\documentclass[a4paper, twosided, 11pt]{amsart}
\pdfoutput=1


%

\usepackage[english]{babel}
\usepackage{amssymb}
\usepackage{amsfonts}
\usepackage{amsmath}
\usepackage{amsthm}
\usepackage{amscd}
\usepackage{mathrsfs}
\usepackage{eucal}
\usepackage{graphicx}
\usepackage{verbatim}
\usepackage{ushort}
\usepackage{enumerate}
\usepackage{amsbsy}
\usepackage{color}




\newcommand{\naturals}{\mathbb{N}}
\newcommand{\integers}{\mathbb{Z}}
\newcommand{\reals}{\mathbb{R}}
\newcommand{\preals}{\mathbb{R^+}}

\newcommand{\proj}{\mathbb{P}}

\newcommand{\torus}{\mathbb{T}}

\newcommand{\continuous}[1]{\mathcal{C}^{#1}}

\newcommand{\fa}{\forall\,}
\newcommand{\ex}{\exists\,}
\newcommand{\eps}{\varepsilon}
\newcommand{\defeq}{\doteqdot}
\newcommand{\deh}{\textup{d}}
\newcommand{\dpar}[2]{\frac{\partial #1}{\partial #2}}
\newcommand{\de}[2]{\frac{\textup{d} #1}{\textup{d} #2}}
\newcommand{\st}{\textup{ s.t. }}
\newcommand{\bigo}[1]{\mathcal{O}\left(#1\right)}
\newcommand{\id}{\textup{Id}}

\newcommand{\const}{\textup{Const}}
\newcommand{\prob}{\mathbb{P}}
\newcommand{\intr}[1]{\textup{int}\,#1}
\newcommand{\cl}[1]{\textup{cl}\,#1}
\newcommand{\diam}{\textup{diam}\,}
\newcommand{\dist}{\textup{dis{}t}}


\newtheorem{thm}{Theorem}[section]
\newtheorem{lem}[thm]{Lemma}
\newtheorem{prp}[thm]{Proposition}
\newtheorem{cor}[thm]{Corollary}
\theoremstyle{remark}
\newtheorem{rmk}[thm]{Remark}
\theoremstyle{definition}
\newtheorem{mydef}[thm]{Definition}

\newtheorem*{qst}{Question}
\newtheorem*{conjecture}{Conjecture}
\numberwithin{equation}{section}

\title{On cyclicity one elliptic islands of the standard map}
\author{Jacopo De Simoi}
\address{Jacopo De Simoi\\
  Dipartimento di Matematica\\
  II Universit\`{a} di Roma (Tor Vergata)\\
  Via della Ricerca Scientifica, 00133 Roma, Italy.}
\email{{\tt desimoi@mat.uniroma2.it}}
\urladdr{\href{http://www.mat.uniroma2.it/~desimoi/redirect.php?refer=sm1}{http://www.mat.uniroma2.it/~desimoi}}

\begin{document}
\maketitle
\input{standardmap.p}
\begin{abstract}
  We study abundance of a special class of elliptic islands for the standard family of
  area preserving diffeomorphism for large parameter values, i.e.\ far from the KAM regime.
  Outside a bounded set of parameter values, we prove that the measure of the set of
  parameter values for which an infinite number of such elliptic islands coexist is zero.
  On the other hand we construct a positive Hausdorff dimension set of arbitrarily large
  parameter values for which the associated standard map admits infinitely many elliptic
  islands whose centers accumulate on a locally maximal hyperbolic set.
\end{abstract}
\section{Introduction}
It is largely expected that symplectic dynamical systems generically present so-called
\emph{coexistence} of elliptic and stochastic behavior. What is usually meant by
coexistence is that the phase space can be divided in two invariant components of positive
volume, such that on the first component we have zero Lyapunov exponents and the second
one we have non-zero Lyapunov exponents. Pesin theory \cite{Pesin77} would then allow to
conclude that the metric entropy of the system is positive.

A well-understood nontrivial class of symplectic dynamical systems is given by completely
integrable systems; the phase space of such systems is foliated by invariant tori on which
the dynamics is conjugated to a rotation. This picture is however very fragile with
respect to perturbations; arbitrarily small perturbations of a completely integrable
system can indeed break some of these tori and chaotic behavior will then necessarily
arise (see for instance \cite{GL}). On the one hand, KAM theory implies that most
invariant tori of the completely integrable system persist also as invariant tori for the
perturbed map, albeit this is guaranteed only on the complement of a set of small but
positive measure. On the other hand, current techniques do not allow to prove that
hyperbolicity takes place in a positive volume set in the phase space; even arbitrarily
close to completely integrable systems we therefore expect to find some form of
coexistence, although a rigorous proof of this fact is far from being established.

On the other end of the spectrum, far away from completely integrable systems, one would
expect to have large ergodic components of positive volume; at the same time, for surface
diffeomorphisms, a conservative version of Newhouse phenomenon (described in
\cite{Duarte08}) implies that, close to a system presenting homoclinic tangencies, we have
abundance (from both the topological and parametric sense) of systems with elliptic
islands accumulating on hyperbolic sets. We can then expect also in this case the
coexistence picture to be prevalent.  Despite an impressive amount of numerical results
that confirm this intuition, it is amazingly difficult to prove that a given dynamical systems
presents coexistence; a continuous (but not $\continuous{1}$) example was given in
\cite{Wojtk}, whereas smooth (but not analytic) examples were constructed in \cite{Przy}
and \cite{Liverani}.

The Standard Family is a very natural and perhaps the most famous example of twist
mapping; it is given by a one-parameter family of area-preserving analytic twist
diffeomorphisms of the two dimensional torus and it is, in a sense, the natural candidate
to attempt a proof of the statements that we mentioned.  The Standard Family was first
numerically studied as a dynamical system in the late 60's by B. V.  Chirikov, in order to
study confinement of charged particles in mirror magnetic traps, and independently by
J. B. Taylor.  The Standard Family describes the dynamics of a mechanical system known as
the ``kicked rotor'' but it can be recognized in a vast collection of physical models; to
name a few, it describes ground states of the Frenkel-Kontorova model (see
\cite{frenkel,aubry}), it models dynamics of particles in accelerators
(\cite{Chirikov,Izraelev}) and dynamics of balls bouncing on a periodically oscillating
platform (\cite{Pust}) as well as cometary motions (\cite{Petrosky}).

The parameter value $\la$, sometimes referred to as the \emph{coupling constant}, is
usually normalized in the following way: if $\la=0$, then the Standard Map is completely
integrable; if $\la$ is \emph{small}, then we are in the KAM realm, whereas if $\la$ is
\emph{large} we are in the ``chaotic'' realm.  For $\la\not=0$ it is known that the map is
non-integrable (\cite{Fontich}), has horseshoes (\cite{Gelfreich, Goroff85}) and has
positive topological entropy (\cite{Knill96}).  However, despite a considerable amount of
work and effort it is still a wide open problem to prove whether or not there exists even
a single value of $\la$ for which the Standard Map is ergodic; moreover, the following in
principle less challenging questions are still far from being answered.
\begin{qst}[Sinai \cite{Sinai}]
  Is the metric entropy of the Standard Map positive for some values of $\la$? For $\la$
  belonging to some positive measure set? For all nonzero values of $\la$?
\end{qst}
In this paper we obtain some related results for large values of the parameter
$\la$. First, we recall some previous results: in \cite{Duarte94} it was proved that for
large enough parameters (i.e.\ far enough from the KAM regime) there exists a residual set
of parameter values for which the standard map $\sm$ has infinitely many elliptic islands
accumulating on a locally maximal hyperbolic set which fills the torus as
$\la\to\infty$. Although elliptic islands constitute an immediate obstruction to
ergodicity, their presence clearly does not prevent the positivity of the metric entropy of
the system, it does, however, imply that there exist regions of positive area where the metric
entropy is zero. We recall the following related conjecture:
\begin{conjecture}[Carleson \cite{Carleson}]
  The density of the set of parameters $\la$ for which at least one elliptic island exists
  tends to $0$ as $\la\to\infty$.
\end{conjecture}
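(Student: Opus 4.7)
The plan is to attempt a Jakobson-style parameter-exclusion argument, adapted to the symplectic setting. The idea is to parameterize elliptic periodic orbits of $\sm$ by their combinatorial itinerary with respect to the underlying hyperbolic skeleton, and to show that each itinerary contributes only a very short interval of elliptic parameter values, short enough that the combinatorial explosion in the period is defeated as $\la\to\infty$. Since Carleson's conjecture is a famous open problem, the plan below is necessarily speculative; I will emphasize the precise step at which current technology breaks down.

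First, I would exploit the fact that for $\la$ large the map $\sm$ is uniformly hyperbolic outside two critical strips $\henonstrip$ of width $\bigo{\la^{-1}}$, where $\cos(2\pi\aax)$ is small and the natural invariant cone field collapses. An elliptic periodic orbit of period $n$ must cancel the expansion accumulated along the hyperbolic part of its orbit; quantitatively, $|\mathrm{Tr}\,D\sm^n|$ at a point of the orbit has to lie in $(-2,2)$, which forces a definite number of visits per period to sub-strips of width of order $\la^{-1}$.

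Second, I would code period-$n$ orbits combinatorially by an itinerary recording the sequence of excursions through $\crit$. The number of admissible codes of period $n$ grows like $N(\la)^n$ with $\log N(\la)\sim\log\la$. For each admissible code the implicit function theorem produces a real-analytic family of periodic points $p_n(\la)$, and the ellipticity condition reads $|\mathrm{Tr}\,D\sm^n(p_n(\la))|<2$. If one could establish a uniform \emph{parametric transversality} estimate of the form $\left|\de{}{\la}\mathrm{Tr}\,D\sm^n(p_n(\la))\right|\gtrsim\la^{\gamma n}$ for some $\gamma>0$ independent of the code, then the set of elliptic parameters for that code would be an interval of length $\bigo{\la^{-\gamma n}}$. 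Summing over codes would produce a bound of order $\la^{(\log N(\la)/\log\la-\gamma)n}$ for the measure in a window $[\la,\la+1]$; choosing $\la$ so large that $\gamma\log\la>\log N(\la)$ and letting $n\to\infty$ would drive the density to zero.

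The main obstacle, and the reason Carleson's conjecture remains open, lies precisely in the uniform transversality estimate. The trace along a long orbit is an exponentially wild function of $\la$, and excluding the catastrophic scenario in which the many contributions to $\partial_\la\mathrm{Tr}\,D\sm^n$ nearly cancel is essentially the symplectic analogue of Jakobson's theorem on one-parameter families of quadratic unimodal maps. Available technology (Benedicks--Carleson parameter exclusion, Duarte's construction of Newhouse phenomena, Wang--Young theory) provides useful one-sided statements but none of them is presently strong enough to control all combinatorial types simultaneously. I would expect that the restriction to \emph{cyclicity one} islands, mentioned in the abstract of the paper, is exactly the restriction that makes a version of the transversality step tractable and allows the weaker ``zero-measure infinite coexistence'' conclusion to be established.
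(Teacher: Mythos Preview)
Your assessment is essentially correct, and in fact you have diagnosed the situation accurately: the statement you were asked to prove is Carleson's \emph{conjecture}, which the paper explicitly records as open. The paper does not prove it. What the paper proves is the cyclicity-one case, noted in a remark following estimate~\eqref{e_optimalENEstimate}: the density of parameters for which $\sm$ has at least one elliptic island \emph{of cyclicity one} tends to zero as $\la\to\infty$. Your final paragraph anticipates this precisely.

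Your sketched mechanism is also a fair description of what the paper actually carries out in the cyclicity-one case. The paper codes cyclicity-one periodic orbits by closed words $\itin$ in a Markov alphabet, with roughly $\la^{N-1}$ admissible words of period $N$. The analogue of your transversality estimate is the statement $\partial_\la\dleaf{\itin}{\eta}{\zeta}{\la}(\xi)=1+\bigo{\la^{-1/2}}$ (Proposition~\ref{p_observationsOnLeaves2}): the relevant ``leaf function'' whose zero locates the periodic point moves with unit speed in the parameter. Combined with the quadratic nature of the critical passage this yields $\prob(\E_1(N,\itin))\leq\const\,\la^{-1}\ndLL\itin^{-2}$ for each code (Lemma~\ref{l_estimateParameterSpace}), and a careful summation over codes (Lemma~\ref{l_technicalSummation}) gives $\prob(\E_1(N))\leq C_\eps\la^{-N(1-\eps)}$, which implies the cyclicity-one Carleson statement.

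Two minor technical corrections to your sketch: the critical strips in the paper have width $\bigo{\la^{-1/2}}$, not $\bigo{\la^{-1}}$; and the per-code parameter interval is controlled not by the derivative of the trace directly but by $\ndLL\itin^{-2}$, where $\ndLL\itin$ is the expansion along the hyperbolic excursion, so the exponent in your $\la^{-\gamma n}$ is code-dependent and the summation lemma is needed to handle the distribution of expansion rates across codes. Your identification of the obstruction for higher cyclicity --- loss of uniform transversality when multiple critical passages interact --- is exactly the point the paper leaves for future work.
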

More recently, in \cite{Anton10}, it was shown that for a residual set of large enough
parameters there exists a full Hausdorff dimension transitive invariant set where the map
has nonzero Lyapunov exponents.

In this paper we will consider a special type of elliptic islands: following
\cite{Kaloshin} we assign to all periodic points a quantity called \emph{cyclicity}; this
quantity counts how many points of the orbit visit a special region of the phase space
that is usually called \emph{critical set}. We perform a careful geometrical study of the
dynamics which leads to a firm understanding of the structure of the set of (large)
parameters for which elliptic islands of cyclicity one are present. This leads to our Main
Theorem, which can be loosely formulated as follows: we prove that for almost all
sufficiently large parameter values, the standard map has only finitely many elliptic
islands of cyclicity one; as a byproduct we show that Carleson conjecture holds for
cyclicity one elliptic islands. On the other hand we also prove that the set of parameter
values for which the standard map admits infinitely many cyclicity one elliptic islands
accumulating on a large locally maximal hyperbolic set is dense among sufficiently large
parameters and has Hausdorff dimension larger than $1/4$.

The geometric construction that is presented in this work seems quite promising for
extending our results to higher cyclicity; taking inspiration again from \cite{Kaloshin},
we expect results similar to our Main Theorem to hold true for orbits of cyclicity either
bounded or appropriately small with respect to the period; we expect to be possible to
prove such results without the ``unimaginable'' complication of understanding the
geometrical features of multiple passages through the critical region that seems to be
required by most previous attempts to deal with this problem.

This article is organized as follows: in section~\ref{s_definition} we recall the
definition of the Standard Family, we define the critical set and the notion of cyclicity,
in order to be able to state our Main Theorem. In section~\ref{s_distribution} we go
through the lengthy construction of the Markov structure on the phase space, which will
allow us to construct a locally maximal hyperbolic set and to classify cyclicity one
periodic points. We moreover study how the Markov structure varies by changing the
parameter, which will be fundamental to prove our results. Section~\ref{s_proof} is
devoted to the proof of the Main Theorem, which articulates in several stages. This
article also features a technical appendix, where we collect the proofs of three lemmata
that are necessary to obtain our results but are as well of independent interest.

{\footnotesize I thank with real pleasure Vadim Kaloshin, whom illustrated me the problem
  and sparked my curiosity on the subject; I would also like to express my gratitude to
  Bassam Fayad, who asked me the proverbial right question at the right time, and to
  Raphaël Krikorian and Carlangelo Liverani for their most welcome and helpful comments.
  I am also indebted to anonymous referees for providing me with accurate remarks and
  references which I was previously unfamiliar with.  It is also a privilege to thank
  the Fields Institute in Toronto, Canada for the excellent hospitality and working
  conditions provided in spring semester 2011.  This work has been partially supported by
  the Fondation Sciences Mathématiques de Paris and by the European Advanced Grant
  Macroscopic Laws and Dynamical Systems (MALADY) (ERC AdG 246953).
}


\input{definitions.p}

\section{Definitions and statement of results}\label{s_definition}
In this section we will introduce the Chirikov-Taylor standard family and collect all definitions that are necessary to state our Main Theorem.

Let $\torus\defeq\reals/\integers$; we write the standard family $\sm:\torus^2\to\torus^2$, $\la\in\reals$ as follows:
\begin{equation}\label{eq_defSM}
  \sm:(x,y)\mapsto(y,-x+2y+\la \dot\phi(y))\mod\integers^2,
\end{equation}
where we fix for definiteness $\phi(\cdot)\defeq (2\pi)^{-1}\sin(2\pi \cdot)$. The
standard map $\sm$ is an area-preserving diffeomorphism; moreover $\sm$ is reversible with
$r:(x,y)\mapsto(y,x)$ as a reversor, i.e.:
\begin{align*}
  r^2&=\textrm{Id}& r\sm&=\sm^{-1}r.
\end{align*}
Finally notice that $(x,y)\mapsto(x+1/2,y+1/2)\mod \integers^2$ conjugates $\sm$ to
$\smaux{-\la}$, so that we will henceforth assume $\la\in\preals$.

An elementary inspection of the dynamics of $\sm$, for large values of the parameter $\la$,
leads to the natural definition of a \emph{critical set} $\crit(\la)$ (see below for
details but also e.g. \cite{Duarte94,Anton10,Goroff85}): geometrically, $\crit(\la)$ is
given by a $\la^{-1/2}$-thin vertical strip around each of the two critical points of
$\dot\phi(x)$. Dynamically, it is characterized by the property that any orbit of $\sm$
which does not intersect $\crit(\la)$ is hyperbolic. In particular:
\begin{prp}\label{p_hyperbolicSet}
  For any $\la$ sufficiently large there exists a locally maximal hyperbolic set
  $\phypset(\la)\subset\torus^2\setminus\crit(\la)$ satisfying the following properties:
  \begin{enumerate}[(a)]
  \item $\phypset(\la)$ is dynamically increasing i.e.\ for all $\la>\la^*$ there exists an
    injection $\iota(\la^*,\la):\phypset(\la^*)\to\phypset(\la)$; for any fixed
    $p\in\phypset(\la^{*})$ we have that $\iota(\la^*,\la)p$ is smooth\footnote{In fact
      the dependance is actually analytic, see e.g. \cite{dlLMM}} in $\la$;
  \item the set $\phypset(\la)$ is $\const\,\la^{-1/2}$-dense in $\torus^2$.
  \end{enumerate}
\end{prp}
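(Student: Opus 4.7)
The plan is to construct $\phypset(\la)$ as the maximal $\sm$-invariant set contained in $\phsp\setminus\crit(\la)$ and to read both assertions off the fact that, outside the critical strips, the standard map is strongly hyperbolic with expansion of order $\la^{1/2}$. Writing
\[
D\sm(\aax,\aay)=\matrixtt{0}{1}{-1}{2+\la\ddot\phi(\aay)},
\]
the definition of $\crit(\la)$ is designed precisely so that on its complement one has $|2+\la\ddot\phi(\aay)|\geq \const\,\la^{1/2}$. A direct computation then shows that $D\sm$ maps a suitable cone $\cone{\unstable}$ of almost-vertical vectors strictly inside itself and expands its vectors by a factor at least $\const\,\la^{1/2}$; the dual stable cone $\cone{\stable}$ and its $D\smi$-contraction follow from reversibility. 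Setting
\[
\phypset(\la)\defeq\bigcap_{n\in\integers}\smaux{\la}^{-n}(\phsp\setminus\crit(\la))
\]
gives a compact $\sm$-invariant set; the cone-field criterion yields uniform hyperbolicity, and local maximality is automatic because $\phypset(\la)$ is by construction the largest invariant subset of the open neighborhood $\phsp\setminus\crit(\la)$.

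For property (b), the plan is to cover $\phsp\setminus\crit(\la)$ by a regular grid of boxes of side $\const\,\la^{-1/2}$. The uniform $\la^{1/2}$-expansion along $\cone{\unstable}$, combined with the dual contraction along $\cone{\stable}$, implies that a suitable rectangle inside any such box is stretched by a single iterate fully across $\phsp\setminus\crit(\la)$ in the unstable direction; a standard horseshoe/fixed-point argument then locates a periodic point of $\phypset(\la)$ inside the box. Carrying this out rigorously is precisely the content of the Markov-partition construction deferred to Section~\ref{s_distribution}, but the outcome is that $\phypset(\la)$ meets every box of the grid, which delivers the claimed $\const\,\la^{-1/2}$-density.

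Property (a) follows from persistence of hyperbolic sets under smooth variation of the parameter. Since $\crit(\la)$ shrinks monotonically as $\la$ grows, any orbit contained in $\phsp\setminus\crit(\la^*)$ also avoids $\crit(\la)$ for every $\la\geq\la^*$, and the cones above become strictly more expanded/contracted as $\la$ increases. Structural stability of uniformly hyperbolic sets (applied to the smooth family $\la\mapsto\sm$, or equivalently via the implicit function theorem in symbolic coordinates) produces the injection $\iota(\la^*,\la)\colon\phypset(\la^*)\hookrightarrow\phypset(\la)$; the same fixed-point formulation shows that $\iota(\la^*,\la)p$ depends smoothly (in fact analytically, as the footnote indicates) on $\la$ for each fixed $p$.

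The main obstacle I expect is not qualitative hyperbolicity, which is essentially automatic outside the critical strips, but the quantitative calibration of constants: the width of $\crit(\la)$, the aperture of the cones $\cone{\unstable},\cone{\stable}$, and the mesh size $\la^{-1/2}$ of the grid have to be chosen in a mutually compatible way so that every box genuinely supports a full Markov rectangle and the resulting periodic points really are $\const\,\la^{-1/2}$-dense with a constant independent of $\la$. This geometric bookkeeping — in particular the careful separation of ``good'' and ``bad'' strips of $\phsp\setminus\crit(\la)$ — is exactly what motivates the more elaborate construction that Section~\ref{s_distribution} is devoted to.
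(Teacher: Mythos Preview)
Your outline is broadly sound, but there is one genuine gap and one slip worth flagging.

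The slip: you write that outside $\crit(\la)$ one has $|2+\la\ddot\phi(y)|\ge\const\,\la^{1/2}$. But $\crit(\la)$ is a vertical strip in the \emph{first} coordinate (since $\slope{1}(x,y)=\ddot\phi(x)+2/\la$), while the large entry of $D\sm(x,y)$ involves $\ddot\phi(y)$. The correct statement is that $|2+\la\ddot\phi(y)|$ is large when $\sm(x,y)\notin\crit(\la)$. For the maximal invariant set this is harmless, but it matters for the cone bookkeeping; the paper handles it by distinguishing $\crit$ from the inner set $\icrit$ and by evaluating $\slope{1}$ at the image point (Proposition~\ref{l_coneInvariance}).

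The gap is in your argument for~(a). Structural stability of a hyperbolic set only furnishes a continuation $\iota(\la^*,\la)$ for $\la$ in a \emph{neighbourhood} of $\la^*$; the proposition asks for the injection for \emph{all} $\la>\la^*$. Your remark that $\crit(\la)$ shrinks is in the right direction but does not close the gap: the continued orbit is a $\sm$-orbit, not a $\sms$-orbit, and nothing a priori prevents it from drifting into $\crit(\la)$ as $\la$ moves far from $\la^*$. The paper supplies the missing monotonicity by deriving an explicit ODE for the continuation (Lemma~\ref{l_continuationXiXi'}) and showing, via estimate~\eqref{eq_differentialEquationForl}, that as $\la$ increases each intermediate iterate is pushed \emph{away} from $\picrit(\la)$. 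This is what ensures the semiflows $\lflowmap{k}$ are defined for all $\la\ge\la^*$ and converge to the desired injection $\lflowmap{}:\phypset(\la^*)\to\phypset(\la)$; the monotonicity of admissibility (Corollary~\ref{c_forwardAdmissible}) is a consequence, not a replacement, of this estimate.

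For~(b) your grid/periodic-point sketch would work, but the paper takes a slightly different route: it first uses the exponential estimate (Proposition~\ref{l_exponentialCantorEstimate}) to reduce to showing that the rank-one set $\phypset_1(\la)$ is $\const\,\la^{-1/2}$-dense, and then checks this by direct inspection of the complement $\tcrit\cup\sm^{-1}\tcrit\cup\sm\tcrit\cup\sm^{-2}\tcrit$, a union of four thin strips no combination of which can cover a $\const\,\la^{-1/2}$-ball.
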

The previous proposition corresponds to theorem A in \cite{Duarte94} and theorem 1 in
\cite{Anton10} and simply amounts to the construction and description of a hyperbolic set
well suited for our needs. It is then natural to give the following

\begin{mydef}[Cyclicity] \label{d_cyclicity} Fix $\la$; let $\Theta=\{p_0,\cdots,p_N =
  p_0\}$ be a periodic orbit for $\sm$ of least period $N$; define $s(\Theta)$ the
  \emph{cyclicity} of the orbit $\Theta$ as:
  \[
  s(\Theta)\defeq \card(\Theta\cap\crit(\la)).
  \]
  We define the \emph{cyclicity of a periodic point} as the cyclicity of its orbit and the
  \emph{cyclicity of an elliptic island} as the cyclicity of its center.
\end{mydef}
The dynamical properties of the critical set (see proposition~\ref{l_coneInvariance}
below) imply that all cyclicity $0$ periodic points are hyperbolic; thus if $\Theta$ is an
elliptic periodic orbit, necessarily $s(\Theta)\geq 1$. In this work we focus our efforts
on cyclicity one elliptic periodic orbits; our results are described in the following
\begin{mthm}
  \label{t_cyclicityOneMeasure}
  There exists $\la_0>1$ such that:
  \begin{enumerate}[(a)]
  \item the set of parameters $\la\in[\la_0,\infty)$ such that the standard map $\sm$ has
    infinitely many elliptic islands of cyclicity $1$ is a null set for Lebesgue measure;
  \item there exists a residual set $\fatset\subset[\la_0,\infty]$ of Hausdorff dimension
    at least $1/4$ such that if $\la\in\fatset$ the standard map $\sm$ has infinitely many
    elliptic islands of cyclicity $1$ whose centers accumulate on $\phypset(\la)$.
  \end{enumerate}
\end{mthm}
Before we venture in giving the precise definition of the critical set and proving its
basic properties, let us add a few comments on our results. In \cite{TLY}, the authors
studied periodic attractors of cyclicity one (which they called \emph{simple periodic
  attractors}) in a standard example of dissipative diffeomorphism and prove a result
which is indeed similar in spirit and techniques to item (a) of our Main Theorem, with
elliptic islands replaced by sinks.  Further results in the dissipative case were later
obtained in \cite{Gonch93,Gonch97} for low cyclicity (in this case called \emph{loop
  count}), and in \cite{Kaloshin} for high cyclicity.  We hope that the main result of our
paper could be the first step on a similar path in the realm of conservative
diffeomorphisms.  Notice moreover that item (b) of our Main Theorem is indeed analogous
to the main result in \cite{Duarte94}; our approach, though, is very different: in
particular we give a constructive proof and explicitly show where elliptic islands of
cyclicity one appear in both phase space and parameter space. This allows us to improve
Duarte's result by not only obtaining a lower bound on the Hausdorff dimension of the
parameter set, but also proving that his theorem holds true even considering solely
cyclicity one elliptic islands.

In order to study hyperbolic properties of the dynamics it is convenient to consider slope
fields rather than vector fields; we thus introduce a few basic notions and write some
useful formulae for future reference. For fixed $\la\in\preals$ define a
$\la$\emph{-adapted slope field} as a function
$\slope{}:\torus^2\to\reals\proj=\reals\cup\{\infty\}$ such that $\la\slope{}$ is the
slope of a line field; by convention if $\slope{}(p)=\infty$, then the line field has a
vertical tangent in $p$. Let $\slope{}$ be a $\la$-adapted slope field; given a $\torus^2$
diffeomorphism $g:(x,y)\mapsto(g_x,g_y)$, we obtain the push-forward $g_*\slope{}$ as the
$\la$-adapted slope field given by:
\begin{equation}\label{eq_pushForwardSlope}
  [g_*\slope{}](p) \defeq \frac{1}{\la}\frac{\partial_xg_y+\la\partial_yg_y\slope{}}{\partial_xg_x+\la\partial_yg_x\slope{}}(g^{-1}p)
\end{equation}
with the convention that if $[\partial_xg_x+\la\partial_yg_x\slope{}](g^{-1}p)=0$, we let $[g_*\slope{}](p)=\infty$
. A $\la$-adapted slope field $\slope{}$ is said to be $\continuous{r}$-smooth  if for all $p\in\torus^2$ either $\slope{}$ is $\continuous{r}$-smooth in $p$ or $r_*\slope{}$ is $\continuous{r}$-smooth in $rp$.
Let $\slope{}$ be a $\continuous{1}$ $\la$-adapted slope field and $\psi$ a smooth function on $A\subset\torus^2$; then for $p\in A$ denote by $\parSlope{\slope{}}\psi\,(p)$ the directional derivative of the map $\psi$ at point $p$ in the direction given by $\slope{}(p)$, normalized as follows:
\[
\parSlope{\slope{}}\psi\,(p) \defeq \begin{cases}
  \partial_y\psi{}(p) & \textrm{if } \slope{}(p)=\infty\\
  \partial_x\psi{}(p)+\la\slope{}(p)\cdot\partial_y\psi{}(p) &\textrm{otherwise}.
\end{cases}
\]
We will quite often consider the expression $\parSlope{\slope{}}\slope{}$; this object is pushed forward by a $\continuous{2}$-diffeomorphism $g:\torus^2\to\torus^2$ by means of the following definition:
\begin{equation*}
  g_*\parSlope{\slope{}}{\slope{}} \defeq \parSlope{g_*\slope{}}[g_*\slope{}]
\end{equation*}
Here follow the expressions describing the action of $r$ on a $\la$-adapted slope field  $\slope{}$ and on a directional derivative $\parSlope{\slope{}}\slope{}$:
\begin{subequations}\label{eq_explicitReversor}
  \begin{align}
    r_*\slope{}\,(p) &= \frac{1}{\la^{2}\slope{}}(rp)\\
    r_*\parSlope{\slope{}}\slope{}\,(p) &= \frac{\parSlope{\slope{}}\slope{}}{\la^{3}\slope{}^{3}}(rp)
  \end{align}
\end{subequations}
Let $\slope{0}$ be the constant $\infty$ slope field, then for $k\in\integers$ define the \emph{$k$-th reference $\la$-adapted slope field} $\slope{k}$ by pushing forward $\slope{0}$ by $\sm^k$, i.e.:
\begin{equation}\label{eq_definitionReferenceSequence}
  \slope{k}\defeq{\sm^k}_*\slope{0}.
\end{equation}
Using \eqref{eq_explicitReversor} and \eqref{eq_definitionReferenceSequence} it is easy to prove the following relation:
\[
\slope{k}(p)=\frac{1}{\la^2\slope{-(k+1)}}(rp)
\]
By means of elementary computations we find:
\begin{subequations}\label{e_oneDefinitions}
  \begin{align}
    \slope{1}(x,y) &= \ddot\phi(x) + \smash{\frac{2}{\la}} \\
    \parSlope{\slope{1}}\slope{1}(x,y) &= \dddot\phi(x) \\
    \slope{-1}(x,y) &= 0\\
    \parSlope{\slope{-1}}\slope{-1}(x,y) &= 0.
  \end{align}
\end{subequations}
We write below, for future reference, a few useful formulae which follow directly from the
definitions and from equations \eqref{eq_explicitReversor}; recall that a superscript $*$
denotes the pull-back i.e.\ the push-forward by the inverse map:
\begin{subequations}
  \label{eq_pushForwards}
  \begin{align}
    {\sm}_*\slope{}\,(p) &= \slope{1}(p) - \frac{1}{\la^2 \slope{}}(\sm^{-1}p)\label{eq_pfh}\\
    {\sm}_*\parSlope{\slope{}}\slope{}\,(p) &= \parSlope{\slope{1}}\slope{1}(p) - \frac{\parSlope{\slope{}}\slope{}}{\la^3 \slope{}^3}(\sm^{-1}p)\\
    {{\sm}^*}\slope{}\,(p) &= \frac{1}{\la^2 (\slope{1}-\slope{})}(\sm p)\label{eq_pbh}\\
    {{\sm}^*}\parSlope{\slope{}}\slope{}\,(p) &= \frac{\parSlope{\slope{}}\slope{}}{\la^3 (\slope{1}-\slope{})^3}(\sm p)
  \end{align}
\end{subequations}
Notice that, denoting $(x',y')=\sm(x,y)$, we have:
\begin{align}\label{eq_expandingRelations}
  \parSlope{\slope{}}{x'}\,(x,y) &= \la\slope{}(x,y) & \parSlope{\slope{}}{x}\,(x',y') &= \la(\slope{1}(x',y')-\slope{}(x',y')),
\end{align}
which in particular implies that integral curves of $\slope{1}$ are forward-expanded in the $x$ direction by a factor $\la\slope{1}$, and that integral curves of $\slope{-1}$ are backward-expanded in the $x$ direction by the same factor $\la\slope{1}$.

%
We are now ready to introduce the \emph{critical set} $\crit$; we expect appropriate cone
conditions and distortion bounds (see proposition~\ref{l_coneInvariance}) to hold on the
complement of $\crit$ and its definition actually depends on our exact requirements. Our
critical set will not satisfy the Markov property,
i.e.\ $\sm\partial\crit\not\subset\partial\crit$; to compensate for this issue, it is
standard practice to define an \emph{inner} critical set $\icrit$ and to employ either
$\crit$ or $\icrit$ according to the situation. This essentially allows to ``blur'' the
boundary of the critical set and makes the lack of Markov property not important; 
let us introduce $\pix$ and $\piy$ the projections on the first and second coordinate
respectively.
\begin{mydef}
  Assume $\la>1$, let $\tness\in\preals$, define the \emph{critical set} $\crit$ and the \emph{inner critical set} $\icrit$ as follows:
  \begin{align*}
    \crit(\la;\tness)&\defeq\{p\in\torus^2\st|\la\slope{1}(p)|<\tness\cdot\la^{1/2}\},&
    \icrit(\la;\tness)&\defeq\crit(\la;\tness/10)
    \intertext{
      Define also the \emph{projected critical set} and the \emph{projected inner critical set}:}
    \pcrit(\la;\tness)&\defeq\{\xi\in\torus\st |\ddot\phi(\xi) +2/\la| < \tau\cdot\la^{-1/2}\},& \picrit(\la;\tness)&\defeq \pcrit(\la;\tness/10).
  \end{align*}
\end{mydef}
We fix once and for all $\tness=80\pi$ and drop it from the notation; furthermore, we will always assume $\la$ to be large enough to ensure that
\begin{equation}\label{e_icritUniformEstimate}
  |\la\slope{1}(p)|<\frac{1}{20}\la\ \text{if }p\in\crit(\la);
\end{equation}
in particular $\pcrit$ has two connected components which we denote by $\cpcrit{+}$ and $\cpcrit{-}$ according to the sign of $\dddot\phi$ on each of them; the same applies to $\picrit$ so that we can define $\cpicrit\pm$. Notice that by \eqref{e_oneDefinitions} we have that $\crit = \pix^{-1}(\pcrit)$ and $\icrit = \pix^{-1}(\picrit)$. In general, given $\pcrit_*\subset\pcrit$ we call $\crit|_{\pcrit_*} = \pix^{-1}(\pcrit_*)$ the \emph{restriction of the critical set on $\pcrit_*$}; we then define $\ccrit\pm = \crit|_{\cpcrit\pm}$ and  $\cicrit\pm = \crit|_{\cpicrit\pm}$.
Our choice of $\phi$ easily implies that:
\begin{equation}\label{e_minimumDistance}
  \dist(\xi,\cl\picrit(\la))>\la^{-1/2}\text{ for any }\xi\not\in\pcrit(\la);
\end{equation}
in fact by \eqref{e_icritUniformEstimate} and definition of $\phi$ we obtain that $|\dddot\phi(\xi)|>2\pi^2$ for $\xi\in\pcrit(\la)$; consequently:
\[
\dist(\xi,\cl\picrit(\la))>\left(1-\frac{1}{10}\right)\frac{\tness\la^{-1/2}}{2\pi^2}>\la^{-1/2}.
\]
We now state properties of the dynamics on the complement of the critical set; for any fixed $\la$ define the following cone fields:
\begin{align*}
  \cone{\unstable}&\defeq{\sm}_*\{\slope{}\st |\slope{}|>2\la^{-1/2}\}&
  \cone{\stable}&\defeq\{\slope{}\st |\slope{}|<2\la^{-1/2}\}.
\end{align*}
As usual, we say that a curve is unstable (respectively stable) if the slopes at all points belong to the unstable (respectively stable) cone.
\begin{prp} \label{l_coneInvariance}
  For large enough $\la$, the cone fields $\cone{\unstable}$ and $\cone{\stable}$ are respectively forward and backward invariant outside $\icrit$; namely, if $p\not\in\icrit$, $\slope{}(p)\in\cone{\unstable}(p)$ and $\slope{}'(p)\in\cone{\stable}(p)$ we have:
  \begin{align*}
    {\sm}_*\slope{}(\sm p)&\in\cone{\unstable}(\sm p) &{\sm}^*\slope{}'(\sm^{-1} p)&\in\cone{\stable}(\sm^{-1} p);
  \end{align*}
  in particular $\bigcap_{k\in\integers}\sm^{k}(\torus^2\setminus\icrit(\la))$ is a hyperbolic set for $\sm$.
  Moreover if $(x',y')=\sm(x,y)$, then if $\slope{}\in\cone{\unstable}$ and $(x,y)\not\in\icrit$ we have
  \begin{subequations}\label{eq_expansionRate}
    \begin{equation}
      |\parSlope{\slope{}}x'(x,y)|>2\la^{1/2};
    \end{equation}
    on the other hand, if $\slope{}'\in\cone{\stable}$ and $(x',y')\not\in\icrit$ we have
    \begin{equation}
      |\parSlope{\slope{}'}x(x',y')|>2\la^{1/2}.
    \end{equation}
  \end{subequations}
  Finally, the following \emph{bounded distortion condition} holds outside $\icrit$:
  \begin{equation}\label{e_smallDistortion}
    \left|\frac{\la\parSlope{\slope{1}}{\slope{1}}(p)}{(\la\slope{1}(p))^2}\right|
    \leq\frac{1}{16}\text{ if }p\not\in\icrit.
  \end{equation}
\end{prp}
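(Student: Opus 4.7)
\medskip

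\textbf{Proof plan.} The strategy is to reduce everything to the concrete formulae \eqref{eq_pushForwards} and the fact that outside $\icrit$ the reference slope $\slope{1}$ is uniformly large (of order $\la^{-1/2}$), while $\cone{\unstable}$ and $\cone{\stable}$ are narrow neighborhoods of $\slope{1}$ and of $0$ respectively. First I would rewrite the unstable cone in a more manageable form: by \eqref{eq_pfh}, a slope $\slope{}(p)$ belongs to $\cone{\unstable}(p)$ precisely when it admits the representation $\slope{}(p)=\slope{1}(p)-\frac{1}{\la^{2}\tilde{\slope{}}(\sm^{-1}p)}$ for some $\tilde{\slope{}}$ with $|\tilde{\slope{}}|>2\la^{-1/2}$; in other words, $\cone{\unstable}(p)=\{\slope{}:|\slope{}-\slope{1}(p)|<\tfrac{1}{2\la^{3/2}}\}$. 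On the other hand $\cone{\stable}(p)$ is by definition $\{|\slope{}|<2\la^{-1/2}\}$. The key separation estimate is then: if $p\notin\icrit$, definition of $\icrit$ gives $|\la\slope{1}(p)|\geq \frac{\tness}{10}\la^{1/2}=8\pi\la^{1/2}$, so $|\slope{1}(p)|\geq 8\pi\la^{-1/2}$. Consequently, for large $\la$, any $\slope{}\in\cone{\unstable}(p)$ satisfies $|\slope{}(p)|>2\la^{-1/2}$ and any $\slope{}'\in\cone{\stable}(p)$ satisfies $|\slope{1}(p)-\slope{}'(p)|>\la^{-1/2}$. These two inequalities are the engine of the whole proof.

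\medskip

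Forward invariance of $\cone{\unstable}$ then follows directly from \eqref{eq_pfh}: the identity $\sm_{*}\slope{}(\sm p)-\slope{1}(\sm p)=-\frac{1}{\la^{2}\slope{}(p)}$ combined with the first inequality above gives $|\sm_{*}\slope{}(\sm p)-\slope{1}(\sm p)|<\frac{1}{2\la^{3/2}}$, which is exactly the defining condition of $\cone{\unstable}(\sm p)$. Backward invariance of $\cone{\stable}$ is analogous via \eqref{eq_pbh}: $\sm^{*}\slope{}'(\sm^{-1}p)=\frac{1}{\la^{2}(\slope{1}(p)-\slope{}'(p))}$ and the second separation inequality bound this quantity by $\frac{1}{\la^{3/2}}<2\la^{-1/2}$. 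The hyperbolicity of $\bigcap_{k}\sm^{k}(\phsp\setminus\icrit)$ is then a standard consequence of the existence of invariant transverse cones together with uniform expansion and contraction; the expansion rates $|\parSlope{\slope{}}x'|>2\la^{1/2}$ and $|\parSlope{\slope{}'}x|>2\la^{1/2}$ are immediate from the expressions \eqref{eq_expandingRelations} and the very inequalities $|\slope{}|>2\la^{-1/2}$ and $|\slope{1}-\slope{}'|>2\la^{-1/2}$ we just used.

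\medskip

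The bounded distortion estimate \eqref{e_smallDistortion} is a direct calculation. Using \eqref{e_oneDefinitions} one has $\parSlope{\slope{1}}\slope{1}(p)=\dddot\phi(x)$ and $\la\slope{1}(p)=\la\ddot\phi(x)+2$. With the explicit $\phi(\xi)=(2\pi)^{-1}\sin(2\pi\xi)$, $|\dddot\phi|\leq 4\pi^{2}$, so $|\la\parSlope{\slope{1}}\slope{1}|\leq 4\pi^{2}\la$, while $p\notin\icrit$ forces $(\la\slope{1}(p))^{2}\geq 64\pi^{2}\la$; the ratio is therefore at most $\tfrac{1}{16}$. The choice $\tness=80\pi$ is designed precisely so that this last inequality holds with the stated constant.

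\medskip

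\textbf{Main obstacle.} There is no conceptual difficulty: the proof is essentially a bookkeeping of constants to verify that the choices $\tness=80\pi$ and the factor $1/10$ appearing in the definition of $\icrit$ are sufficient to make the separation estimates work for every step simultaneously (cone invariance, expansion, distortion). The only subtle point is to notice that the forward invariance of $\cone{\unstable}$ is asked at points $p$ (i.e.\ the criticality condition involves the image point), while backward invariance of $\cone{\stable}$ is also stated with respect to a point $p\notin\icrit$ whose preimage is the point at which the stable slope is evaluated; once this asymmetry is kept straight, the proof is mechanical.
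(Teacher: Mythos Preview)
Your proof is correct and follows essentially the same route as the paper's: rewrite $\cone{\unstable}$ as $\{|\slope{}-\slope{1}|<\tfrac{1}{2}\la^{-3/2}\}$ via \eqref{eq_pfh}, use $|\slope{1}(p)|\geq 8\pi\la^{-1/2}$ outside $\icrit$ to separate the cones, then read off invariance from \eqref{eq_pfh}--\eqref{eq_pbh}, expansion from \eqref{eq_expandingRelations}, and distortion from the explicit formulae \eqref{e_oneDefinitions}. Your version is in fact more explicit than the paper's (which dismisses the distortion bound as ``elementary''); the only quibble is your parenthetical remark in the last paragraph that ``the criticality condition involves the image point'' for the unstable cone---it doesn't, the hypothesis is $p\notin\icrit$ and that is exactly where you use $|\slope{1}(p)|\geq 8\pi\la^{-1/2}$ to get $|\slope{}(p)|>2\la^{-1/2}$.
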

\begin{proof}
  Notice that by \eqref{eq_pfh} we have $\cone{\unstable}=\{\slope{}\st|\slope{}-\slope{1}|<1/2\cdot\la^{-3/2}\}.$ To prove forward invariance it is sufficient to prove that if $p\not\in\icrit$, then:
  \[
  \cone{\unstable}(p)\subset\{\slope{}\st |\slope{}|>2\la^{-1/2}\},
  \]
  but since $p\not\in\icrit$, then $|\slope{1}(p)|\geq 8\pi\cdot\la^{-1/2}$, hence if $\la$ is large enough we can conclude.

  Likewise, by \eqref{eq_pbh} we have that if $p\not\in\icrit$ and $\slope{}\in\cone{\stable}$, then $|{\sm}^*\slope{}(\sm^{-1}p)|<(1/4\pi)\cdot\la^{-3/2}$, hence if $\la$ is large enough we can again conclude.  Estimates \eqref{eq_expansionRate} immediately follow from \eqref{eq_expandingRelations} and \eqref{e_smallDistortion} is elementary given the definition of $\icrit$ and expressions \eqref{e_oneDefinitions}.
\end{proof}
We conclude this section by collecting a few useful estimates
\begin{mydef}
  Let $k>1$ and define respectively the \emph{$k$-forward regular set} and \emph{$k$-backward regular set} as the following open sets:
  \begin{align*}
    \fnicePoints_k(\la) &=\torus^2\setminus \bigcup_{j=1}^{k-1} \sm^{-j} \cl\icrit(\la)&
    \bnicePoints_k(\la) &=\torus^2\setminus \bigcup_{j=1}^{k-1} \sm^{j} \cl\icrit(\la).
  \end{align*}
\end{mydef}

\begin{prp} \label{l_estimateReferenceSequence}
  Fix $\la$; let $k>1$ and take $p\in\cl\fnicePoints_k(\la)$ and $q\in\cl\bnicePoints_k(\la)$; then for $0<j<k$ the following estimates hold:
  \begin{subequations}\label{eq_estimateHH}
    \begin{align}
      |\slope{k}(q) - \slope{j}(q)| &\leq \smash{\prod_{l=1}^{j-1}}(\la\slope{k-l}(\sm^{-l}q))^{-2}\cdot\notag\\
      &\qquad\qquad \cdot\frac{1}{\la^2|\slope{k-j}(\sm^{-j}q)|}\cdot(1+\bigo{\la^{-\alpha}});\label{eq_estimateHkHj}\\
      |\slope{-k}(p) - \slope{-j}(p)| &\leq \smash{\prod_{l=1}^{j-1}}(\la\slope{-k+(l-1)}(\sm^{l-1}p))^2\cdot\notag\\&\qquad\qquad \cdot|\slope{-k+(j-1)}(\sm^{j-1}p)|\cdot(1+\bigo{\la^{-\alpha}}).\label{eq_estimateH-kH-j}
    \end{align}
  \end{subequations}
  Moreover:
  \begin{subequations}\label{eq_estimatedH}
    \begin{align}
      |\parSlope{\slope{k}}\slope{k}(q)-\parSlope{\slope{1}}\slope{1}(q)|& <2|\la\slope{k-1}(\sm^{-1}q)|^{-3}    \label{eq_estimatedHk}\\
      \label{eq_estimatedH-k}
      |\parSlope{\slope{-k}}\slope{-k}(p)-\parSlope{\slope{-1}}\slope{-1}(p)|&<2|\la\slope{-k}(p)|^{-3}
    \end{align}
  \end{subequations}
  \begin{subequations}
    \begin{align}
      |\parSlope{\slope{0}}{\slope{k}}(q)|&<2|\la\slope{k-1}(\sm^{-1}q)|^{-2}\label{eq_verticalEstimateFordHk}\\
      |\parSlope{\slope{0}}{\slope{-k}}(p)|&<2|\la\slope{-k}(p)|^{2}\label{eq_verticalEstimateFordH-k}.
    \end{align}
  \end{subequations}
  In particular we can write:
  \begin{align}
    |\slope{k}(q)-\slope{j}(q)|&=\bigo{\la^{-j\smash{-1/2}}};
    \tag{\ref{eq_estimateHkHj}'}
    \label{eq_forwardEstimateForHk}\\
    |\slope{-k}(p)-\slope{-j}(p)|&=\bigo{\la^{-j\smash{-1/2}}};
    \tag{\ref{eq_estimateH-kH-j}'}
    \label{eq_backwardEstimateForHk}\\
    |\parSlope{\slope{k}}\slope{k}(q)-\parSlope{\slope{1}}\slope{1}(q)|&=\bigo{\la^{-3\smash{/2}}};
    \tag{\ref{eq_estimatedHk}'}
    \label{eq_forwardEstimateFordHk}\\
    |\parSlope{\slope{-k}}\slope{-k}(p)-\parSlope{\slope{-1}}\slope{-1}(p)|&=\bigo{\la^{-3\smash{/2}}};
    \tag{\ref{eq_estimatedH-k}'}
    \label{eq_backwardEstimateFordHk}\\
    |\parSlope{\slope{0}}\slope{k}(q)|&=\bigo{\la^{-1}};
    \tag{\ref{eq_verticalEstimateFordHk}'}
    \label{eq_forwardVerticalEstimateFordHk}\\
    |\parSlope{\slope{0}}\slope{-k}(p)|&=\bigo{\la^{-1}}.
    \tag{\ref{eq_verticalEstimateFordH-k}'}
    \label{eq_forwardVerticalEstimateFordH-k}
  \end{align}
\end{prp}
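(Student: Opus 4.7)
The strategy I would follow is to iterate the one-step recursions in \eqref{eq_pushForwards} and to use the regularity hypothesis to keep every denominator uniformly away from zero: for $q \in \cl\bnicePoints_k(\la)$ one has $\sm^{-l}q \notin \cl\icrit(\la)$ for $1 \leq l \leq k-1$, whence $|\la\slope{k-l}(\sm^{-l}q)| \geq \tness\la^{1/2}/10$ by the very definition of $\icrit$ together with \eqref{e_icritUniformEstimate}; symmetrically for $p \in \cl\fnicePoints_k(\la)$ and forward iterates. All six displayed bounds will then close by induction on the number of iterates, and the primed asymptotic consequences will fall out directly from the uniform $\la^{1/2}$ lower bound on $|\la\slope{k-l}|$.

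I would first prove \eqref{eq_estimateHkHj}. Applying \eqref{eq_pfh} to both $\slope{k-1}$ and $\slope{j-1}$ and subtracting gives
\begin{equation*}
\slope{k}(q) - \slope{j}(q) = \frac{\slope{k-1}(\sm^{-1}q) - \slope{j-1}(\sm^{-1}q)}{\la^2\slope{k-1}(\sm^{-1}q)\slope{j-1}(\sm^{-1}q)};
\end{equation*}
iterating this identity $j-1$ times reduces the numerator to $\slope{k-j+1}(\sm^{-(j-1)}q) - \slope{1}(\sm^{-(j-1)}q)$, which a last application of \eqref{eq_pfh} identifies with $-1/[\la^2\slope{k-j}(\sm^{-j}q)]$. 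Collecting the factors produces exactly the product $\prod_{l=1}^{j-1}[\la^2\slope{k-l}(\sm^{-l}q)\slope{j-l}(\sm^{-l}q)]^{-1}$ times the base factor; an inner induction, provided by the same estimate applied at smaller $j$, shows $\slope{j-l}(\sm^{-l}q) = \slope{k-l}(\sm^{-l}q)\onepbigo{\la^{-\alpha}}$, which converts the mixed denominator into $\slope{k-l}(\sm^{-l}q)^{2}$ and yields \eqref{eq_estimateHkHj}. The primed form \eqref{eq_forwardEstimateForHk} is then immediate from $|\la\slope{k-l}(\sm^{-l}q)| \geq \tness\la^{1/2}/10$.

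The derivative estimate \eqref{eq_estimatedHk} follows the same pattern applied to \eqref{eq_pushForwards}: a single unrolling gives
\begin{equation*}
\parSlope{\slope{k}}\slope{k}(q) - \parSlope{\slope{1}}\slope{1}(q) = -\frac{\parSlope{\slope{k-1}}\slope{k-1}(\sm^{-1}q)}{(\la\slope{k-1}(\sm^{-1}q))^3},
\end{equation*}
and a parallel induction invoking the bounded-distortion estimate \eqref{e_smallDistortion} keeps $|\parSlope{\slope{k-1}}\slope{k-1}|$ uniformly close to $|\dddot\phi|_\infty$. For \eqref{eq_verticalEstimateFordHk} one uses that $\parSlope{\slope{0}}\slope{k} = \partial_y \slope{k}$ (because $\slope{0} = \infty$); differentiating the same recursion in $y$ and noting that $\slope{1}(x,y)$ is independent of $y$ by \eqref{e_oneDefinitions} together with the observation $D\sm^{-1}\,(0,1) = (-1,0)$ reduces the bound to control on $|\partial_x \slope{k-1}|$, which is controlled inductively via the identity $\partial_x\slope{} = \parSlope{\slope{}}\slope{} - \la\slope{}\,\partial_y\slope{}$ and the two preceding bounds.

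The three backward estimates then come for free by applying the reversor: the identity $r\sm r = \sm^{-1}$ shows $p \in \cl\fnicePoints_k(\la)$ if and only if $rp \in \cl\bnicePoints_k(\la)$, and the formulas \eqref{eq_explicitReversor} together with $\slope{-k}(p) = 1/[\la^2\slope{k-1}(rp)]$ transport each forward bound at $rp$ to its backward counterpart at $p$. The hard part will be closing the induction uniformly in $k$: one must simultaneously propagate $\continuous{0}$ bounds on $\slope{k}-\slope{1}$, on $\parSlope{\slope{k}}\slope{k} - \parSlope{\slope{1}}\slope{1}$, and on $\partial_y \slope{k}$, absorbing at every iterate a multiplicative error of order $|\la\slope{k-l}(\sm^{-l}q)|^{-2} = \bigo{\la^{-1}}$ without letting constants blow up. For $\la$ sufficiently large these errors sum into a convergent geometric series, so the inductive hypotheses reproduce themselves and the primed asymptotic forms \eqref{eq_forwardEstimateForHk}--\eqref{eq_forwardVerticalEstimateFordH-k} follow immediately.
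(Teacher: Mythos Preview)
Your proposal is correct and is precisely the approach the paper has in mind: the paper's own proof consists of the single sentence ``The proof follows from the definitions using expressions \eqref{eq_pushForwards},'' and what you have written is exactly the unrolling of those recursions together with the uniform lower bound on $|\la\slope{1}|$ outside $\icrit$ that makes the iteration close. Your use of the reversor to transport the forward bounds to the backward ones is a clean way to package the symmetric half, and the simultaneous induction you describe for the $\continuous{0}$ and derivative bounds is the natural way to make the constants uniform in $k$.
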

\begin{proof}
  The proof follows from the definitions using expressions \eqref{eq_pushForwards}.
\end{proof}


\input{definitions.p}
\input{domains.p}
\input{distribution.p}
\section{Dynamically adapted covering}\label{s_distribution}
In this section we define a special covering of the phase space which carries relevant dynamical information; the construction is similar to the construction of a Markov partition, and in fact will produce, as a byproduct, locally maximal hyperbolic sets endowed with a Markov partition.
This section is divided into four parts, in the first part we define, for each fixed $\la$, a covering of the complement of the inner critical set; once we are done with the definitions,  we dynamically refine the covering in order to define hyperbolic sets in the second part and critical domains in the third part. Finally in the fourth part we study the how these objects behave by varying the parameter value.


\subsection{Markov structure}
In this section we describe the geometrical construction which yields the Markov structure; the construction is rather simple in itself, although it needs quite cumbersome notation to be properly defined. The reader will hopefully find the pictures helpful to more easily follow the definitions.

The complement of the inner critical set is given by two connected components,
\[
\torus^2\setminus\icrit(\la) = \cstr+(\la)\sqcup\cstr-(\la);
\]
let $\cpstr\pm(\la)\defeq\pix\,\cstr\pm(\la)$. The superscript index $+$ or a $-$ is chosen according to the sign of $\slope{1}$ on each component;  on the other hand recall that the superscript in $\crit^{\pm}$ had been chosen according to the sign of $\dddot\phi$.
It is convenient to introduce a unified notation for the sets $\crit$, $\cstr{}$ and their projections; we do so in the following way
\begin{align*}
  \gstr{\cs}{\pm}(\la)&=\cl\ccrit\pm(\la)  &\gpstr{\cs}{\pm}(\la)&=\cl\cpcrit\pm(\la)\\
  \gstr{\ds}{\pm}(\la)&=\cstr\pm(\la)  &\gpstr{\ds}{\pm}(\la)&=\cpstr\pm(\la)
\end{align*}
Fix the following points on $\torus$:
\begin{align*}
  \gxi\cs-&=0&\gxi\ds-&=\frac{1}{4}&\gxi\cs+&=\frac{1}{2}&\gxi\ds+&=\frac{3}{4},
\end{align*}
along with the corresponding vertical lines $\gvl\is\pm=\pix^{-1}(\gxi\is\pm)$, for $\is\in\{\cs,\ds\}$. It follows then from the definitions that $\gxi\is\pm\in\gpstr\is\pm(\la)$ (see figure~\ref{f_critical}).
\begin{figure}[!ht]
  \begin{center}
    \def\svgwidth{12cm}
    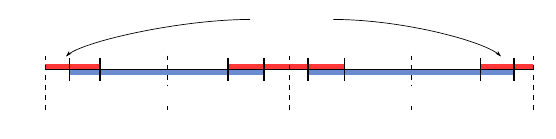
  \end{center}
  \caption{Sketch of the geometry of the sets $\gpstr{z}{s}$; the corresponding sets $\gstr{z}{s}$ are given by vertical strips above $\gpstr{z}{s}$. Notice that for large values of $\la$ the sets $\gpstr{c}{s}$ are much smaller than $\gpstr{d}{s}$; for sake of clarity the picture is not to scale.}
  \label{f_critical}
\end{figure}
For $\s,\si,\sii \in\{+,-\}$ and $\is,\js\in\{\cs,\ds\}$ define the following sets (see figures~\ref{f_squares} and~\ref{f_rectangles} for an illustration):
\begin{align*}
  \gsqu{\is}{\js}{\s}{\si}(\la) &=\gstr\is\s(\la)\cap\sm^{-1}\gstr\js\si(\la)\\
  \grect\is\sii\s\js\si(\la)&=\sm\gsqu\is\ds\sii\s(\la)\cap\gsqu\ds\js\s\si(\la).
\end{align*}
\begin{figure}[!ht]
  \footnotesize
  \begin{center}
    \begin{minipage}{5cm}
      \def\svgwidth{4.5cm}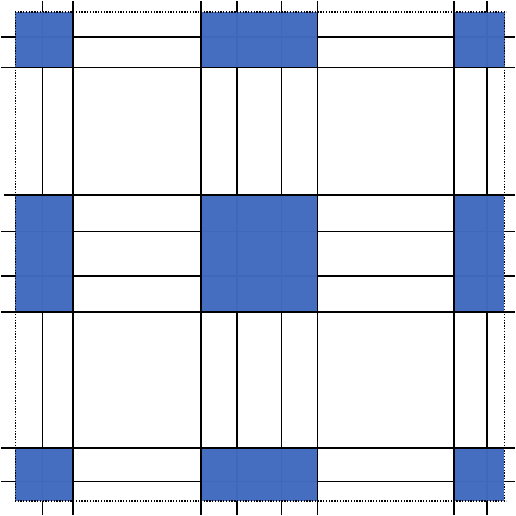\\[0.35cm]
      \def\svgwidth{4.5cm}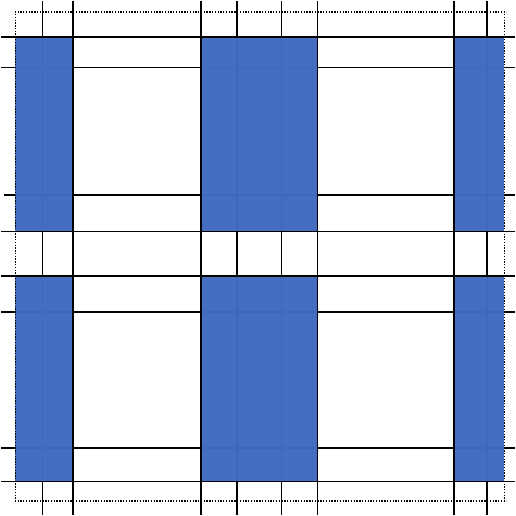
    \end{minipage}
    \begin{minipage}{5cm}
      \def\svgwidth{4.5cm}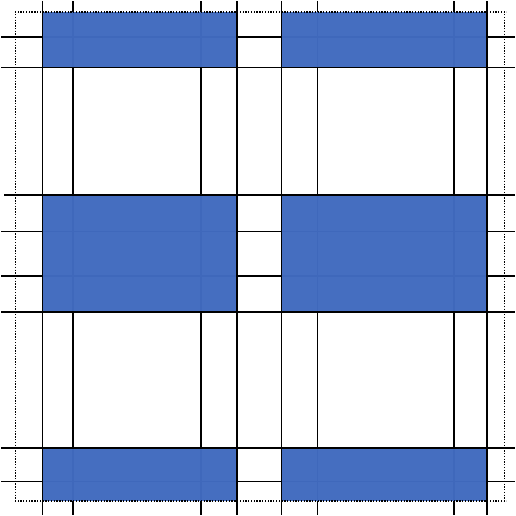\\[0.35cm]
      \def\svgwidth{4.5cm}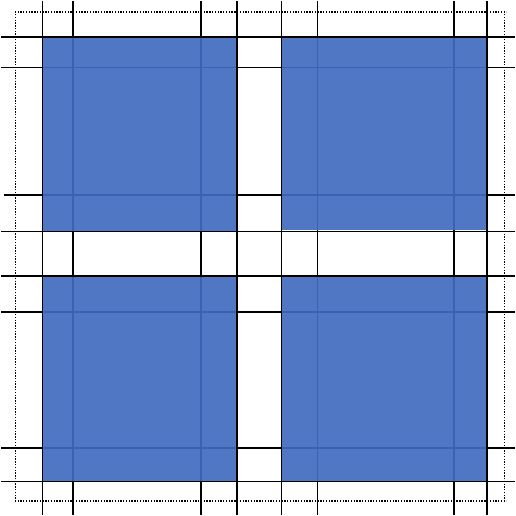
    \end{minipage}
  \end{center}
  \caption{Structure of the sets $\gsqu{\is}{\js}{\s}{\si}$; each of the four picture represents $\torus^2$ and sketches the geometry of the associated set.}
  \label{f_squares}
\end{figure}
\begin{figure}[!ht]
  \footnotesize
  \begin{center}
    \begin{minipage}{6.0cm}
      \def\svgwidth{5.5cm}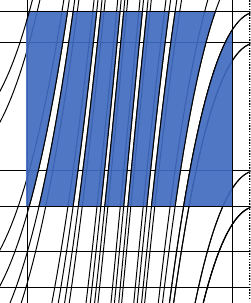\\[0.35cm]
      \def\svgwidth{5.5cm}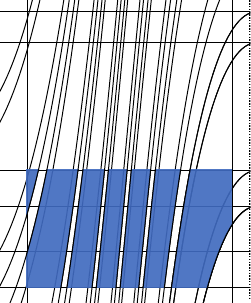
    \end{minipage}
    \begin{minipage}{6.0cm}
      \def\svgwidth{5.5cm}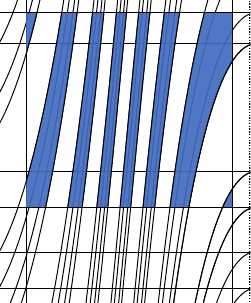\\[0.35cm]
      \def\svgwidth{5.5cm}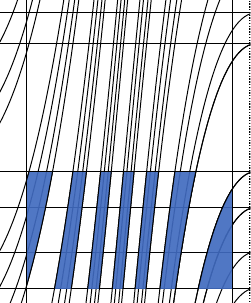
    \end{minipage}
  \end{center}
  \caption{Structure of the sets $\grect{\is}{\sii}{\s}{\js}{\si}$; in the pictures we set for definiteness $\s=\si=+$, the other combinations of signs give similar geometrical structures. Each of the four pictures represents a portion of $\torus^2$ (a neighborhood of the top-right quadrant in figure~\ref{f_squares}) and sketches the geometry of the associated set. Again for sake of clarity we have chosen $\la$ not too large.}
  \label{f_rectangles}
\end{figure}
By construction, the boundary of each set $\rect$ is given by pieces of integral curves of $\slope{-1},\slope{0}$ and $\slope{1}$; we call \emph{stable boundary} and denote by $\partial_\stable\rect$ the subset of $\partial\rect$ given by integral curves of $\slope{-1}$; likewise the \emph{unstable boundary}, denoted by $\partial_\unstable\rect$, refers to the subset of $\partial\rect$ given by integral curves of $\slope{1}$; in particular:
\begin{align*}
  \partial_\stable\grect\is\sii\s\js\si(\la) &= \partial\grect\is\sii\s\js\si(\la) \cap \partial\smi\gstr\js\si(\la)\\
  \partial_\unstable\grect\is\sii\s\js\si(\la) &= \partial\grect\is\sii\s\js\si(\la) \cap \partial\sm\gstr\is\sii(\la).
\end{align*}
Each set $\rect(\la)$ is the union of compact connected components $\rect^*$: we define $\partial_\stable$ and $\partial_\unstable$ on connected components $\rect^*$ by restriction:
\begin{align*}
  \partial_\stable\rect^*&=\partial_\stable\rect\cap\rect^*&\partial_\unstable\rect^*&=\partial_\unstable\rect\cap\rect^*.
\end{align*}

Let $\rect^*$ be a connected component of $\grect\is\sii\s\js\si$; it is convenient to define the \emph{forward} and \emph{backward bases} of $\rect^*$ as follows:
\begin{align*}
  \bbase{\rect^*}&=\gpstr\is\sii(\la)
  & \fbase{\rect^*}&=\gpstr\js\si(\la);
\end{align*}
notice that by definition we have $\dpix{-1}\rect^*\subset\bbase{\rect^*}$ and $\dpix{1}\rect^*\subset\fbase{\rect^*}$.
We introduce the map $\homeo(\la):\torus^2\to\torus\times\torus$ given by $\homeo=(\bbpi,\fbpi)$; $\homeo$ is differentiable and:
\[
\deh\homeo(p,\la) = \matrixtt{\la\slope{1}(p,\la)}{-1}{0}{1}.
\]
\begin{prp}\label{p_alternativeAdmissible}
  Fix $\la$ and let $\rect^*$ be a connected component of $\rect(\la)$; the following statements are equivalent:
  \begin{enumerate}[(a)]
  \item $\partial\rect^*\subset\partial_\stable\rect^*\cup\partial_\unstable\rect^*$;
  \item $\partial\rect^* \cap \partial\icrit \subset \partial_\stable\rect^* \cap \partial_\unstable\rect^*$    
  \item $\homeo(\la)$ is a diffeomorphism between $\rect^*$ and $\bbase{\rect^*}\times\fbase{\rect^*}$.
  \end{enumerate}
  If any of the previous properties holds, then $\rect^*$ is said to be an \emph{admissible component} or a \emph{rectangle}. Moreover, a rectangle $\rect^*$ is said to be \emph{proper} if $\pix\rect^*\subset\torus\setminus\cl\pcrit(\la)$.
\end{prp}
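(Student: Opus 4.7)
The plan is to exploit the three-fold inclusion $\rect^*\subset\sm\gstr\is\sii\cap\gstr\ds\s\cap\sm^{-1}\gstr\js\si$ together with the explicit expression $\homeo(x,y)=(2x+\la\dot\phi(x)-y,\,y)$ and its Jacobian $\det d\homeo=\la\slope{1}$, and thereby transport the geometry of $\rect^*$ to the product $\bbase\times\fbase$.

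First I would decompose $\partial\rect^*$. Since the boundaries of $\gstr\is\sii$, $\gstr\ds\s$ and $\gstr\js\si$ all consist of vertical lines (integral curves of $\slope{0}$), $\partial\rect^*$ is a union of arcs of three types: (i) arcs in $\sm\partial\gstr\is\sii$, which are integral curves of $\slope{1}=\sm_*\slope{0}$ and form $\partial_\unstable\rect^*$; (ii) arcs in $\sm^{-1}\partial\gstr\js\si$, which are integral curves of $\slope{-1}\equiv 0$, hence horizontal, forming $\partial_\stable\rect^*$; and (iii) arcs in $\partial\gstr\ds\s\subset\partial\icrit$, which are vertical. Condition (a) asserts the absence of arcs of type (iii). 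The equivalence (a)$\Leftrightarrow$(b) is then immediate: arcs of types (i) and (ii) meet $\partial\icrit$ only at their endpoints, which by construction lie in the corners $\partial_\stable\rect^*\cap\partial_\unstable\rect^*$, while any type (iii) arc would contribute a non-corner intersection point of $\partial\rect^*$ with $\partial\icrit$.

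For (a)$\Rightarrow$(c) I would use that $\rect^*\subset\gstr\ds\s\subset\torus^2\setminus\icrit$, so $|\la\slope{1}|\geq(\tness/10)\la^{1/2}$ on $\rect^*$ and $\homeo|_{\rect^*}$ is a local diffeomorphism with Jacobian bounded away from zero. The identities $\bbpi\circ\sm=\pix$ and $\fbpi(x,y)=y$ show that $\homeo$ sends integral curves of $\slope{1}$ to fibers of $\bbpi$ and horizontal lines to fibers of $\fbpi$. Under (a), the unstable arcs of $\partial\rect^*$ have $\bbpi$-value in $\partial\bbase$ and the stable arcs have $\fbpi$-value in $\partial\fbase$, so $\homeo(\partial\rect^*)\subset\partial(\bbase\times\fbase)$. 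Since $\rect^*$ is compact and connected and $\bbase\times\fbase$ is a simply connected product of intervals, the proper local diffeomorphism $\homeo|_{\rect^*}$ must be a covering of multiplicity one, i.e.\ a diffeomorphism. A more hands-on alternative is slice-by-slice: fixing $y_0\in\fbase$, the map $x\mapsto 2x+\la\dot\phi(x)-y_0$ has derivative $\la\slope{1}(x,y_0)$ of constant sign on $\cstr\s$ and is therefore strictly monotonic, while (a) forces the slice $\rect^*\cap\{y=y_0\}$ to be a single interval bounded by two unstable arcs whose $\bbpi$-values are the two endpoints of $\bbase$, yielding a monotone bijection onto $\bbase$.

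Finally, (c)$\Rightarrow$(a) is formal: $\partial\rect^*=\homeo^{-1}(\partial(\bbase\times\fbase))$ splits into the preimage of $\partial\bbase\times\fbase$ (integral curves of $\slope{1}$, hence in $\partial_\unstable\rect^*$) and the preimage of $\bbase\times\partial\fbase$ (horizontal curves, hence in $\partial_\stable\rect^*$), so no type (iii) arc can occur and (a) holds. I expect the main obstacle to be the (a)$\Rightarrow$(c) step, specifically in ruling out unstable boundary arcs whose $\bbpi$-value is interior to $\bbase$; this uses the fact that $\sm\partial\gstr\is\sii$ consists of exactly the two integral curves of $\slope{1}$ lying over $\partial\gpstr\is\sii$, together with connectedness of $\rect^*$, and requires careful bookkeeping.
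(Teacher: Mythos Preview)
Your proof is correct. The equivalence (a)$\Leftrightarrow$(b) and the implication (c)$\Rightarrow$(a) are essentially the paper's arguments, spelled out a bit more explicitly.

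For (a)$\Rightarrow$(c) you take a genuinely different route. The paper argues constructively: given $(\xi,\xi')\in\bbasei{}\times\fbasei{}$ and any $q\in\rect^*$, it follows the $\slope{1}$-leaf $\Gamma_q$ through $q$ (which under~(a) must terminate on the two stable sides, since along a $\slope{1}$-leaf the coordinate $y$ is monotone and the leaf cannot meet another $\slope{1}$-leaf), so that $\sm\Gamma_q$ is a full graph over $\fbasei{}$ and hits $\pix^{-1}(\xi')$ in a unique point $\sm r$; it then follows the $\slope{-1}$-leaf through $r$ backwards to hit $\pix^{-1}(\xi)$, thereby exhibiting $\homeo^{-1}(\xi,\xi')$ explicitly. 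Your covering argument is cleaner topologically: once one checks that under~(a) the local diffeomorphism $\homeo|_{\rect^*}$ sends interior to interior and boundary to boundary of $\bbase{\rect^*}\times\fbase{\rect^*}$, properness and simple connectedness of the target finish the job. The trade-off is that the paper's leaf-following argument is the template for the higher-rank analogue (Proposition~\ref{p_admissibleDomain}), where exactly the same idea is iterated through $\slope{\pm j}$-foliations, so it earns its keep later in the paper.

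Your alternative slice-by-slice argument is less solid as written: the assertion that $\rect^*\cap\{y=y_0\}$ is a \emph{single} interval does not follow from monotonicity of $x\mapsto\bbpi(x,y_0)$ on $\cpstr\s$ alone, since that map wraps around $\torus$ many times and the preimage of $\bbase$ has many components (one per component of $\rect$). Pinning down that the slice of the \emph{single} component $\rect^*$ is connected ultimately requires either the covering argument you already gave, or the paper's leaf argument; the ``careful bookkeeping'' you anticipate is real.
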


\begin{proof}
  Notice that by definition of $\rect$ we have that:
  \[
  \partial\rect^* \subset \partial\icrit \cup \partial_\stable\rect^* \cup \partial_\unstable\rect^*,
  \]
  hence \emph{(b)} implies \emph{(a)}.
  On the other hand, since $\slope{1}$ and $\slope{-1}$ never attain the value $\infty$ it is also clear that \emph{(a)} implies \emph{(b)}

  We now prove that \emph{(a)} implies \emph{(c)}: we know that the map $\homeo(\la)$ is a local diffeomorphism on $\rect^*$ since $\slope{1}$ is nonzero in some neighborhood of $\rect^*$;
  therefore, it suffices to prove that $\homeo\rect^*=\bbase{\rect^*}\times\fbase{\rect^*}$, since then $\homeo\rect^*$ would be simply connected, which implies injectivity of $\homeo$. For ease of notation let $\bbasei{}=\bbase{\rect^*}$ and $\fbasei{}=\fbase{\rect^*}$; consider $q\in\rect^*$ and fix any $\xi\in\bbasei{}$, $\xi'\in\fbasei{}$; denote by $\Gamma_q$ the leaf of the $\slope{1}$-foliation of $\rect^*$ containing $q$; \emph{(a)} implies that $\Gamma_q$ joins the two opposite sides of $\partial_\stable\rect^*$. Hence, by proposition~\ref{l_estimateReferenceSequence} we have that $\sm\Gamma_q$ is the graph of a function over $\fbasei{}$ and, as such, it will intersect the vertical line $\pix^{-1}(\xi')$ in a unique point that we denote by $\sm r$.  Let now $\Gamma'_r$ be the leaf of the $\slope{-1}$-foliation of $\rect^*$ containing $r$; again $\Gamma'_r$ must join the two opposite sides of $\partial_\unstable\rect^*$, therefore $\sm^{-1}\Gamma'_r$ is the graph of a function over $\bbasei{}$ which intersects the vertical line $\pix^{-1}(\xi)$ in a unique point that we denote by $\sm^{-1}p$; then, by construction, $p\in\rect^*$ and $\homeo(p)=(\xi,\xi')$. Since $\xi$ and $\xi'$ were arbitrary, $\homeo$ is surjective.

  Assume \emph{(c)}, then we have $\partial\rect^* = \homeo^{-1}(\partial(\bbase{\rect^*}\times\fbase{\rect^*}))$, which is \emph{(a)}.
\end{proof}

We now proceed to describe a canonical indexing procedure for admissible components; for $\s,\si,\sii\in\{+,-\}$ and $\is,\js\in\{\cs,\ds\}$ define the following sets:
\[
\gpoint\is\sii\s\js\si(\la) = \sm\gvl\is\sii\cap\gstr\ds\s(\la)\cap\smi\gvl\js\si
\]
Proposition~\ref{l_estimateReferenceSequence} implies that $\slope{1}$ and $\slope{-1}$ are always transverse in $\gstr\ds\s(\la)$; consequently, each set $\point$ is given by a disjoint union of points; we assign to each of such points an index $k$ as follows:
\[
\gpoint\is{\sii}{\s}\js{\si}\ni(x,y) \mapsto k = \s(\la\dot\phi(x)+2(x-\gxi\ds\s))+\gxi\js\si-\gxi\is\sii;
\]
it is immediate, given the definition of $\point$ and $\sm$, to check that $k\in\integers$. We denote each of these points by $\gppointC\is\sii\s\js\si k$;  by proposition~\ref{p_alternativeAdmissible}, every admissible component $\rect^*$ has to contain exactly one point in the appropriate set $\point$; we assign to each admissible component the index of the corresponding point: each rectangle will then be uniquely identified by the notation $\grectC\is\sii\s\js\si k(\la)$. Before proceeding any further it is convenient to define a shorthand notation:
\begin{mydef}
  Define the \emph{basic alphabet} $\alphabet$ to be the set of all sextuples $\letter$ of the form
  \[
  \letter=[\sixind\is\sii\s\js\si k]
  \]
  for $\s,\si,\sii\in\{+,-\}$, $\is,\js\in\{\cs,\ds\}$ and $k\in\integers$. Also define the \emph{restricted alphabets} $\ralphabet{\is}{\js}$ as the subset of $\alphabet$ given by all symbols with prescribed $\is$ and $\js$.
\end{mydef}
For $\letter\in\alphabet$ we introduce the natural notation $\recta\letter(\la)=\grectC\is{\sii}{\s}\js{\si}{k}(\la)$ and $\ppointa\letter(\la) = \gppointC\is{\sii}{\s}\js{\si}{k}(\la)$; similarly we let $\bbasea{\letter}(\la)=\bbase{\recta{\letter}(\la)}$ and $\fbasea{\letter}(\la)=\fbase{\recta{\letter}(\la)}$. For $\la\in\reals^+$ we say that a symbol $\letter$ is $\la$-admissible if $\recta{\letter}(\la)$ is admissible and define the \emph{$\la$-admissible alphabet} $\alphabet(\la)$ as follows:
\[
\alphabet(\la)=\{\letter\in\alphabet\st \letter\textrm{ is $\la$-admissible}\}
\]
and similarly for the restricted alphabets. Likewise we say that a symbol $\letter$ is $\la$-proper if $\recta{\letter}(\la)$ is proper.

We now introduce a few additional definitions: we say that a rectangle $\recta\letter$ is a \emph{$\la$-bulk rectangle} (and that the symbol $\letter$ is a \emph{$\la$-bulk symbol}) if the following holds:
\[
\fa p\in\recta\letter\ \la|\slope{1}(p)|>\frac{1}{10}\la;
\]
notice that this is a non-trivial definition since we assume \eqref{e_icritUniformEstimate} to hold. Restricting ourselves to bulk rectangles in the constructive part of our proof will be extremely useful since bulk rectangles satisfy nearly optimal expansion and distortion conditions, which will ease the process of establishing good lower bounds. On the other hand, in order to obtain good upper estimates, it is convenient to classify rectangles according to the following definition: for $\alpha\in(1/2,1)$ we say that a rectangle is $\alpha$-critical if it is not a bulk rectangle and:
\[
\fa p\in\rect^*\ \la|\slope{1}(p)|<4\la^\alpha.
\]
The following proposition shows that, for large enough $\la$, most rectangles are bulk rectangles and those which we can only obtain poor expansion and distortion estimates are not too many.
\begin{prp}\label{p_rectCardinalityBound}
  For large enough $\la$ any $\rect$ allows at most $4\la$ admissible components. Moreover the number of $\alpha$-critical rectangles is bounded by $\const\,\la^{2\alpha-1}$.
\end{prp}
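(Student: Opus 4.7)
The plan is to count the admissible components of a fixed $\rect=\grect\is\sii\s\js\si(\la)$ by counting the indexing points of $\gpoint\is\sii\s\js\si(\la)=\sm\gvl\is\sii\cap\gstr\ds\s\cap\smi\gvl\js\si$. Using \eqref{eq_defSM}, the curve $\sm\gvl\is\sii$ is the graph $\{(y,F(y)\bmod 1):y\in\torus\}$ where
\[
F(y)\defeq -\gxi\is\sii+2y+\la\dot\phi(y),
\]
while a direct computation from $\smi(x',y')=(2x'+\la\dot\phi(x')-y',x')$ shows $\smi\gvl\js\si$ is the horizontal line $\{y=\gxi\js\si\}$. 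Hence indexing points inside $\gstr\ds\s$ correspond bijectively to solutions of $F(y)\equiv\gxi\js\si\pmod 1$ with parameter $y\in\gpstr\ds\s$, and both parts of the proposition reduce to a counting principle for this congruence.

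For part (a), observe that $F'(y)=\la\slope{1}(y)$ has constant sign on $\gpstr\ds\s$ by the very definition of this component of $\torus\setminus\picrit$, so $F$ is strictly monotone on the arc $\gpstr\ds\s$. Consequently the number of solutions is bounded by the total variation of $F$ on $\gpstr\ds\s$ plus one. Since $\dot\phi(\xi)=\cos(2\pi\xi)$ and $\gpstr\ds\s$ is a half-period of $\cos(2\pi\cdot)$ delimited by extrema (minus $\bigo{\la^{-1/2}}$-small critical neighborhoods of its endpoints), an explicit evaluation at the endpoints gives total variation equal to $2\la+\bigo{1}$, which is at most $4\la$ for $\la$ large enough.

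For part (b), $\alpha$-criticality of $\rect^*$ forces $|\la\slope{1}(\ppoint)|<4\la^\alpha$ at the indexing point $\ppoint$. Because $\la\slope{1}(x,y)=-2\pi\la\sin(2\pi x)+2$ depends only on $x$ and the indexing point's $x$-coordinate equals its $y$-parameter, every $\alpha$-critical rectangle has its indexing parameter in
\[
A\defeq\{y\in\gpstr\ds\s:|\la\slope{1}(y)|<4\la^\alpha\}.
\]
The bound $|\sin(2\pi y)|<(2\la^\alpha+1)/(\pi\la)$ valid on $A$ shows that $|A|=\bigo{\la^{\alpha-1}}$, and since $|F'|<4\la^\alpha$ on $A$, summing over the at-most-two monotone pieces of $A$ gives total variation $\bigo{\la^{2\alpha-1}}$; the counting principle again yields the claimed bound on the number of $\alpha$-critical rectangles.

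No significant obstacle is anticipated: the entire argument reduces to counting integer preimages of a monotone $\continuous{1}$ real function whose derivative is controlled explicitly. The only care needed is selecting the correct component $\gpstr\ds\pm$, keeping the modular arithmetic consistent, and noting that the monotonicity of $F$ is automatic from the definition of $\gpstr\ds\s$ as the complement of the inner critical set.
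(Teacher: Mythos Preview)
Your proposal is correct and follows essentially the same approach as the paper: both bound the number of admissible components by counting the indexing points in $\gpoint\is\sii\s\js\si(\la)$, which amounts to counting how many times the curve $\sm\gvl\is\sii$ (an integral curve of $\slope{1}$) crosses a fixed horizontal level inside $\gstr\ds\s$. Your parametrization via $F(y)=-\gxi\is\sii+2y+\la\dot\phi(y)$ and the identification $F'=\la\slope{1}$ make this explicit, yielding the slightly sharper total-variation bound $2\la+\bigo{1}$, while the paper uses the cruder estimate $\diam\gpstr\ds\s\cdot\sup|\la\slope{1}|<\tfrac12\cdot 7\la$; for part (b) both arguments restrict to the $\bigo{\la^{\alpha-1}}$-long set where $|\la\slope{1}|<4\la^\alpha$ and multiply by the maximal expansion rate $4\la^\alpha$.
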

\begin{proof}
  We noticed already that each admissible component $\recta\letter$ contains a point $\ppointa\letter$; we can then bound the total number of admissible components with the number of points in each point set $\gpoint\is\sii\s\js\si(\la)$, which can in turn be bounded, by definition, by the number of times that the image of a piece of integral curve of $\slope{1}$ contained in $\gstr{\ds}{\s}$ wraps around the cylinder along the first coordinate. But this is simple to obtain, since the maximum horizontal expansion rate on an integral curve of $\slope{1}$ is by definition bounded by $\sup_{p\in\torus^2}\la|\slope{1}(p)|=2\pi\la+2<7\la$ for $\la$ large enough. Since $\diam\gpstr{\ds}{\s}<1/2$ we can then conclude.

  The very same argument gives a bound on the number of $\alpha$-critical rectangles: in fact let
  \[
  \picrit(\la;\alpha)=\{\xi\in\torus\st|\ddot\phi(\xi)+2/\la|<\min(4\la^{\alpha-1},1/10)\}.\]
  Then clearly we have $|\picrit(\la;\alpha)|<\const\,\la^{\alpha-1}$; moreover by definition each $\alpha$-critical component $\rect^*$ is such that $\pix\rect^*\subset\picrit(\la;\alpha)$; the maximum expansion rate on such rectangles along integral curves of $\slope{1}$ is by definition $4\la^\alpha$, from which we can conclude since the number of $\alpha$-critical rectangles is then bounded by $4|\picrit(\la;\alpha)|\la^\alpha=\const\,\la^{2\alpha-1}$.
\end{proof}
We conclude this subsection by collecting a few simple geometrical estimates about admissible components; these estimates are the quantitative counterpart of figure~\ref{f_rectangles}.

\begin{prp}[Geometry of components]\label{p_componentGeometry}
  Let $\rect^*$ be a connected component of some $\rect$; then
  \begin{equation}\label{e_weakDiameterRect}
    \diam\pix\rect^*<\frac{1}{4}\la^{-1/2};
  \end{equation}
  moreover, if $\rect^*$ is a rectangle, denoted by $\recta{\letter}$, let $\xi_\letter=\pix\ppointa{\letter}$ and $\deltaXi_\letter=3/4(\la\slope{1}(\ppointa{\letter}))^{-1}$; then:
  \begin{equation}\label{e_strongDiameterRect}
    \pix \recta{\letter} \subset [\xi_\letter-\deltaXi_{\letter},\xi_{\letter}+\deltaXi_{\letter}]
  \end{equation}
\end{prp}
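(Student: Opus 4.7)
My plan is to reduce both bounds to a one-dimensional estimate on the auxiliary function $F(x) := 2x + \la\dot\phi(x)$, whose derivative $F'(x) = 2 + \la\ddot\phi(x) = \la\slope{1}(x)$ (by \eqref{e_oneDefinitions}) depends only on $x$. A direct computation gives $\sm^{-1}(x,y) = (F(x) - y,\, x)$, so $\bbpi(x,y) = F(x) - y$ while $\fbpi(x,y) = y$. A point $p = (x, y) \in \rect^* \subset \grect\is\sii\s\js\si$ therefore satisfies: $x \in \gpstr\ds\s$ (so by the definition of $\icrit$ and \eqref{e_icritUniformEstimate}, $\la\slope{1}$ has constant sign $\s$ on $\pix\rect^*$ and $|F'| \ge 8\pi\la^{1/2}$ there); $y \in \gpstr\js\si =: B_2$; and $F(x) - y \in \gpstr\is\sii =: B_1$. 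Equivalently, $F(x) \in B_1 + B_2$ (Minkowski sum, read in a common local lift to $\reals$).

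For \eqref{e_weakDiameterRect} I would simply note that each of the arcs $\gpstr\cs\pm$, $\gpstr\ds\pm$ has length at most $1/2$, so $\diam(B_1 + B_2) \le 1$; combined with the strict monotonicity of $F$ on $\gpstr\ds\s$,
\[
\diam\pix\rect^* \le \frac{\diam F(\pix\rect^*)}{\inf|F'|} \le \frac{1}{8\pi\la^{1/2}} < \frac{1}{4}\la^{-1/2}.
\]
Note that this argument does not rely on admissibility of $\rect^*$: it applies to any connected component of any $\rect$.

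For \eqref{e_strongDiameterRect} set $F_0 := F'(\xi_\letter) = \la\slope{1}(\ppointa\letter)$. The defining properties of $\ppointa\letter \in \gpoint\is\sii\s\js\si$ give $y_\letter = \gxi\js\si$ and $\bbpi\ppointa\letter = \gxi\is\sii$, hence $F(\xi_\letter) = \gxi\is\sii + \gxi\js\si$ in the lift. The symmetries of $\phi$ (odd, and antiperiodic under $x\mapsto x+1/2$) place $\gxi\ds\pm$ exactly at the center of $\gpstr\ds\pm$ and $\gxi\cs\pm$ within $O(\la^{-1})$ of the center of $\gpstr\cs\pm$; writing $\rho_i$ for the semi-width of $B_i$ about its reference point, this gives $\rho_1 + \rho_2 \le 1/2$ and therefore $|F(x) - F(\xi_\letter)| \le 1/2$ for every $x \in \pix\recta\letter$. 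The distortion estimate \eqref{e_smallDistortion} reads $|F''/F'^2| = |\la\,\parSlope{\slope{1}}\slope{1}/(\la\slope{1})^2| \le 1/16$, which integrates to $|1/F'(x) - 1/F_0| \le |x - \xi_\letter|/16$; in particular, on any sub-interval around $\xi_\letter$ of length at most $8/|F_0|$ one has
\[
|F'(x)| \ge \tfrac{2}{3}|F_0|.
\]

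A short bootstrap on the connected interval $I := \pix\recta\letter$ closes the argument. On $J := I \cap [\xi_\letter - 8/|F_0|,\ \xi_\letter + 8/|F_0|]$ the lower bound $|F'|\ge(2/3)|F_0|$ together with $|F(x)-F(\xi_\letter)| \le 1/2$ yields $|x - \xi_\letter| \le 3/(4|F_0|) < 8/|F_0|$; if some point of $I$ lay outside $[\xi_\letter - 8/|F_0|,\ \xi_\letter + 8/|F_0|]$, the intermediate value theorem would produce a point of $I$ at distance exactly $8/|F_0|$ from $\xi_\letter$ satisfying the same strictly smaller bound, a contradiction. Thus $I \subset [\xi_\letter - 3/(4|F_0|),\ \xi_\letter + 3/(4|F_0|)]$, which is \eqref{e_strongDiameterRect}. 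The main delicate step is exactly this bootstrap: the neighborhood on which distortion provides useful multiplicative control has size comparable to $|F_0|^{-1}$, which is the very quantity we are trying to bound, so the constants have to line up (our $3/4 < 8$ provides the needed slack) — and one must also verify the non-obvious fact that the reference points $\gxi\is\sii$ truly sit at (or extremely close to) the centers of $\gpstr\is\sii$.
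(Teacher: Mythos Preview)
Your proof is correct and follows essentially the same approach as the paper. The paper phrases the argument in terms of the differential of $\pix\circ\homeo^{-1}$, which equals $(\la\slope{1})^{-1}(1\ 1)$, but this is literally your observation that $\bbpi+\fbpi=F(x)$ with $F'=\la\slope{1}$; the paper then dispatches \eqref{e_strongDiameterRect} in one sentence (``analogous arguments using the definition of $\ppointa{\letter}$ and the distortion estimate \eqref{e_smallDistortion}''), and your explicit distortion--bootstrap with the $8/|F_0|$ window is exactly the content hidden behind that sentence.
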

\begin{proof}
  Consider the differential $\deh[\pix\homeo^{-1}](\homeo(p))=(\la\slope{1}(p,\la))^{-1}(1\ 1)$; by definition of inner critical set we know that $|\la\slope{1}(p,\la)|>{4}\la^{1/2}$ for $p\in\rect^*$, so that:
  \[
  \diam(\pix\rect^*)<(\diam{\fbase{\rect^*}}+\diam{\bbase{\rect^*}})\frac{1}{4}\la^{-1/2}<\frac{1}{4}\la^{-1/2};
  \]
  from which we obtain \eqref{e_weakDiameterRect}. Likewise, we can obtain \eqref{e_strongDiameterRect} from analogous arguments simply using the definition of $\ppointa{\letter}$, estimates \eqref{e_oneDefinitions} and distortion estimate \eqref{e_smallDistortion}.
\end{proof}
Proposition~\ref{p_componentGeometry} immediately implies that $\fa p\in\recta{\letter}$ we have
\[\slope{1}(p)=\slope{1}(\ppointa{\letter})\onepbigo{\la^{-1}|\slope{1}(\ppointa\letter)|^{-2}};\]
in particular we obtain for a bulk rectangle $\recta{\letter}$ the following enhanced estimate:
\begin{equation}\label{e_bulkSlopeControl}
  \fa p\in\recta{\letter}\ \slope{1}(p)=\slope{1}(\ppointa{\letter})\onepbigo{\la^{-1}};
\end{equation}
A straightforward check, given our definition of the indexing procedure, proves the following relations which hold whenever the corresponding rectangles are admissible
\begin{subequations}\label{e_orderRelationsXiC}
  \begin{align}
    \xiC\is-\s\js\si j &<  \xiC\is+\s\js\si j &\xiC\is{+}{\s}\js{\si}{j} &<  \xiC\is{-}{\s}\js{\si}{j+1}\\
    \xiC\is + \s\js + j &= \xiC\is- \s \js - j &  \xiC\is+ \s \js - j &=  \xiC\is-\s\js+{j-1},
  \end{align}
\end{subequations}
moreover if the corresponding symbols are bulk rectangles we obtain the following estimates:
\begin{subequations}\label{e_deltaRelationsXiC}
  \begin{align}
    \xiC\is\sii\s\js\si {j+1}-\xiC\is\sii\s\js\si j &=|\la\slope{1}(\gppointC\is\sii\s\js\si j)|^{-1}\onepbigo{\la^{-1}}\\
    \xiC\is{+}{\s}\js{\si}{j}-\xiC\is{-}{\s}\js{\si}{j}&=\frac{1}{2}|\la\slope{1}(\gppointC\is{+}{\s}\js{\si}{j})|^{-1}\onepbigo{\la^{-1}}
  \end{align}
\end{subequations}
furthermore, we have that:
\begin{subequations}\label{e_absoluteRelationsXiC}
  \begin{align}
    \xiC\is{\si}{\s}\js{\sii}{0} &= \xi_\s + \bigo{\la^{-1}}\\
    \xiC\is{\si}{+}\js{\sii}{j} -\xi_+ &= \xiC\is{\si}{-}\js{\sii}{j} -\xi_- + \bigo{\la^{-1}}
  \end{align}
\end{subequations}
which immediately follow from the definition of $\phi$ and from our choice of indexing procedure. In particular:
\begin{equation}\label{e_zeroXiC}
  \xiC\ds{\si}{\s}\ds{\si}{0} = \xi_\s.
\end{equation}
The following proposition is simply a collection of useful estimates whose proof is trivial given relations \eqref{e_orderRelationsXiC}, \eqref{e_deltaRelationsXiC} and \eqref{e_absoluteRelationsXiC}.
\begin{prp}\label{p_symmetricLetters}
  Fix $\la$ and $\letter\in\alphabet(\la)$ a bulk symbol; let $\letter'\in\alphabet(\la)$ a bulk symbol obtained from $\letter$ by means of any of the following operations: $\sii\mapsto -\sii$, $\si\mapsto -\si$, $k\mapsto -k$ and $k\mapsto k+1$; then:
  \[
  \la\slope{1}(\ppointa{\letter'})=\la\slope{1}(\ppointa{\letter})\onepbigo{\la^{-1}};
  \]
  on the other hand, if $\letter'$ has instead been obtained from $\letter$ via $\s\mapsto -\s$:
  \[
  \la\slope{1}(\ppointa{\letter'})=-\la\slope{1}(\ppointa{\letter})\onepbigo{\la^{-1}}.
  \]
\end{prp}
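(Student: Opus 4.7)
My plan is to reduce every claim to a one-dimensional statement on $\xi_\letter := \pix\ppointa\letter$. By \eqref{e_oneDefinitions} we have $\la\slope{1}(x,y)=\la\ddot\phi(x)+2$, so this quantity depends only on $x$; the bulk hypothesis gives $|\la\slope{1}(\xi_\letter)|>\la/10$, while $|\tfrac{d}{dx}\la\slope{1}(x)|=|\la\dddot\phi(x)|=\bigo{\la}$ uniformly. Consequently, any estimate of the form $|\xi_{\letter'}-\xi_\letter|=\bigo{\la^{-1}}$ translates, via the mean value theorem, into $\la\slope{1}(\xi_{\letter'})-\la\slope{1}(\xi_\letter)=\bigo{1}=\la\slope{1}(\xi_\letter)\cdot\bigo{\la^{-1}}$, which is the sign-preserving form of the claim.

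For each of the operations $k\mapsto k+1$, $\sii\mapsto-\sii$, and $\si\mapsto-\si$, the displacement $|\xi_{\letter'}-\xi_\letter|$ can be read directly off \eqref{e_orderRelationsXiC} and \eqref{e_deltaRelationsXiC}: in each case it evaluates to $\const\cdot(\la|\slope{1}|)^{-1}\onepbigo{\la^{-1}}$, hence $\bigo{\la^{-1}}$ for a bulk symbol; combined with the reduction above, this closes these three cases.

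For the sign-flipping operation $\s\mapsto-\s$ I instead combine \eqref{e_absoluteRelationsXiC}(b), which gives $\xi_{\letter'}-\xi_\letter=\xi_{-\s}-\xi_\s+\bigo{\la^{-1}}=\pm\tfrac{1}{2}+\bigo{\la^{-1}}$, with the identity $\ddot\phi(x+\tfrac{1}{2})=-\ddot\phi(x)$ built into our specific choice of $\phi$. This yields $\la\slope{1}(\xi_{\letter'})=-\la\slope{1}(\xi_\letter)+4+\bigo{1}$, and the bulk lower bound absorbs the $\bigo{1}$ additive error into the advertised factor $-\la\slope{1}(\xi_\letter)\onepbigo{\la^{-1}}$.

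The step I expect to require most care is $k\mapsto-k$: iterating \eqref{e_deltaRelationsXiC} is useless here, since $|k|$ can reach order $\la$, so $|\xi_{\letter'}-\xi_\letter|$ is in general of order $1$ rather than $\la^{-1}$. My approach is instead to exploit a symmetry of $F(x):=\la\dot\phi(x)+2x$, which defines the index implicitly through $F(\xi_\letter)=2\gxi\ds\s+\s(k-\gxi\js\si+\gxi\is\sii)$. Since $\gxi\ds\pm\in\{1/4,3/4\}$, a direct computation gives $\dot\phi(2\gxi\ds\s-x)=-\dot\phi(x)$ and therefore $F(2\gxi\ds\s-x)+F(x)=4\gxi\ds\s$. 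Comparing with the defining equation for $\xi_{\letter'}$ yields $F(\xi_{\letter'})-F(2\gxi\ds\s-\xi_\letter)=2\s(\gxi\is\sii-\gxi\js\si)=\bigo{1}$, so the bulk bound $|F'|=|\la\slope{1}|>\la/10$ gives $\xi_{\letter'}=2\gxi\ds\s-\xi_\letter+\bigo{\la^{-1}}$; coupling this with the sign-preserving identity $\ddot\phi(2\gxi\ds\s-x)=\ddot\phi(x)$ (again immediate from the choice of $\phi$) and applying the reduction of the first paragraph concludes the argument.
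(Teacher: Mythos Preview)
Your proof is correct and follows the same route as the paper, which merely declares the proposition ``trivial given relations \eqref{e_orderRelationsXiC}, \eqref{e_deltaRelationsXiC} and \eqref{e_absoluteRelationsXiC}''; for the operations $k\mapsto k+1$, $\sii\mapsto-\sii$, $\si\mapsto-\si$ and $\s\mapsto-\s$ you are just unpacking those relations explicitly. Your reflection argument for $k\mapsto -k$ via $F(2\gxi\ds\s-x)+F(x)=4\gxi\ds\s$ is not literally one of the cited identities but is the natural way to extract that case from the indexing formula, and is precisely in the spirit of the paper's one-line reference.
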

\subsection{Refinement of the structure}
As the reader might expect, we will use symbols in the alphabet set to encode chunks of orbits of the map $\sm$; if a given orbit never visits the critical set $\crit$, it is possible to encode it with an infinite sequence of symbols belonging to $\alphabetReg$. Conversely, we only require to encode orbits that eventually visit $\crit$ as long as they stay outside of it; this implies that we will need to deal with a number of different types of encodings, which will be described in detail below.
We first now define rules according to which the dynamics composes words out of our symbols.
\begin{prp}\label{p_compatibilityIsGood}
  Let $\letter\in\alphabetHead(\la)$ or $\alphabetReg(\la)$; let $\altLetter\in\alphabetTail(\la)$ or $\alphabetReg(\la)$. Then we have:
  \[
  \sm\recta{\letter}\cap\recta{\altLetter}\not = \emptyset
  \]
  if and only if the following compatibility condition holds:
  \begin{align}\label{e_compatibilityCondition}
    \s_\letter &=\sii_{\altLetter}& \si_\letter &=\s_{\altLetter}
  \end{align}
\end{prp}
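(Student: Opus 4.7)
The plan is to prove the two implications separately. Necessity falls out of the set inclusions built into the definition of the rectangles; sufficiency requires combining the admissibility of $\recta{\letter}$ with the twist estimate controlled by $\icrit$.

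For necessity, suppose $p\in\sm\recta{\letter}\cap\recta{\altLetter}$. The hypothesis $\letter\in\alphabetHead(\la)\cup\alphabetReg(\la)$ forces $\js_\letter=\ds$, so $\recta{\letter}\subset\gsqu{\ds}{\ds}{\s_\letter}{\si_\letter}$ and hence $\sm\recta{\letter}\subset\sm\gstr{\ds}{\s_\letter}\cap\gstr{\ds}{\si_\letter}$. Symmetrically, $\is_\altLetter=\ds$ yields $\recta{\altLetter}\subset\sm\gstr{\ds}{\sii_\altLetter}\cap\gstr{\ds}{\s_\altLetter}$. Thus $p$ lies simultaneously in $\gstr{\ds}{\si_\letter}\cap\gstr{\ds}{\s_\altLetter}$ and in $\sm\gstr{\ds}{\s_\letter}\cap\sm\gstr{\ds}{\sii_\altLetter}$; the disjointness of the two components of $\torus^2\setminus\icrit(\la)$ forces $\si_\letter=\s_\altLetter$, and injectivity of $\sm$ forces $\s_\letter=\sii_\altLetter$.

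For sufficiency, assume the compatibility. By proposition~\ref{p_alternativeAdmissible}, $\homeo$ realizes a diffeomorphism between $\recta{\letter}$ and $\bbasea{\letter}\times\gpstr{\ds}{\si_\letter}$. Composing with $\sm$ and using $\fbpi=\piy$, each $(u,v)\in\bbasea{\letter}\times\gpstr{\ds}{\si_\letter}$ corresponds to $\sm q\in\sm\recta{\letter}$ with $\pix\sm q=v$; the implicit relation $u=2\pix q-v+\la\dot\phi(\pix q)$ then yields, at fixed $u$,
\[
\de{\piy\sm q}{v}\;=\;\la\slope{1}(v,\cdot)\;-\;\la\slope{1}(\pix q,\cdot)^{-1}.
\]
Both $v\in\gpstr{\ds}{\si_\letter}$ and $\pix q\in\gpstr{\ds}{\s_\letter}$ lie outside $\picrit(\la)$, so $|\la\slope{1}|\ge 8\pi\la^{1/2}$ by definition of $\icrit$, making $\de{\piy\sm q}{v}$ of order $|\la\slope{1}(v,\cdot)|$ for $\la$ large. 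Proposition~\ref{p_componentGeometry} together with~\eqref{e_strongDiameterRect} and~\eqref{e_deltaRelationsXiC} shows that $\pix\recta{\altLetter}$ is an interval of width comparable to $|\la\slope{1}(\ppointa{\altLetter})|^{-1}$ around $\xi_\altLetter$, whereas $\piy\recta{\altLetter}=\gpstr{\ds}{\si_\altLetter}$ by admissibility. As $v$ traverses $\pix\recta{\altLetter}$, the value $\piy\sm q$ therefore sweeps a $\piy$-interval of length comparable to one period of $\torus$; since $|\gpstr{\ds}{\si_\altLetter}|$ is approximately $1/2$, this sweep cannot avoid $\gpstr{\ds}{\si_\altLetter}$. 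Picking $v^*$ at which $\piy\sm q(u,v^*)$ lies strictly inside $\gpstr{\ds}{\si_\altLetter}$ and $v^*$ is strictly inside $\pix\recta{\altLetter}$, far enough from the endpoints to accommodate the $O(|\la\slope{1}|^{-1})$ tilt of the $\slope{1}$-boundaries of $\recta{\altLetter}$, the point $(v^*,\piy\sm q(u,v^*))$ belongs to $\sm\recta{\letter}\cap\recta{\altLetter}$.

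The main obstacle is the quantitative sweep estimate: the product $|\de{\piy\sm q}{v}|\cdot|\pix\recta{\altLetter}|$ is of order one and needs to cover enough of $\torus$ that it hits $\gpstr{\ds}{\si_\altLetter}$, while the chosen $v^*$ must simultaneously land in the interior of $\pix\recta{\altLetter}$. This is most delicate for near-critical $\altLetter$ (where $|\la\slope{1}(\ppointa{\altLetter})|$ is close to the lower bound $8\pi\la^{1/2}$) and requires the distortion estimate~\eqref{e_smallDistortion} together with the $O(\la^{-1})$ correction in~\eqref{e_deltaRelationsXiC} to complete the matching.
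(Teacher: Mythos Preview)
Your necessity argument is correct.

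For sufficiency you attempt a quantitative sweep argument that has real gaps. First, the identity $\piy\recta{\altLetter}=\gpstr{\ds}{\si_\altLetter}$ fails when $\altLetter\in\alphabetTail(\la)$: then $\js_\altLetter=\cs$ and $\piy\recta{\altLetter}=\fbasea{\altLetter}=\gpstr{\cs}{\si_\altLetter}$, a target of width $O(\la^{-1/2})$ rather than roughly $1/2$, and a sweep of ``length comparable to one period'' is far too crude to guarantee a hit. Second, the relations~\eqref{e_deltaRelationsXiC} you invoke hold only for bulk symbols, and proposition~\ref{p_componentGeometry} supplies only an upper bound on $\diam\pix\recta{\altLetter}$; you never establish the lower bound your sweep needs. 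Third, even granting the sweep, the conditions $v^*\in\pix\recta{\altLetter}$ and $\piy\sm q(u,v^*)\in\piy\recta{\altLetter}$ do not place the point in $\recta{\altLetter}$: this rectangle is a thin $\slope{1}$-tilted strip, not an $(x,y)$-product, and the ``far from the endpoints'' clause neither makes the containment precise nor singles out $\recta{\altLetter}$ among the many admissible components whose $\pix$-projections can overlap with it.

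The paper bypasses all of this with a topological crossing argument carried out in the square $\gsqu{\ds}{\ds}{\s_\letter}{\si_\letter}$ rather than in $\sm\recta{\letter}$. Take any curve $\Gamma_\letter\subset\recta{\letter}$ joining the two components of $\partial_\stable\recta{\letter}$; since $\partial_\stable\recta{\letter}\subset\partial\smi\gstr{\ds}{\si_\letter}$, this curve joins the two horizontal sides of the square. Take any curve $\Gamma_\altLetter\subset\recta{\altLetter}$ joining the two components of $\partial_\unstable\recta{\altLetter}$; since $\partial_\unstable\recta{\altLetter}\subset\partial\sm\gstr{\ds}{\sii_\altLetter}$, the preimage $\smi\Gamma_\altLetter$ lies in $\gsqu{\ds}{\ds}{\sii_\altLetter}{\s_\altLetter}$ and joins its two vertical sides. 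The two squares coincide precisely when~\eqref{e_compatibilityCondition} holds, and in that case a curve joining opposite horizontal sides must meet a curve joining opposite vertical sides. This yields both implications at once, with no expansion estimates, no distortion bounds, and no case split on $\js_\altLetter$.
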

\begin{proof}
  Consider any curve $\Gamma_\letter\subset\recta{\letter}$ joining the two opposite sides of $\partial_\stable\recta{\letter}$; take any other curve $\Gamma_\altLetter\subset\recta{\altLetter}$ joining the two opposite sides of $\partial_\unstable\recta{\altLetter}$. Then, by definition, $\sm^{-1}\Gamma_\altLetter\subset\gsqu\ds\ds{\sii_\altLetter}{\s_\altLetter}$ and it joins the two opposite vertical sides of $\gsqu\ds\ds{\sii_\altLetter}{\s_\altLetter}$; on the other hand $\Gamma_\letter\subset\gsqu\ds\ds{\s_\letter}{\si_\letter}$ and joins the two opposite horizontal sides of $\gsqu\ds\ds{\s_\letter}{\si_\letter}$. Consequently we can conclude that $\Gamma_\letter\cap\sm^{-1}\Gamma_\altLetter\not = \emptyset$ if and only if $\s_\letter = \sii_\altLetter$ and $\si_\letter = \s_\altLetter$, that is \eqref{e_compatibilityCondition}.
\end{proof}
\begin{prp}[Markov property]\label{l_markovProperty}\label{l_existenceInvariantManifold}
  Let $\letter\in\alphabetReg(\la)$ or $\alphabetHead(\la)$ and $\altLetter\in\alphabetReg(\la)$ or $\alphabetTail(\la)$ satisfying condition \eqref{e_compatibilityCondition}; let $\Gamma_\unstable^\letter$ be an unstable curve joining the two opposite sides of $\partial_\stable\recta{\letter}$. Then we have that $\Gamma_\unstable^\altLetter=\sm\Gamma_\unstable^\letter\cap\recta{\altLetter}$ is a connected unstable curve that joins the two opposite sides of $\partial_\stable\recta{\altLetter}$; moreover we have $\intr\Gamma_\unstable^\altLetter\cap\partial_\unstable\recta{\altLetter}=\emptyset$.
  Likewise let $\Gamma_\stable^\altLetter$ be a stable curve joining the two opposite sides of $\partial_\unstable\recta{\altLetter}$. Then we have that $\Gamma_\stable^{\letter}=\sm^{-1}\Gamma_\stable^\altLetter\cap\recta{\letter}$ is a connected stable curve that joins the two opposite sides of $\partial_\unstable\recta{\letter}$; moreover we have $\intr\Gamma_\stable^{\letter}\cap\partial_\stable\recta{\letter}=\emptyset$.
\end{prp}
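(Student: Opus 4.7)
The statement naturally splits into two mirror claims; I would prove the one about unstable curves in detail and then deduce the stable statement by conjugating with the reversor $r$, using $r\sm=\sm^{-1}r$ together with the identities \eqref{eq_explicitReversor}, which swap $\cone{\unstable}$ with $\cone{\stable}$ and forward with backward iterates.

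The starting ingredient is proposition \ref{p_compatibilityIsGood}, which under \eqref{e_compatibilityCondition} guarantees $\sm\recta{\letter}\cap\recta{\altLetter}\neq\emptyset$. Since $\recta{\letter}\subset\gstr{\ds}{\s_\letter}\subset\torus^2\setminus\icrit$, proposition \ref{l_coneInvariance} ensures that $\sm\Gamma_\unstable^\letter$ remains an unstable curve. I then work in the product coordinates $\homeo|_{\recta{\altLetter}}\colon\recta{\altLetter}\to\bbasea{\altLetter}\times\fbasea{\altLetter}$ provided by proposition \ref{p_alternativeAdmissible}(c), in which $\partial_\stable\recta{\altLetter}$ corresponds to $\bbasea{\altLetter}\times\partial\fbasea{\altLetter}$ and $\partial_\unstable\recta{\altLetter}$ to $\partial\bbasea{\altLetter}\times\fbasea{\altLetter}$. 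A direct calculation using $d\homeo=\matrixtt{\la\slope{1}}{-1}{0}{1}$ shows that an unstable vector $(1,\la\slope{})$ with $\slope{}\in\cone{\unstable}$ is sent to $(\la(\slope{1}-\slope{}),\la\slope{})$, whose first component is of size at most $\la^{-1/2}/2$ while the second is of size at least $\la^{1/2}$; hence $\sm\Gamma_\unstable^\letter\cap\recta{\altLetter}$ appears in these coordinates as a nearly-vertical graph of $\bbpi$ as a function of $\fbpi$.

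Two additional facts must be verified. First, the $\bbpi$-coordinate stays strictly inside $\bbasea{\altLetter}$, so the arc never touches $\partial_\unstable\recta{\altLetter}$ in its interior: writing $\bbpi(\sm q)=\pix(q)$ for $q\in\Gamma_\unstable^\letter$, proposition \ref{p_componentGeometry} bounds the total $\bbpi$-variation by $\diam\pix\recta{\letter}<\la^{-1/2}/4$, while compatibility $\sii_\altLetter=\s_\letter$ combined with \eqref{e_minimumDistance} places the centre $\pix(\ppointa{\letter})$ inside $\bbasea{\altLetter}=\gpstr{\ds}{\s_\letter}$ at distance at least $\la^{-1/2}$ from its boundary. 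Second, the $\fbpi$-coordinate must cover $\fbasea{\altLetter}$ as a single connected arc: the identity $\fbpi(\sm q)=-\pix(q)+2\fbpi_\letter(q)+\la\dot\phi(\fbpi_\letter(q))$ has derivative approximately $\la\slope{1}(\fbpi_\letter)$ with respect to $\fbpi_\letter$, so as $\fbpi_\letter$ traverses $\fbasea{\letter}$ the image $\fbpi$ winds monotonically around the torus on the order of $\la/2$ times; each winding enters a distinct admissible component of $\grect{\is_\altLetter}{\sii_\altLetter}{\s_\altLetter}{\js_\altLetter}{\si_\altLetter}$, distinguished by the integer $k$ of the indexing procedure. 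Exactly one winding lands in our prescribed $\recta{\altLetter}$, and by continuity and the graph structure it constitutes a single connected arc entering and exiting through the two opposite components of $\partial_\stable\recta{\altLetter}$.

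The most delicate step is the last identification: one must match the integer $k_\altLetter$ with the integer part of $2\fbpi_\letter+\la\dot\phi(\fbpi_\letter)$ by means of the relations \eqref{e_orderRelationsXiC}--\eqref{e_zeroXiC}, a discrete-counting bookkeeping in the same spirit as the proof of proposition \ref{p_rectCardinalityBound}. Once this is done, the symmetric stable statement follows at once from the unstable one by applying the reversor $r$.
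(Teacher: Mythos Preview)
Your approach via the product chart $\homeo$ on $\recta{\altLetter}$ is workable, but the paper's argument is shorter and avoids precisely the step you flag as ``most delicate''. The paper never computes in $\homeo$-coordinates and never counts windings or matches indices. Instead it reuses the transversality observation from the proof of proposition~\ref{p_compatibilityIsGood}: foliate $\recta{\altLetter}$ by stable (horizontal) curves joining the two sides of $\partial_\unstable\recta{\altLetter}$; for each such leaf its $\smi$-preimage is a stable curve in the square $\gsqu{\ds}{\ds}{\sii_\altLetter}{\s_\altLetter}=\gsqu{\ds}{\ds}{\s_\letter}{\si_\letter}$ joining the two vertical sides, while $\Gamma_\unstable^\letter$ is an unstable curve in that same square joining the two $\partial_\stable$ sides. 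Since the cones $\cone{\stable}$ and $\cone{\unstable}$ are disjoint outside $\icrit$, these meet in exactly one point. Hence $\sm\Gamma_\unstable^\letter$ meets every stable leaf of $\recta{\altLetter}$ in exactly one point, which already gives connectedness and the full crossing with no integer bookkeeping. The boundary claim is then handled purely through admissibility (item~(b) of proposition~\ref{p_alternativeAdmissible}): if $p\in\partial_\unstable\recta{\altLetter}$ then $\smi p\in\recta{\letter}\cap\partial\icrit\subset\partial_\stable\recta{\letter}\cap\partial_\unstable\recta{\letter}$, so $p\in\partial\icrit$ as well, and admissibility of $\altLetter$ forces $p\in\partial_\stable\recta{\altLetter}$; an unstable curve can only touch a $\slope{-1}$-curve at an endpoint.

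Two minor inaccuracies in your write-up are worth noting. The appeal to \eqref{e_minimumDistance} requires $\pix(\ppointa\letter)\notin\pcrit$, i.e.\ that $\recta{\letter}$ be \emph{proper}, which is not assumed; the inclusion $\pix(\recta{\letter})\subset\gpstr{\ds}{\s_\letter}=\bbasea{\altLetter}$ already holds by construction and suffices. And the reversor does not literally interchange $\cone{\unstable}$ with $\cone{\stable}$: by \eqref{eq_explicitReversor} the image $r_*\cone{\stable}$ is $\{|\slope{}|>\tfrac12\la^{-3/2}\}$, strictly larger than $\cone{\unstable}$. The paper simply remarks that the stable statement follows by the symmetrical argument rather than by applying $r$.
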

\begin{proof}
  Consider any stable curve in $\recta{\altLetter}$ joining the two opposite sides of $\partial_\unstable\recta{\altLetter}$; then its preimage will intersect $\Gamma_\unstable^\letter$ in an unique point, since outside $\icrit$ the two cones $\cone\stable$ and $\cone\unstable$ are invariant and disjoint. Now foliate $\recta{\altLetter}$ with horizontal curves; then $\sm\Gamma_\unstable^\letter$ will intersect each leaf only once; since $\cone\unstable$ is invariant we have that $\Gamma_\unstable^\altLetter$ is itself an unstable curve. Finally assume that there exists $p\in\Gamma_\unstable^\altLetter\cap\partial_\unstable\recta{\altLetter}$; then $\sm^{-1}p\in\partial\icrit$ by definition, but since $\letter$ is admissible, this implies that $\sm^{-1}p\in\sm^{-1}\partial\icrit$; hence, since $\altLetter$ is admissible we have $p\in\partial_\stable\recta{\altLetter}$. Since an unstable curve cannot have a horizontal tangent, it must be $p\in\partial\Gamma_\unstable^\altLetter$.

  The argument for the stable curve is completely symmetrical and it is omitted.
\end{proof}
We now give the definition of \emph{words}, i.e.\ ordered strings of symbols which we will
use to identify specific portions of the phase space. We need to distinguish between
closed, left and right closed and open words. Closed words will be used to track finite
chunks of orbits whose first and last point lies inside $\crit$ with all other points
lying outside; left-closed words will instead track chunks of orbits which start in
$\crit$ such that all other points are outside, whereas right-closed words will track
those which finish inside $\crit$ with all other points outside; finally, open words will
track chunks of orbits which do not intersect $\crit$ at all.
\begin{mydef}[Words]
  A \emph{closed word} is the datum of two signs $\si,\sii$ and a possibly empty finite ordered string of symbols belonging to $\alphabet$ constructed according to the following rules:
  \begin{itemize}
  \item if the closed word is composed of just one symbol, than this symbol belongs to $\alphabetDeg$;
  \item if the closed word is composed of more than one symbol, then the following conditions hold:
    \begin{itemize}
    \item the first symbol belongs to $\alphabetHead$;
    \item the last symbol belongs to $\alphabetTail$;
    \item all other symbols (if any) belong to $\alphabetReg$;
    \end{itemize}
  \end{itemize}
  Closed words will be denoted by enclosing the finite string between the two signs $\si$ and $\sii$ respectively before the first symbol and after the last symbol, e.g. $\si\letter_1\cdots\letter_{l}\sii$; we moreover require that the compatibility condition holds for all symbols, i.e.\ the symbol $\altLetter$ can follow the symbol $\letter$ only provided \eqref{e_compatibilityCondition} holds; this condition is naturally required to hold for the boundary signs as well.

  A \emph{left-closed word} is the datum of a sign $\si$ and a possibly empty, finite or forward-infinite ordered string of symbols belonging to $\alphabet$ constructed according to the following rules:
  \begin{itemize}
  \item the first symbol (if any) belongs to $\alphabetHead$;
  \item all other symbols (if any) belong to $\alphabetReg$;
  \end{itemize}
  Left-closed words will be denoted by writing the sign $\si$ on the left of the string e.g. $\si\letter_1\cdots\letter_{l}$; we again require the compatibility condition to hold for all symbols, including the boundary sign.

  Likewise a \emph{right-closed word} is the datum of a sign $\sii$ and a possibly empty, finite or backward-infinite ordered string of symbols belonging to $\alphabet$ constructed according to the following rules:
  \begin{itemize}
  \item the last symbol (if any) belongs to $\alphabetTail$;
  \item all other symbols (if any) belong to $\alphabetReg$;
  \end{itemize}
  Right-closed words will be denoted by writing the sign $\sii$ on the right of the string e.g. $\letter_1\cdots\letter_{l}\sii$; we again require the compatibility condition to hold for all symbols, including the boundary sign.

  Finally an \emph{open word} is a finite, one-sided-infinite or two-sided-infinite ordered string of symbols belonging exclusively to $\alphabetReg$ and subject to the compatibility condition.
\end{mydef}
A word is said to be \emph{$\la$-admissible} if every symbol is $\la$-admissible; likewise a word is said to be \emph{$\la$-proper} if every symbol is $\la$-proper. The set of all closed words and closed $\la$-admissible words of length $l$ is denoted respectively by $\cwords{l}$ and $\cwords{l}(\la)$.

It is natural to establish a correspondence between right and left closed words and cylinder sets of closed words; let $r>0$:
\begin{itemize}
\item given a left-closed word $\si\letter_1\cdots\letter_r$ of length $r$, we define the associated \emph{forward cylinder set of rank $r$} as the set of all closed words with prescribed first $r$ symbols:
  \begin{align*}
    \fcyl=\smash{\bigcup_{l>r}}\fcyl_l&=\{\itin\in\cwords{l}\st\itin=\si\letter_1\cdots\letter_r\altLetter_{r+1}\cdots\altLetter_{l}\sii\\&\textrm{ with } \altLetter_j\in\alphabetReg \textrm{ for } r<j< l\textrm{ and } \altLetter_l\in\alphabetTail\}
  \end{align*}
\item given a right-closed word $\letter_{l-r+1}\cdots\letter_l\sii$ of length $r$, we define the associated \emph{backward cylinder set of rank $r$} as the set of all closed words with prescribed last $r$ symbols:
  \begin{align*}
    \bcyl=\smash{\bigcup_{l>r}}\bcyl_l&=\{\itin\in\cwords{l}\st\itin=\si\altLetter_1\cdots\altLetter_{l-r}\letter_{l-r+1}\cdots\letter_{l}\sii\\&\textrm{ with } \altLetter_1\in\alphabetHead\textrm{ and } \altLetter_j\in\alphabetReg\textrm{ for } 1 < j < l-r+1\}
  \end{align*}
\item we define a \emph{bicylinder set of rank $r$} as the intersection of a forward cylinder set with  a backward cylinder set of same rank $r$. Bicylinders will be denoted simply by $\Bcyl$.
\end{itemize}
For fixed $\la$ we will say that a forward cylinder (resp.\ backward cylinder, bicylinder) is a \emph{$\la$-bulk} forward cylinder (resp. \emph{$\la$-bulk} backward cylinder, \emph{$\la$-bulk} bicylinder) if any symbol $\letter$ appearing in their corresponding words is a $\la$-bulk rectangle.

We now proceed to define four sequences of subsets of the phase space; if a point $p$ belongs to a set in one of these sequences, then we can assign to $p$ a word of the appropriate class. We begin by defining the sets: $\frectUnion_0(\la)\defeq\brectUnion_0(\la) \defeq \bigcup_{\s,\si\in\{+,-\}}\gsqu\ds\ds{\s}{\si}(\la)$ and for $N>1$ let:
\begin{align*}
  \frectUnion_N(\la) &\defeq \bigcap_{0\leq l < N}\sm^{-l}\recta{\alphabetReg(\la)}(\la) &
  \brectUnion_N(\la) &\defeq \bigcap_{-N\leq l < 0}\sm^{-l}\recta{\alphabetReg(\la)}(\la)
\end{align*}
Then define the symmetrical set:
\[
\rectUnion_N(\la) \defeq \frectUnion_{N}(\la) \cap \brectUnion_{N}(\la).
\]
Furthermore, let
\begin{align*}
  \fcritiUnion_0(\la)&\defeq\bigcup_{\s,\si\in\{+,-\}}\squH{\s}{\si}(\la)&
  \bcritiUnion_0(\la)&\defeq\sm\bigcup_{\s,\si\in\{+,-\}}\squT{\s}{\si}(\la);\\
  \fcritiUnion_1(\la)&\defeq\sm^{-1}\recta{\alphabetHead(\la)}(\la)&
  \bcritiUnion_1(\la)&\defeq\sm\recta{\alphabetTail(\la)}(\la);
\end{align*}
then for $N>1$ let:
\begin{align*}
  \fcritiUnion_N(\la)&\defeq\sm^{-1}\recta{\alphabetHead(\la)}(\la)\cap\bigcap_{1< l \leq N}\sm^{-l}\recta{\alphabetReg(\la)}(\la)\\
  \bcritiUnion_N(\la)&\defeq\sm\recta{\alphabetTail(\la)}(\la)\cap \bigcap_{1 < l \leq N}\sm^{l}\recta{\alphabetReg(\la)}(\la)
\end{align*}
Finally:
\begin{align*}
  \wall_0(\la)&\defeq\bigcup_{\s,\si\in\{+,-\}}\squD{\s}{\si}(\la)\\
  \wall_1(\la)&\defeq\sm^{-1}\recta{\alphabetDeg(\la)}(\la)\\
  \wall_2(\la)&\defeq\sm^{-1}\recta{\alphabetHead(\la)}(\la)\cap\sm^{-2}\recta{\alphabetTail(\la)}(\la)
  \intertext{and for $N>2$:}
  \wall_N(\la)&\defeq\sm^{-1}\recta{\alphabetHead(\la)}(\la)\cap\bigcap_{2\leq l<N}\sm^{-l}\recta{\alphabetReg(\la)}(\la) \cap\sm^{-N} \recta{\alphabetTail(\la)}(\la)
\end{align*}
By definition $\frectUnion_N(\la)\cap\icrit(\la)=\emptyset$ and similarly $\brectUnion_N(\la)\cap\icrit(\la)=\emptyset$, whereas $\fcritiUnion_N(\la)\subset\crit(\la)$, $\bcritiUnion_N(\la)\subset\crit(\la)$ and $\wall_N(\la)\subset\crit(\la)$. Notice moreover that $\frectUnion_N(\la)\supset\frectUnion_{N+1}(\la)$, $\brectUnion_N(\la)\supset\brectUnion_{N+1}(\la)$ and likewise $\fcritiUnion_N(\la)\supset\fcritiUnion_{N+1}(\la)$, $\bcritiUnion_N(\la)\supset\bcritiUnion_{N+1}(\la)$.
\begin{figure}[!ht]
  \begin{center}
    \def\svgwidth{5.0cm}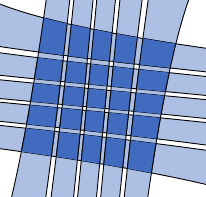\\[0.35cm]
  \end{center}
  \caption{Sketch of the portion of the set $\rectUnion_1$ contained in $\gsqu\ds\ds++$; the curved vertical strips form $\frectUnion_1$, whereas the curved horizontal strips form $\brectUnion_1$. The darker region describes the set $\rectUnion_1$.
  }
\end{figure}
\begin{figure}[!ht]
  \footnotesize
  \begin{center}
    \begin{minipage}{6.0cm}
      \def\svgwidth{5.0cm}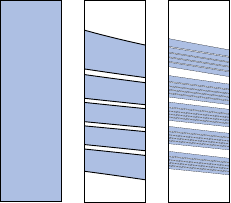\\[0.35cm]
    \end{minipage}
    \begin{minipage}{6.0cm}
      \def\svgwidth{5.0cm}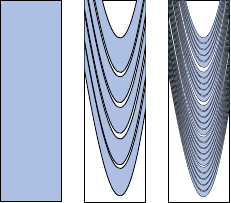\\[0.35cm]
    \end{minipage}
  \end{center}
  \caption{Sketch of a portion of the sets $\fcritiUnion$ and $\bcritiUnion$, for simplicity restricted to an appropriate choice of signs. The picture on the right is further simplified: in fact the parabolic strips have in reality very high curvature and a single strip should wrap around the torus several times.}
  \label{f_CylinderDomains}
\end{figure}
\begin{mydef}[Coding]
  Let $r$ be a non-negative integer; then:
  \begin{itemize}
  \item given $p\in\wall_r(\la)$ we define $\itin_r(p)$ the \emph{closed itinerary of rank $r$ of $p$} as the closed word $\si\letter_1\cdots\letter_{r}\sii$ such that: $p\in\gstr\cs\si$, $\sm^{r+1}p\in\gstr\cs\sii$ and for $1\leq l \leq r$ we have $\sm^lp\in\recta{\letter_l}$;
  \item given $p\in\fcritiUnion_r(\la)$ we define $\fcyl_r(p)$ the \emph{forward cylinder of rank $r$ associated to $p$} as the forward cylinder associated to $\si\letter_1\cdots\letter_{r}$ such that $p\in\gstr\cs\si$ and for $1 \leq l \leq r$ we have $\sm^lp\in\recta{\letter_l}(\la)$;
  \item given $q\in\bcritiUnion_r(\la)$ we define $\bcyl_r(q)$ the \emph{backward cylinder of rank $r$ associated to $q$} as the backward cylinder associated to $\letter_{-r}\cdots\letter_{-1}\sii$ such that $p\in\gstr\cs\sii$ and for $-r\leq l \leq -1$ we have $\sm^{l}p\in\recta{\letter_{l}}$; 
  \item given $p\in\rectUnion_r(\la)$, we define $\itin_r(p)$ the \emph{open itinerary of rank $r$ of $p$} as the open word $\letter_{-r}\cdots\letter_{r-1}$ such that  for $-r \leq l < r$ we have $\sm^lp\in\recta{\letter_l}(\la)$;
  \item given $p\in\brectUnion_r(\la)$, define $\bitin_r(p)$ the \emph{open backward itinerary of rank $r$ of $p$} as the open word $\letter_{-r}\cdots\letter_{-1}$ such that for $-r\leq l <0$ we have $\sm^{l}p\in\recta{\letter_{l}}(\la)$;
  \item given $p\in\frectUnion_r(\la)$, define $\fitin_r(p)$ the \emph{open forward itinerary of rank $r$ of $p$} as the open word $\letter_0\cdots\letter_{r-1}$ such that for $0\leq l < r$ we have $\sm^lp\in\recta{\letter_l}(\la)$;

  \end{itemize}
\end{mydef}
\subsection{Hyperbolic set and critical domains}
In this subsection we construct a locally maximal hyperbolic set and we introduce the notion of critical domain, which is of fundamental importance in the proof of the Main Theorem. Until next subsection we assume $\la$ to be fixed and drop it from the notation where it will not be source of confusion.
\begin{mydef}
  Let $p\in\rectUnion_N$ and $\itin_N(p)=\letter_{-N}\cdots\letter_{N-1}$; for $-N \leq l < N$ define $\Gamma_u^l(p)$ as the leaf of the $\slope{N+l+1}$-foliation of $\recta{\letter_l}$ containing $\sm^lp$. Similarly, define $\Gamma_s^l(p)$ as the leaf of the $\slope{N-l}$-foliation of $\recta{\letter_l}$ containing $\sm^lp$.
\end{mydef}
Then, proposition~\ref{l_markovProperty} immediately implies the following
\begin{cor}\label{c_regularityOfGammaus}
  Let $p\in\rectUnion_N$, then for $-N \leq l < N$ we have $\Gamma_\unstable^l(p)\subset\cl\bnicePoints_{N+l+1}$ and $\Gamma_\stable^l(p)\subset\cl\fnicePoints_{N-l}$. In particular $\Gamma_\unstable^l(p)$ is an unstable curve and $\Gamma_\stable^l(p)$ is a stable curve.
\end{cor}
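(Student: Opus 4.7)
\emph{Proof plan.} The plan is to construct both families of leaves by induction, iterating the Markov property (Proposition~\ref{l_markovProperty}) along the itinerary $\letter_{-N}\cdots\letter_{N-1}$ of $p$, starting from the appropriate endpoint and working toward position $l$. The inclusions in $\cl\bnicePoints_{N+l+1}$ and $\cl\fnicePoints_{N-l}$ will then be immediate consequences of the fact that the intermediate rectangles visited along the way are all regular, hence disjoint from $\icrit$; the unstable/stable character is preserved automatically because Proposition~\ref{l_markovProperty} is phrased precisely in these terms.

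First I would handle the unstable leaves. For the base case $l=-N$, Proposition~\ref{p_alternativeAdmissible}(c) says that $\homeo(\la)$ trivializes $\recta{\letter_{-N}}$ as a product $\bbasea{\letter_{-N}}\times\fbasea{\letter_{-N}}$, so the $\slope{1}$-foliation of $\recta{\letter_{-N}}$ is an honest foliation by unstable curves joining the two opposite components of $\partial_\stable\recta{\letter_{-N}}$; I take $\Gamma_\unstable^{-N}(p)$ to be the leaf through $\sm^{-N}p$, and $\cl\bnicePoints_1=\torus^2$ makes the containment trivial. For the inductive step, suppose $\Gamma_\unstable^{l-1}(p)$ has already been constructed as a leaf of the $\slope{N+l}$-foliation of $\recta{\letter_{l-1}}$ joining the opposite sides of $\partial_\stable\recta{\letter_{l-1}}$. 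Since both $\letter_{l-1}$ and $\letter_l$ belong to $\alphabetReg(\la)$, Proposition~\ref{p_compatibilityIsGood} guarantees the compatibility condition~\eqref{e_compatibilityCondition}, and Proposition~\ref{l_markovProperty} yields that $\sm\Gamma_\unstable^{l-1}(p)\cap\recta{\letter_l}$ is a connected unstable curve joining the opposite sides of $\partial_\stable\recta{\letter_l}$. Using $\sm_*\slope{N+l}=\slope{N+l+1}$, this is exactly a leaf of the $\slope{N+l+1}$-foliation, and it contains $\sm^l p=\sm(\sm^{l-1}p)$ by construction; I declare it to be $\Gamma_\unstable^l(p)$. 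The containment $\Gamma_\unstable^l(p)\subset\cl\bnicePoints_{N+l+1}$ follows because $\sm^{-j}\Gamma_\unstable^l(p)\subset\recta{\letter_{l-j}}\subset\torus^2\setminus\icrit$ for each $1\le j\le N+l$.

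The argument for the stable leaves $\Gamma_\stable^l(p)$ is symmetric: I would start from the foliation of $\recta{\letter_{N-1}}$ whose leaves are stable curves joining the two opposite components of $\partial_\unstable\recta{\letter_{N-1}}$, take the one passing through $\sm^{N-1}p$, and pull it back iteratively via $\sm^{-1}$ using the stable half of Proposition~\ref{l_markovProperty}. Since at each step the pull-back lies in a regular rectangle $\recta{\letter_{l+j}}$, the inclusion $\Gamma_\stable^l(p)\subset\cl\fnicePoints_{N-l}$ is obtained in the same way. Alternatively, the whole stable case can be reduced to the unstable one by applying the reversor $r$ and using~\eqref{eq_explicitReversor}. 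I do not foresee any serious obstacle: the proof is purely structural bookkeeping of slope indices and boundary types under iteration, and both tasks are already packaged inside Proposition~\ref{l_markovProperty}; the only mild point to check is that the initial leaf in the induction genuinely connects the correct pair of boundary components of its rectangle, which is exactly what admissibility (Proposition~\ref{p_alternativeAdmissible}) provides.
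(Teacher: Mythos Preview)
Your proof is correct and is precisely the argument the paper has in mind: the corollary is stated as an immediate consequence of the Markov property (Proposition~\ref{l_markovProperty}), and your inductive bookkeeping of slope indices and boundary types is exactly the unfolding of that one-line justification. The only cosmetic point is that compatibility~\eqref{e_compatibilityCondition} holds because $\sm^l p\in\sm\recta{\letter_{l-1}}\cap\recta{\letter_l}\neq\emptyset$, which then feeds into Proposition~\ref{p_compatibilityIsGood}; otherwise there is nothing to add.
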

\begin{prp}\label{l_exponentialCantorEstimate}
  Let $p,q\in\rectUnion_N$ such that $\itin_N(p)=\itin_N(q)$. Then the following estimate holds:
  \[
  \dist(p,q)<\const\,\la^{-N/2}
  \]
\end{prp}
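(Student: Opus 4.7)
My plan is a standard hyperbolic bracket argument, combining the product structure of rectangles (Proposition~\ref{p_alternativeAdmissible}) with the Markov property (Proposition~\ref{l_markovProperty}) and the cone expansion bounds of Proposition~\ref{l_coneInvariance}.

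Let $\letter_{-N}\cdots\letter_{N-1}$ denote the common itinerary, so that both $p$ and $q$ lie in $\recta{\letter_0}$. By Proposition~\ref{p_alternativeAdmissible}, $\homeo$ identifies $\recta{\letter_0}$ with $\bbasea{\letter_0}\times\fbasea{\letter_0}$; the level sets of $\bbpi$ are the leaves of the $\slope{1}$-foliation (unstable) and the level sets of $\fbpi$ are those of the $\slope{-1}$-foliation (stable). Consequently the unstable leaf $\Gamma_u^0(p)$ through $p$ and the stable leaf $\Gamma_s^0(q)$ through $q$ meet transversally in a unique point $z\in\recta{\letter_0}$, which I call the bracket $[p,q]$.

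The first step is to show $z\in\rectUnion_N$ with the same rank-$N$ itinerary. By Proposition~\ref{l_markovProperty}, $\sm(\Gamma_u^0(p))\cap\recta{\letter_1}$ is a single unstable leaf through $\sm p$; since $\sm z$ lies on it and also on the stable leaf through $\sm q$ in $\recta{\letter_1}$ (the Markov image of $\Gamma_s^0(q)$), $\sm z$ is itself the bracket of $\sm p$ and $\sm q$ in $\recta{\letter_1}$. Iterating gives $\sm^j z\in\recta{\letter_j}$ for $0\le j\le N-1$; the symmetric backward argument, using the $\sm^{-1}$ clause of the same proposition, extends this to all $-N\le j\le N-1$.

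For the distance estimates, since $p$ and $z$ share an unstable leaf, \eqref{eq_expansionRate} applied to slopes in $\cone{\unstable}$ yields $|\pix\sm^{j+1}p-\pix\sm^{j+1}z|\ge 2\la^{1/2}\,|\pix\sm^jp-\pix\sm^jz|$ at each forward step outside $\icrit$. Iterating $N-1$ times and combining with the bound $|\pix\sm^{N-1}p-\pix\sm^{N-1}z|\le \frac{1}{4}\la^{-1/2}$ from \eqref{e_weakDiameterRect}, I get $|\pix p-\pix z|\le \frac{1}{4}\la^{-1/2}(2\la^{1/2})^{-(N-1)}=\bigo{\la^{-N/2}}$; the $y$-component $|y_p-y_z|=|\pix\sm p-\pix\sm z|$ is controlled by the same chain shifted by one step. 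A symmetric backward iteration for $z$ and $q$ along the stable direction, based on $\recta{\letter_{-N}}$ and the stable half of \eqref{eq_expansionRate}, gives $\dist(z,q)=\bigo{\la^{-N/2}}$; the triangle inequality then yields $\dist(p,q)\le \const\,\la^{-N/2}$.

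The main obstacle I anticipate is the inductive step establishing $\sm^j z\in\recta{\letter_j}$ for every $-N\le j\le N-1$: at each iterate one has to verify that the forward image of the unstable leaf stays inside $\recta{\letter_{j+1}}$ and the backward preimage of the stable leaf lands in $\recta{\letter_{-j-1}}$, so that Proposition~\ref{l_markovProperty} can be invoked along the entire word. This ultimately reduces to the compatibility condition \eqref{e_compatibilityCondition}, which is guaranteed by $\la$-admissibility of the common itinerary.
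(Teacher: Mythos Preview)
Your overall strategy (bracket point, then expansion) is the right one, but the inductive step ``$\sm^j z\in\recta{\letter_j}$'' has a genuine gap, and in fact the specific claim you use to justify it is false.

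You define $z$ as the intersection of the $h_1$-leaf through $p$ and the $h_{-1}$-leaf (horizontal line) through $q$ in $\recta{\letter_0}$, and then assert that $\sm z$ is the bracket of $\sm p$ and $\sm q$ in $\recta{\letter_1}$. But the bracket built from $h_1$ and $h_{-1}$ leaves does \emph{not} commute with $\sm$: the bracket $z'$ of $\sm p,\sm q$ in $\recta{\letter_1}$ satisfies $\dpix{-1}z'=\pix p$, whereas $\dpix{-1}(\sm z)=\pix z$, and in general $\pix z\neq\pix p$. Relatedly, your phrase ``the Markov image of $\Gamma_s^0(q)$'' misreads Proposition~\ref{l_markovProperty}: that proposition only controls \emph{preimages} of stable curves. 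The forward image of your horizontal segment is a vertical segment, and there is no mechanism forcing it into $\recta{\letter_1}$; indeed $\pix\sm^2 z=\pix\sm^2 q+(\pix q-\pix z)$, and if $\pix\sm^2 q$ sits near $\partial\picrit$ the perturbation $\pix q-\pix z=O(\la^{-1/2})$ can push $\sm z$ out of $\gsqu\ds\ds{\s_1}{\si_1}$. Once you lose control of the itinerary of $z$, the expansion estimate also fails, because the unstable arc from $\sm^j p$ to $\sm^j z$ may cross $\icrit$.

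The paper fixes this by bracketing with the $h_{N+1}$-leaf through $p$ and the $h_{-N}$-leaf through $q$ rather than the $h_{\pm 1}$-leaves. The point is that the $h_{-N}$-leaf in $\recta{\letter_0}$ is obtained by starting from the horizontal ($h_{-1}$) leaf in $\recta{\letter_{N-1}}$ and repeatedly applying the \emph{backward} Markov property for stable curves, one rectangle at a time; by construction, its forward images therefore lie in $\recta{\letter_j}$ for $0\le j\le N-1$. Symmetrically, the backward images of the $h_{N+1}$-leaf lie in the $\recta{\letter_{-j}}$. With the bracket $r$ taken with respect to these higher-order leaves, the containment $\sm^j r\in\recta{\letter_j}$ is automatic, and the expansion bound then goes through exactly as you wrote.
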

\begin{proof}
  Corollary~\ref{c_regularityOfGammaus} implies that $\Gamma_\unstable^0(p)\cap\Gamma^0_\stable(q)$ is given by a unique point $r\in\recta{\letter_0}$. We claim that $r\in\rectUnion_N$ and that it has the same itinerary as both $p$ and $q$. In fact proposition~\ref{l_existenceInvariantManifold} gives
  \begin{align*}
    \sm^jr\in\sm^j\Gamma^0_\stable(q)\subset&\Gamma^j_\stable(q)\subset\recta{\letter_j},\\
    \sm^{-j}r\in\sm^{-j}\Gamma^0_\unstable(p)\subset&\Gamma^{-j}_\stable(p)\subset\recta{\letter_{-j}};
  \end{align*}
  the claim is thus proved. We can consequently bound the distance by the triangle inequality $\dist(p,q)<\dist(p,r) + \dist(r,q)$; using expansion estimates \eqref{eq_expansionRate} and bounds given by proposition~\ref{l_estimateReferenceSequence} we can then conclude.
\end{proof}
\begin{mydef}\label{c_definitionWus}
  Let $p\in\rectUnion_\infty$, then define the sets
  \begin{align*}
    \Wu(p)&=\{q\in\brectUnion_\infty\st\fa n\in\naturals\ \bitin_n(q) = \bitin_n(p)\}\\
    \Ws(p)&=\{q\in\frectUnion_\infty\st\fa n\in\naturals\ \fitin_n(q) = \fitin_n(p)\}.
  \end{align*}
  by proposition~\ref{l_exponentialCantorEstimate} we observe that $\Wu(p)$ and $\Ws(p)$ are respectively a local unstable manifold and a local stable manifold of $p$.
\end{mydef}
\begin{prp}\label{l_locallyMaximal}
  The invariant set $\rectUnion_\infty$ is a locally maximal hyperbolic set
\end{prp}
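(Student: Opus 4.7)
The plan is to combine the Markov-style construction of this section with the cone invariance established in proposition~\ref{l_coneInvariance}. Hyperbolicity will be inherited from an already-hyperbolic ambient set; the genuine work lies in verifying local maximality, which I would handle by exhibiting a suitable neighborhood whose maximal invariant set coincides with $\rectUnion_\infty$.

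For the hyperbolic structure, I would first observe that by the very definition of the regular alphabet, every $\recta{\letter}$ with $\letter\in\alphabetReg(\la)$ is contained in $\torus^2\setminus\icrit(\la)$, and therefore
\[
\rectUnion_\infty\subset\bigcap_{k\in\integers}\sm^{k}(\torus^2\setminus\icrit(\la)).
\]
By proposition~\ref{l_coneInvariance} the right-hand side is already hyperbolic; since $\rectUnion_\infty$ is visibly $\sm$-invariant and closed (proposition~\ref{p_rectCardinalityBound} ensures $\card\alphabetReg(\la)<\infty$, so $\recta{\alphabetReg(\la)}$ is a finite union of closed rectangles), the restriction of $\cone{\stable}$ and $\cone{\unstable}$ together with the uniform expansion bound \eqref{eq_expansionRate} yields a continuous hyperbolic splitting on $\rectUnion_\infty$. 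The corresponding local stable and unstable manifolds are those already produced in Definition~\ref{c_definitionWus}, and their exponential geometry is controlled by proposition~\ref{l_exponentialCantorEstimate}.

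For local maximality I would take $U$ to be a sufficiently small open neighborhood of $\recta{\alphabetReg(\la)}$, chosen to be disjoint both from $\icrit(\la)$ and from every non-regular rectangle, and claim $\rectUnion_\infty=\bigcap_{n\in\integers}\sm^{-n}U$. The inclusion $\subseteq$ is tautological. For $\supseteq$, if $\sm^np\in U$ for every $n\in\integers$, then uniform expansion on $\torus^2\setminus\icrit(\la)$ together with the diameter bound \eqref{e_weakDiameterRect} forces $\sm^np$ into some $\recta{\letter_n}$ with $\letter_n\in\alphabetReg(\la)$ at every integer $n$; proposition~\ref{p_compatibilityIsGood} then automatically supplies the compatibility relations \eqref{e_compatibilityCondition} between consecutive letters, so $p\in\rectUnion_\infty$.

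The main obstacle I anticipate is precisely this last step: ensuring that $U$ can be chosen genuinely open yet narrow enough that no orbit can linger near $\partial\recta{\alphabetReg(\la)}$ without either entering a regular rectangle or exiting $U$. This is where the expansion rate $2\la^{1/2}$ and the transversality between stable and unstable boundaries provided by proposition~\ref{p_alternativeAdmissible} become essential, and where the bulk geometric estimates of proposition~\ref{p_componentGeometry} are invoked to prevent pathological shadow orbits.
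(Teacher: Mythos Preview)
Your hyperbolicity argument is fine and matches the paper's implicit reasoning. The divergence is in local maximality: the paper does not construct an isolating neighborhood at all. Instead it invokes the standard equivalence (hyperbolicity together with local product structure implies local maximality, see e.g.\ \cite{Katok}) and then verifies local product structure in three lines: given $p_0,q_0\in\rectUnion_\infty$ lying in the same rectangle $\recta{\letter_0}$, proposition~\ref{p_compatibilityIsGood} guarantees a point $r_0\in\rectUnion_\infty$ whose backward itinerary agrees with that of $p_0$ and whose forward itinerary agrees with that of $q_0$; by Definition~\ref{c_definitionWus} this $r_0$ lies in $\Wu(p_0)\cap\Ws(q_0)$.

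Your isolating-neighborhood route is a legitimate strategy in principle, but as stated it has a real obstruction beyond the one you flag. You ask for an open $U\supset\recta{\alphabetReg(\la)}$ that is disjoint from $\icrit(\la)$ and from every non-regular rectangle; neither disjointness is available. Proposition~\ref{p_alternativeAdmissible}(b) explicitly allows corners of admissible rectangles to sit on $\partial\icrit$, so any open enlargement meets $\icrit$. More seriously, regular and non-regular rectangles genuinely overlap (this is the whole point of introducing both $\crit$ and $\icrit$: the sets $\gstr{\cs}{\cdot}$ and $\gstr{\ds}{\cdot}$ share a thick annular region), so you cannot separate $\recta{\alphabetReg(\la)}$ from $\recta{\alphabetHead(\la)}\cup\recta{\alphabetTail(\la)}$ by an open set. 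One can salvage the approach by taking $U$ to be a small neighborhood of $\rectUnion_\infty$ itself rather than of the rectangle union, but then the argument that orbits in $U$ acquire a regular itinerary requires essentially the same bracket construction the paper uses, so the local-product-structure route is both shorter and avoids the boundary subtleties altogether.
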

\begin{proof}
  By standard arguments (see e.g. \cite{Katok}) it is sufficient to prove that $\rectUnion_\infty$ has local product structure; let $p_0,q_0\in\rectUnion_\infty$ be close enough so that $p_0,q_0\in\recta{\letter_0}$ for some $a_0$; by proposition~\ref{p_compatibilityIsGood}, we can find a point $r_0\in\rectUnion_\infty$ such that the backward itinerary of $r_0$ agrees with the backward itinerary of $p_0$ and its forward itinerary agrees with the forward itinerary of $q_0$. By corollary~\ref{c_definitionWus} we have that $r_0\in\Wu(p_0)\cap\Ws(q_0)$, hence $\rectUnion_\infty$ has local product structure.
\end{proof}
Define $\phypset(\la)\subset\hypset_\infty(\la)$ as follows:
\[
\phypset(\la)=\{p\in\hypset_\infty(\la)\st\fa n\in\naturals\ \itin_n(p)\textrm{ is $\la$-proper}\}
\]
By construction $\phypset(\la)$ is itself a locally maximal hyperbolic set for $\sm$; we now prove that it satisfies property (b) in proposition~\ref{p_hyperbolicSet}; the proof of property (a) will be given later in subsection~\ref{ss_moving}.
\begin{proof}
  Proposition~\ref{l_exponentialCantorEstimate} implies that a $\const\,\la^{-1/2}$-neighborhood of $\phypset(\la)$ covers the set $\phypset_1(\la)$ where:
  \[
  \phypset_1(\la)=\{p\in\hypset_1(\la)\st \itin_1(p)\textrm{ is $\la$-proper}\}.
  \]
  It is therefore sufficient to show a $\const\,\la^{-1/2}$-neighborhood of $\phypset_1(\la)$ covers $\torus^2$. For arbitrary fixed $\eps$, let $\tcrit(\la)$ be a $(\frac{1}{4}+\eps)\la^{-1/2}$-neighborhood of $\crit(\la)$; then proposition~\ref{p_componentGeometry} gives that if $p\not\in\tcrit(\la)$ belongs to some rectangle $\recta\letter$ then $\letter$ is proper, since $\recta\letter$ cannot intersect $\crit(\la)$. Hence, by definition of $\hypset_1(\la)$ we obtain:
  \[ \phypset_1(\la)\supset\torus^2\setminus(\sm^{-2}\tcrit(\la)\cup\sm^{-1}\tcrit(\la)\cup\tcrit(\la)\cup\sm^{1}\tcrit(\la))=\hat\hypset_1(\la)
  \]
  We claim that a $\const\,\la^{-1/2}$-neighborhood of $\hat\hypset_1(\la)$ covers $\torus^2$; in fact, by inspection, the intersection of $\tcrit(\la)$ and $\sm^{-2}\tcrit(\la)$ with any horizontal line is given by two intervals of diameter $\bigo{\la^{-1/2}}$ whose centers are approximately $1/2$ apart. The same is true for the intersection of $\sm\tcrit(\la)$ and $\smi\tcrit(\la)$ with any vertical line. Hence if $q\not\in\hat\hypset_1(\la)$, we can find a point $p\in\hat\hypset_1(\la)$ by moving with unit speed along horizontal and vertical lines for a time bounded by $\const\,\la^{-1/2}$, which concludes the proof.
\end{proof}

We now begin the description of so-called \emph{critical domains}; these sets will be used
to classify elliptic orbits of the Standard Map. First, for $k\in\integers$, introduce the
dynamical projections:
\begin{align*}
  \dpix{k}(p;\la)&\defeq\pix\sm^{k}(p)&\dpiy{k}(p;\la)&\defeq\piy\sm^{k}(p).
\end{align*}
\begin{mydef}
  Let $\Bcyl=\fcyl\cap\bcyl$ be a $\la$-admissible bicylinder of rank $r$; define the sets
  \begin{align*}
    \cdomain_{\fcyl}(\la)&=\{p\in\fcritiUnion_r(\la)\st\fcyl_r(p)=\fcyl\};\\
    \cdomain_{\bcyl}(\la)&=\{q\in\bcritiUnion_r(\la)\st\bcyl_r(q)=\bcyl\};
  \end{align*}
  Similarly let $\itin\in\cwords{r}(\la)$; the set
  \[
  \domain_\itin(\la)=\{p\in\wall_{r}(\la)\st \itin_{r}(p)=\itin\}
  \]
  is said to be a \emph{critical domain of rank $r$}.
  Notice that the Markov property implies that $\domain_\itin(\la)$ is non-empty provided that $\itin$ is $\la$-admissible, and similarly for $\cdomain_{\fcyl}$ and $\cdomain_{\bcyl}$.
\end{mydef}
\begin{figure}[!ht]
  \begin{center}
    \def\svgwidth{5cm}
    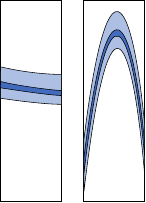
  \end{center}
  \caption{On the left we sketch $\cdomain_{\fcyl}$ (in light shade) containing a critical domain $\domain_\itin$ for $\itin\in\fcyl$ (in darker shade); on the right a sketch of $\cdomain_{\bcyl}$ containing the $(r+1)$-st image of a critical domain $\domain_{\itin'}$ of rank $r$ for $\itin'\in\bcyl$. Notice that $\cdomain_{\bcyl}$ in principle wraps around the torus several times on $\crit(\la)$, but for sake of clarity in the picture this is not shown.}
\end{figure}
Let $\fcyl$ be given by $\si\letter_1\cdots\letter_r$; then we denote by $\bbasei\fcyl=\bbasea{\letter_1}=\gpstr\cs\si$ and $\fbasei\fcyl=\fbasea{\letter_r}$; similarly if $\bcyl$ is given by $\letter_{-r}\cdots\letter_{-1}\sii$, we denote by $\bbasei\bcyl=\bbasea{\letter_{-r}}$ and $\fbasei\bcyl=\fbasea{\letter_{-1}}=\gpstr\cs\sii$. Finally let $\itin=\si\letter_1\cdots\letter_{r}\sii\in\cwords{r}(\la)$; we denote by $\bbasei\itin=\bbasea{\letter_1}=\gpstr\cs\si$ and $\fbasei\itin=\fbasea{\letter_r}=\gpstr\cs\sii$.

We introduce the maps $\fhomeo_r(\la):\torus^2\to\torus$ and $\bhomeo_r(\la):\torus^2\to\torus$ given by:
\begin{align*}
  \fhomeo_r &=(\dpix{0},\dpix{r+1})&    \bhomeo_r &=(\dpix{-r-1},\dpix{0});
\end{align*}
Notice that $\fhomeo_r$ and $\bhomeo_r$ are differentiable and
\begin{align*}
  \deh\fhomeo_r(p)&=\matrixtt{1}{0}{-\stepex{0r+1}(p)\la\slope{-r-1}(p)}{\stepex{0r+1}(p)}\\
  \deh\bhomeo_r(q)&=\matrixtt{\stepex{0r+1}(\sm^{-r-1}q)\la\slope{r+1}(q)}{-\stepex{0r+1}(\sm^{-r-1}q)}{1}{0}
\end{align*}
where $\stepex{0r+1}(p)=\la\slope{1}(\sm p)\cdots\la\slope{r}(\sm^r p)$.
Notice that by definition $\homeo(\la)=\fhomeo_2(\la)\circ\sm^{-1}=\bhomeo_2(\la)\circ\sm$.
\begin{prp}\label{p_admissibleDomain}
  Let $\itin$ be a $\la$-admissible closed word of length $r$ and $\Bcyl=\fcyl\cap\bcyl$ be a $\la$-admissible bicylinder of rank $r$; then:
  \begin{itemize}
  \item $\fhomeo_r$ is a diffeomorphism between $\domain_\itin$ and $\bbasei\itin\times\fbasei\itin$;
  \item $\fhomeo_r$ is a diffeomorphism between $\cdomain_{\fcyl}$ and $\bbasei\fcyl\times\fbasei\fcyl$;
  \item $\bhomeo_r$ is a diffeomorphism between $\cdomain_{\bcyl}$ and $\bbasei\bcyl\times\fbasei\bcyl$.
  \end{itemize}
\end{prp}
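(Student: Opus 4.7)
The plan is to mirror the proof of Proposition~\ref{p_alternativeAdmissible}: establish that $\fhomeo_r$ is a local diffeomorphism on $\domain_\itin$ and on $\cdomain_\fcyl$ (and similarly that $\bhomeo_r$ is one on $\cdomain_\bcyl$), iterate Proposition~\ref{l_markovProperty} to obtain surjectivity, and finally use strict monotonicity along vertical fibres to conclude injectivity. The matrix displayed just before the proposition gives $\det\deh\fhomeo_r(p)=\stepex{0r+1}(p)=\prod_{j=1}^{r}\la\slope{j}(\sm^j p)$; each intermediate iterate $\sm^j p$ with $1\le j\le r$ lies in an admissible rectangle $\recta{\letter_j}$ disjoint from $\icrit$, so $|\la\slope{1}(\sm^j p)|\ge 4\la^{1/2}$ by definition of $\icrit$, and hence $|\la\slope{j}(\sm^j p)|$ is also bounded away from zero by \eqref{eq_forwardEstimateForHk}. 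Consequently $\det\deh\fhomeo_r$ never vanishes and the inverse function theorem supplies the local-diffeomorphism property.

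For surjectivity I would fix $(\xi,\xi')\in\bbasei\itin\times\fbasei\itin$ arbitrarily and consider the vertical circle $\gamma=\pix^{-1}(\xi)$; since $\xi\in\gpstr\cs\si$, we have $\gamma\subset\gstr\cs\si$. The image $\sm\gamma$ is an integral curve of $\slope{1}$, and its intersection with $\recta{\letter_1}$ is an unstable curve joining the two opposite sides of $\partial_\stable\recta{\letter_1}$: this is the first-step version of the argument used in Proposition~\ref{p_alternativeAdmissible}, and relies on $\letter_1\in\alphabetHead$ encoding a proper entry from $\gstr\cs\si$ into $\gstr{\ds}{\s_{\letter_1}}$. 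Iterating Proposition~\ref{l_markovProperty} along $\letter_2,\ldots,\letter_r$ yields an unstable curve $\sigma\subset\recta{\letter_r}$ joining the two opposite sides of $\partial_\stable\recta{\letter_r}$. For $\domain_\itin$ we have $\letter_r\in\alphabetTail$, so one further application of $\sm$ sends $\sigma$ into $\gstr\cs\sii$ and $\pix\sm\sigma$ covers all of $\gpstr\cs\sii=\fbasei\itin$; for $\cdomain_\fcyl$ instead $\letter_r\in\alphabetReg$ and $\pix\sigma$ already covers $\fbasea{\letter_r}=\fbasei\fcyl$. Either way there is a point $p$ along this chain of curves with $\fhomeo_r(p)=(\xi,\xi')$.

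For injectivity, if $\fhomeo_r(p_1)=\fhomeo_r(p_2)=(\xi,\xi')$ then both $p_i$ lie on $\gamma\cap\domain_\itin$, and along $\gamma$ the function $\pix\sm^{r+1}$ has derivative $\stepex{0r+1}$ whose sign is constant on $\domain_\itin$: each factor $\la\slope{j}(\sm^j p)$ inherits the constant sign of $\slope{1}$ on the component of $\torus^2\setminus\icrit$ indexed by $\s_{\letter_j}$. Hence $\pix\sm^{r+1}|_{\gamma\cap\domain_\itin}$ is strictly monotone and $p_1=p_2$. The same argument goes through verbatim on $\cdomain_\fcyl$. The statement for $\bhomeo_r$ on $\cdomain_\bcyl$ follows by the symmetric argument or, more compactly, by invoking the reversor symmetry $r\sm=\sm^{-1}r$, which intertwines $\fhomeo_r$ and $\bhomeo_r$ up to swapping the two coordinates.

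The main obstacle I anticipate is at the two endpoints of the Markov iteration. A vertical arc is not itself in the unstable cone, so I must verify carefully that $\sm\gamma\cap\recta{\letter_1}$ really is an unstable curve whose endpoints lie on opposite components of $\partial_\stable\recta{\letter_1}$ — equivalently, that the geometry of the head rectangle, determined by the shape of $\sm\gsqu\cs\ds\cdot\cdot$, captures the full crossing from $\gstr\cs\si$ into $\gstr{\ds}{\s_{\letter_1}}$. The mirror check at the tail end, where $\sm\sigma\subset\gstr\cs\sii$ must span $\gpstr\cs\sii$ horizontally, is analogous and involves the geometry of $\gsqu\ds\cs\cdot\cdot$; both amount to unpacking the definitions of $\alphabetHead$ and $\alphabetTail$ together with the shapes sketched in Figures~\ref{f_squares} and~\ref{f_rectangles}.
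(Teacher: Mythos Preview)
Your proposal is correct and follows essentially the same route as the paper: verify that $\fhomeo_r$ is a local diffeomorphism via nonvanishing of $\stepex{0r+1}$, and obtain bijectivity by iterating the Markov property (Proposition~\ref{l_markovProperty}) through the chain of admissible rectangles. The only cosmetic difference is that the paper first fixes an arbitrary $q\in\domain_\itin$, pushes the $\slope{j}$-leaf through $\sm^j q$ forward to hit $\xi'$, and then pulls a stable leaf backward to hit $\xi$; this forward-then-backward order sidesteps the endpoint check you flag, since the starting curve is already a full $\slope{1}$-leaf of the admissible rectangle $\recta{\letter_1}$ (equivalently a fibre of $\homeo$ by Proposition~\ref{p_alternativeAdmissible}), so Proposition~\ref{l_markovProperty} applies immediately without a separate verification at the head.
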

\begin{proof}
  The proof is a generalization of the proof of the analogous property for proposition~\ref{p_alternativeAdmissible}. Moreover the proof is basically the same for all three items, with trivial adaptations. We explicitly give the proof of the first item only.

  For simplicity of notation let $\domain=\domain_\itin$, $\bbasei{}=\bbasei{\itin}$ and $\fbasei{}=\fbasei{\itin}$; assume for definiteness that $\itin=\si\letter_1\cdots\letter_r\sii$. By simple inspection $\fhomeo_r$ is a local diffeomorphism in a neighborhood of $\domain$; it suffices then to prove that $\fhomeo_r\domain = \bbasei{}\times\fbasei{}$.  Let $\xi\in\bbasei{}$ and $\xi'\in\fbasei{}$ and fix a point $q\in\domain$; for $0<j\leq r$ let $\Gamma_j$ denote the leaf of the $\slope{j}$-foliation of $\recta{\letter_j}$ passing through $\sm^j q$. By the Markov property we have that $\Gamma_{j+1}\subset\Gamma_j$ and that $\Gamma_r$ is an unstable curve joining the two opposite sides of $\partial_\stable\recta{\letter_r}$. Hence $\sm\Gamma_r$ is a graph of a function over $\fbasei{}$ and, as such, it will intersect any given vertical line $\pix^{-1}(\xi')$ in a unique point that we denote by $q'$; let now $\Gamma'_j$ be the leaf of the $\slope{j-r-1}$-foliation of $\recta{\letter_j}$ containing $\sm^{j-r-1}q'$. Then by the Markov property again we have that $\Gamma'_j\subset\sm^{-1}\Gamma'_{j+1}$ and that $\Gamma'_1$ is a stable curve joining the two opposite sides of $\partial_\unstable\recta{\letter_1}$. Therefore $\sm^{-1}\Gamma'_1$ is the graph of a function over $\bbasei{}$ and it will intersect any vertical line $\pix^{-1}(\xi)$ in a unique point denoted $p$ such that $\fhomeo_r(p)=(\xi,\xi')$. Since $\xi$ and $\xi'$ were arbitrary, $\fhomeo_r$ is a diffeomorphism.
\end{proof}
We introduce the useful notations:
\begin{align*}
  \pd{\itin}(\xi,\xi';\la)&=\left[\fhomeo_r|_{\domain_\itin(\la)}\right]^{-1}(\xi,\xi')&
  \qd{\itin}(\xi,\xi';\la)&=\sm^{r+1}\pd{\itin}(\xi,\xi';\la)\\
  \pd{\fcyl}(\xi,\xi';\la)&=\left[\fhomeo_r|_{\cdomain_{\fcyl}(\la)}\right]^{-1}(\xi,\xi')&
  \qd{\fcyl}(\xi,\xi';\la)&=\sm^{r+1}\pd{\fcyl}(\xi,\xi';\la)\\
  \pd{\bcyl}(\xi,\xi';\la)&=\sm^{-r-1}\qd{\bcyl}(\xi,\xi';\la)&
  \qd{\bcyl}(\xi,\xi';\la)&=\left[\bhomeo_r|_{\cdomain_{\bcyl}(\la)}\right]^{-1}(\xi,\xi')
\end{align*}
This construction allows us to assign a closed word to each periodic orbit of cyclicity one; this correspondence is not $1-1$: in fact several orbits may correspond to a given closed word; in the next section we will indeed prove that there is a $1-1$ correspondence between closed word and cyclicity one elliptic periodic orbits.
\begin{prp}
  Fix $\la$ and let $p\in\crit(\la)\cap\sm^{-N}\crit(\la)$ such that
  \[
  p\not\in\bigcup_{0<j<N}\sm^{-j}\crit(\la),\]
  then there exists $\itin\in\cwords{N-1}(\la)$ such that $p\in\domain_{\itin}(\la)$.
\end{prp}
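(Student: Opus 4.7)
I would construct the word $\itin$ explicitly by reading off the symbolic itinerary of $p$ and then verify it is a $\la$-admissible closed word. First fix boundary signs $\si,\sii\in\{+,-\}$ by $p\in\gstr\cs\si$ and $\sm^Np\in\gstr\cs\sii$. For $0<j<N$ the point $\sm^jp$ lies outside $\crit\supset\icrit$, so it belongs to a unique connected component of $\torus^2\setminus\icrit$ and one can define a sign $\s_j$ by $\sm^jp\in\gstr\ds{\s_j}$. Set $\s_0:=\si$ and $\s_N:=\sii$. The case $N=1$ is immediate, since then $\itin=\si\sii\in\cwords{0}(\la)$ and $p\in\gsqu\cs\cs\si\sii\subset\wall_0(\la)$ with $\itin_0(p)=\si\sii$. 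I assume $N\geq 2$ henceforth.

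For each $1\leq j\leq N-1$ the three consecutive points $\sm^{j-1}p,\sm^jp,\sm^{j+1}p$ place $\sm^jp$ in $\grect{\is_j}{\s_{j-1}}{\s_j}{\js_j}{\s_{j+1}}$, where $\is_j=\cs$ exactly when $j=1$ and $\js_j=\cs$ exactly when $j=N-1$ (both equal $\ds$ otherwise). Let $\rect^*$ be the connected component of this set containing $\sm^jp$; the core of the proof is to show that $\rect^*$ is admissible, so that the canonical indexing procedure assigns to it a symbol $\letter_j=[\sixind{\is_j}{\s_{j-1}}{\s_j}{\js_j}{\s_{j+1}}{k_j}]\in\alphabet(\la)$ of the appropriate class ($\alphabetHead$, $\alphabetReg$, $\alphabetTail$, or $\alphabetDeg$). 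For this I would invoke criterion (b) of proposition~\ref{p_alternativeAdmissible}, which reduces admissibility to the emptiness $\partial\rect^*\cap\partial\icrit=\emptyset$. Since $\sm^jp\notin\crit$, estimate \eqref{e_minimumDistance} gives $\dist(\pix\sm^jp,\cl\picrit)>\la^{-1/2}$, whereas \eqref{e_weakDiameterRect} bounds $\diam\pix\rect^*<(1/4)\la^{-1/2}$. Combining these two inequalities, $\pix\rect^*$ stays at distance $>(3/4)\la^{-1/2}$ from $\cl\picrit$, so $\rect^*\cap\cl\icrit=\emptyset$ and the desired emptiness follows.

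To conclude, I would set $\itin=\si\letter_1\cdots\letter_{N-1}\sii$ and check it is a $\la$-admissible closed word. Compatibility \eqref{e_compatibilityCondition} between $\letter_j$ and $\letter_{j+1}$ is automatic since $\s_{\letter_j}=\s_j=\sii_{\letter_{j+1}}$ and $\si_{\letter_j}=\s_{j+1}=\s_{\letter_{j+1}}$; boundary compatibility holds via $\s_0=\si=\sii_{\letter_1}$ and $\s_N=\sii=\si_{\letter_{N-1}}$ by construction. Since $\sm^jp\in\recta{\letter_j}$ for each $1\leq j\leq N-1$, we obtain $p\in\wall_{N-1}(\la)$ with $\itin_{N-1}(p)=\itin$, i.e.\ $p\in\domain_\itin(\la)$. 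I expect the only substantive step to be the admissibility argument in the middle paragraph, in which the horizontal distance lower bound from \eqref{e_minimumDistance} and the rectangle diameter upper bound from \eqref{e_weakDiameterRect} must cooperate at scale $\la^{-1/2}$; everything else is combinatorial bookkeeping of signs.
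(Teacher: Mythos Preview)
Your proof is correct and follows essentially the same approach as the paper: the core step---showing each intermediate component $\rect^*_j$ is admissible by combining the distance lower bound \eqref{e_minimumDistance} with the diameter upper bound \eqref{e_weakDiameterRect} to conclude $\rect^*_j\cap\cl\icrit=\emptyset$---is identical. The paper's proof is terser and omits the explicit bookkeeping of signs, the $N=1$ case, and the verification of compatibility that you spell out, but the substance is the same.
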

\begin{proof}
  Let $p_j=\sm^j p$; by definition we have that, for $0<j<N$ the point $p_j$ belongs to
  some set $\rect_j$; let us denote by $\rect^*_j$ the connected component containing
  $p_j$ we need to prove that $\rect^*_j$ is admissible.  By proposition~\ref{p_componentGeometry} we have that $\pix\rect^*_j$ is contained in a $\frac{1}{4}\la^{-1/2}$-neighborhood of $\pix{}p_l$. But by \eqref{e_minimumDistance} we have that $\dist (\pix p_j,\cl\picrit(\la))>\la^{-1/2}$; which implies that $\rect^*_j\cap\cl\icrit(\la)=\emptyset$. Hence $\rect^*_j$ is admissible for all $j$ and $p\in\wall_{N-1}(\la)$ from which the statement follows.
\end{proof}
\begin{cor} \label{c_s1ppAdmissible}
  Let $p\in\crit$ be a cyclicity 1 periodic point of least period $N$; then $p$ belongs to some domain $\domain_\itin$ of rank $N-1$. Hence necessarily $p=\pd\itin(\xi,\xi;\la)=\qd\itin(\xi,\xi;\la)$ for $\xi=\pix p$.
\end{cor}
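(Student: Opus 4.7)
The strategy is to verify that the hypotheses of the preceding proposition apply to $p$, and then simply unpack what this means together with the characterization of $\domain_\itin$ via $\fhomeo_{N-1}$ given by proposition~\ref{p_admissibleDomain}. This is essentially a bookkeeping argument — no new dynamical input is needed beyond what has been established.

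First I would check the three hypotheses of the previous proposition. By assumption $p\in\crit(\la)$. Since $p$ has least period $N$, we have $\sm^N p = p\in\crit(\la)$, so $p\in\sm^{-N}\crit(\la)$. Finally, by the definition of cyclicity, the orbit $\{p,\sm p,\ldots,\sm^{N-1}p\}$ meets $\crit(\la)$ exactly once, so $\sm^j p\not\in\crit(\la)$ for $0<j<N$, i.e.\ $p\not\in\bigcup_{0<j<N}\sm^{-j}\crit(\la)$. The preceding proposition then produces a closed word $\itin\in\cwords{N-1}(\la)$ with $p\in\domain_\itin(\la)$.

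To conclude, I would apply proposition~\ref{p_admissibleDomain} with $r=N-1$: the map $\fhomeo_{N-1}=(\dpix{0},\dpix{N})$ is a diffeomorphism from $\domain_\itin(\la)$ onto $\bbasei\itin\times\fbasei\itin$. Evaluating at $p$ and using the periodicity $\sm^N p=p$ yields
\[
\fhomeo_{N-1}(p)=(\pix p,\pix\sm^N p)=(\xi,\xi),
\]
with $\xi=\pix p$. Inverting, $p=\pd\itin(\xi,\xi;\la)$, and since $\qd\itin(\xi,\xi;\la)=\sm^N\pd\itin(\xi,\xi;\la)=\sm^N p=p$, the second equality follows as well. (The compatibility $\xi\in\bbasei\itin\cap\fbasei\itin=\cl\cpcrit{\si}$ for the common sign $\si=\sii$ of the closed word is automatic from $p=\sm^N p$ lying in a single connected component of $\crit(\la)$.)

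The only point that might merit a line of comment is the reduction from ``$p\in\crit$ with cyclicity one'' to ``$\sm^j p\not\in\crit$ for $0<j<N$''; but this is immediate from Definition~\ref{d_cyclicity}. Thus no genuine obstacle is expected — the corollary is a direct consequence of the preceding proposition combined with the characterization of critical domains via $\fhomeo_{N-1}$.
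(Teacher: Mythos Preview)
Your argument is correct and matches the paper's intended reasoning: the corollary is stated without proof precisely because it follows by checking the hypotheses of the preceding proposition and then reading off the consequences of proposition~\ref{p_admissibleDomain}, exactly as you do. Your additional remark that periodicity forces $\si=\sii$ (since $\xi=\pix p=\pix\sm^Np$ must lie in both $\bbasei\itin$ and $\fbasei\itin$) is a nice explicit observation that the paper leaves implicit.
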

\subsection{Parameter dependence of the Markov structure}\label{ss_moving}
In this subsection we describe how the Markov structure changes when the parameter $\la$ varies. The evolution with the parameter $\la$ turns out to be rather simple to understand and describe; this happens to be decisive for the proof of our Main Theorem, which strongly relies on this quantitative description.
We first provide some definitions: fix $q\in\torus^2$ and a parameter value $\la$; for $k\in\integers$ let $q_k(q,\la)=(x_k(q,\la),y_k(q,\la))=\sm^kq$ and define $A_k,\nu_k,\hat A_k$ as follows:
\begin{align*}
  A_k(q,\la)&=\deh \sm(q_k), &\nu_{k}(q)&=\left(\begin{array}{c}0\\\dot\phi(x_{k+1})\end{array}\right),
\end{align*}
\begin{equation*}
  \hat A_{k}(q,\la) = \left(\begin{array}{cc}A_{k}(q,\la)&\nu_{k}(q)\\0&1\end{array}\right).
\end{equation*}
Let $A^{(0)}\equiv\Id_{2\times2}$ and $\hat A^{(0)}\equiv\Id_{3\times3}$; define for $k>0$:
\begin{align*}
  A^{(k)} &= A_{k-1} A^{(k-1)}
  &A^{-(k)} &= \left(A^{(k)}\right)^{-1}\\
  \hat A^{(k)} &= \hat A_{k-1}A^{(k-1)}
  &\hat A^{-(k)} &= \left(\hat A^{(k)}\right)^{-1}
\end{align*}
Notice that $\hat A^{(k)}$ and $\hat A^{-(k)}$ are of the form:
\begin{align*}
  \hat A^{(k)}&=\left(\begin{array}{cc}A^{(k)}&\nu^{(k)}\\0&1\end{array}\right)&  \hat A^{-(k)} &=\left(\begin{array}{cc}A^{-(k)}&\nu^{-(k)}\\0&1\end{array}\right), 
\end{align*}
where we define:
\begin{align*}
  \nu^{(k)}&=\sum_{j=1}^kA^{(k)}A^{-(j)}\nu_{j-1}, &  \nu^{-(k)} &= -\sum_{j=1}^kA^{-(j)}\nu_{j-1}.
\end{align*}
By definition the differentials $\deh x_k$ and $\deh y_k$ satisfy the following linear relation:
\begin{equation}\label{eq_iterationDifferential}
  \left(\begin{array}{c} \deh x_{k+1}\\ \deh y_{k+1}\\ \deh\la\end{array}\right) = \hat A_{k}
  \left(\begin{array}{c} \deh x_{k}  \\ \deh y_{k}  \\ \deh\la\end{array}\right).
\end{equation}
Finally we define, for $0\leq j,k\leq N$ the real functions $\stepex{jk}(q,\la)$:
\[
\stepex{jk} = \left(\begin{array}{cc}1&0\end{array}\right) A^{(k)}A^{-(j)} \left(\begin{array}{c} 0\\1\end{array}\right);
\]
notice that, since $A^{(\cdot)}$ is a $2\times 2$ area preserving matrix we have $\stepex{jk} = -\stepex{kj}$, hence $\stepex{jj}\equiv 0$; by simple inspection we obtain:
\begin{equation}\label{e_definitionAlpha}
  \stepex{jk}(q,\la) =\begin{cases}
    \la\slope{1}(q_{j+1})\cdots\la\slope{k-(j+1)}(q_{k-1}) &\textrm{if }0\leq j<k-1;\\
    1 &\textrm{if }j=k-1,
  \end{cases}
\end{equation}
which matches the notation for $\stepex{}$ we introduced in the previous subsection.
Notice that by definition of $\dpix{k}(p;\la)$ we have, for $k>0$:
\begin{align*}
  \partial_p\dpix{k}(p,\la)&=(-\stepex{1k}(p,\la)\quad \stepex{0k}(p,\la));\\
  \partial_\la\dpix{k}(p,\la)&=\sum_{i=1}^{k}\stepex{ik}(p,\la)\dot\phi(\dpix{i}(p,\la));\\
  \partial_p\dpix{-k}(p,\la)&=(\stepex{0k+1}(\sm^{-k}p,\la)\quad -\stepex{0k}(\sm^{-k}p,\la));\\
  \partial_\la\dpix{-k}(p,\la)&=\sum_{i=1}^{k}\stepex{0i}(\sm^{-k}p,\la)\dot\phi(\dpix{i-k}(p,\la));
\end{align*}
The following lemma is crucial: it gives essential estimates that control how the dynamics evolves when varying the parameter. 
\begin{lem}   \label{l_continuationXiXi'}
  Fix $k\in\naturals$, then for each $\la^*\in\reals^+$ and $\la\geq\la^*$ let $\rho=(\rho^\forw,\rho^\backw)$ be a smooth family of smooth maps $\rho(\la^*,\la):\torus\times\torus\to\torus\times\torus$ such that:
  \begin{align*}
    \rho(\la^*,\la^*)&=\id &\left\|\dpar{\rho^\forw}{\la}\right\|, \, \left\|\dpar{\rho^\backw}{\la}\right\|&<\frac{1}{4}.
  \end{align*}
  Then there exists a unique smooth family of smooth maps
  \[
  \tpflow{k}{\rho}{\la^*}{\la}{}:\fnicePoints_k(\la^*)\to\fnicePoints_k(\la)
  \]
  such that the following diagram commutes:
  \begin{equation}\label{e_commutingRelation}
    \begin{CD}
      \fnicePoints_k(\la^*) @>{\tpflow{k}{\rho}{\la^*}{\la}{}}>>\fnicePoints_k(\la)\\
      @VV\fhomeo_{k}(\la^*) V  @VV\fhomeo_{k}(\la) V\\
      \torus\times\torus @>\rho>> \torus\times\torus
    \end{CD}
  \end{equation}
  Furthermore,the following estimates hold:
  \begin{subequations}\label{eq_differentialEquations}
    \begin{align}
      \de{}{\la}\dpiy{0}\tpflow{k}{\rho}{\la^*}{\la}p&=\la\slope{-k}(p')\partial_\la\rho^\forw(\dpix{0}p')+ \notag\\&\quad+\stepex{0k}^{-1}(p')\partial_\la\rho^\backw(\dpix{k}p')+\bigo{\la^{-1/2}}\label{eq_differentialEquationForY0}\\
      \de{}{\la}\dpiy{k}\tpflow{k}{\rho}{\la^*}{\la}p&=\dot\phi(\dpix{k}p') - \stepex{0k}^{-1}(p')\partial_\la\rho^\forw(\dpix{0}p')\notag\\&\quad + \la\slope{k}(\sm^kp')\partial_\la\rho^\backw(\dpix{k}p')
      +\bigo{\la^{-1/2}}\label{eq_differentialEquationForYN}\\
      \intertext{and finally for $0<j<k$:}
      \left|\de{}{\la}\dpix{j}\tpflow{k}{\rho}{\la^*}{\la}p\right|&<|\la\slope{j}(\sm^jp')|^{-1}\onepbigo{\la^{-1/2}}\label{e_estimateXk}
    \end{align}
    where $p'=\tpflow{k}{\rho}{\la^*}{\la}p$.
  \end{subequations}
\end{lem}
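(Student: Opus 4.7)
The plan is to define $\tpflow{k}{\rho}{\la^*}{\la}$ implicitly from the commuting diagram \eqref{e_commutingRelation}: given $p\in\fnicePoints_k(\la^*)$, I let $p'=\tpflow{k}{\rho}{\la^*}{\la}p$ be the unique continuation in $\la$, starting at $p'=p$ for $\la=\la^*$, of the solution of
\[
\fhomeo_k(\la)(p')=\rho(\la^*,\la)(\fhomeo_k(\la^*)(p)).
\]
The implicit function theorem applies because $\det\deh\fhomeo_k=\stepex{0k+1}$ is bounded away from zero by the cone condition of proposition~\ref{l_coneInvariance} together with \eqref{e_definitionAlpha}. The hypothesis $\|\partial_\la\rho^\forw\|,\|\partial_\la\rho^\backw\|<1/4$, combined with the $\la^{-1/2}$-scale buffer between $\fnicePoints_k$ and $\icrit$ from \eqref{e_minimumDistance}, keeps the continuation inside $\fnicePoints_k(\la)$ for every $\la\ge\la^*$, so $\tpflow{k}{\rho}{\la^*}{\la}$ exists, is unique and is smooth.

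I would then prove \eqref{eq_differentialEquationForY0} by differentiating in $\la$ the two scalar components of the defining relation. The first component $\dpix{0}(p')=\rho^\forw$ yields $\de{}{\la}\dpix{0}(p')=\partial_\la\rho^\forw$ immediately. The second component $\dpix{k+1}(p',\la)=\rho^\backw$, combined with $\partial_p\dpix{k+1}=(-\stepex{1k+1},\stepex{0k+1})$ and $\partial_\la\dpix{k+1}=\sum_{i=1}^{k+1}\stepex{ik+1}\dot\phi(\dpix{i}(p'))$, yields after a scalar linear solve
\[
\de{}{\la}\dpiy{0}(p')=\frac{\stepex{1k+1}}{\stepex{0k+1}}\partial_\la\rho^\forw+\frac{1}{\stepex{0k+1}}\partial_\la\rho^\backw-\frac{1}{\stepex{0k+1}}\sum_{i=1}^{k+1}\stepex{ik+1}\dot\phi(\dpix{i}(p')).
\]
I would identify the first two coefficients with $\la\slope{-k}(p')$ and $\stepex{0k}^{-1}(p')$ using the pull-back expressions \eqref{eq_pushForwards} and \eqref{eq_explicitReversor}, and I would bound the tail sum by $\bigo{\la^{-1/2}}$ by observing that $|\stepex{ik+1}/\stepex{0k+1}|$ decays exponentially in $i$ thanks to the cone invariance in $\fnicePoints_k$, while $\dot\phi$ is uniformly bounded. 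Equation \eqref{eq_differentialEquationForYN} follows from the same argument applied to $\bhomeo_k$ in place of $\fhomeo_k$, the only additional ingredient being the $\dot\phi(\dpix{k}(p'))$ term coming from the explicit $\la$-dependence of $\sm$ at the last iterate.

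For the intermediate bound \eqref{e_estimateXk} I would iterate the linear relation \eqref{eq_iterationDifferential} starting from the pair $(\de{}{\la}\dpix{0}(p'),\de{}{\la}\dpiy{0}(p'))$ just computed. The matrix $\hat A^{(j)}(p',\la)$ splits into a homogeneous part acting on the initial data and an affine correction $\nu^{(j)}$ built from $\dot\phi$ along the orbit. The first coordinate of the homogeneous part is a difference of two large terms, each of size comparable to $\stepex{0j}\cdot\stepex{1k+1}/\stepex{0k+1}$, and their cancellation produces exactly $|\la\slope{j}(\sm^jp')|^{-1}$ up to a relative error of order $\la^{-1/2}$ that is controlled by the slope estimates \eqref{eq_forwardEstimateForHk} and the bounded distortion \eqref{e_smallDistortion}. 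This telescoping cancellation, together with the analogous identification of $\stepex{}$-ratios with reference slopes in the previous step, is the main obstacle; everything else is a routine application of the implicit function theorem and of the estimates collected in proposition~\ref{l_estimateReferenceSequence}.
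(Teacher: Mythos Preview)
Your overall scheme---define the continuation implicitly from the commuting relation, differentiate it to get an ODE for $(X_0,Y_0)$, and read off the three estimates---is exactly the paper's approach, and your derivation of \eqref{eq_differentialEquationForY0} (and by symmetry \eqref{eq_differentialEquationForYN}) is correct.

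The gap is in the global existence step. You assert that the buffer \eqref{e_minimumDistance} together with $\|\partial_\la\rho\|<1/4$ keeps the continuation inside $\fnicePoints_k(\la)$ for every $\la\ge\la^*$. This does not follow: the implicit function theorem is local, while the lemma claims existence on the unbounded interval $[\la^*,\infty)$. The size estimate you eventually obtain for the intermediate iterates, $|\partial_\la X_j|\lesssim|\la\slope{j}(\sm^jp')|^{-1}$, is of order $\la^{-1/2}$ precisely when $X_j$ sits in the buffer zone $\ptcrit\setminus\cl\picrit$; since $\int_{\la^*}^{\infty}\la^{-1/2}\,\deh\la=\infty$, a magnitude bound alone cannot prevent $X_j$ from drifting into $\picrit(\la)$. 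The paper closes this by computing $\partial_\la X_l$ explicitly (their formula \eqref{e_completeDifferentialEquation}, which you essentially have the ingredients for) and observing that when $X_l\in\ptcrit(\la)\setminus\cl\picrit(\la)$ one has
\[
\partial_\la X_l \;=\; -\frac{\dot\phi(X_l)}{\la\slope{l}(q_l)} \;+\; (\la\slope{l}(q_l))^{-1}\bigl(\|\partial_\la\rho^\forw\|+\|\partial_\la\rho^\backw\|+\bigo{\la^{-1/2}}\bigr),
\]
so that the dominant term has a definite sign pointing \emph{away} from $\picrit$ (here $|\dot\phi(X_l)|\sim 1$ beats the $1/4$ bound on $\partial_\la\rho$). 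This directional information---not merely the width of the buffer---is what forbids the orbit from reaching $\partial\fnicePoints_k(\la)$ and hence lets the continuation extend for all $\la\ge\la^*$. Once that formula is in hand, \eqref{e_estimateXk} falls out immediately from it, so your separate iteration of $\hat A^{(j)}$ and the ``telescoping cancellation'' in the last paragraph are unnecessary.
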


\begin{proof}
  Fix $\la^*$ and introduce the notations:
  \begin{align*}
    Q(p,\la)&=\tpflow{k}{\rho}{\la^*}{\la}p & X_k=\dpix{k}Q&,\, Y_k=\dpiy{k}Q.
  \end{align*}
  Using \eqref{e_commutingRelation} we can write:
  \[
  \fhomeo_k(\la)\tpflow{k}{\rho}{\la^*}{\la}{} = \rho(\la^*,\la)\fhomeo_k(\la^*);
  \]
  differentiating the previous expression with respect to $\la$ yields:
  \[
  \matrixtt{1}{0}{-\stepex{1k}}{\stepex{0k}}\vectort{\partial_\la X_0}{\partial_\la Y_0} + \vectort{0}{\partial_\la\dpix{k}} = \vectort{\partial_\la\rho^\forw}{\partial_\la\rho^\backw}.
  \]
  Hence we immediately obtain the differential equations:
  \begin{subequations}\label{eq_xiXiDifferentialEquations}
    \begin{align}
      \partial_\la X_0(p,\la) &= \partial_\la\rho^\forw(p,\la)\\
      \stepex{0k}(p,\la)\partial_\la Y_0(p,\la) &= -\sum_{i=1}^k\stepex{ik}(p,\la)\dot\phi(X_i(p,\la)) +\notag\\&\quad +\stepex{1k}(p,\la)\partial_\la\rho^\forw(p,\la) +\partial_\la\rho^\backw(p,\la).\label{e_differentialEquationForSharp}
    \end{align}
  \end{subequations}

  Assume that $\la'\in[\la^*,\infty)$   $p'\in\fnicePoints_k(\la')$, then by \eqref{e_definitionAlpha} there exists an open neighborhood $\nbhd\ni(p',\la')$ such that $\stepex{}$ is smooth in $\nbhd$ and $|\stepex{ij}(p,\la)|\geq 1$ if $i\not = j$ for all $(p,\la)\in\nbhd$.
  Then equations \eqref{eq_xiXiDifferentialEquations} have a unique solution $Q(p,\la)$ for $(Q(p,\la),\la)\in\nbhd$ with smooth dependence on the initial condition $p$.

  We now claim that if for some $0<l<k$ we have $X_l(p,\la)\in\ptcrit(\la)\setminus\cl\picrit(\la)$, where $\ptcrit(\la)$ is a $\bigo{\la^{-1/2}}$-neighborhood of $\pcrit(\la)$, then $\partial_\la X_l$ points away from $\picrit(\la)$. For ease of exposition we now drop $p$ and $\la$ from the notations; then compute:
  \[
  \partial_\la X_l = \stepex{0l}\partial_\la Y_0 + \sum_{i=1}^{l}\stepex{il}\dot\phi(X_i) -\stepex{1l}\partial_\la\rho^\forw
  \]
  and using \eqref{eq_xiXiDifferentialEquations} and $\stepex{ll}=0$ we obtain:
  \begin{align}\label{e_completeDifferentialEquation}
    \partial_\la{X_l} &= \sum_{j=1}^{l-1}\left(\stepex{jl}-\frac{\stepex{jk}\stepex{0l}}{\stepex{0k}}\right)\dot\phi(X_j) - \frac{\stepex{lk}\stepex{0l}}{\stepex{0k}}\dot\phi(X_l) - \sum_{j=l+1}^{k-1}\frac{\stepex{jk}\stepex{0l}}{\stepex{0k}}\dot\phi(X_j)+\notag\\&\quad + \left(\frac{\stepex{1k}\stepex{0l}}{\stepex{0k}}-\stepex{1l}\right)\partial_\la\rho^\forw + \frac{\stepex{0l}}{\stepex{0k}}\partial_\la\rho^\backw.
  \end{align}
  Then, by \eqref{e_definitionAlpha}  and using \eqref{eq_estimateHkHj} we have:
  \begin{align*}
    \left|\frac{\stepex{jk}\stepex{0l}}{\stepex{0k}}\right| &<
    \left|\la\slope{l}(q_l)\cdots\la\slope{j}(q_{j})\right|^{-1}\onepbigo{\la^{-1/2}} & \textrm{for }l\leq j< k;\\
    \left|\frac{\stepex{jk}\stepex{0l}}{\stepex{0k}} - \stepex{jl}\right| &<
    \left|\la\slope{j}(q_j)\cdots\la\slope{l}(q_l)\right|^{-1}\onepbigo{\la^{-1/2}} & \textrm{for }0< j \leq l;
  \end{align*}
  Thus:
  \begin{equation}\label{eq_differentialEquationForl}
    \partial_\la{X_l} + \frac{\dot\phi(x_l)}{\la\slope{l}(q_l)} < (\la\slope{l}(q_l))^{-1} \left(\|\partial_\la\rho^\forw\| + \|\partial_\la\rho^\backw\| + \bigo{\la^{-1/2}}\right).
  \end{equation}
  If moreover $x_l\in\ptcrit\setminus\picrit$, we have $|\dot\phi(x_l)|\sim 1$, $\la\slope{l}(q_l)\sim \la^{1/2}$, therefore since $\partial_\la\rho^\forw$ and $\partial_\la\rho^\backw$ are small, $\partial_\la{X_l}$ has the same sign as $\frac{\dot\phi(x_l)}{\la\slope{l}(q_l)}$, and this immediately implies our claim.

  The claim, in turn, implies that $\tpflow{k}{\rho}{\la^*}{\la}$ is well defined; in fact for any $p\in\fnicePoints_{k}(\la^*)$ there exists a unique solution $Q(p,\la)$ of equations \eqref{eq_xiXiDifferentialEquations} with boundary condition $p$; this solution extends indefinitely for $\la > \la^*$ and it is such that $Q(p,\la)\in\fnicePoints_k(\la)$. Finally, \eqref{e_estimateXk} follows from \eqref{eq_differentialEquationForl}; estimate \eqref{eq_differentialEquationForY0} follows immediately from \eqref{e_differentialEquationForSharp} whereas estimate \eqref{eq_differentialEquationForYN} follows from the analogous computation obtained using $\bhomeo_k$ in place of $\fhomeo_k$.
\end{proof}
We also define the companion semiflow:
\[
\tqflow{k}{\rho}{\la^*}{\la} = \sm^k\circ\tpflow{k}{\rho}{\la^*}{\la}\circ\sms^{-k}
\]
Notice that by definition the following diagram also commute:
\[
\begin{CD}
  \bnicePoints_k(\la^*) @>{\tqflow{k}{\rho}{\la^*}{\la}{}}>>\bnicePoints_k(\la)\\
  @VV\bhomeo_k(\la^*) V  @VV\bhomeo_{k}(\la) V\\
  \torus\times\torus @>\rho>> \torus\times\torus
\end{CD}
\]
Moreover we define the semiflows associated to the identity map
\begin{align*}
  \pflow{k}{\la^*}{\la} &= \tpflow{k}{\Id}{\la^*}{\la}{} & \qflow{k}{\la^*}{\la} &= \tqflow{k}{\Id}{\la^*}{\la}{},
\end{align*}

along with the following semiflow that we call the \emph{$k$-inclusion map}:
\[
\lflow{k}{\la^*}\la{}=\sm^{k+1}\pflow{2k+1}{\la^*}\la{}\sms^{-(k+1)}.
\]
Notice that the above map preserves the set $\hypset_k$, i.e. $\lflow{k}{\la^*}{\la}{\hypset_k(\la^*)}\subset{\hypset_k(\la)}$. We are now able to give the
\begin{proof}[Proof of property (a) in proposition~\ref{p_hyperbolicSet}]
  Using \eqref{e_completeDifferentialEquation} it follows that $\lflowmap{k}$ has a smooth limit for $k\to\infty$ that we denote by $\lflowmap{}$. Clearly $\lflowmap{}$ preserves $\hypset_\infty$ and defines the natural inclusion $\hypset_\infty(\la^*)\to\hypset_\infty(\la)$. Using once more \eqref{eq_differentialEquationForl} we can prove that $\lflowmap{}$ preserves the set $\phypset$ as well, and this concludes the proof.
\end{proof}

As a first application of the previous lemma, we show that the admissibility condition is monotone with respect to the parameter i.e.\ if $\la_1\leq\la_2$ then $\alphabet(\la_1)\subset\alphabet(\la_2)$. This will allow us to make some useful simplifications in the proof, as it will be described at the beginning of the next section.
\begin{prp}\label{l_continuationRectangles}
  Let $\letter\in\alphabet(\la_1)$; then for all $\la_2\geq\la_1$, we have that $\letter\in\alphabet(\la_2)$.
\end{prp}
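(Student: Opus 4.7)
The plan is a continuation argument based on Lemma~\ref{l_continuationXiXi'}. A symbol $\letter\in\alphabet$ is $\la$-admissible exactly when the map $\homeo(\la)=(\dpix{-1},\dpix{1})$ restricts to a diffeomorphism between the associated connected component $\recta\letter(\la)$ and $\gpstr\is\sii(\la)\times\gpstr\js\si(\la)$, i.e.\ characterization \emph{(c)} of Proposition~\ref{p_alternativeAdmissible}. The goal is to show that this property, known at $\la_1$, persists for every $\la\in[\la_1,\la_2]$, with the correct winding index $k$ preserved.

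First I would exploit the fact that the anchors $\gxi\is\sii$ and $\gxi\js\si$ are $\la$-independent, while the strips $\gpstr\cs\pm(\la)$ shrink and their complements $\gpstr\ds\pm(\la)$ grow on scale $\la^{-1/2}$ as $\la$ increases. I would construct a smooth family $\rho(\la_1,\la)=(\rho^\forw,\rho^\backw):\torus\times\torus\to\torus\times\torus$, equal to the identity at $\la=\la_1$, sending $\gpstr\is\sii(\la_1)\times\gpstr\js\si(\la_1)$ onto $\gpstr\is\sii(\la)\times\gpstr\js\si(\la)$ and fixing a neighborhood of the two anchors (and of $\gxi\s,\gxi{-\s}$). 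Because the motion of the boundary of each strip is $\bigo{\la^{-3/2}}$, one arranges $\|\partial_\la\rho^\forw\|,\|\partial_\la\rho^\backw\|<1/4$ for $\la$ large, so Lemma~\ref{l_continuationXiXi'} and its companion version for $\bhomeo_2$ apply.

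Next, using the companion semiflow $\tqflow{2}{\rho}{\la_1}{\la}{}$ (conjugated as needed to match $\homeo=\bhomeo_2\circ\sm$), I would transport the open component $\intr\recta\letter(\la_1)\subset\bnicePoints_2(\la_1)$ to a subset of $\bnicePoints_2(\la)$. Commutativity of the diagram in Lemma~\ref{l_continuationXiXi'} gives
\[
\homeo(\la)\circ\tqflow{2}{\rho}{\la_1}{\la}{}\circ\sm=\rho\circ\homeo(\la_1),
\]
so $\homeo(\la)$ restricts to a diffeomorphism of the transported open set onto $\intr\gpstr\is\sii(\la)\times\intr\gpstr\js\si(\la)$. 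Extending by continuity to the closures (which is legitimate because the semiflow remains smooth up to $\partial\icrit(\la)$), I obtain a closed subset of $\torus^2$ satisfying condition \emph{(c)} of Proposition~\ref{p_alternativeAdmissible}; since the winding integer $k$ is a discrete invariant of the continuous deformation, the transported component is exactly $\recta\letter(\la)$, hence $\letter\in\alphabet(\la)$.

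The main obstacle is a geometric one: ensuring that the continuation does not leave the relevant strips—in particular, that the midpoint $p$ remains in $\gstr\ds\s(\la)$ and that, when $\is=\cs$ or $\js=\cs$, the preimage $\sm^{-1}p$ or image $\sm p$ remains in the (shrinking) critical strip $\gstr\cs\sii(\la)$ or $\gstr\cs\si(\la)$. The first point is handled by monotonicity: $\icrit(\la)$ shrinks in $\la$, and the sign-preserving relation $\partial_\la X_l\sim-\dot\phi(x_l)/(\la\slope{l})$ proved inside Lemma~\ref{l_continuationXiXi'} shows that a point outside $\icrit(\la_1)$ can only drift further outside $\icrit(\la)$. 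The second point is enforced by construction of $\rho$, which maps the relevant strips onto themselves. Once these checks are settled, matching the continued set to $\recta\letter(\la)$ reduces to bookkeeping on signs and the winding $k$.
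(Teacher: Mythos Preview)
Your approach is essentially the same as the paper's: construct a family $\rho$ of near-identity maps carrying the bases $\gpstr\is\sii(\la_1)\times\gpstr\js\si(\la_1)$ onto $\gpstr\is\sii(\la)\times\gpstr\js\si(\la)$ with $\|\partial_\la\rho\|=\bigo{\la^{-3/2}}$, then invoke Lemma~\ref{l_continuationXiXi'} (via the relation $\homeo=\fhomeo_2\circ\sm^{-1}$, equivalently your $\bhomeo_2\circ\sm$) to transport the admissible component and conclude by characterization~(c) of Proposition~\ref{p_alternativeAdmissible}. The paper is terser---it takes $\rho$ affine on the bases and simply says ``apply Lemma~\ref{l_continuationXiXi'}''---whereas you spell out the geometric checks (midpoint staying outside $\icrit$, preservation of the winding index $k$) that the paper leaves implicit; but the argument is the same.
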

\begin{proof}
  For ease of notation let $\rect_i=\recta\letter(\la_i)$, $\bbasei{i}=\bbasea{\letter}(\la_i)$ and $\fbasei{i}=\fbasea{\letter}(\la_i)$; we need to prove that the map $\homeo(\la_2):\rect_2\to\bbasei{2}\times\fbasei{2}$ is a diffeomorphism. For $\la\geq\la_1$ let $\tilde\rho^\forw(\la_1,\la)$ be the family of affine orientation preserving diffeomorphisms mapping $\bbasei{1}$ to $\bbasea{\letter}(\la)$; similarly let $\tilde\rho^\backw(\la_1,\la)$ be the family of affine orientation preserving diffeomorphisms which map $\fbasei{1}$ to $\fbasea{\letter}(\la)$.  We then smoothly extend $\tilde\rho^\backw$ and $\tilde\rho^\forw$ to be maps of the torus onto itself by making them act as the identity outside of a neighborhood of $\fbasei{1}$ and $\bbasei{1}$ respectively. It is then easy to check that we can take the extension satisfy the following estimate:
  \[
  \|\partial_\la{\rho^\forw}\|, \|\partial_\la{\rho^\backw}\| \sim \la^{-3/2}<1/4
  \]
  Hence we can apply proposition~\ref{l_continuationXiXi'}, which implies that $\homeo(\la_2)$ is a diffeomorphism, since, as we noted before we have $\homeo(\la)=\fhomeo_2(\la)\circ\sm^{-1}$.
\end{proof}
\begin{cor}\label{c_forwardAdmissible}
  Let $\itin$ be a $\la_1$-admissible closed word; then $\itin$ is $\la_2$-admissible for all $\la_2\geq\la_1$.
\end{cor}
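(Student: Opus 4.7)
The corollary is essentially an immediate consequence of Proposition~\ref{l_continuationRectangles}, so the plan is very short. The key observation is to unravel the definition of $\la$-admissibility for a closed word: it is the conjunction of two requirements, namely (i)~every symbol $\letter$ appearing in the word belongs to $\alphabet(\la)$, and (ii)~consecutive symbols (including the boundary signs) satisfy the compatibility condition \eqref{e_compatibilityCondition}. Condition (ii) is purely combinatorial on the signs $\s,\si,\sii$ and makes no reference to $\la$, so it is automatically preserved when the parameter is enlarged.

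Consequently the only thing to verify is that (i) is monotone in $\la$, and this is exactly the content of Proposition~\ref{l_continuationRectangles}. The plan is then as follows: write the $\la_1$-admissible closed word as $\itin = \si\,\letter_1\cdots\letter_r\,\sii$; by hypothesis each $\letter_i$ lies in $\alphabet(\la_1)$, and Proposition~\ref{l_continuationRectangles} applied to each $\letter_i$ separately yields $\letter_i\in\alphabet(\la_2)$ for any $\la_2\geq\la_1$. Since the compatibility relations between the signs of $\letter_i,\letter_{i+1}$ and between the boundary signs $\si,\sii$ and the first/last symbols are the same at $\la_2$ as at $\la_1$, the resulting word is $\la_2$-admissible.

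There is no real obstacle here: the whole work has already been done in Proposition~\ref{l_continuationRectangles} via the continuation semiflow $\tpflowPrivate^\forw$ constructed in Lemma~\ref{l_continuationXiXi'}. The only mild point worth remarking on (and which I would include as a short sentence for clarity) is that the analogous monotonicity also applies symbol-by-symbol irrespective of whether the symbol belongs to $\alphabetReg$, $\alphabetHead$, $\alphabetTail$ or $\alphabetDeg$, because Proposition~\ref{l_continuationRectangles} is stated uniformly over $\alphabet$. The whole proof should fit into two or three lines.
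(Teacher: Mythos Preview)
Your proposal is correct and matches exactly the paper's intended (implicit) argument: the corollary is stated in the paper without proof because it is an immediate consequence of Proposition~\ref{l_continuationRectangles}, applied symbol by symbol, together with the observation that the compatibility condition~\eqref{e_compatibilityCondition} and the structure of a closed word are purely combinatorial and independent of $\la$. Your remark that the proposition applies uniformly over $\alphabet$ (hence to symbols in any of the restricted alphabets $\alphabetReg,\alphabetHead,\alphabetTail,\alphabetDeg$) is also correct and worth including for clarity.
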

As a further application of lemma~\ref{l_continuationXiXi'} we obtain some extremely useful estimates concerning the variation of the slope fields along the semiflows $\pflow{k}{\la^*}{\la}$ and $\qflow{k}{\la^*}{\la}$, which will be of utmost importance to control the multiplier of a periodic point.
\begin{prp} \label{l_partialHLa}
  Fix $\la\geq\la^*$ and $k\in\naturals$; let $p\in\fnicePoints_k(\la)$ and $q\in\bnicePoints_k(\la)$; then for any $0<l\leq k$ we have:
  \begin{subequations}
    \begin{align}
      \de{}{\la}\,\slope{l}(\qflow{k}{\la^*}{\la}q;\la) = & -2\la^{-2} + \bigo{|\la\slope{j-1}(\sm\qflow{k}{\la^*}{\la}q;\la)|^{-3}};\label{eq_partialHkLa}\\
      \de{}{\la}\,\slope{-l}(\pflow{k}{\la^*}{\la}p;\la) & = \bigo{|\la\slope{-j}(\pflow{k}{\la^*}{\la}p;\la)|^{-3}}.\label{eq_partialH-kLa}
    \end{align}
  \end{subequations}
\end{prp}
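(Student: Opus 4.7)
The plan is to prove both estimates by induction on $l$, using the recursion \eqref{eq_pfh} for the forward case and \eqref{eq_pbh} for the backward case, combined with lemma~\ref{l_continuationXiXi'} (specialised to $\rho = \id$) in order to track how the points $q'(\la) = \qflow{k}{\la^*}{\la}q$ and $p'(\la) = \pflow{k}{\la^*}{\la}p$ evolve with the parameter.

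For \eqref{eq_partialHkLa} the base case $l=1$ is immediate: since $\slope{1}(q';\la) = \ddot\phi(\pix q') + 2/\la$ and the commuting diagram defining $\qflow{k}{\la^*}{\la}$ forces $\pix q'(\la) \equiv \pix q$, the total derivative is exactly $-2\la^{-2}$, with the error slot vacuous because $\slope{0}=\infty$. For the inductive step I differentiate $\slope{l}(q') = \slope{1}(q') - [\la^2\slope{l-1}(\sm^{-1}q')]^{-1}$ in $\la$: the main term $-2\la^{-2}$ comes from $\partial_\la\slope{1}(q')$, an elementary piece $2/(\la^3\slope{l-1}(\sm^{-1}q'))$ is bounded by $\const\,|\la\slope{l-1}|^{-3}$ since $\slope{l-1}$ is uniformly bounded, and the crux is estimating the remaining piece $[\la^2\slope{l-1}(r)^2]^{-1}\,\de{}{\la}\slope{l-1}(r(\la);\la)$ at the moving point $r(\la) = \sm^{-1}q'(\la)$.

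The central analytical observation is that $r(\la)$ moves very slowly: its $y$-coordinate $\piy r = \pix q$ is independent of $\la$, while its $x$-coordinate satisfies $\de{}{\la}\pix r = \dot\phi(\pix q) - \de{}{\la}\piy q'$, which by \eqref{eq_differentialEquationForYN} of lemma~\ref{l_continuationXiXi'} applied to $\tqflow$ with $\rho = \id$ equals $\bigo{\la^{-1/2}}$ after the two $\dot\phi(\pix q)$ terms cancel. Splitting $\de{}{\la}\slope{l-1}(r;\la)$ into a spatial contribution $\partial_x\slope{l-1}(r)\cdot\de{}{\la}\pix r$ and a partial-$\la$ contribution at fixed $r$, I control the spatial gradient via the identity $\partial_x = \partial_{\slope{1}} - \la\slope{1}\partial_{\slope{0}}$ together with estimates \eqref{eq_estimatedHk} and \eqref{eq_verticalEstimateFordHk} of proposition~\ref{l_estimateReferenceSequence}, while the partial-$\la$ derivative at the literally fixed point $r$ satisfies an analogous bound through a parallel induction in which the recursion \eqref{eq_pfh} is differentiated pointwise. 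Assembling these pieces and multiplying by the prefactor $[\la^2\slope{l-1}^2]^{-1}$ keeps the total within $\bigo{|\la\slope{l-1}(\sm^{-1}q')|^{-3}}$.

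Estimate \eqref{eq_partialH-kLa} is handled by an entirely parallel induction using \eqref{eq_pbh} in place of \eqref{eq_pfh}; the absence of a $-2\la^{-2}$ main term reflects the fact that the outer layer of the pull-back recursion carries an additional $\la^{-2}$ factor which suppresses the would-be leading contribution, so only the error term survives. Alternatively one may invoke the reversor identity from \eqref{eq_explicitReversor} (observing that $r$ does not depend on $\la$) and deduce the backward bound directly from \eqref{eq_partialHkLa}. The principal technical obstacle throughout is that the auxiliary point $r(\la)$ (and its analogue in the backward case) is not itself the image under any of the semiflows governed directly by lemma~\ref{l_continuationXiXi'}; the resolution is precisely the near-cancellation of velocities noted above, which makes $r$ almost stationary and allows the missing flow structure to be replaced by the pointwise spatial estimates of proposition~\ref{l_estimateReferenceSequence}.
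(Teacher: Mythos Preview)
Your outline is on the right track, and you correctly identified the obstacle: the auxiliary point $r(\la)=\smi q'(\la)$ is not itself the image under any of the semiflows of lemma~\ref{l_continuationXiXi'}. But your resolution via the near-cancellation in \eqref{eq_differentialEquationForYN} loses information. All you extract is $\de{}{\la}\pix r = \bigo{\la^{-1/2}}$, and since $\partial_x\slope{l-1}=\bigo{1}$ the spatial contribution to $\de{}{\la}\slope{l-1}(r;\la)$ is only $\bigo{\la^{-1/2}}$; after the prefactor $(\la\slope{l-1}(r))^{-2}$ this yields $\bigo{\la^{-1/2}|\la\slope{l-1}(r)|^{-2}}$, which matches the claimed $\bigo{|\la\slope{l-1}(r)|^{-3}}$ only when $|\la\slope{l-1}(r)|\sim\la^{1/2}$. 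At bulk points, where $|\la\slope{l-1}(r)|\sim\la$, you fall short by a factor $\la^{1/2}$. (The cruder $\bigo{\la^{-3/2}}$ that your argument does yield is in fact enough for the downstream applications, but not for the proposition as stated.)

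The paper's proof avoids the split into spatial and partial-$\la$ pieces altogether. Writing $p_j=\sm^j p'$ with $p'=\sm^{-k}q'$, so that $q'=p_k$ and $r=p_{k-1}$, it records the single identity
\[
\deh\slope{l}(p_j)=\dddot\phi(x_j)\,\deh x_j-\frac{2}{\la^2}\Bigl(1+\frac{1}{\la\slope{l-1}(p_{j-1})}\Bigr)\deh\la+\frac{1}{(\la\slope{l-1}(p_{j-1}))^2}\,\deh\slope{l-1}(p_{j-1})
\]
and iterates it jointly in $(l,j)$ starting at $j=k$, where $\deh x_k=0$. The point you missed is that the intermediate $x_j$ for $0<j<k$ are \emph{exactly} the quantities to which \eqref{e_estimateXk} applies, and that estimate gives the sharp bound $|\deh x_j/\deh\la|<|\la\slope{j}(p_j)|^{-1}\bigl(1+\bigo{\la^{-1/2}}\bigr)$ rather than merely $\bigo{\la^{-1/2}}$. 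With this, the first unwound term $\dddot\phi(x_{k-1})\,\deh x_{k-1}/\deh\la$ is already $\bigo{|\la\slope{l-1}(r)|^{-1}}$, the claimed error follows directly, and no separate parallel induction is needed. Your reversor remark for \eqref{eq_partialH-kLa} is a valid shortcut; the paper instead writes the analogous identity for $\deh\slope{-l}(p_j)$ and iterates in the same fashion.
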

\begin{proof}
  Let us denote $p_j=(x_j,y_j)=\sm^jp$; since $\slope{0}\equiv \infty$  assume conventionally that $\deh\slope{0}=0$ and for any $l>0$ and $j$ compute:
  \begin{align*}
    \deh\slope{l}(p_j) &= \dddot\phi(x_j)\deh x_j - \frac{2}{\la^2}\left(1+\frac{1}{\la\slope{l-1}(p_{j-1})}\right)\deh\la +\\&\quad +\frac{1}{\la^2\slope{l-1}^2(p_{j-1})}\deh\slope{l-1}(p_{j-1});\\
    \intertext{Similarly:}
    \deh\slope{-l}(p_j) &= 2\la^{-1}\slope{-l}(p_j)(1-\la\slope{-l}(p_j))+\\&\quad + (\la\slope{-l}(p_j))^2\left[ \dddot\phi(x_{j+1})\deh x_{j+1} - \deh\slope{-l+1}(p_{j+1})\right].
  \end{align*}
  By definition of $\pflowmap{k}$ and $\qflowmap{k}$ we have $\deh x_0=0$, $\deh x_k=0$; then \eqref{e_estimateXk} implies that for $0<j<k$:
  \[
  \deh x_j=\bigo{|\la\slope{-j}(\sm^j\pflow{k}{\la^*}{\la}p;\la)|^{-1}}\deh\la.
  \] Therefore, iterating the previous expressions we obtain \eqref{eq_partialHkLa} and \eqref{eq_partialH-kLa}.
\end{proof}
The previous proposition immediately implies the following bound for $p\in\fnicePoints_k(\la)$:
\begin{equation}\label{e_partialStepexLa}
  \partial_\la\stepex{0k}(p,\la)=\la^{-1}\stepex{0k}(p,\la)\onepbigo{\la^{-1/2}}
\end{equation}
Additionally we collect in the following proposition some useful estimates concerning the variation of the reference slopes in the coordinates defined by $\fhomeo$ and $\bhomeo$. 
\begin{prp}\label{l_partialHxi}
  Fix $\la$ and let $k\in\naturals$; let $p\in\fnicePoints_k(\la)$ and $q\in\bnicePoints_k(\la)$; let further $(\xi,\xi')=\fhomeo_k p$ and $(\eta,\eta')=\bhomeo_k q$ then for any $0<j\leq k$ we obtain:
  \begin{align*}
    \partial_{\xi}\,\slope{-j}([\fhomeo_k]^{-1}(\xi,\xi'))&=\parSlope{\slope{-k}}{\slope{-j}}(p);\\
    \partial_{\xi'}\,\slope{-j}([\fhomeo_k]^{-1}(\xi,\xi'))&=\stepex{0k}^{-1}(p)\cdot\parSlope{\slope{0}}{\slope{-k}}(p);\\
    \partial_{\eta}\,\slope{j}([\bhomeo_k]^{-1}(\eta,\eta')) &=\stepex{0k}^{-1}(\sm^{-j} q) \parSlope{\slope{0}}{\slope{k}}(q);\\
    \partial_{\eta'}\,\slope{j}([\bhomeo_k]^{-1}(\eta,\eta'))&=\parSlope{\slope{k}}{\slope{j}}(q).
    \intertext{and moreover}
    \partial_{\xi}\,\stepex{0j}([\fhomeo_k]^{-1}(\xi,\xi'))&\leq\frac{1}{16}\stepex{0j}(p)\onepbigo{\la^{-1/2}}\\
    \partial_{\xi'}\,\stepex{0j}([\fhomeo_k]^{-1}(\xi,\xi'))&\leq\frac{1}{16}\stepex{0j}(p)\onepbigo{\la^{-1/2}}
  \end{align*}
\end{prp}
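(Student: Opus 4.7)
The plan is to derive each identity by a direct application of the chain rule to the diffeomorphism $\fhomeo_k$ (respectively $\bhomeo_k$), combined with the formulas for $\partial_p\dpix{\pm k}$ computed earlier in this subsection. First I would invert the differential $\deh\fhomeo_k$ explicitly, using $\partial_p\dpix{0}=(1,0)$ together with the expression for $\partial_p\dpix{k}$. The tangent vector $\partial p/\partial\xi$ (with $\xi'$ fixed) points along the level set $\{\dpix{k}=\mathrm{const}\}$, which by construction is an integral curve of $\slope{-k}$, and so equals $(1,\la\slope{-k}(p))$; the vector $\partial p/\partial\xi'$ (with $\xi$ fixed) is vertical and, solving the linear system, has magnitude $\stepex{0k}(p)^{-1}$. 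Plugging these tangent vectors into the chain rule applied to $\slope{-j}$ and recalling the definition of $\parSlope{\slope{}}{\cdot}$ immediately yields the two $\fhomeo_k$-identities.

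Next I would carry out the analogous computation for $\bhomeo_k$: varying $\eta'$ with $\eta$ fixed moves $q$ along an integral curve of $\slope{k}$, while varying $\eta$ with $\eta'$ fixed moves $q$ vertically. The chain rule then produces the remaining two identities for $\slope{j}$. Alternatively, one can transfer the forward identities to the backward ones via the reversor relations \eqref{eq_explicitReversor} together with the relation between $\slope{k}$ and $\slope{-(k+1)}$ derived right after \eqref{eq_definitionReferenceSequence}.

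Finally, for the two estimates on $\partial_\bullet\stepex{0j}$, I would take logarithmic derivatives of the product expression $\stepex{0j}(p)=\prod_{i=1}^{j-1}\la\slope{i}(\sm^i p)$ supplied by \eqref{e_definitionAlpha}. Differentiating factor by factor with respect to $\xi$ or $\xi'$ reduces, by the chain rule, to evaluating derivatives of $\slope{i}$ at each intermediate iterate $\sm^i p$, where the direction along which $\sm^i p$ moves is $\sm^i_*\slope{-k}=\slope{i-k}$. Each contribution is then estimated using the bounded distortion inequality \eqref{e_smallDistortion} (which provides the constant $1/16$) together with the expansion bounds \eqref{eq_expansionRate} valid on $\fnicePoints_k$; summing the resulting geometric series produces the claimed bound $\tfrac{1}{16}\stepex{0j}(1+\mathcal O(\la^{-1/2}))$. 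The main obstacle lies precisely in this final step: the chain-rule identities are essentially linear algebra once the correct tangent directions are identified, but the distortion sum requires carefully tracking how the differentiation direction shifts index at each iterate and verifying that the $1/16$ constant survives telescoping across all $j-1$ factors without degradation.
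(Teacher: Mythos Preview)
Your proposal is correct and matches the paper's approach exactly: the paper's own proof is the single sentence ``The proof simply follows by the expressions for the differentials $\deh\fhomeo_k$ and $\deh\bhomeo_k$ and by the distortion estimate \eqref{e_smallDistortion},'' and what you have written is precisely the unpacking of that sentence. Your identification of the tangent directions $(1,\la\slope{-k})$ and $(0,\stepex{0k}^{-1})$ from inverting $\deh\fhomeo_k$, together with the observation that only the first factor in the logarithmic derivative of $\stepex{0j}$ contributes at order $1/16$ while the remaining ones form a geometric series in $\la^{-1/2}$, is exactly what the paper intends.
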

\begin{proof}
  The proof simply follows by the expressions for the differentials $\deh\fhomeo_k$ and $\deh\bhomeo_k$ and by the distortion estimate \eqref{e_smallDistortion}.
\end{proof}
Fix $\la$; then for $\itin$, $\fcyl$ and $\bcyl$ respectively a $\la$-admissible closed word, forward cylinder and backward cylinder of rank $r$, define the following quantities bounding the total expansion rates along the $x$ direction:
\begin{align*}
  \ndLL{\itin}(\la)&=\inf_{p\in\domain_\itin(\la)}|\stepex{0r+1}(p;\la)|& \sdLL{\itin}(\la)&=\sup_{p\in\domain_\itin(\la)}|\stepex{0r+1}(p;\la)|;\\
  \ncLL{{\fcyl}}(\la)&=\inf_{p\in\cdomain_{\fcyl}(\la)}|\stepex{0r+1}(p;\la)| &\scLL{{\fcyl}}(\la)&=\sup_{p\in\cdomain_{\fcyl}(\la)}|\stepex{0r+1}(p;\la)|;\\
  \ncLL{{\bcyl}}(\la)&=\inf_{p\in\cdomain_{\bcyl}(\la)}|\stepex{0r+1}(\sm^{-r-1}p;\la)|&\scLL{{\bcyl}}(\la)&=\sup_{p\in\cdomain_{\bcyl}(\la)}|\stepex{0r+1}(\sm^{-r-1}p;\la)|.
\end{align*}
By \eqref{e_definitionAlpha}, proposition~\ref{l_estimateReferenceSequence} and the definition of critical set we have:
\[
(2\pi\la)^r \geq \ndLL{\itin}(\la),\,\ncLL{\fcyl}(\la),\,\ncLL{\bcyl}(\la) \geq (4\la)^{r/2}
\]
Let $\Bcyl=\fcyl\cap\bcyl$ be $\la$-admissible bicylinder, then define
\begin{align*}
  \ncLL{\Bcyl}(\la) &= \left(\ncLL{\fcyl}(\la)^{-1}+\ncLL{\bcyl}(\la)^{-1}\right)^{-1};\\
  \scLL{\Bcyl}(\la) &= \left(\scLL{\fcyl}(\la)^{-1}+\scLL{\bcyl}(\la)^{-1}\right)^{-1}.
\end{align*}
Notice finally that by proposition~\ref{l_partialHxi} we have that
\[
\sdLL\itin=\ndLL\itin\onepbigo{\la^{-1/2}}
\]
and for $\la$-bulk cylinders $\fcyl$ and $\bcyl$:
\begin{align}\label{e_bulkEstimates}
  \sdLL\fcyl&=\ndLL\fcyl\onepbigo{\la^{-1}}&
  \sdLL\bcyl&=\ndLL\bcyl\onepbigo{\la^{-1}}
\end{align}
We can moreover prove that, given $\la$-bulk cylinders $\fcyl=\si\letter_1\cdots\letter_r$ and $\bcyl=\letter_{-r}\cdots\letter_{-1}\sii$ then:
\begin{align}\label{e_bulkApproximation}
  \ndLL\fcyl &= \prod_{j=1}^r \la\left|\slope{1}(p_{\letter_j})\right|\onepbigo{\la^{-1}}&
  \ndLL\bcyl &= \prod_{j=-r}^{-1} \la\left|\slope{1}(p_{\letter_j})\right|\onepbigo{\la^{-1}}.
\end{align}


\input{proof.p}
\input{riemannlemma.p}
\section{Proof of main results}\label{s_proof}
In this section we give the proof of our main results; the first step is to cut the parameter line in overlapping intervals of length $\bigo{1}$ and then prove our results for $\la$ in each of these parameter intervals; in order to do so for, $n\in\naturals$ let
\begin{align*}
  \laa(n)&=\frac{1}{2}n-\frac{7}{16} &\lab(n)&=\frac{1}{2}n+\frac{7}{16};
\end{align*}
obviously, the union over all $n$ of the intervals $\J(n)=[\laa(n),\lab(n)]$ covers
$\reals^+$; we will prove our main theorem for $\la\in\J(n)$ for arbitrary, but large
enough, $n$. Fix $n$ and drop it from the notations; let $\J=\J(n)$ be the \emph{main
  parameter set} and denote the normalized Lebesgue measure on $\J$ with the symbol
$\prob$. We also define the \emph{main core parameter set}
  \(
  \hat\J=\ball{3/8}{n/2}\subset\J.
  \)
We now exploit the fact that admissibility is a monotonic property: let $\lwords{r}=\cwords r(\laa)$ and $\uwords{r}=\cwords{r}(\lab)$; notice that proposition~\ref{c_forwardAdmissible} implies:
\begin{equation}\label{e_ulboundForD}
  \lwords r\subset\cwords r(\la)\subset\uwords r\ \fa\la\in\J.
\end{equation}
If $\itin\in\uwords r$, then possibly $\itin$ is not admissible for some values of $\la\in\J$; thus we define the admissibility interval:
\[
\J^\admissible_\itin=\{\la\in\J\st\itin \textrm{ is $\la$-admissible}\}.
\]
On the other hand if $\itin\in\lwords r$ we have $\J^\admissible_\itin=\J$; we give similar definitions for cylinders:
\[
\J^\admissible_\Bcyl=\{\la\in\J\st\Bcyl\text{ is $\la$-admissible}\}.
\]
Moreover we define:
\begin{align*}
  \ndLL\itin&=\inf_{\la\in\J^\admissible_\itin}\ndLL\itin(\la)&  \sdLL\itin&=\sup_{\la\in\J^\admissible_\itin}\sdLL\itin(\la)\\
  \ndLL\Bcyl&=\inf_{\la\in\J^\admissible_\Bcyl}\ndLL\Bcyl(\la)&  \sdLL\Bcyl&=\sup_{\la\in\J^\admissible_\Bcyl}\sdLL\Bcyl(\la)
\end{align*}

For any $N$ and $\itin\in\uwords {N-1}$ define the set of parameters for which we have an elliptic periodic point belonging to a given critical domain:
\[
\E_1(N,\itin)=\{\la\in\J^\admissible_\itin \st \ex p\in\domain_\itin(\la) \textrm{ cyclicity 1 elliptic $N$-periodic point for }\sm \}.
\]
Furthermore, define:
\[
\E_1(N) =\bigcup_{\itin\in\uwords {N-1}}\E_1(N,\itin).
\]
Let $\la\in\J$ be such that there exists $p\in\crit(\la)$ a cyclicity 1 elliptic $N$-periodic point for $\sm$; by corollary~\ref{c_s1ppAdmissible} the point $p$ must belong to some domain, hence \eqref{e_ulboundForD} implies that $\la\in \E_1(N)$.
In the following subsections we prove two lemmata, which are the main technical ingredients of our main theorem; the first one gives an upper bound in measure in the parameter space for existence of a cyclicity one elliptic periodic point of fixed itinerary and it furthermore proves that we have $1-1$ correspondence between admissible closed word of rank $N-1$ and elliptic $N$-periodic points of cyclicity one. The second lemma is rather a lower bound; we show that in any small ball in parameter space we can find a smaller ball, with a proper lower bound on the diameter, for which there exists a cyclicity one elliptic periodic orbit of appropriate period which shadows for some small (logarithmic) fraction of its period an assigned hyperbolic orbit. This lies the foundation for the construction of a positive Hausdorff dimension set of parameters admitting infinitely many elliptic islands of cyclicity one.
\begin{lem}\label{l_estimateParameterSpace}
  Fix $N$ and let $\itin\in\uwords {N-1}$;  then:
  \begin{enumerate}[(a)]
  \item the following estimate holds:
    \begin{equation}\label{e_estimateParameterSpace}
      \prob (\E_1(N,\itin))\leq\const\cdot\laa^{-1}\ndLL\itin^{-2}
    \end{equation}
  \item there can be at most one elliptic $N$-periodic point of cyclicity 1 in each  $\domain_\itin(\la)$.
  \end{enumerate}
\end{lem}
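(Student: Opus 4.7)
My plan is to reduce each claim to a single scalar identity relating $\partial_\xi G$ and the trace $T$ along the diagonal $\xi=\xi'$ in $\fhomeo_{N-1}$-coordinates.

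By Corollary~\ref{c_s1ppAdmissible}, every cyclicity one $N$-periodic point of $\sm$ contained in $\domain_\itin(\la)$ has the form $p=\pd\itin(\xi,\xi;\la)$ with $\xi\in\pix\domain_\itin(\la)\subset\picrit(\la)$, and the periodicity condition reduces to
\[
G(\xi,\la):=\piy\qd\itin(\xi,\xi;\la)-\piy\pd\itin(\xi,\xi;\la)=0,
\]
while ellipticity is $T(\xi,\la):=\operatorname{tr}D\sm^N(p)\in(-2,2)$. First I would use the Jacobians $\deh\fhomeo_{N-1}$ and $\deh\bhomeo_{N-1}$ listed before Proposition~\ref{p_admissibleDomain}: restricting to the diagonal yields
\[
\partial_\xi G(\xi,\la)=\la\slope{N}(\sm^N p)-\la\slope{-N}(p)-2\stepex{0N}^{-1}(p).
\]
Next, writing $D\sm^N(p)=\bigl(\begin{smallmatrix}a_N&b_N\\ c_N&d_N\end{smallmatrix}\bigr)$ with $\det=1$ and identifying, for $\sm^Np=p$, the slopes of $D\sm^N(p)\cdot(0,1)^{\top}$ and of $(D\sm^N(p))^{-1}\cdot(0,1)^{\top}$ with $\la\slope{N}(p)$ and $-\la\slope{-N}(p)$ respectively, I would obtain $a_N=-b_N\la\slope{-N}(p)$, $d_N=b_N\la\slope{N}(p)$, and hence
\[
T(\xi,\la)=b_N\bigl(\la\slope{N}(p)-\la\slope{-N}(p)\bigr),\qquad b_N=\stepex{0N}(p)\onepbigo{\la^{-1/2}},
\]
the second claim coming from the Jacobi-type recursion $b_{k+2}=\beta_kb_{k+1}-b_k$ with $\beta_k=\la\slope 1(p_{k+1})$ compared with the definition of $\stepex{0N}$ via Proposition~\ref{l_estimateReferenceSequence}. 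Combining these two relations yields the key identity
\begin{equation}\label{e_keyxi}
\partial_\xi G(\xi,\la)=\frac{T(\xi,\la)-2}{\stepex{0N}(p)}\onepbigo{\la^{-1/2}}.
\end{equation}

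Assertion (b) then follows immediately: on the open elliptic window $|T|<2$, the right side of (\ref{e_keyxi}) is strictly negative and bounded in absolute value below by $c/\stepex{0N}(p)$, so $G(\cdot,\la)$ is strictly monotone in $\xi$ and has at most one zero. For (a), the implicit function theorem applied to $G=0$ combined with (\ref{e_keyxi}) produces the smooth branch $\xi=\xi(\la)$ with
\[
\xi'(\la)=-\frac{\partial_\la G}{\partial_\xi G}\sim -\stepex{0N}(p)\cdot\partial_\la G,
\]
where Proposition~\ref{l_continuationXiXi'} applied with $\rho=\id$---specifically estimate (\ref{eq_differentialEquationForYN}) in the limiting case $\partial_\la\rho^\forw=\partial_\la\rho^\backw=0$---gives $\partial_\la G=\dot\phi(\xi)+\bigo{\la^{-1/2}}$, bounded below by a positive absolute constant on $\picrit$. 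Setting $h(\la):=T(\xi(\la),\la)$, so that $\la\in\E_1(N,\itin)\iff h(\la)\in(-2,2)$, I would then differentiate $T=\stepex{0N}\bigl(\la\slope 1(p)+\bigo{\la^{-1/2}}\bigr)$ along the branch using (\ref{e_partialStepexLa}) and Proposition~\ref{l_partialHLa}; the dominant term is $\stepex{0N}\cdot\la\dddot\phi(\xi)\cdot\xi'(\la)$, whose modulus is $\gtrsim \la\,\ndLL\itin^{2}$. The estimate $|\E_1(N,\itin)|\le 4/\inf_{\J}|h'|\le\const\,\laa^{-1}\ndLL\itin^{-2}$ then yields (\ref{e_estimateParameterSpace}) after dividing by $|\J|$.

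The principal difficulty is securing identity (\ref{e_keyxi}). A term-by-term bound on $\partial_\xi G=\la\slope{N}-\la\slope{-N}-2\stepex{0N}^{-1}$ yields only $\bigo{\la^{-1/2}}$ via Proposition~\ref{l_estimateReferenceSequence}, missing the cancellation that makes the true size $\sim 1/\stepex{0N}$ on the elliptic window; absent this cancellation $\xi'(\la)$ cannot be shown to have the required size $\sim\stepex{0N}$, and the resulting bound on $\prob(\E_1(N,\itin))$ would be weaker by a factor of $\la\,\ndLL\itin$, insufficient for the Main Theorem.
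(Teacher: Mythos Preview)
Your approach to part (b) is essentially the same as the paper's: both identify the diagonal function $G(\xi,\la)=\diago\itin\la(\xi)$ and relate its $\xi$-derivative to the trace $T$, obtaining the ellipticity condition $\partial_\xi G\in(-4\stepex_{0N}^{-1},0)$ (the paper's \eqref{e_ellipticDiagoCondition}), which forces a definite sign and hence uniqueness of the elliptic zero. The identity you call (\ref{e_keyxi}) is precisely the content of \eqref{e_ellipticDiagoCondition}; in fact $b_N=\stepex_{0N}$ holds \emph{exactly}, as one sees from the factorisation of $\deh\sm^N$ displayed just before \eqref{eq_traceCondition}, so no $\bigo{\la^{-1/2}}$ correction is needed there.

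For part (a) your route is genuinely different. The paper proves (a) via Proposition~\ref{p_estimateItinDiameter}: it builds a nested decreasing sequence $\J_\itin^0\supset\J_\itin^1\supset\cdots$ containing $\J_\itin^\elliptic$, with a recursive diameter bound driven by estimates on $\ddleafMap\itin$ over the shrinking bases $\basec_\itin^j$; iterating yields $\diam\J_\itin\le\const\,\laa^{-1}\ndLL\itin^{-2}$, which is \eqref{e_estimateParameterSpace}. You instead track $h(\la)=T(\xi(\la),\la)$ along the elliptic branch and bound $|h'|$ from below by $\sim\la\,\stepex_{0N}^2$. This is more direct, and it is in fact the same computation the paper performs later, in the proof of Proposition~\ref{p_lowerEstimate}, to get the \emph{lower} bound needed for Lemma~\ref{l_fatDensityComplete}: there the paper writes $q=\partial_\xi\diago$ and integrates $q\,\deh q=-4\pi^2\la\,\deh\la\,(1+\bigo{\la^{-1/2}})$ along the branch. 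Your argument recognises that the same relation, read the other way, also gives the upper bound in (a). What you gain is concision; what the paper's iterative construction buys is the adapted base/parameter machinery $\basec_\Bcyl,\J_\Bcyl$ at the bicylinder level (Proposition~\ref{p_decreasingBcylEstimate}), which is indispensable for the covering Lemma~\ref{l_covering} and hence for Lemma~\ref{l_fatDensityComplete}. One small point you leave implicit: the bound $|\E_1(N,\itin)|\le 4/\inf|h'|$ needs the elliptic branch to be connected in $\la$ (a single fold); this follows from $\partial_\la G\sim 1$, which makes the critical value of $G(\cdot,\la)$ monotone in $\la$.
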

\begin{lem}\label{l_fatDensityComplete}
  For any fixed $\J$ there exist two sequences
  \begin{align*}
    \eps_N &= C\laa^{-(N-\lfloor\log N\rfloor-5)} & \eps'_N &= C'\lab^{-4N}
  \end{align*}
  such that, for any ball $\mathcal{B}\subset\hat\J$ of diameter larger than $\eps_N$ and any open proper word $\itin$ of rank $r\leq\lfloor\log N\rfloor$, there exists a ball $\mathcal{B'}\subset\mathcal{B}$ of diameter larger than $\eps'_N$ such that if $\la\in\mathcal{B'}$ there exists a point $p\in\hypset_r(\la)$ that is the center of an elliptic island of period $N$ and $\itin_r(p)=\itin$.
\end{lem}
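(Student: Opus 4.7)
The plan is to embed the given open proper word $\itin$ as a central sub-window of the itinerary of the critical orbit of a carefully designed closed word $\tilde\itin$ of rank $N-1$, and then, via the parameter-dependence machinery of subsection~\ref{ss_moving}, to show that the cyclicity-one period-$N$ orbit associated to $\tilde\itin$ sweeps through the elliptic trace window $(-2,2)$ as $\la$ traverses $\mathcal{B}$.

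Writing $\itin = \letter_{-r}\cdots\letter_{r-1}$, I would first extend $\itin$ to a closed word $\tilde\itin$ of length $N-1$ by embedding $\itin$ at positions $\lfloor N/2\rfloor - r,\ldots,\lfloor N/2\rfloor + r - 1$ of the critical orbit and filling the remaining slots (together with the opening head symbol in $\alphabetHead$ and closing tail in $\alphabetTail$) by $\laa$-bulk symbols chosen compatibly with \eqref{e_compatibilityCondition}. Since $2r\leq 2\lfloor\log N\rfloor\ll N$, the window carrying $\itin$ never touches the boundary critical positions, so compatibility at the joints can always be arranged. Corollary~\ref{c_forwardAdmissible} then guarantees that $\tilde\itin$ is $\la$-admissible for every $\la\in\J$, while the bulk estimates \eqref{e_bulkSlopeControl} and \eqref{e_bulkApproximation} apply uniformly to the filler portion throughout $\mathcal{B}$.

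Next, Corollary~\ref{c_s1ppAdmissible} reduces the existence of a cyclicity-one period-$N$ point in $\domain_{\tilde\itin}(\la)$ to the scalar equation $\pd{\tilde\itin}(\xi,\xi;\la) = \qd{\tilde\itin}(\xi,\xi;\la)$ on $\bbasei{\tilde\itin}\cap\fbasei{\tilde\itin}$. The unstable expansion from Proposition~\ref{p_admissibleDomain} together with the distortion bound \eqref{e_smallDistortion} makes this equation strictly monotone in $\xi$, and the implicit function theorem then yields a unique smooth curve $\la\mapsto p(\la)$; setting $p = \sm^{\lfloor N/2\rfloor}p(\la)$ produces a point in $\hypset_r(\la)$ with $\itin_r(p)=\itin$ by construction. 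A direct trace computation for the cocycle of standard-map differentials along the orbit, using the identity $2+\la\ddot\phi(\pix\sm q)=\la\slope{1}(\sm q)$, yields, up to relative error $\bigo{\la^{-1}}$,
\[
M(\la)\;\defeq\;\mathrm{tr}\,\deh\sm^N(p(\la))\;=\;\la\slope{1}(p(\la))\cdot\stepex{0N}(p(\la);\la),
\]
so that $p$ is the center of an elliptic island of period $N$ if and only if $|M(\la)|<2$.

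Finally, to produce $\mathcal{B}'$, I would differentiate $M$ in $\la$ using Proposition~\ref{l_partialHLa}, estimate \eqref{e_partialStepexLa}, and the implicit drift of $p(\la)$ controlled by Lemma~\ref{l_continuationXiXi'} and Proposition~\ref{l_partialHxi}; the leading term is the explicit $-2/\la^2$ drift of $\slope{1}$ from \eqref{eq_partialHkLa}, producing the lower bound $|\partial_\la M(\la)|\geq c\,\la^{-1}\ndLL{\tilde\itin}(\la)$. Combined with the estimate $\ndLL{\tilde\itin}\gtrsim\la^{N-r}$ (the factor $\la^{-r}$ accounting for the $2r$ proper-but-not-bulk rectangles contributed by $\itin$), this shows that over $\mathcal{B}$ the function $M$ varies by at least $c\,\la^{\lfloor\log N\rfloor-r+4}\gtrsim\la^4\gg 4$, so $M$ must admit a zero in $\mathcal{B}$. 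The crude upper bound $\stepex{0N}\leq(2\pi\lab)^{N-1}$ together with routine estimates on $\partial_\la M$ gives $|\partial_\la M|\leq C\lab^{4N}$ uniformly, so the preimage $\{\la\in\mathcal{B}:|M(\la)|<2\}$ contains an interval $\mathcal{B}'$ of length at least $4/(C\lab^{4N})\geq\eps'_N$ around the zero. The principal obstacle is precisely this derivative estimate: because $M$ is the product of the small factor $|\la\slope{1}(p)|<\tness\la^{1/2}$ (which evaluates yet more tightly at an elliptic $p$) and the exponentially large $\stepex{0N}$, securing a non-degenerate lower bound on $|\partial_\la M|$ requires delicate control of the potential cancellation between the explicit $-2/\la^2$ drift and the implicit drift of $\slope{1}(p(\la))$ along the fixed-point curve; restricting to $\laa$-bulk filler symbols in the first step is precisely what ensures that the uniform estimates of Propositions~\ref{l_partialHLa} and \ref{l_partialHxi} apply throughout.
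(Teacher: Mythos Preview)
Your argument has a genuine gap at the localization step. You construct $\tilde\itin$ by padding $\itin$ with \emph{arbitrary} $\laa$-bulk symbols, and then assert that the implicit function theorem yields a smooth curve $\la\mapsto p(\la)$ of periodic points throughout $\mathcal{B}$. But the periodic-point equation $\diago{\tilde\itin}{\la}(\xi)=0$ is a $\torus$-valued equation on the small interval $\basec^*$ (of diameter $\bigo{\la^{-1}}$), and since $\partial_\la\diago{\tilde\itin}{\la}\approx 1$ while $\diam\mathcal{B}\leq\eps_N$ is exponentially small in $N$, the image of $\diago{\tilde\itin}{\la}$ shifts by only $\bigo{\eps_N}$ as $\la$ traverses $\mathcal{B}$. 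There is no a priori reason this image should contain $0\bmod 1$ for any $\la\in\mathcal{B}$; for a generic choice of bulk filler symbols it will not. Your monotonicity claim is also incorrect: $\diago{\tilde\itin}{\la}$ has a quadratic critical point in $\basec^*$ (Proposition~\ref{p_observationsOnLeaves2}), so it is not monotone in $\xi$. Consequently the whole downstream argument about $M(\la)$ never gets off the ground, and even if it did, ``$M$ varies by at least $\la^4$ over $\mathcal{B}$'' does not imply $M$ takes values in $(-2,2)$ anywhere in $\mathcal{B}$.

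What the paper does instead is precisely to adapt the filler symbols to $\mathcal{B}$. The covering machinery of Stage~3 (Lemma~\ref{l_covering} and Corollary~\ref{c_decreasingSequence}) produces a \emph{balanced bicylinder} $\tBcyl=\tfcyl\cap\tbcyl$ of rank $r_1\sim N-\log N$ whose tangency parameter $\la_{\tBcyl}$ lies in $\mathcal{B}$ and whose core parameter set $\hat\J_{\tBcyl}$ is contained in $\mathcal{B}$. The closed word $\itin^*$ is then built by taking the first $r_1$ symbols from $\tfcyl$, the last $r_1$ from $\tbcyl$, and inserting $\itin$ with a few padding symbols in the short middle window. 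This guarantees that $\diago{\itin^*}{\la}$ is $\ncLL{\tBcyl}^{-1}$-close to $\dcleaf{\tBcyl}{\la}$, and the latter is \emph{known} to vanish at $\la_{\tBcyl}\in\mathcal{B}$ by construction. One then applies Proposition~\ref{p_lowerEstimate} to extract $\mathcal{B}'=\hat\J_{\itin^*}^\elliptic$. The entire content of Stage~3 is to make this localization possible; without it the filler symbols cannot be chosen.
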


\begin{proof}[Proof of the main theorem]
  The proof of item (a) follows from this simple summation lemma; the proof is given in appendix~\ref{appendixRL}
  \begin{lem}\label{l_technicalSummation}
    Let $\indexset$ be a index set and $\{\delta_i\}_{i\in\indexset}$ and $\{\RiemFunc_i\}_{i\in\indexset}$ be positive real numbers; assume that there exist real numbers $\rslc_1,\rslc_2>0$, $0<\spread<1$ and $0<\expo\leq 1$ satisfying the following properties:
    \begin{itemize}
    \item $\rslc_1\cdot\spread<\RiemFunc_i< \rslc_1 \ \fa i\in\indexset$;
    \item for $\cursor\in(0,1)$, define the set $\indexset_\cursor=\{i\in\indexset\st \RiemFunc_i>\rslc_1\cdot\spread^{1-\cursor}\}$; then: 
      \[
      \sum_{i\in\indexset_\cursor} \delta_i < \rslc_2\cdot\spread^{\expo\cursor}.
      \]
    \end{itemize}
    Then $\fa \eps>0$ sufficiently small there exists $C_\eps\sim \rslc_1\rslc_2 \cdot \eps^{-1}$ such that: 
    \[
    \sum_i \RiemFunc_i\delta_i < C_\eps \cdot \spread^{\expo-\eps}.
    \]
  \end{lem}
  In fact each admissible itinerary $\itin=\si\letter_1\cdots\letter_{N-1}\sii$ is in one to one correspondence with a choice of $N-1$ admissible rectangles; thus proposition~\ref{p_rectCardinalityBound} immediately implies that the number of admissible domains of rank $r$ is bounded by $(4\la)^r$. By construction we know that
  \[
  \ndLL\itin >\const\prod_j\la|\slope{1}(p_{\letter_j})|.
  \]
  We want to apply lemma~\ref{l_technicalSummation} to the following expression:
  \[
  \prob(\E_1(N)) = \sum_{\itin\in\uwords r}\prob(\E_1(N,\itin));
  \]
  in fact for each $1 \leq j \leq N-1$ let $\indexset_j$ be the appropriate alphabet, define $\RiemFunc_\letter$ as $(\la|\slope{1}(p_{\letter}|)^{-2}$ and let $\delta_\letter=1$. Then by proposition~\ref{p_rectCardinalityBound}, the hypotheses of lemma~\ref{l_technicalSummation} hold with:
  \begin{align*}
    \rslc_1&=\const\,\la^{-1}&\rslc_2&=\const\,\la&\spread&=\const\,\la^{-1}&\expo&=1,
  \end{align*}
  hence, by lemma~\ref{l_estimateParameterSpace}a we have, for all small $\eps>0$:
  \begin{align}
    \sum_{\itin\in\uwords r}\prob(\E_1(N,\itin))&<\const\,\la^{-1}\sum_{\letter_1}\cdots\sum_{\letter_{N-1}}\RiemFunc_{\letter_1}\cdots\RiemFunc_{\letter_{N-1}};\notag\\
    &<C_\eps \la^{-1}\la^{-(N-1)(1-\eps)}<C_\eps\la^{-N(1-\eps)}\label{e_optimalENEstimate}
  \end{align}
  with $C_\eps<\const\,\eps^{-N}$. Furthermore, by lemma~\ref{l_estimateParameterSpace}b we have that there can be at most finitely many elliptic periodic points of bounded period and cyclicity 1, hence
  \[
  \{\la\in\J\st\sm \textrm{has infinitely many elliptic p.p.\ of cyclicity $1$}\} = \limsup_{N\to\infty} \E_1(N)
  \]
  then by the Borel-Cantelli lemma, \eqref{e_optimalENEstimate} implies item (a) of the Main Theorem.
  \begin{rmk}
    Notice that \eqref{e_optimalENEstimate} implies that the density of parameters for which we have at least one elliptic island of cyclicity $1$ goes to zero as $\la\to\infty$; this implies Carleson conjecture for elliptic islands of cyclicity one. It is interesting to note that in \cite{Carleson} it is conjectured that \eqref{e_estimateParameterSpace} should hold with $\ndLL\itin^{-3}$ on the right hand side; in our proof it appears that \eqref{e_estimateParameterSpace} is a sharp bound.
  \end{rmk}

  In order to prove item (b) we need the following general result; the proof is given in appendix~\ref{appendixHD}.
  \begin{lem}\label{l_technicalHD}
    For any $\la\in\J$ and $n\in\naturals$, denote by $P_n(\la)$ some property of the
    element $\la$ that depends on $n$. Assume that there exist two sequences
    $\eps_n>\eps'_n$ satisfying the fast decreasing property:
    \begin{align}\label{e_fastDecreasing}
     \eps_0&<\diam\hat\J&      \eps'_n&>\eps_{n+1}^{1/n},
    \end{align}
    such that for any ball $\mathcal{B}\subset\hat\J$ of diameter at least
    $\eps_N$ there exists a smaller ball $\mathcal{B}'\subset\mathcal{B}$ of diameter at
    least $\eps'_N$ such that property $P_N(\la)$ holds for all $\la\in\mathcal{B}'$. Then
    there exists a residual set $\fatset\subset\hat\J$ of Hausdorff dimension bounded below
    by:
    \begin{equation}\label{e_lowerBoundHD}
      \dim_H\fatset\geq\liminf_{n\to\infty}\frac{\log\eps'_n}{\log\eps_n}
    \end{equation}
    such that, for any $\la\in\fatset$, property $P_n(\la)$ holds for all $n$ larger than
    some $\bar n(\la)$.
  \end{lem}
  Lemma~\ref{l_fatDensityComplete} yields two sequences $\eps_N$ and $\eps'_N$: extract
  subsequences $\eps_{N_k}$ and $\eps'_{N_k}$ such that conditions
  \eqref{e_fastDecreasing} hold and define the properties $P_k$ as follows:
  \begin{align*}
    P_k(\la)= \ex p\in\hypset_{r_k}(\la)\st& \itin_{r_k}(p)=\itin^k,\\&p\ \text{is the center of a cyclicity 1 elliptic island of period}\ N_k
  \end{align*}
  where $r_k$ and $\itin^k$ are defined as follows.
  Let $\bar r_1=\lfloor \log N_1\rfloor$; and let $\itin^1,\cdots,\itin^{k_1}$ be an enumeration of all $\J$-proper open words of rank $\bar r_1$; we let $r_1 = \cdots = r_{k_1} = \bar r_1$.
  Let then $\bar r_2=\lfloor \log N_{k_1+1} \rfloor$ and let $\itin^{k_1+1},\cdots,\itin^{k_1+k_2}$ be an enumeration of all $\J$-proper open words of rank $\bar r_2$; we let $r_{k_1+1} = \cdots = r_{k_1+k_2} = \bar r_2$. By proceeding in this way we obtain infinite sequences $r_k$ and $\itin^k$.

  Let $\fatset$ be the set obtained by lemma~\ref{l_technicalHD}; we now show that for any $\la\in\fatset$ and any $p\in\hypset(\la)$ there is a sequence of center of elliptic islands of $\sm$ converging to $p$.
  In fact, if $p\in\hypset(\la)$, there exists by construction a subsequence of open words $\itin^{k_1},\itin^{k_2},\cdots$ such that $p\in\rectUnion_{\itin^{k_l}}(\la)$ for all $l\in\naturals$; but by lemma~\ref{l_technicalHD}, for each $l$ there exists $q_l$ the center of an elliptic island of period $N_{k_l}$ for $\sm$ such that $\itin_{r_{k_l}}(q_l)=\itin^{k_l}$. The proof then follows by proposition~\ref{l_exponentialCantorEstimate}.
\end{proof}
\subsection{Proof of lemmata~\ref{l_estimateParameterSpace} and~\ref{l_fatDensityComplete}}
A periodic orbit of cyclicity $1$ can intersect $\crit$ in a single point $p$, which therefore belongs to either $\crit^+$ or $\crit^-$; therefore the itinerary $\itin$ of $p$ necessarily satisfies the condition $\bbasei{\itin}=\fbasei{\itin}$; to fix ideas let $\basec=\cl\cpcrit-$ and take $\bbasei{\itin}=\fbasei{\itin}=\basec$. All closed words $\itin$ and cylinders $\bcyl$ and $\fcyl$ that we will consider are henceforth assumed to be such that $\bbasei{\fcyl}=\fbasei{\bcyl}=\basec$; the case $\basec=\cl\cpcrit+$ can be treated in a similar way and it will not be explicitly considered.

The proofs of lemmata~\ref{l_estimateParameterSpace} and~\ref{l_fatDensityComplete} follow from a fairly involved common construction that is split for convenience in several stages. We first introduce some notation and then outline the strategy behind the construction.
Given $\basec^\prime\subset\basec$ we can define the restrictions
\begin{align*}
  \left.\domain_\itin\right|_{\basec^\prime}&=\domain_\itin\cap (\fhomeo_r)^{-1}(\basec^\prime\times\basec^\prime);\\
  \left.\cdomain_{\fcyl}\right|_{\basec^\prime}&=\domain_{\fcyl}\cap (\fhomeo_r)^{-1}(\basec^\prime\times\fbasei{\fcyl});\\
  \left.\cdomain_{\bcyl}\right|_{\basec^\prime}&=\domain_{\bcyl}\cap (\bhomeo_r)^{-1}(\bbasei{\bcyl}\times\basec^\prime).
\end{align*}
Further, assume $p\in\crit(\la)$ is a cyclicity $1$ periodic point for $\sm$; then we can write the linearization of $\sm^N$ in $p$ as follows:
\[
\deh\sm^N(p)=
\matrixtt{0}{1}{-1}{\la\slope{N}(p)}
\matrixtt{\stepex{0N}^{-1}(p)}{0}{0}{\stepex{0N}(p)}
\matrixtt{1}{0}{-\la\slope{-N}(p)}{1}.
\]
We then conclude that the point $p$ is an elliptic periodic point if and only if the following inequality holds true:
\begin{equation}\label{eq_traceCondition}
  |\textup{Tr}(\deh\sm^N(p))|=|\stepex{0N}(p)\la(\slope{N}(p)-\slope{-N}(p))|< 2.
\end{equation}
We can then control the ellipticity of a periodic point $p$ by controlling the value of the reference slope fields on $p$; to this extent the estimates we obtained in the previous section will be most useful. For $\itin\in\uwords {N-1}$, $\la\in\J^\admissible_\itin$, define the following set:
\[
\basec^\elliptic_{\itin,\la}=\{\xi\in\basec(\la)\st \pd\itin(\xi,\xi;\la)=\qd\itin(\xi,\xi;\la)=p,\ \la|\slope N(p)-\slope{-N}(p)|<2\ndLL\itin^{-1}\}
\]
let furthermore
\begin{align*}
  \basec^\elliptic_{\itin} &= \bigcup_{\la\in\J^\admissible_\itin}\basec^\elliptic_{\itin,\la} &
  \basec^\elliptic_{\Bcyl} &= \bigcup_{\itin\in\Bcyl}\basec^\elliptic_{\itin}.
\end{align*}
Define:
\begin{align*}
  \J^\elliptic_\itin&=\{\la\in\J^\admissible_\itin\st\domain_\itin^{\elliptic}(\la)\cap\sm^{-N}\domain_\itin^\elliptic(\la)\not =\emptyset\}\\
  \J^\elliptic_\Bcyl&=\{\la\in\J^\admissible_\Bcyl\st\cdomain_{\fcyl}^\elliptic(\la)\cap\cdomain_{\bcyl}^\elliptic(\la)\not =\emptyset\}
\end{align*}
where $\domain_\itin^\elliptic(\la)=\domain_\itin(\la)|_{\basec^\elliptic_\itin}$, $\cdomain_{\fcyl}^\elliptic(\la)=\cdomain_{\fcyl}(\la)|_{\basec^\elliptic_{\fcyl}}$ and $\cdomain_{\bcyl}^\elliptic(\la)=\cdomain_{\bcyl}(\la)|_{\basec^\elliptic_{\bcyl}}$.
Notice that if $\Bcyl'\subset\Bcyl$ are two $\la$-admissible bicylinders, by definition we have that
\begin{align*}
  \basec^\elliptic_{\Bcyl'}&\subset  \basec^\elliptic_{\Bcyl} & \J^\elliptic_{\Bcyl'}&\subset  \J^\elliptic_{\Bcyl};
\end{align*}
By \eqref{eq_traceCondition} it is clear that if $p\in\domain_\itin$ is elliptic for $\sm$, then necessarily $\pix p\in\basei{\itin}^{\elliptic}$ and $\la\in\J_\itin^\elliptic$. By definition we then conclude that $\E(N,\itin) \subset\J_\itin^{\elliptic}$. In order to prove lemma~\ref{l_estimateParameterSpace} we will give an upper bound for the measure of the sets $\J_\itin^\elliptic$ for all $\itin$. Along with the upper bound, we will obtain, for $\itin$ belonging to a special class of words, a way to construct a suitably large interval of parameters that is indeed contained in $\E(N,\itin)$. At the same time we will be able to prove that the density of such intervals grows at an exponential rate in the period $N$. Additionally, the density grows exponentially even if we require that the corresponding periodic orbit shadows for some time a chunk of a given hyperbolic orbit. Combining these three observations we will obtain the proof of lemma~\ref{l_fatDensityComplete}.


\paragraph{Stage 1} We bootstrap the set $\basec$ in order to make it independent of $\la$, conveniently small and at the same time large enough to contain $\basec_\itin^\elliptic$ for all $\itin$.
Define
\[
\basec^* = \{\xi\in\basec(\laa)\st -4/\laa\leq \ddot\phi(\xi)\leq 0\};
\]
we then claim that for any $N$ and any $\itin\in\uwords{N-1}$ we have $\basec_\itin^\elliptic\subset\basec^*$. In fact if $\xi\in\basec_\itin^\elliptic$, then there exists $\la\in\J_\itin^\admissible$ such that $\pd\itin(\xi,\xi;\la) = \qd\itin(\xi,\xi;\la)=p$ and
\[
\la|\slope{N}(p)-\slope{-N}(p)|<2\ndLL\itin^{-1};
\]
if $N=1$, we have $\ndLL\itin=1$, which implies by definition that $\xi\in\basec^*$; otherwise notice that $\slope{N}$ and $\slope{-N}$ satisfy the following estimates:
\begin{subequations}\label{e_obviousEstimatesHn}
  \begin{align}
    |\slope{N}(p)-\slope{1}(p)|&=\la^{-2}|\slope{1}(\sm^{-1}p)|^{-1}\cdot\onepbigo{\laa^{-1/2}}\\
    |\slope{-N}(p)-\slope{-1}(p)|&=\la^{-2}|\slope{1}(\sm p)|^{-1}\cdot\onepbigo{\laa^{-1/2}}
  \end{align}
\end{subequations}
Hence:
\[
\la|\slope{1}(p)-\slope{-1}(p)| = \bigo{\la^{-1/2}} < 2
\]
which implies that $\xi\in\basec^*$. We obtain, by construction and by definition of $\phi$ that, for any $\xi\in\basec^*$:
\begin{subequations}
  \begin{align}
    \dot\phi(\xi)&= 1+\bigo{\laa^{-2}};\label{e_estimateOnDotPhi}\\
    \dddot\phi(\xi)&= - 4\pi^2\onepbigo{\laa^{-2}};\label{e_estimateOnDddotPhi}
  \end{align}
\end{subequations}
and the diameter estimate:
\begin{equation*}
  \diam\basec^*< \pi^{-2}\laa^{-1}\onepbigo{\laa^{-2}}.
\end{equation*}

\paragraph{Stage 2} Given a bicylinder $\Bcyl$, we further reduce the set $\basec^*$ to a smaller set $\basei{\Bcyl}$ still satisfying $\basec_\itin^\elliptic\subset\basei{\Bcyl}$ for all $\itin\in\Bcyl$. This will also allow us to define a set $\J_\Bcyl\subset\J$ containing all parameters $\la$ for which $\sm$ admits an elliptic periodic point of cyclicity $1$ with itinerary $\itin\in\Bcyl$.

The idea behind this reduction is simple to describe in words: recall that all closed words belonging to a bicylinder of rank $r$ have fixed first $r$ and last $r$ symbols; this implies that we control the first $r$ and last $r$ points of the orbit $\{\sm^jp\}$ for any $p$ belonging to the corresponding critical domain, which gives in turn good control on $\slope{r}$ and $\slope{-r}$ and thus, by proposition~\ref{l_estimateReferenceSequence}, on $\slope{N}$ and $\slope{-N}$ up to some small error. This allows to find an approximation of $\basec^\elliptic_{\itin}$ whose accuracy improves with the rank $r$ of the bicylinder.

We need to introduce further notation: given $\Bcyl=\fcyl\cap\bcyl$ a $\lab$-admissible bicylinder of rank $r$, we introduce the associated \emph{leaf functions} $\leafPrivate$, whose graphs foliate the corresponding domains $\cdomain_{\fcyl}$ or $\cdomain_{\bcyl}$, in the following way: for any  $\la\in\J_\Bcyl^\admissible$, $\zeta\in\fbasei\fcyl(\la)$ and $\eta\in\bbasei\bcyl(\la)$, define
\begin{align*}
  \fleaf{\fcyl}{\zeta}{\la}&:\basec^*\to\torus\st\pd\fcyl(\xi,\zeta;\la)=(\xi,\fleaf\fcyl\zeta\la(\xi));\\
  \bleaf{\bcyl}{\eta}{\la}&:\basec^*\to\torus\st\qd\bcyl(\eta,\xi;\la)=(\xi,\bleaf\bcyl\eta\la(\xi));
\end{align*}
\[
\dleaf{\Bcyl}{\eta}{\zeta}{\la} = \bleaf{\bcyl}{\eta}{\la} - \fleaf{\fcyl}{\zeta}{\la} \mod 1.\\
\]
Moreover we define the associated \emph{central leaf functions} as follows; first introduce:
\begin{align}\label{e_centralPoints}
  \czeta{\Bcyl}=\begin{cases}
    \xi_+&\textrm{if }\fbasei\fcyl = \pstr_+\\
    \xi_-&\textrm{if }\fbasei\fcyl = \pstr_-
  \end{cases}
  &&
  \ceta{\Bcyl}=\begin{cases}
    \xi_+&\textrm{if }\bbasei\bcyl = \pstr_+\\
    \xi_-&\textrm{if }\bbasei\bcyl = \pstr_-
  \end{cases}
\end{align}
then :
\begin{align*}
  \bcleaf{\bcyl}{\la}{}&=\bleaf{\bcyl}{\czeta{\Bcyl}}{\la} & \fcleaf{\fcyl}{\la}{} &=\fleaf{\fcyl}{\ceta{\Bcyl}}{\la} &\dcleaf{\Bcyl}{\la}&=\dleaf{\Bcyl}{\ceta{\Bcyl}}{\czeta{\Bcyl}}{\la}
\end{align*}
We also define the functions:
\begin{align}
  \ddleaf\Bcyl\eta\zeta\la(\xi)&=\slope{r+1}(\qd{\bcyl}(\eta,\xi;\la))-\slope{-r-1}(\pd{\fcyl}(\xi,\zeta;\la))\\
  \cddleaf\Bcyl\la(\xi)&=\ddleaf\Bcyl{\ceta{\Bcyl}}{\czeta{\Bcyl}}{\la}(\xi);
\end{align}
observe that, by definition:
\begin{subequations}\label{e_baseEstimatesOnLeaves}
  \begin{align}
    \partial_\xi\dleaf{\Bcyl}{\eta}{\zeta}{\la}(\xi)&=\la\ddleaf\Bcyl\eta\zeta\la(\xi)\\
    \partial_\eta\dleaf\Bcyl\eta\zeta\la(\xi)&=\stepex{0r+1}(\pd\fcyl(\xi,\eta;\la))^{-1}\\
    \partial_\zeta\dleaf\Bcyl\eta\zeta\la(\xi)&=\stepex{0r+1}(\pd\bcyl(\xi,\zeta;\la))^{-1}
    \intertext{and combining \eqref{eq_differentialEquationForY0} and \eqref{eq_differentialEquationForYN} we have}
    \partial_\la\dleaf{\Bcyl}{\eta}{\zeta}{\la}(\xi) &= \dot\phi(\xi)+\bigo{\la^{-1/2}}\label{e_parLaLeaf}
  \end{align}
\end{subequations}
\begin{prp}\label{p_observationsOnLeaves}
  For any $\xi\in\basec^*$, $\la\in\J_\Bcyl^\admissible$, $\zeta\in\fbasei\fcyl(\la)$ and $\eta\in\bbasei\bcyl(\la)$  the following properties are satisfied:
  \begin{subequations}
    \begin{align}
      \partial_\la\dleaf{\Bcyl}{\eta}{\zeta}{\la}(\xi) &= 1+\bigo{\la^{-1/2}}\label{e_estimateLaLeaf}\\
      \partial_\xi\ddleaf\Bcyl\eta\zeta{\la}(\xi) &= -4\pi^2\onepbigo{\la^{-1/2}}\label{e_estimateXiLeaf}
    \end{align}
  \end{subequations}
  Moreover $\dleaf{\Bcyl}{\eta}{\zeta}\la$ has a unique critical point in $\basec^*$.
\end{prp}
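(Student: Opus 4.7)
The plan is to reduce each of the three claims to estimates that are already available from Propositions~\ref{l_estimateReferenceSequence} and~\ref{l_partialHxi} together with the defining identity~\eqref{e_parLaLeaf} and the pointwise bounds on $\dot\phi$ and $\dddot\phi$ established in Stage~1. Estimate~\eqref{e_estimateLaLeaf} is immediate from~\eqref{e_parLaLeaf}: substituting $\dot\phi(\xi)=1+\bigo{\laa^{-2}}$ from~\eqref{e_estimateOnDotPhi} yields the required bound on $\basec^*$.

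For~\eqref{e_estimateXiLeaf} I will compute $\partial_\xi$ of each term in $\ddleaf{\Bcyl}{\eta}{\zeta}{\la}$ separately. Writing $q=\qd{\bcyl}(\eta,\xi;\la)$, the chain rule together with the expression for $\deh\bhomeo_r$ given before Proposition~\ref{p_admissibleDomain} gives $\partial_\xi\slope{r+1}(q)=\parSlope{\slope{r+1}}\slope{r+1}(q)$; combining~\eqref{eq_estimatedHk} with~\eqref{e_oneDefinitions} then produces
\[
\parSlope{\slope{r+1}}\slope{r+1}(q)=\dddot\phi(\pix q)+\bigo{|\la\slope{r}(\sm^{-1}q)|^{-3}}=\dddot\phi(\xi)+\bigo{\la^{-3/2}},
\]
since $\sm^{-1}q$ lies in a rectangle indexed by a symbol of $\alphabetTail$ and therefore $|\la\slope{r}(\sm^{-1}q)|\geq\const\,\la^{1/2}$, while $\pix q=\xi$ by the definition of $\bhomeo_r$. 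Symmetrically $\partial_\xi\slope{-r-1}(\pd{\fcyl}(\xi,\zeta;\la))=\parSlope{\slope{-r-1}}\slope{-r-1}(\pd{\fcyl})$; the pullback formula in~\eqref{eq_pushForwards} shows inductively that $\parSlope{\slope{-k}}\slope{-k}\equiv 0$ for every $k\geq 1$, starting from $\parSlope{\slope{-1}}\slope{-1}=0$ in~\eqref{e_oneDefinitions}. Assembling the two contributions and invoking~\eqref{e_estimateOnDddotPhi} delivers~\eqref{e_estimateXiLeaf}.

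Uniqueness of the critical point of $\dleaf{\Bcyl}{\eta}{\zeta}{\la}$ on $\basec^*$ follows at once: since $\partial_\xi\dleaf{\Bcyl}{\eta}{\zeta}{\la}=\la\,\ddleaf{\Bcyl}{\eta}{\zeta}{\la}$, critical points of $\dleaf{\Bcyl}{\eta}{\zeta}{\la}$ correspond to zeros of $\ddleaf{\Bcyl}{\eta}{\zeta}{\la}$, and the strict monotonicity granted by~\eqref{e_estimateXiLeaf} allows at most one such zero. For existence I apply the intermediate value theorem: combining~\eqref{eq_forwardEstimateForHk}--\eqref{eq_backwardEstimateForHk} with~\eqref{e_oneDefinitions} yields the approximation $\ddleaf{\Bcyl}{\eta}{\zeta}{\la}(\xi)=\ddot\phi(\xi)+2/\la+\bigo{\la^{-3/2}}$, which is positive at the endpoint of $\basec^*$ where $\ddot\phi=0$ and negative at the endpoint where $\ddot\phi=-4/\laa$, provided $\la\in\J$ is large enough. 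The one delicate point to handle is that $\pd{\fcyl}\in\crit$, so the naive error bound $|\la\slope{-r-1}|^{-3}$ suggested by~\eqref{eq_estimatedH-k} is in fact large; this is sidestepped not by any quantitative refinement but by the \emph{exact} vanishing $\parSlope{\slope{-k}}\slope{-k}\equiv 0$, which makes the backward contribution identically zero and thereby isolates the leading term $-4\pi^2$ coming purely from the forward side.
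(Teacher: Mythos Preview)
Your treatment of~\eqref{e_estimateLaLeaf}, of the forward contribution to~\eqref{e_estimateXiLeaf}, and of the existence and uniqueness of the critical point is correct and matches the paper's argument essentially verbatim.

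The genuine gap is the backward contribution to~\eqref{e_estimateXiLeaf}. Your inductive claim that $\parSlope{\slope{-k}}\slope{-k}\equiv 0$ for all $k\ge 1$ is false. For instance, a direct computation gives
\[
\slope{-2}(x,y)=\frac{1}{\la(\la\ddot\phi(y)+2)},\qquad
\parSlope{\slope{-2}}\slope{-2}(x,y)=-\frac{\dddot\phi(y)}{(\la\ddot\phi(y)+2)^{3}}\not\equiv 0.
\]
The induction relies on the pullback identity in~\eqref{eq_pushForwards}, but that identity is misprinted: the correct formula carries an extra term $-\parSlope{\slope{1}}\slope{1}$ in the numerator,
\[
\sm^{*}\parSlope{\slope{}}\slope{}\,(p)=\frac{\parSlope{\slope{}}\slope{}-\parSlope{\slope{1}}\slope{1}}{\la^{3}(\slope{1}-\slope{})^{3}}(\sm p),
\]
and with this term present the induction no longer yields zero.

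The paper avoids the issue by invoking the primed estimate~\eqref{eq_backwardEstimateFordHk} directly, which gives $|\parSlope{\slope{-r-1}}\slope{-r-1}(p)|=\bigo{\la^{-3/2}}$ for $p\in\fnicePoints_{r+1}$; this is small enough to be absorbed into~\eqref{e_estimateXiLeaf}. Your worry that the unprimed bound~\eqref{eq_estimatedH-k} is ``in fact large'' is itself rooted in a second misprint there: the exponent should be $+3$ rather than $-3$, so that the right-hand side reads $2|\la\slope{-k}(p)|^{3}$. Since $|\la\slope{-k}(p)|\le\const\,|\la\slope{1}(\sm p)|^{-1}\le\const\,\la^{-1/2}$ on $\fnicePoints_k$, the corrected bound gives $\bigo{\la^{-3/2}}$ and the primed form follows. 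Either route closes your gap without any need for exact vanishing.
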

\begin{proof}
  Since $\xi\in\basec^*$, we can apply \eqref{e_estimateOnDotPhi} to \eqref{e_parLaLeaf} which yields \eqref{e_estimateLaLeaf}. On the other hand, by proposition~\ref{l_partialHxi}, estimates \eqref{eq_estimatedH}, \eqref{e_oneDefinitions} and \eqref{e_estimateOnDddotPhi}, we immediately obtain \eqref{e_estimateXiLeaf}.
  Finally, by estimates \eqref{e_obviousEstimatesHn} and by definition of $\basec^*$ we can check that $\ddleaf{\Bcyl}\eta\zeta\la$ has opposite signs on the two boundary points of $\basec^*$, therefore $\ddleaf{\Bcyl}\eta\zeta\la$ has at least one zero, which in fact needs to be unique by \eqref{e_estimateXiLeaf}.
\end{proof}
The following proposition shows the usefulness of leaf functions; they provide a simple method to check if the forward and backward cylinder domains associated to a given bicylinder intersect above a given interval $\basec'\subset\basec^*$.
\begin{prp}\label{p_cylinderIntersection}
  Let $\basec'\subset\basec^*$; then:
  \begin{align*}
    \dcleaf\Bcyl\la\basec'\cap\left[-\frac{1}{4}\ncLL\Bcyl^{-1},\frac{1}{4}\ncLL\Bcyl^{-1}\right]        =\emptyset &\Rightarrow \cdomain_\fcyl(\la)|_{\basec'}\cap\cdomain_\bcyl(\la)|_{\basec'} = \emptyset \\
    \dcleaf\Bcyl\la\basec'\cap\left[-\frac{1}{4}\scLL\Bcyl^{-1},\frac{1}{4}\scLL\Bcyl^{-1}\right]\not=\emptyset &\Rightarrow \cdomain_\fcyl(\la)|_{\basec'}\cap\cdomain_\bcyl(\la)|_{\basec'} \not = \emptyset
  \end{align*}
\end{prp}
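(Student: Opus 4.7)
The overall strategy is to recast the geometric intersection as a scalar zero-finding problem for the leaf-difference function $\dleaf\Bcyl\eta\zeta\la(\xi)$, and then control this function by its central value $\dcleaf\Bcyl\la(\xi)$ by means of the derivative identities collected in \eqref{e_baseEstimatesOnLeaves}. The two implications will follow from an upper and a lower bound, respectively, on the range of $\dleaf$ near $\dcleaf$.

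First I would reinterpret intersection analytically. By proposition~\ref{p_admissibleDomain}, $\fhomeo_r$ and $\bhomeo_r$ parameterize $\cdomain_\fcyl$ and $\cdomain_\bcyl$ diffeomorphically onto their respective base products, so any point $p\in\cdomain_\fcyl(\la)|_{\basec'}\cap\cdomain_\bcyl(\la)|_{\basec'}$ is of the form $p=(\xi,\fleaf\fcyl\zeta\la(\xi))=(\xi,\bleaf\bcyl\eta\la(\xi))$ for some $\xi\in\basec'$, $\zeta\in\fbasei\fcyl$, $\eta\in\bbasei\bcyl$. Hence the intersection is non-empty if and only if the equation $\dleaf\Bcyl\eta\zeta\la(\xi)=0$ admits a solution $(\xi,\zeta,\eta)\in\basec'\times\fbasei\fcyl\times\bbasei\bcyl$.

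Next I would compare $\dleaf$ with $\dcleaf$ by the mean value theorem applied to \eqref{e_baseEstimatesOnLeaves}, together with the $\stepex$-bounds $|\partial_\zeta\dleaf|\leq\ncLL\fcyl^{-1}$ and $|\partial_\eta\dleaf|\leq\ncLL\bcyl^{-1}$, obtaining
\[
|\dleaf\Bcyl\eta\zeta\la(\xi)-\dcleaf\Bcyl\la(\xi)|\leq|\zeta-\czeta\Bcyl|\,\ncLL\fcyl^{-1}+|\eta-\ceta\Bcyl|\,\ncLL\bcyl^{-1}.
\]
For $\la$ large enough each connected component of $\cl\pcrit$ has diameter well below $1/2$ (compute from $\tness=80\pi$ and the expansion $\ddot\phi(\xi)\approx-4\pi^2(\xi-\xi_\pm)$), so the distances $|\zeta-\czeta\Bcyl|$ and $|\eta-\ceta\Bcyl|$ are at most $1/4$, and thus $|\dleaf-\dcleaf|\leq\tfrac14(\ncLL\fcyl^{-1}+\ncLL\bcyl^{-1})=\tfrac14\ncLL\Bcyl^{-1}$. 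The first implication is then immediate: under its hypothesis $|\dcleaf\Bcyl\la(\xi)|>\tfrac14\ncLL\Bcyl^{-1}$ for every $\xi\in\basec'$, so the bound above forces $\dleaf\Bcyl\eta\zeta\la(\xi)$ to retain the sign of $\dcleaf\Bcyl\la(\xi)$ for every admissible triple $(\xi,\zeta,\eta)$, and the zero-equation has no solution.

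For the second implication I would pick $\xi^{*}\in\basec'$ with $|\dcleaf\Bcyl\la(\xi^{*})|\leq\tfrac14\scLL\Bcyl^{-1}$ and then solve $\dleaf\Bcyl\eta\zeta\la(\xi^{*})=0$ by a monotonicity-plus-intermediate-value argument. The derivatives $\partial_\zeta\dleaf=-\stepex{0,r+1}^{-1}$ and $\partial_\eta\dleaf=+\stepex{0,r+1}^{-1}$ have constant, opposite signs, so the image of the continuous map $(\zeta,\eta)\mapsto\dleaf\Bcyl\eta\zeta\la(\xi^{*})$ is a closed interval, and its endpoints are reached at the corners of $\fbasei\fcyl\times\bbasei\bcyl$. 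Integrating the lower bounds $|\partial_\zeta\dleaf|\geq\scLL\fcyl^{-1}$ and $|\partial_\eta\dleaf|\geq\scLL\bcyl^{-1}$ between the central point and either boundary of the respective base produces, in each direction, a shift of $\dcleaf\Bcyl\la(\xi^{*})$ of length at least $\tfrac14\scLL\Bcyl^{-1}$ (this is where I expect the main technical obstacle: one must leverage the placement of $\czeta\Bcyl,\ceta\Bcyl$ inside $\fbasei\fcyl,\bbasei\bcyl$ and combine the explicit size of $\cl\pcrit$ with the sharp inequalities $|\stepex{0,r+1}^{-1}|\geq\scLL^{-1}$, and possibly supplement by a small variation in $\xi$ using $\partial_\xi\dcleaf=\la\cddleaf$). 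The resulting image interval covers $[-\tfrac14\scLL\Bcyl^{-1},\tfrac14\scLL\Bcyl^{-1}]$, hence contains zero, so $\dleaf$ vanishes at some admissible $(\xi,\zeta,\eta)$ and the intersection is non-empty.
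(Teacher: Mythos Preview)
Your approach is essentially identical to the paper's: both recast the intersection as the solvability of $\dleaf\Bcyl\eta\zeta\la(\xi)=0$, use the derivative identities \eqref{e_baseEstimatesOnLeaves} to bound $|\dleaf-\dcleaf|$, and invoke the fact that $|\eta-\ceta_\Bcyl|,|\zeta-\czeta_\Bcyl|<1/4$ because $\czeta_\Bcyl,\ceta_\Bcyl$ are the midpoints $\gxi\ds\pm$ of the intervals $\gpstr\ds\pm$, whose half-width is $1/4-O(\la^{-1/2})$. The paper's proof is terser than yours but follows exactly the same line.

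One remark: the technical obstacle you flag for the second implication---that the available half-width is only $1/4-O(\la^{-1/2})$ and hence the guaranteed shift is $(1/4-O(\la^{-1/2}))\scLLi\Bcyl$ rather than exactly $\tfrac14\scLLi\Bcyl$---is real, and the paper's proof glosses over it in the same way. In all later applications of this proposition (e.g.\ in the proof of proposition~\ref{p_decreasingBcylEstimate} and the tangency-parameter estimates) the constant $1/4$ is never used sharply, so this $O(\la^{-1/2})$ defect is harmless; your honesty in pointing it out is appropriate.
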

\begin{proof}
  By definition we have that $\cdomain_\fcyl(\la)|_{\basec'}\cap\cdomain_\bcyl(\la)|_{\basec'} \not= \emptyset$ if and only if there exist $\xi\in\basec'$, $\eta\in\bbasei{\bcyl}(\la)$ and $\zeta\in\fbasei{\fcyl}(\la)$. such that $\dleaf\Bcyl\eta\zeta\la(\xi)=0$.
  By equations \eqref{e_baseEstimatesOnLeaves} we have:
  \begin{align*}
    \scLLi{\fcyl}&<|\partial_\eta\dleaf\Bcyl\eta\zeta\la(\xi)|<\ncLLi{\fcyl};\\
    \scLLi{\bcyl}&<|\partial_\zeta\dleaf\Bcyl\eta\zeta\la(\xi)|<\ncLLi{\bcyl}.
  \end{align*}
  Hence, since by construction $|\eta-\ceta{\Bcyl}|<1/4$ and $|\zeta-\czeta{\Bcyl}|<1/4$ we can easily conclude.
\end{proof}
\begin{prp}\label{p_estimateOnPhi}
  Let $\la_1,\la_2\in\J_{\Bcyl}^\admissible$, $\eta_1\in\bbasei{\bcyl}(\la_1),\,\eta_2\in\bbasei{\bcyl}(\la_2)$, $\zeta_1\in\fbasei{\fcyl}(\la_1),\,\zeta_2\in\fbasei{\fcyl}(\la_2)$ and $\xi\in\basec^*$; then we have:
  \begin{align*}
    \la_2|\varphi_\Bcyl(\xi,\eta_2,\zeta_2;\la_2)| &<\la_1|\varphi_\Bcyl(\xi,\eta_1,\zeta_1;\la_1)|\onepbigo{\laa^{-1}}+\\&\quad+\frac{1}{2}(\ncLL{\fcyl}^{-1}|\zeta_2-\zeta_1| + \ncLL{\bcyl}^{-1}|\eta_2-\eta_1|)+\\&\quad+|\la_2-\la_1|\bigo{\laa^{-1/2}}
  \end{align*}
\end{prp}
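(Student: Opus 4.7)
The plan is to estimate the three partial derivatives of $\la\varphi_\Bcyl(\xi,\eta,\zeta;\la)$ and then integrate them along a convenient path from $(\la_1,\eta_1,\zeta_1)$ to $(\la_2,\eta_2,\zeta_2)$; summing the three contributions will yield the claimed inequality.

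First I will treat the partial derivative in $\eta$ at fixed $\la$ and $\zeta$. Writing $q=\qd{\bcyl}(\eta,\xi;\la)$ and inverting the explicit expression for $\deh\bhomeo_r$ given just before proposition~\ref{p_admissibleDomain}, one sees that only the $y$-component of $q$ depends on $\eta$, with $\partial_\eta q_y=-\stepex{0r+1}^{-1}(\sm^{-r-1}q;\la)$. Since $\slope{-r-1}\circ\pd{\fcyl}$ is independent of $\eta$, the chain rule gives
\[
\partial_\eta\varphi_\Bcyl(\xi,\eta,\zeta;\la)=-\stepex{0r+1}^{-1}(\sm^{-r-1}q;\la)\cdot\parSlope{\slope{0}}\slope{r+1}(q).
\]
The bound $\stepex{0r+1}^{-1}(\sm^{-r-1}q;\la)\leq\ncLL{\bcyl}^{-1}(\la)$ combined with \eqref{eq_verticalEstimateFordHk} and the estimate $|\la\slope{r}(\sm^{-1}q)|\geq 2\la^{1/2}$ (which follows from $\sm^{-1}q\not\in\icrit$ together with \eqref{eq_forwardEstimateForHk}) yields $\la|\partial_\eta\varphi_\Bcyl|\leq\tfrac12\ncLL{\bcyl}^{-1}(\la)\onepbigo{\la^{-1/2}}$. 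A symmetric argument using $\deh[\fhomeo_r]^{-1}$ and \eqref{eq_verticalEstimateFordH-k} gives the analogous bound $\la|\partial_\zeta\varphi_\Bcyl|\leq\tfrac12\ncLL{\fcyl}^{-1}(\la)\onepbigo{\la^{-1/2}}$.

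Next I will compute the $\la$-derivative with $(\eta,\zeta)$ held fixed in the $\bhomeo_r$/$\fhomeo_r$ coordinates. By lemma~\ref{l_continuationXiXi'} applied with $\rho=\id$, this amounts to letting the two relevant points follow the identity semiflows $\qflowmap{r}$ and $\pflowmap{r}$. Proposition~\ref{l_partialHLa} then provides
\[
\partial_\la\slope{r+1}(q)=-2\la^{-2}+\bigo{\la^{-3/2}},\qquad\partial_\la\slope{-r-1}(p)=\bigo{\la^{-3/2}},
\]
so $\partial_\la\varphi_\Bcyl=\bigo{\la^{-3/2}}$ uniformly on the admissible domain.

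Finally, I will combine the three estimates by traversing the piecewise-coordinate path $(\la_1,\eta_1,\zeta_1)\to(\la_2,\eta_1,\zeta_1)\to(\la_2,\eta_2,\zeta_1)\to(\la_2,\eta_2,\zeta_2)$. The last two legs give $\tfrac12\ncLL{\bcyl}^{-1}|\eta_2-\eta_1|$ and $\tfrac12\ncLL{\fcyl}^{-1}|\zeta_2-\zeta_1|$ respectively; for the first leg, setting $\varphi_2':=\varphi_\Bcyl(\xi,\eta_1,\zeta_1;\la_2)$ and integrating $\partial_\la\varphi_\Bcyl$ yields
\[
|\varphi_2'|\leq|\varphi_\Bcyl(\xi,\eta_1,\zeta_1;\la_1)|+\bigo{\la^{-3/2}}|\la_2-\la_1|,
\]
which after multiplying by $\la_2$ and using $\la_2/\la_1=1+\bigo{\la^{-1}}$ (valid on $\J$ since $|\la_2-\la_1|\leq\tfrac78$ while $\la_1\sim n/2$) becomes $\la_2|\varphi_2'|\leq\la_1|\varphi_\Bcyl(\xi,\eta_1,\zeta_1;\la_1)|\onepbigo{\la^{-1}}+\bigo{\la^{-1/2}}|\la_2-\la_1|$. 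The main technical subtlety I anticipate is that the bases $\bbasei{\bcyl}(\la)$ and $\fbasei{\fcyl}(\la)$ themselves depend on $\la$, so to make sense of "holding $(\eta,\zeta)$ fixed" one must first invoke proposition~\ref{l_continuationRectangles} to ensure $\Bcyl$ remains admissible throughout $[\la_1,\la_2]$; the identity semiflows produced by lemma~\ref{l_continuationXiXi'} then provide a canonical identification of $(\eta,\zeta)$-coordinates across parameter values.
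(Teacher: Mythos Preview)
Your approach is essentially the same as the paper's: estimate the three partial derivatives of $\varphi_\Bcyl$ (in $\eta$, $\zeta$, and $\la$) using proposition~\ref{l_estimateReferenceSequence} and proposition~\ref{l_partialHLa}, then integrate along a path. Your explicit computation of $\partial_\eta q_y$ via $[\deh\bhomeo_r]^{-1}$ is a clean way to obtain the $\eta$- and $\zeta$-bounds that the paper gets by citing propositions~\ref{l_estimateReferenceSequence} and~\ref{l_coneInvariance}.

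The one point where your argument falls slightly short is the domain issue you flag at the end. Your path $(\la_1,\eta_1,\zeta_1)\to(\la_2,\eta_1,\zeta_1)\to\cdots$ requires $(\eta_1,\zeta_1)$ to lie in $\bbasei{\bcyl}(\la)\times\fbasei{\fcyl}(\la)$ for every $\la$ between $\la_1$ and $\la_2$; since these bases are monotone \emph{increasing} in $\la$, this is guaranteed only when $\la_2\geq\la_1$. Your proposed fix via proposition~\ref{l_continuationRectangles} and the identity semiflow does not resolve this, because the semiflow of lemma~\ref{l_continuationXiXi'} is only defined forward in $\la$. The paper handles this simply by splitting into the two cases $\la_1\geq\la_2$ and $\la_2>\la_1$ and reordering the legs so that the $(\eta,\zeta)$-variation always takes place at the larger parameter value, where all four points $\eta_1,\eta_2,\zeta_1,\zeta_2$ are guaranteed to lie in the bases. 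With that adjustment your proof is complete and matches the paper's.
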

\begin{proof}
  First of all notice that:
  \[
  \la_1|\varphi_\Bcyl(\xi,\eta_1,\zeta_1;\la_1)| = \la_2|\varphi_\Bcyl(\xi,\eta_1,\zeta_1;\la_1)| + \frac{\la_1-\la_2}{\la_1}\la_1|\varphi_\Bcyl(\xi,\eta_1,\zeta_1;\la_1)|;
  \]
  thus we only need to estimate the difference $|\varphi_\Bcyl(\xi,\eta_1,\zeta_1;\la_1) - \varphi_\Bcyl(\xi,\eta_2,\zeta_2;\la_2)|$; if $\la_1 \geq \la_2$ the following expression is well defined:
  \begin{align*}
    |\varphi_\Bcyl(\xi,\eta_1,\zeta_1;\la_1) - \varphi_\Bcyl(\xi,\eta_2,\zeta_2;\la_2)| &<
    |\varphi_\Bcyl(\xi,\eta_1,\zeta_1;\la_1) - \varphi_\Bcyl(\xi,\eta_2,\zeta_2;\la_1)| +\\&+
    |\varphi_\Bcyl(\xi,\eta_2,\zeta_2;\la_1) - \varphi_\Bcyl(\xi,\eta_2,\zeta_2;\la_2)|;
  \end{align*}
  conversely, if $\la_2 > \la_1$, the following is well defined:
  \begin{align*}
    |\varphi_\Bcyl(\xi,\eta_1,\zeta_1;\la_1) - \varphi_\Bcyl(\xi,\eta_2,\zeta_2;\la_2)| &<
    |\varphi_\Bcyl(\xi,\eta_1,\zeta_1;\la_1) - \varphi_\Bcyl(\xi,\eta_1,\zeta_1;\la_2)| +\\&+
    |\varphi_\Bcyl(\xi,\eta_1,\zeta_1;\la_2) - \varphi_\Bcyl(\xi,\eta_2,\zeta_2;\la_2)|.
  \end{align*}
  To fix ideas we assume to be in the first case, the other case being virtually identical. Using propositions~\ref{l_estimateReferenceSequence} and~\ref{l_coneInvariance} we obtain the bound
  \[
  |\varphi_\Bcyl(\xi,\eta_1,\zeta_1;\la_1) - \varphi_\Bcyl(\xi,\eta_2,\zeta_2;\la_1)|<\frac{1}{4}(\ncLL{\fcyl}^{-1}|\zeta_2-\zeta_1| + \ncLL{\bcyl}^{-1}|\eta_2-\eta_1|)\cdot\,\la_1^{-1};
  \]
  on the other hand, by proposition~\ref{l_partialHLa} we have that
  \[
  |\varphi_\Bcyl(\xi,\eta_2,\zeta_2;\la_1) - \varphi_\Bcyl(\xi,\eta_2,\zeta_2;\la_2)|<|\la_2-\la_1\bigo{\la^{-3/2}};
  \]
  which concludes the proof.
\end{proof}
In the following proposition we give the precise statement and the proof of the argument we described in words at the beginning of this stage.
\begin{prp}\label{p_decreasingBcylEstimate}
  Let $\Bcyl=\fcyl\cap\bcyl$ be a $\lab$-admissible bicylinder of rank $r$; there exist sets:
  \begin{align*}
    \basec_\Bcyl&\subset\basec^* &\J_\Bcyl&\subset\J_\Bcyl^\admissible,
  \end{align*}
  that we call respectively \emph{adapted base} and \emph{adapted parameter set}, satisfying the following  properties:
  \begin{enumerate}[(a)]
  \item $\basec_\Bcyl^\elliptic\subset\basei{\Bcyl}$ and $\J_\Bcyl^\elliptic\subset\J_\Bcyl$;
  \item $\diam\basei{\Bcyl}<\frac{3}{8\pi^2}\laa^{-1}\ncLL{\Bcyl}^{-1}\onepbigo{\laa^{-1}}$ and $\diam\J_{\Bcyl}<\frac{1}{2}\ncLL{\Bcyl}^{-1}\onepbigo{\laa^{-1}}$;
  \item if $\Bcyl'\subset\Bcyl$ then $\basei{\Bcyl'}\subset\basei{\Bcyl}$ and $\J_{\Bcyl'}\subset\J_\Bcyl$.
  \end{enumerate}
\end{prp}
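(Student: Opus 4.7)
The plan is to realize $\basei{\Bcyl}$ and $\J_{\Bcyl}$ as explicit sub--level sets of the central leaf objects $\cddleaf{\Bcyl}{\la}$ and $\dcleaf{\Bcyl}{\la}$. The point is that these two objects are faithful proxies, up to errors of order $\ncLL{\Bcyl}^{-1}\bigo{\laa^{-1/2}}$, for the two conditions an elliptic cyclicity--one periodic point with itinerary in $\Bcyl$ must satisfy: the ellipticity trace bound \eqref{eq_traceCondition} is encoded by $\la|\cddleaf{\Bcyl}{\la}(\xi)|$, while the mere non--emptiness of $\cdomain_{\fcyl}\cap\cdomain_{\bcyl}$ is encoded by $|\dcleaf{\Bcyl}{\la}(\xi)|$ through proposition~\ref{p_cylinderIntersection}. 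For a suitable absolute constant $C$, I would simply define
\[
\J_{\Bcyl}=\Bigl\{\la\in\J_{\Bcyl}^{\admissible}\st \exists\,\xi\in\basec^{*}\ \text{with}\ |\dcleaf{\Bcyl}{\la}(\xi)|\le\tfrac14\ncLL{\Bcyl}^{-1}(1+C\laa^{-1/2})\Bigr\}
\]
and
\[
\basei{\Bcyl}=\Bigl\{\xi\in\basec^{*}\st \exists\,\la\in\J_{\Bcyl}\ \text{with}\ \la|\cddleaf{\Bcyl}{\la}(\xi)|\le\tfrac34\ncLL{\Bcyl}^{-1}(1+C\laa^{-1/2})\Bigr\}.
\]

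For part (b), the quantitative derivatives of proposition~\ref{p_observationsOnLeaves} give both bounds directly: $\partial_{\la}\dcleaf=1+\bigo{\laa^{-1/2}}$ makes the $\la$--slice in the definition of $\J_{\Bcyl}$ an interval of width at most $\tfrac12\ncLL{\Bcyl}^{-1}(1+\bigo{\laa^{-1}})$, and $\partial_{\xi}\cddleaf=-4\pi^{2}(1+\bigo{\laa^{-1/2}})$ makes each $\xi$--slice an interval of diameter at most $\tfrac{3}{8\pi^{2}}\la^{-1}\ncLL{\Bcyl}^{-1}(1+\bigo{\laa^{-1/2}})$. The union over $\la\in\J_{\Bcyl}$ is only slightly larger, because by proposition~\ref{l_partialHLa} the unique zero $\xi_{0}(\la)$ of $\cddleaf{\Bcyl}{\la}(\cdot)$ drifts at rate $\bigo{\laa^{-2}}$, producing a total drift $\bigo{\laa^{-2}\ncLL{\Bcyl}^{-1}}$ across $\J_{\Bcyl}$, which is absorbed by the slack factor $(1+C\laa^{-1/2})$.

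For part (a), if $\la\in\J_{\Bcyl}^{\elliptic}$ an elliptic periodic point witnesses $\cdomain_{\fcyl}(\la)|_{\basec^{*}}\cap\cdomain_{\bcyl}(\la)|_{\basec^{*}}\ne\emptyset$, and the contrapositive of the emptiness implication in proposition~\ref{p_cylinderIntersection} (with $\basec'=\basec^{*}$) places $\la$ in $\J_{\Bcyl}$. For $\xi\in\basec_{\itin}^{\elliptic}$ with $\itin\in\Bcyl$ at witnessing parameter $\la^{*}$, I would take $p=\pd{\itin}(\xi,\xi;\la^{*})$ and break the ellipticity bound $\la^{*}|\slope{N}(p)-\slope{-N}(p)|<2\ndLL{\itin}^{-1}$ in two: first replace $\slope{N}(p)$ and $\slope{-N}(p)$ by $\slope{r+1}(p)$ and $\slope{-r-1}(p)$ via the asymptotic estimates \eqref{eq_forwardEstimateForHk}--\eqref{eq_backwardEstimateForHk}, at a cost $\bigo{\laa^{-r-3/2}}$; then apply proposition~\ref{p_estimateOnPhi} (with $\la_{1}=\la_{2}=\la^{*}$) to shift the second coordinates from $(\pix\sm^{-r-1}p,\pix\sm^{r+1}p)$ to the canonical values $(\ceta{\Bcyl},\czeta{\Bcyl})$, at a cost $\tfrac12\ncLL{\Bcyl}^{-1}+\bigo{\ncLL{\Bcyl}^{-1}\laa^{-1/2}}$. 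Combining, $\la^{*}|\cddleaf{\Bcyl}{\la^{*}}(\xi)|\le 2\ndLL{\itin}^{-1}+\tfrac12\ncLL{\Bcyl}^{-1}+\bigo{\ncLL{\Bcyl}^{-1}\laa^{-1/2}}$, and since $\ndLL{\itin}/\ncLL{\Bcyl}\gtrsim\laa^{(N-1-2r)/2}$ the first term is negligible, so $\xi\in\basei{\Bcyl}$.

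Item (c) is immediate from the definitions together with one further application of proposition~\ref{p_estimateOnPhi}: for $\Bcyl'\subset\Bcyl$ one has $\ncLL{\Bcyl'}\ge\ncLL{\Bcyl}$, and the central leaf functions of $\Bcyl'$ differ from those of $\Bcyl$ only by terms of order $\ncLL{\Bcyl}^{-1}\bigo{\laa^{-1/2}}$, which is absorbed by the slack. The main technical obstacle is the bookkeeping in part (a): one must keep the multiplicative constants in proposition~\ref{p_estimateOnPhi}, the distortion bound \eqref{e_smallDistortion}, and the constant $4\pi^{2}$ coming from $\dddot\phi$ in \eqref{e_estimateOnDddotPhi} simultaneously aligned so that every error contribution in the triangle inequality is genuinely swallowed by the $(1+C\laa^{-1/2})$ slack, and the tight numerical constants $\tfrac34$ and $\tfrac14$ in the definitions match the target diameters in (b) exactly rather than off by a factor of two.
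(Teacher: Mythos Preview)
Your direct-definition approach has a real gap in the diameter bound for $\J_{\Bcyl}$ in item~(b). Your set
\[
\J_{\Bcyl}=\Bigl\{\la\in\J_{\Bcyl}^{\admissible}\st \exists\,\xi\in\basec^{*}\ \text{with}\ |\dcleaf{\Bcyl}{\la}(\xi)|\le\tfrac14\ncLL{\Bcyl}^{-1}(1+C\laa^{-1/2})\Bigr\}
\]
is the set of $\la$ for which the \emph{range} of $\dcleaf{\Bcyl}{\la}$ on $\basec^{*}$ meets the target interval $[-\tfrac14\ncLL{\Bcyl}^{-1},\tfrac14\ncLL{\Bcyl}^{-1}]$. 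The bound $\partial_{\la}\dcleaf{}=1+\bigo{\laa^{-1/2}}$ only controls the $\la$--width for a \emph{fixed} $\xi$; the union over $\xi\in\basec^{*}$ adds the width of that range. Since $\partial_{\xi}^{2}\dcleaf{}\sim -4\pi^{2}\la$ and $\diam\basec^{*}\sim\pi^{-2}\laa^{-1}$, the range of $\dcleaf{\Bcyl}{\la}$ over $\basec^{*}$ already has width of order $\laa^{-1}$, independently of~$r$. Hence your $\J_{\Bcyl}$ has diameter $\gtrsim\laa^{-1}$, which for rank $r\ge 2$ is far larger than the required $\tfrac12\ncLL{\Bcyl}^{-1}$. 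The same obstruction contaminates your $\basei{\Bcyl}$: the drift of $\xi_{0}(\la)$ across a parameter interval of length $\laa^{-1}$ is not $\bigo{\laa^{-2}\ncLL{\Bcyl}^{-1}}$ but rather $\bigo{\laa^{-5/2}}$, again too large once $\ncLL{\Bcyl}\gg\laa^{3/2}$.

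This is exactly why the paper does not define the two sets in one shot but runs an \emph{iteration}: alternately shrink the base (using the current bound on $\diam\J_{\Bcyl}^{j}$) and the parameter set (using the new, smaller base). The key quadratic feedback is the estimate $\diam\J_{\Bcyl}^{j+1}\le\tfrac12\ncLL{\Bcyl}^{-1}+2\laa^{-1}(\diam\J_{\Bcyl}^{j})^{2}$, which arises because $\dcleaf{}$ is approximately quadratic in $\xi$ near its critical point, so restricting $\xi$ to an interval of radius $\rho$ makes the range of $\dcleaf{}$ shrink like $\la\rho^{2}$. No single-step definition can harvest this gain.

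Your argument for item~(c) also overstates the closeness of the central leaf functions: passing from $\Bcyl$ to a sub-bicylinder $\Bcyl'$ replaces the $(r{+}1)$st anchor $\czeta_{\Bcyl}$ by a point somewhere in $\fbasei{\fcyl}$, and by \eqref{e_baseEstimatesOnLeaves} this shifts $\dcleaf{}$ by up to $\tfrac14\ncLL{\fcyl}^{-1}\sim\ncLL{\Bcyl}^{-1}$, \emph{not} $\ncLL{\Bcyl}^{-1}\bigo{\laa^{-1/2}}$. That shift is the same order as the thresholds themselves and cannot be absorbed in a multiplicative slack $(1+C\laa^{-1/2})$; the paper handles (c) by comparing across \emph{consecutive} iterates $\basei{\Bcyl'}^{j+1}\subset\basei{\Bcyl}^{j}$, exploiting that $\diam\J_{\Bcyl'}^{j}=\diam\J_{\Bcyl}^{j-1}\cdot\bigo{\laa^{-1/2}}$.
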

\begin{proof}
  For each $\Bcyl$ we construct two decreasing sequences of sets:
  \begin{align*}
    \basec^*=\basec_{\Bcyl}^0&\supset    \basec_{\Bcyl}^1\supset\cdots    \basec_{\Bcyl}^j\supset\cdots\\
    \J_{\Bcyl}^\admissible=\J_{\Bcyl}^0&\supset    \J_{\Bcyl}^1\supset\cdots    \J_{\Bcyl}^j\supset\cdots
  \end{align*}
  satisfying the following inductive versions of the above properties:
  \begin{enumerate}[(a')]
  \item $\basec_\Bcyl^\elliptic\subset\basei{\Bcyl}^j$ and $\J_\Bcyl^\elliptic\subset\J_\Bcyl^j$ for $j\geq 0$;
  \item the following bounds hold for $j\geq 1$:
    \begin{subequations}
      \begin{align}
        \diam\basec_{\Bcyl}^j&<\frac{1}{4\pi^2}\laa^{-1}(\ncLL{\Bcyl}^{-1}+\diam\J^{j-1}_{\Bcyl})\onepbigo{\laa^{-1/2}}\label{e_innerInductionBaseEstimate}\\
        \diam\J_{\Bcyl}^j&=
        \left(\frac{1}{2}\ncLL{\Bcyl}^{-1} + 2\laa^{-1}(\diam\J_{\Bcyl}^{j-1})^2\right)\onepbigo{\laa^{-1/2}}\label{e_innerInductionDiameterEstimate}
      \end{align}
    \end{subequations}
  \item If $\Bcyl'\subset\Bcyl$, then for any $j\geq 0$ we have $\basec_{\Bcyl'}^{j+1}\subset\basec_{\Bcyl}^{j}$ and $\J_{\Bcyl'}^{j+1}\subset\J_{\Bcyl}^{j}$.
  \end{enumerate}
  Suppose now we have constructed the sequences as prescribed; we then define:
  \begin{align*}
    \basec_{\Bcyl} &=\bigcap_j\basec_{\Bcyl}^j &  \J_{\Bcyl} &=\bigcap_j\J_{\Bcyl}^j
  \end{align*}
  In fact item (a) immediately follows by (a') and item (c) by (c'); let now $d_j = \diam\J^j_{\Bcyl}<1$;
  then by item (b') we have:
  \begin{align}\label{e_absoluteDiameterEstimate}
    d_{k} &<\frac{1}{2}\ncLL{\Bcyl}^{-1}+2\laa^{-1}d_{k-1} < \frac{1}{2}\ncLL{\Bcyl}^{-1}\cdot \sum_{j=0}^{k-1} (2\laa^{-1})^j+(2\laa^{-1})^k\notag\\
    &<\frac{1}{2}\ncLL{\Bcyl}^{-1}\onepbigo{\laa^{-1}}+ (2\laa^{-1})^k
  \end{align}
  which implies (b) for $k$ large enough. We prove (a'), (b') and (c') by induction on $j$.

  Assume we defined $\basei{\Bcyl}^j$ and $\J_\Bcyl^j$ satisfying (a') and (b'); the case $j=0$ of (a') is guaranteed by stage 1.
  If $\J_{\Bcyl}^j$ is empty, we let $\basec_{\Bcyl}^{j+1}=\emptyset$ and $\J_\Bcyl^{j+1}=\emptyset$; notice that if this is the case, then necessarily $\basei{\Bcyl}^\elliptic=\emptyset$ and $\J_{\Bcyl}^\elliptic=\emptyset$, so both statements (a') and (b') are trivially satisfied. Assume otherwise that $\J_{\Bcyl}^j\not=\emptyset$, then fix any $\la'\in\J_{\Bcyl}^j$ and define the following sets:
  \begin{align*}
    \basec_{\Bcyl}^{j+1}&=\{\xi\in\basec_{\Bcyl}^j\st\la'|\bar\varphi_\Bcyl(\xi;\la')|<\ncLL{\Bcyl}^{-1}+\diam\J_{\Bcyl}^j\};\\
    \J_{\Bcyl}^{j+1}&=\{\la\in\J_{\Bcyl}^{j}\st\cdomain_{\fcyl}(\la)|_{\basec_{\Bcyl}^{j+1}}\cap\cdomain_{\bcyl}(\la)|_{\basec_{\Bcyl}^{j+1}}\not = \emptyset\}
  \end{align*}
  By \eqref{e_estimateXiLeaf} we immediately obtain that $\basec_{\Bcyl}^{j+1}$ is an interval and that \eqref{e_innerInductionBaseEstimate} holds.
  \begin{prp}\label{p_pcritComparison}
    We have $\basec_{\Bcyl}^\elliptic\subset\basec_{\Bcyl}^{j+1}$ and $\J_{\Bcyl}^\elliptic\subset\J_{\Bcyl}^{j+1}$. Moreover if $\xi\in\basec_{\Bcyl}^{j+1}$; then for any $\la\in\J_{\Bcyl}^j$, $\eta\in\bbasei{\bcyl}(\la)$ and $\zeta\in\fbasei{\fcyl}(\la)$:
    \begin{equation}\label{e_estimateGenericXiEta}
      \la|\varphi_\Bcyl(\xi,\eta,\zeta;\la)|<2\ncLL{\Bcyl}^{-1}+\onepbigo{\la^{-1/2}}\diam\J_{\Bcyl}^j
    \end{equation}
  \end{prp}

  \begin{proof}
    Apply proposition~\ref{p_estimateOnPhi} to $\la_1=\la'$, $\eta_1=\ceta{\Bcyl}$, $\zeta_1=\czeta{\Bcyl}$ and $\la_2=\la$, $\eta_2=\eta$, $\zeta_2=\zeta$; then for large enough $\laa$ we obtain \eqref{e_estimateGenericXiEta}.
    Assume now that $\xi\in\basec_{\Bcyl}^\elliptic$; then there exists $N>r+1$, $\itin\in\cwords{N-1}\cap\Bcyl$, $\la\in\J_{\Bcyl}^\elliptic\subset\J_{\Bcyl}^j$, such that $p=\pd{\itin}(\xi,\xi;\la)=\qd{\itin}(\xi,\xi;\la)$ and
    \[
    \la|\slope{N}(\qd{\itin}(\xi,\xi;\la))-\slope{-N}(\pd{\itin}(\xi,\xi;\la))|<2\ndLL\itin^{-1}=\bigo{\la^{-1/2}}\ncLL{\Bcyl}^{-1};
    \]
    let $\zeta=\pix_{r+1} p$ and $\eta=\pix_{-r-1} p$; then, by definition:
    \[
    p=\pd{\fcyl}(\xi,\zeta;\la)=\qd{\bcyl}(\eta,\xi;\la).
    \]
    Using proposition~\ref{l_estimateReferenceSequence} we can then conclude that $\varphi_\Bcyl(\xi,\eta,\zeta;\la) = o(\ncLL{\Bcyl})$; using once more proposition~\ref{p_estimateOnPhi} we obtain that $\xi\in\basec_{\Bcyl}^{j+1}$, but since $\xi$ was arbitrary, this implies that $\basec_{\Bcyl}^\elliptic\subset\basec_{\Bcyl}^{j+1}$, which in turn yields $\J_{\Bcyl}^\elliptic\subset\J_{\Bcyl}^{j+1}$.
  \end{proof}
  \begin{prp}
    The following estimate holds:
    \[
    \diam \J_{\Bcyl}^{j+1}<\left(\frac{1}{2}\ncLL{\Bcyl}^{-1} + 2\laa^{-1}(\diam\J_{\Bcyl}^j)^2\right)\onepbigo{\la^{-1/2}}
    \]
  \end{prp}
  \begin{proof}
    If $\J_{\Bcyl}^{j+1}=\emptyset$ the estimate is trivially satisfied; otherwise
    take any $\la^*\in\J_{\Bcyl}^{j+1}$; then by proposition~\ref{p_cylinderIntersection} there exists $\xi^*\in\basec_{\Bcyl}^{j+1}$ such that:
    \[
    \dcleaf{\Bcyl}{\la^*}(\xi^*)\in\left[-\frac{1}{4}\ncLL\Bcyl^{-1},\frac{1}{4}\ncLL\Bcyl^{-1}\right].
    \]
    introduce the quantity $d = \sup_{\la\in\J_\Bcyl^{j+1}}\sup_{\xi\in\basec_\Bcyl^{j+1}}
    \dist(\dcleaf{\Bcyl}{\la}(\xi),\dcleaf{\Bcyl}{\la}(\xi^*))$;
    by definition we have:
    \[
    d
    \leq\diam\basec_\Bcyl^{j+1}
    \sup_{\la\in\J_\Bcyl^{j+1}}
    \sup_{\xi\in\basec_\Bcyl^{j+1}}
    |\la\bar\varphi_\Bcyl(\xi;\la)|;
    \]
    which, using \eqref{e_innerInductionBaseEstimate} and proposition~\ref{p_pcritComparison}, yields:
    \[
    d
    < \delta_{j+1}=
    \laa^{-1}(\ncLL{\Bcyl}^{-1}+\diam\J_k^j)
    \cdot(2\ncLL{\Bcyl}^{-1}+\diam\J_k^j)\onepbigo{\la^{-1/2}},
    \]

    which implies that, for any $\la\in\J_\Bcyl^{j+1}$:
    \[
    \dcleaf{\Bcyl}{\la}\basei{\Bcyl}^{j+1}\subset\ball{\delta_{j+1}}{\dcleaf{\Bcyl}{\la}(\xi^*)}.
    \]
    Hence, if ${\dcleaf{\Bcyl}{\la}(\xi^*)}\not\in\left[-\frac{1}{4}\ncLL\Bcyl^{-1}-\delta_{j+1},\frac{1}{4}\ncLL\Bcyl^{-1}+\delta_{j+1}\right]$ then by proposition~\ref{p_cylinderIntersection} we must have $\la\not\in\J_{\Bcyl}^{j+1}$, whence we can conclude using \eqref{e_estimateLaLeaf}.
  \end{proof}
  It remains to prove item (c'); the case $j=0$ is trivial by definition; assume that we proved that $\basei{\Bcyl'}^j\subset\basei{\Bcyl}^{j-1}$ and $\J_{\Bcyl'}^j\subset\J_{\Bcyl}^{j-1}$, then we  prove that that $\basei{\Bcyl'}^{j+1}\subset\basei{\Bcyl}^{j}$, which in turn implies $\J_{\Bcyl'}^{j+1}\subset\J_{\Bcyl}^{j}$ by construction.

  Let $\xi\in\basei{\Bcyl'}^{j+1}$ and $\la\in\J_{\Bcyl'}^{j}$; then by proposition~\ref{p_pcritComparison} we have for any $\eta'\in\bbasei{{\bcyl}'}$ and $\zeta'\in\fbasei{{\fcyl}'}$:
  \[
  \la|\varphi_{\Bcyl'}(\xi,\eta',\zeta';\la)|<2\ncLL{\Bcyl'}^{-1}+\onepbigo{\la^{-1/2}}\diam\J_{\Bcyl'}^j.
  \]
  but by proposition~\ref{l_estimateReferenceSequence} this implies that for some $\eta\in\bbasei{{\bcyl}}$ and $\zeta\in\fbasei{{\fcyl}}$
  \[
  \la|\varphi_{\Bcyl}(\xi,\eta,\zeta;\la)|<o(\ncLL{\Bcyl}^{-1})+\onepbigo{\la^{-1/2}}\diam\J_{\Bcyl'}^j.
  \]
  However, \eqref{e_absoluteDiameterEstimate} implies that $\diam\J_{\Bcyl'}^j=\diam\J_{\Bcyl}^{j-1}\bigo{\la^{-1/2}}$; we then apply proposition~\ref{p_estimateOnPhi} and, since by induction hypothesis we have that $\la\in\J_{\Bcyl'}^j\subset\J_{\Bcyl}^{j-1}$, we obtain that  $\xi\in\basei{\Bcyl}^j$, which concludes the proof.
\end{proof}


\paragraph{Stage 3} For any $r>0$ we show that the collection $\{\J_{\Bcyl}\}$, where $\Bcyl$ ranges on all $\J$-admissible bicylinders of rank $r$, covers $\J$ except for a $\bigo{\la^{-1/2}}$-neighborhood of its boundary. Throughout this stage we fix a small $\eps>0$ and assume $\la$ to be large enough to ensure every term that we previously bounded with $\bigo{\la^{-1/2}}$ to be smaller than $\eps$. In this constructive part of the proof it is very convenient to restrict our considerations to a special class of bicylinders, as long as the covering property still holds. The first property that we need to ensure is good control of their adapted parameter set and it is defined in the following
\begin{prp}
  Fix $\Bcyl$; for $\la\in\J_\Bcyl^\admissible$ let $\xi_\critical(\la)$ be defined as the unique critical point of the leaf function $\dcleaf{\Bcyl}{\la}$ i.e. $\cddleaf{\Bcyl}{\la}(\xi_\critical(\la))=0$; then if there exists $\la\in\J_\Bcyl^\admissible$ such that
  \begin{equation}\label{d_tangencyParameter}
    \dcleaf{\Bcyl}{\la}(\xi_\critical(\la)) = 0\mod 1
  \end{equation}
  we say that $\la$ is a \emph{tangency parameter for $\Bcyl$}. For each $\Bcyl$, there exists at most one tangency parameter in $\J$ that will be denoted by $\la_\Bcyl$ and we have
  \begin{equation}\label{e_upperInclusionJ}
    \J_\Bcyl\subset\ball{\frac{1}{4}\ncLL{\Bcyl}^{-1}(1+\eps)}{\la_\Bcyl}.
  \end{equation}
  If we additionally assume that 
  \begin{equation}\label{e_intactCondition}
    \ball{\frac{1}{4}\ncLL{\Bcyl}^{-1}(1+\eps)}{\la_\Bcyl}\subset\J_\Bcyl^\admissible
  \end{equation}
  then we also obtain the following lower bound
  \begin{equation}\label{e_lowerInclusionJ}
    \ball{\frac{1}{4}\scLL{\Bcyl}^{-1}(1-\eps)}{\la_\Bcyl}\subset\J_\Bcyl.
  \end{equation}
  A bicylinder for which there exists a tangency parameter $\la_\Bcyl$ satisfying \eqref{e_intactCondition} is said to be \emph{intact in $\J$}.
\end{prp}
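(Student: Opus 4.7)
The plan is to reduce the whole statement to the analysis of the real-valued function
\[
F(\la) \defeq \dcleaf{\Bcyl}{\la}(\xi_\critical(\la)),
\]
namely the value of the leaf function at its unique interior critical point. First I would compute $F'(\la)$ by an envelope argument: since $\xi_\critical(\la)$ is by definition the critical point of $\dcleaf{\Bcyl}{\la}$, the cross term $\partial_\xi\dcleaf{\Bcyl}{\la}\cdot\tfrac{d\xi_\critical}{d\la}$ vanishes and \eqref{e_estimateLaLeaf} gives $F'(\la) = 1 + \bigo{\la^{-1/2}}$. In particular $F$ is strictly monotone and its oscillation on $\J$ is bounded by $(1+\eps)\diam\J<1$, so $F(\la)\equiv 0\pmod 1$ admits at most one solution on $\J$; this is the tangency parameter $\la_\Bcyl$, whose uniqueness is thus established.

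For the upper bound \eqref{e_upperInclusionJ}, I would start from the observation that the inductive construction of Stage~2 forces, for $\la\in\J_\Bcyl$, the intersection $\cdomain_{\fcyl}(\la)|_{\basec_{\Bcyl}}\cap\cdomain_{\bcyl}(\la)|_{\basec_{\Bcyl}}$ to be non-empty. Applying the contrapositive of the first implication of proposition~\ref{p_cylinderIntersection}, $\dcleaf{\Bcyl}{\la}$ must attain a value in $[-\tfrac{1}{4}\ncLL{\Bcyl}^{-1},\tfrac{1}{4}\ncLL{\Bcyl}^{-1}]\pmod 1$ somewhere on $\basec_{\Bcyl}$. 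Since $\basec_{\Bcyl}$ is, by construction, a tight neighborhood of $\xi_\critical$ cut out by smallness of $|\cddleaf{\Bcyl}{\cdot}|$, the uniform bound \eqref{e_estimateXiLeaf} together with the diameter estimate in proposition~\ref{p_decreasingBcylEstimate}(b) shows that $\dcleaf{\Bcyl}{\la}$ differs from $F(\la)$ by at most $\bigo{\eps\ncLL{\Bcyl}^{-1}}$ across $\basec_{\Bcyl}$. Hence $F(\la)$ itself lies within $\tfrac{1}{4}\ncLL{\Bcyl}^{-1}(1+\eps/2)$ of $0\pmod 1$, and combining $F(\la_\Bcyl)\equiv 0$ with $F'\geq 1-\eps/2$ yields $|\la-\la_\Bcyl|\leq\tfrac{1}{4}\ncLL{\Bcyl}^{-1}(1+\eps)$.

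For the lower bound \eqref{e_lowerInclusionJ} under the intactness hypothesis \eqref{e_intactCondition}, given $\la$ in the prescribed ball I would prove by induction that $\la\in\J_{\Bcyl}^{j}$ for every $j$, whence $\la\in\J_\Bcyl$. The base case $j=1$ follows because the bound $|\la-\la_\Bcyl|\leq\tfrac{1}{4}\scLL{\Bcyl}^{-1}(1-\eps)$ and $F'\simeq 1$ force $F(\la)\in[-\tfrac{1}{4}\scLL{\Bcyl}^{-1},\tfrac{1}{4}\scLL{\Bcyl}^{-1}]\pmod 1$, so that the second implication of proposition~\ref{p_cylinderIntersection}, applied with $\basec'$ a small neighborhood of $\xi_\critical(\la)$, delivers the required intersection. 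The inductive step exploits intactness to keep $\la$ admissible and uses the quantitative estimates \eqref{e_innerInductionBaseEstimate}--\eqref{e_innerInductionDiameterEstimate} to check that $\xi_\critical(\la)$ continues to lie in $\basec_{\Bcyl}^{j+1}$ at every stage. I expect this propagation step to be the main obstacle: one must verify that the gap between $\scLL{\Bcyl}^{-1}$ and $\ncLL{\Bcyl}^{-1}$, together with the $\eps$-buffer, absorbs the $\bigo{\la^{-1/2}}$ errors that accumulate through the construction so that the critical value $F(\la)$ stays safely inside the tightening admissible window at every refinement.
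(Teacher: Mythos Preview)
Your proposal is correct and follows essentially the same route as the paper: the envelope identity $F'(\la)=\partial_\la\dcleaf{\Bcyl}{\la}(\xi_\critical(\la))$ together with \eqref{e_estimateLaLeaf} gives uniqueness; the upper bound comes from proposition~\ref{p_cylinderIntersection} plus the diameter estimate of $\basei{\Bcyl}$ (the paper computes the variation across $\basei{\Bcyl}$ as $\bigo{\laa^{-1}\ncLL{\Bcyl}^{-2}}$, sharper than your $\bigo{\eps\,\ncLL{\Bcyl}^{-1}}$ but leading to the same conclusion); and the lower bound uses proposition~\ref{p_cylinderIntersection} again at $\xi_\critical$.

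One simplification worth noting: for the lower bound the paper does not run an induction on $j$ in the way you suggest. It observes once and for all that $\xi_\critical(\la)\in\basec_{\Bcyl}^{j}$ for every $j$, simply because $\cddleaf{\Bcyl}{\la}(\xi_\critical(\la))=0$ transfers via proposition~\ref{p_estimateOnPhi} to $\la'|\cddleaf{\Bcyl}{\la'}(\xi_\critical(\la))|<|\la-\la'|\bigo{\la^{-1/2}}$, which is automatically below the threshold $\ncLL{\Bcyl}^{-1}+\diam\J_{\Bcyl}^{j}$ defining $\basec_{\Bcyl}^{j+1}$. With $\xi_\critical\in\basei{\Bcyl}$ established, a single application of proposition~\ref{p_cylinderIntersection} at $\xi_\critical$ gives $\la\in\J_{\Bcyl}^{j+1}$ for every $j$. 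So the ``main obstacle'' you anticipate is not really an obstacle: the $\bigo{\la^{-1/2}}$ error is absorbed directly by the $\ncLL{\Bcyl}^{-1}$ term in the definition of $\basec_{\Bcyl}^{j+1}$, and the gap between $\scLL{\Bcyl}$ and $\ncLL{\Bcyl}$ plays no role there.
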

\begin{proof}
  To prove uniqueness of the tangency parameter in $\J$, notice that,  by definition:
  \[
  \de{}{\la}\dcleaf{\Bcyl}{\la}(\xi_\critical(\la))=\partial_\la\dcleaf{\Bcyl}{\la}(\xi_\critical(\la)),
  \]
  and we can conclude by \eqref{e_estimateLaLeaf}: in fact since $|\J|<7/8$ we know $\dcleaf{\Bcyl}{\la}(\xi_\critical(\la))$ cannot wrap around the torus, hence it can intersect $0$ only once in $\J$.
  We now proceed to prove the inclusion statements; following the proof of proposition~\ref{p_decreasingBcylEstimate} and using
  \[
  \la|\cddleaf{\Bcyl}{\la}(\xi_\critical(\la))|<|\la-\la_\Bcyl|\bigo{\la^{-1/2}},
  \]
  which follows by applying proposition~\ref{p_estimateOnPhi}, it is immediate to check that $\xi_\critical\in\basei{\Bcyl}^j$ for all $j\geq 0$, which implies that $\xi_\critical\in\basei{\Bcyl}$, whence $\la_\Bcyl\in\J_\Bcyl$.
  Consequently, if $\la\in\ball{\frac{1}{4}\scLL{\Bcyl}^{-1}(1-\eps)}{\la_\Bcyl}$ and if \eqref{e_intactCondition} holds, by \eqref{e_estimateLaLeaf} we obtain:
  \[
  \dist(\dcleaf{\Bcyl}{\la}(\xi_\critical),\dcleaf{\Bcyl}{\la_\Bcyl}(\xi_\critical)) \leq |\la-\la_\Bcyl|\onepbigo{\la^{-1/2}},
  \]
  which implies \eqref{e_lowerInclusionJ} by proposition~\ref{p_cylinderIntersection}.

  Conversely, assume that $\la\in\J_\Bcyl$; then by proposition~\ref{p_cylinderIntersection} there exists $\xi\in\basei{\Bcyl}$ such that:
  \[
  \dcleaf{\Bcyl}{\la}(\xi)\in\left[-\frac{1}{4}\ncLL\Bcyl^{-1},\frac{1}{4}\ncLL\Bcyl^{-1}\right]
  \]
  Using once more \eqref{e_estimateLaLeaf} we obtain
  \[
  \dist(\dcleaf{\Bcyl}{\la_\Bcyl}(\xi),\dcleaf{\Bcyl}{\la}(\xi))=  |\la-\la_\Bcyl|\onepbigo{\la^{-1/2}};
  \]
  and moreover, by the diameter estimates and by definition of $\basei{\Bcyl}$ we have:
  \[
  \dist(\dcleaf{\Bcyl}{\la_\Bcyl}(\xi),\dcleaf{\Bcyl}{\la_\Bcyl}(\xi_\critical)) \leq 3\laa^{-1}\ncLL{\Bcyl}^{-2}.
  \]
  Hence:
  \[
  |\la-\la_\Bcyl|\onepbigo{\la^{-1/2}} = \dist(\dcleaf{\Bcyl}{\la}(\xi),\dcleaf{\Bcyl}{\la_\Bcyl}(\xi_\critical))\leq\frac{1}{4}\ncLL\Bcyl^{-1},
  \]
  from which we conclude.
\end{proof}
We are now going to inductively construct a set of intact with good density properties, which we call \emph{balanced bicylinders}; at the $n$-th inductive step we construct the $n$-th \emph{generation} of balanced bicylinders, that is a collection of intact bulk bicylinders of rank $n$. Since the actual definition is rather cumbersome, we first try to expose the general idea in words before providing all the details. First of all notice that if $\Bcyl=\fcyl\cap\bcyl$ is a bicylinder of rank $r$, then in general $\ncLL{\fcyl}$ and $\ncLL{\bcyl}$ are quite different (in fact their ratio can oscillate between $\la^{-r/2}$ and $\la^{r/2}$); on top of this, even if we had $\ncLL{\fcyl}\sim\ncLL{\bcyl}$, the forward and backward expansion rates at step $k<r$ could still be very different for points belonging to $\fcyl$ and $\bcyl$ respectively. This leads to complications which we want to avoid: in order to choose forward and backward cylinders that have similar expansion rates at all steps we appropriately construct left-closed and right-closed words and take the associated forward and backward cylinder. In the construction process it is also natural to index the resulting bicylinders in such a way that if two bicylinders are indexed consecutively, then their adapted parameter set will be close to each other: in fact what we will show is that they indeed overlap.

\begin{mydef}[First generation]\label{p_childBicylinders}
  Fix $s_1$ and $\s_{-1}$ depending on $\J$ in a way to be described later; for $j\in\integers$ define the symbols $\altLetter^\forw_{(j,\pm)}\in\alphabetHead(\lab)$ and $\altLetter^\backw_{(j,\pm)}\in\alphabetTail(\lab)$ as follows:
  \begin{align*}
    \altLetter^\forw_{(j,\pm)}  &=\hsignDeg{-}{\s_1}{j}{\pm} &
    \altLetter^\backw_{(j,\pm)} &=\tsignDeg{\pm}{\s_{-1}}{j}{-};
  \end{align*}
  further, define $\zeta_{(j,\pm)}=\xi_{{\altLetter^\forw_{(j,\pm)}}}$ and $\eta_{(j,\pm)}=\xi_{{\altLetter^\backw_{(j,\pm)}}}$. We then define cylinders of rank $1$ associated to the following words:
  \begin{align*}
    \tfcyl_{(j,\pm)}&=-\altLetter^\forw_{(j,\pm)}&\tbcyl_{(j,\pm)}&=\altLetter^\backw_{(j,\pm)}-,
  \end{align*}
  and the bicylinders:
  \begin{align*}
    \tBcyl_{(4j)}&={\tfcyl_{(j,+)}}\cap{\tbcyl_{(j,-)}}\\
    \tBcyl_{(4j+1)}&={\tfcyl_{(j,+)}}\cap{\tbcyl_{(j,+)}}\\
    \tBcyl_{(4j+2)}&={\tfcyl_{(j,-)}}\cap{\tbcyl_{(j,+)}}\\
    \tBcyl_{(4j+3)}&={\tfcyl_{(j,-)}}\cap{\tbcyl_{(j+1,-)}};
  \end{align*}
  and likewise:
  \begin{align*}
    \zeta_{(4j)}&=\zeta_{(j,+)}&  \eta_{(4j)}&=\eta_{(j,-)}\\
    \zeta_{(4j+1)}&=\zeta_{(j,+)}&  \eta_{(4j+1)}&=\eta_{(j,+)}\\
    \zeta_{(4j+2)}&=\zeta_{(j,-)}&  \eta_{(4j+2)}&=\eta_{(j,+)}\\
    \zeta_{(4j+3)}&=\zeta_{(j,-)}&  \eta_{(4j+3)}&=\eta_{(j+1,-)}
  \end{align*}
  The \emph{first generation of balanced bicylinders} is the collection of all $\tBcyl_{(l)}$ that are intact bulk bicylinders.

  Assume we defined the $r$-th generation of balanced bicylinders, we proceed to define the $r+1$-st generation of balanced bicylinders.
  Fix $\Bcyl=\fcyl\cap\bcyl$ an arbitrary $r$-th generation balanced bicylinder; to fix ideas let $\fcyl=-\letter_1\cdots\letter_r$ and $\bcyl=\letter_{-r}\cdots\letter_{-1}-$; let $S^\forw = \s_1\cdots\s_{r-1}$ and $\S^\backw=\s_{-1}\cdots\s_{-r+1}$,  where $\s_j$ is the sign $\s$ associated to the symbol $\letter_j$;  define the symbols $\altLetter^\forw_{(\Bcyl:j,\pm)},\,\altLetter^\backw_{(\Bcyl:j,\pm)}\in\alphabetReg(\lab)$ as follows:
  \begin{align*}
    \altLetter^\forw_{(\Bcyl:j,\pm)} &=\signDeg{\s_r}{\si_r}{S^\forw j}{\pm} &
    \altLetter^\backw_{(\Bcyl:j,\pm)}&=\signDeg{\pm}{\sii_{-r}}{S^\backw j}{\s_{-r}};
  \end{align*}
  further, define $\zeta_{(\Bcyl:j,\pm)}=\xi_{{\altLetter^\forw_{(\Bcyl:j,\pm)}}}$ and $\eta_{(\Bcyl:j,\pm)}=\xi_{{\altLetter^\backw_{(\Bcyl:j,\pm)}}}$. Define the following cylinders of rank $r+1$:
  \begin{align*}
    \tfcyl_{(\Bcyl:j,\pm)}&=-\letter_1\cdots\letter_r\altLetter^\forw_{(\Bcyl:j,\pm)} &
    \tbcyl_{(\Bcyl:j,\pm)}&=\altLetter^\backw_{(\Bcyl:j,\pm)}\letter_{-r}\cdots\letter_{-1}-,
  \end{align*}
  and the corresponding bicylinders:
  \begin{align*}
    \tBcyl_{(\Bcyl:4j)}&={\tfcyl_{(\Bcyl:j,S^\forw)}}\cap{\tbcyl_{(\Bcyl:j,-S^\backw)}}\\
    \tBcyl_{(\Bcyl:4j+1)}&={\tfcyl_{(\Bcyl:j,S^\forw)}}\cap{\tbcyl_{(\Bcyl:j,S^\backw)}}\\
    \tBcyl_{(\Bcyl:4j+2)}&={\tfcyl_{(\Bcyl:j,-S^\forw)}}\cap{\tbcyl_{(\Bcyl:j,S^\backw)}}\\
    \tBcyl_{(\Bcyl:4j+3)}&={\tfcyl_{(\Bcyl:j,-S^\forw)}}\cap{\tbcyl_{(\Bcyl:j+1,-S^\backw)}};
  \end{align*}
  likewise:
  \begin{align*}
    \zeta_{(\Bcyl:4j)}&=\zeta_{(\Bcyl:j,S^\forw)}&  \eta_{(\Bcyl:4j)}&=\eta_{(\Bcyl:j,-S^\backw)}\\
    \zeta_{(\Bcyl:4j+1)}&=\zeta_{(\Bcyl:j,S^\forw)}&  \eta_{(\Bcyl:4j+1)}&=\eta_{(\Bcyl:j,S^\backw)}\\
    \zeta_{(\Bcyl:4j+2)}&=\zeta_{(\Bcyl:j,-S^\forw)}&  \eta_{(\Bcyl:4j+2)}&=\eta_{(\Bcyl:j,S^\backw)}\\
    \zeta_{(\Bcyl:4j+3)}&=\zeta_{(\Bcyl:j,-S^\forw)}&  \eta_{(\Bcyl:4j+3)}&=\eta_{(\Bcyl:j+1,-S^\backw)}
  \end{align*}
  Every $\tBcyl_{(\Bcyl:l)}$ that is an intact bulk bicylinder is said to be a \emph{child} of $\Bcyl$ and the collection of all children of all $r$-th generation balanced bicylinders composes the \emph{$r+1$-st generation of balanced bicylinders}.
\end{mydef}
By proposition~\ref{p_symmetricLetters} and \eqref{e_bulkApproximation} it is immediate to check that:
\begin{align}\label{e_estimateBalancedBcyl}
  \ncLLi{\fcyl}&=\frac{1}{2}\ncLLi{\Bcyl}\onepbigo{\la^{-1}}&
  \ncLLi{\bcyl}&=\frac{1}{2}\ncLLi{\Bcyl}\onepbigo{\la^{-1}}.
\end{align}
We are now ready to properly state the covering lemma:
\begin{mydef}
   Let $\Bcyl$ be an intact bicylinder; we define the \emph{core parameter set of $\Bcyl$} as:
  \[
  \hat\J_\Bcyl=\ball{\frac{3}{8}\frac{\ncLL{\Bcyl}^{-1}}{2}}{\la_\Bcyl}
  \]
\end{mydef}
\begin{lem}\label{l_covering}
  The collection of the core parameter sets of the first generation balanced bicylinder covers the main core parameter set $\hat\J$.
  Moreover for any $r$-th generation balanced bicylinder $\Bcyl$; then the core parameter set $\hat\J_\Bcyl$ is covered by the collection of the core parameter sets of its children:
  \[
  \hat\J_{\Bcyl}\subset\bigcup_{l}\hat\J_{\tBcyl_{(\Bcyl:l)}}
  \]
\end{lem}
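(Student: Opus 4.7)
The plan is to treat both statements by the same scheme: in each generation I show (i) that the tangency parameters of consecutive balanced bicylinders in the natural enumeration differ by less than the sum of their core parameter set radii, so that adjacent cores overlap, and (ii) that the range of tangency parameters spanned as the enumeration index varies covers the target set---$\hat\J$ for the first generation and $\hat\J_\Bcyl$ for the children of $\Bcyl$.

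For (i) I would begin from the tangency relation $\dcleaf{\Bcyl}{\la_\Bcyl}(\xi_c) = 0\mod 1$ together with $\partial_\la \dcleaf{\Bcyl}{\la}(\xi) = \dot\phi(\xi) + O(\la^{-1/2}) \approx 1$ from \eqref{e_estimateLaLeaf}, which reduces the problem to bounding the jump $\dcleaf{\Bcyl_{l+1}}{\la}(\xi) - \dcleaf{\Bcyl_l}{\la}(\xi)$ at fixed $\la$ and $\xi$. Two consecutive balanced bicylinders differ by a single-step modification of one reference symbol---either a sign flip of $\sii$ or $\si$, shifting the reference $\xi$-coordinate by $\tfrac{1}{2}|\la\slope{1}|^{-1}$ via \eqref{e_deltaRelationsXiC}, or an increment of the index $k$ by $1$, shifting by $|\la\slope{1}|^{-1}$. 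Propagating this through the leaf functions using \eqref{e_baseEstimatesOnLeaves} and the balancing identity \eqref{e_estimateBalancedBcyl}, the corresponding jump in $\dcleaf$ is bounded by a small multiple of $\ncLL{\Bcyl}^{-1}$ with an explicit constant strictly smaller than $\tfrac{3}{8}$; since $\tfrac{3}{8}\ncLL{\Bcyl}^{-1}(1+O(\la^{-1}))$ is the sum of two adjacent core radii $\tfrac{3}{16}\ncLL{\Bcyl}^{-1}$, adjacent cores indeed overlap.

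For (ii) I sum the per-step shifts along the chain. In a child generation of a rank-$r$ parent, appending a new reference symbol multiplies the expansion rates by $\la$, so $\ncLL{\tBcyl} \sim \la\ncLL{\Bcyl}$ and each per-step shift is of order $\la^{-1}\ncLL{\Bcyl}^{-1}$; with $\sim\la$ admissible choices of the new index $j$, the total range swept out by the child tangency parameters is of order $\ncLL{\Bcyl}^{-1}$, which contains the parent's core $\hat\J_\Bcyl = B(\la_\Bcyl,\tfrac{3}{16}\ncLL{\Bcyl}^{-1})$ once the enumeration is centered at $\la_\Bcyl$. For the first generation the analogous sum, combined with the observation that as $j$ ranges over its $\sim\la$ admissible values the leaf functions at the central references sweep once across the torus, yields coverage of an interval of length $\sim 1$ which contains $\hat\J$ after an appropriate choice of $s_1$, $s_{-1}$ anchors the chain. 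The main technical obstacle is the quantitative bookkeeping of the constants in (i), together with the verification that enough bicylinders in the enumeration are simultaneously bulk---automatic for rectangles whose $\pix$-image is bounded away from the critical points, where $|\la\slope{1}|>\la/10$---and intact---which holds for cores lying inside $\hat\J\subset\J$ by applying the monotonic admissibility of Proposition~\ref{l_continuationRectangles} to the full balanced bicylinder.
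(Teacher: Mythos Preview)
Your proposal is correct and follows essentially the same approach as the paper. The paper formalizes your step~(i) by showing that consecutive tangency parameters satisfy $\la_{\tBcyl_{(l+1)}}-\la_{\tBcyl_{(l)}}=-\tfrac14\ncLL{\tBcyl_{(l)}}^{-1}(1+O(\la^{-1/2}))$ (the constant is exactly $\tfrac14<\tfrac38$), obtained from the explicit first-generation identity $\dcleaf{\tBcyl_{(l)}}{\la}(\gxi\cs-)=-\eta_{(l)}+\la-\zeta_{(l)}$ and its higher-rank analogue, together with a short fixed-point argument (Proposition~\ref{p_approximateTangency}) converting leaf-function values into tangency-parameter locations; your step~(ii) and the anchoring via the choice of $s_1,s_{-1}$ likewise match the paper's Propositions~\ref{p_firstGenerationDistribution} and~\ref{p_childBicylinder}.
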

\begin{proof}
  First of all we need to control the tangency parameters of balanced bicylinders; this can be done by means of the following
  \begin{prp}\label{p_approximateTangency}
    For any bicylinder $\Bcyl$, $\la\in\J_\Bcyl^\admissible$, let
    \[
    \dcleaf{\Bcyl}{\la}(\xi_\critical(\la))=\tilde\delta\mod 1
    \]
    assume $\tilde\delta$ is such that there exists $\delta=\tilde\delta - m$ with $m\in\{0,1\}$ satisfying \[
    \sball=\ball{C\delta\la^{-1/2}}{\la-\delta}\subset\J_\Bcyl^\admissible,
    \] where $C$ is some  constant independent of $\Bcyl$ and on the initial point $\la$;
    then we can conclude that $\Bcyl$ admits a tangency parameter $\la_\Bcyl\in\sball$.
  \end{prp}
  \begin{proof}
    Let $\la_0=\la$, $\xi_0=\xi_\critical(\la)$ and $\delta_0=\delta$; for $j>0$ we define sequences $\la_j\in\J$, $\xi_j\in\basec^*$ and $\delta_j\in\reals$ as follows: let $\la_j=\la_{j-1}-\delta_{j-1}$, $\xi_{j}$ be such that $\bar\varphi_{\Bcyl}(\xi_j,\la_j)=0$ and let $\delta_j=\dcleaf{\Bcyl}{\la_j}(\xi_j)-m_j$ where $m_j$ will be chosen later.
    We then claim that $\la_j$ converges to $\la_\Bcyl$; in fact using \eqref{e_estimateLaLeaf} we obtain:
    \[
    \dist(\dcleaf{\Bcyl}{\la_{j+1}}(\xi_j),0)\leq C' |\delta_j|\la^{-1/2}
    \]
    for some constant $C'$; further, by proposition~\ref{p_estimateOnPhi} we know that \[
    |\cddleaf{\Bcyl}{\la_{j+1}}(\xi_j)-\cddleaf{\Bcyl}{\la_j}(\xi_j)|=|\delta_j|\bigo{\la^{-3/2}};\]
    since $\cddleaf{\Bcyl}{\la}$ is monotonic we can then choose $m_{j+1}$ such that:
    \[
    |\delta_{j+1}| = |\dcleaf{\Bcyl}{\la_{j+1}}(\xi_{j+1})+m_{j+1}|\leq C'' |\delta_j|\la^{-1/2}.
    \]
    If we define
    \[
    C = 2\frac{C''}{1-C''\la^{-1/2}}
    \]
    we have that $\la_{j}\in\sball$ for any $j$ and the induction procedure is well defined. By construction the limit point of the sequence $\la_j$ belongs to $\sball$, and by continuity we can conclude that it is indeed given by $\la_\Bcyl$.
  \end{proof}
  Proposition~\ref{p_approximateTangency} is extremely useful, since it allows us to check if a bicylinder admits a tangency parameter and if this is the case, it allows to obtain a estimate on its value by simply evaluating a function that is defined for all $\la\in\J_\Bcyl^{\admissible}$. We now use the proposition to establish the following result about the distribution of tangency parameters for balanced bicylinders
  \begin{prp}\label{p_firstGenerationDistribution}
    Let $\la_0=(\laa+\lab)/2$; then we can choose $\s_1$ and $\s_{-1}$ in definition~\ref{p_childBicylinders} in such a way that
    \begin{equation}\label{e_zeroBicylinder}
      |\la_{\tBcyl_{(0)}}-\la_0|<6\laa^{-1}
    \end{equation}
    moreover let $\tBcyl_{(l)}$ and $\tBcyl_{(l+1)}$ both be first generation bicylinders; then:
    \begin{equation}\label{e_coveringBicylinder}
      \la_{\tBcyl_{(l+1)}}-\la_{\tBcyl_{(l)}}=-\frac{1}{4}\ncLL{\tBcyl_{(l)}}^{-1}\onepbigo{\la^{-1/2}}.
    \end{equation}
  \end{prp}
  \begin{proof}
    By definition $\la_0 = n/2$ for some $n\in\naturals$; we let $\s_1=\s_{-1}=+$ if $n$ is odd, and $\s_1=+$, $\s_{-1}=-$ if $n$ is even.
    By definition of central leaf function it is trivial to check that
    \begin{align*}
      \fcleaf{\tfcyl_{(l)}}{\la}(\gxi\cs-) &= \zeta_{(l)}&
      \bcleaf{\tbcyl_{(l)}}{\la}(\gxi\cs-) &= -\eta_{(l)} + \la
    \end{align*}
    whence
    \begin{equation}\label{e_dcleafEtaZeta}
      \dcleaf{\tBcyl_{(l)}}{\la}(\gxi\cs-)=-\eta_{(l)}+\la - \zeta_{(l)}.
    \end{equation}
    We claim that \eqref{e_zeroBicylinder} holds; in fact by \eqref{e_absoluteRelationsXiC} we know that
    \begin{align*}
      |\zeta_{(0)}-\gxi\ds{\s_1}|=|\xiC{\cs}{-}{{\s_1}}{\ds}{{+}}{0} - \gxi\ds{\s_1}|&<\la^{-1}\\
      |\eta_{(0)}-\gxi\ds{\s_{-1}}|=|\xiC{\ds}{{-}}{{\s_{-1}}}{\cs}-{0} - \gxi\ds{\s_{-1}}|&<\la^{-1};
    \end{align*}
    on the other hand, by our choice of $\s_1$ and $\s_{-1}$ we know that
    \[
    \gxi\ds{\s_1}+\gxi\ds{\s_{-1}}-\la_0=0 \mod 1,
    \]
    hence we conclude that $\dist(\dcleaf{\tBcyl_{(l)}}{\la_0}(\gxi\cs-),0)<2\la_0^{-1}$. Since we have $\xi_\critical(\la_0)\in\basec^*$, $|\basec^*|<\laa^{-1}$ and $|\partial_\xi\dcleaf{\tBcyl_{(0)}}{\la_0}|<2$ on $\basec^*$ we also obtain that
    \[
    \dist(\dcleaf{\tBcyl_{(l)}}{\la_0}(\xi_\critical(\la_0)),0)<4\la_0^{-1}
    \]
    whence, using proposition~\ref{p_approximateTangency}, we obtain \eqref{e_zeroBicylinder}.

    In order to prove \eqref{e_coveringBicylinder} we perform a similar computation; first assume that we proved the following estimate:
    \begin{equation}\label{e_leafAtWrongCritical}
      \dcleaf{\tBcyl_{(l+1)}}{\la}(\gxi\cs-) - \dcleaf{\tBcyl_{(l)}}{\la}(\gxi\cs-) = -\frac{1}{4}\ncLLi{\Bcyl_{(l)}}\onepbigo{\la^{-1}}
    \end{equation}
    We now claim that if \eqref{e_leafAtWrongCritical} holds we obtain \eqref{e_coveringBicylinder}. In fact by a Gronwall argument we can extend \eqref{e_leafAtWrongCritical} to
    \[
    \dcleaf{\tBcyl_{(l+1)}}{\la}(\xi) - \dcleaf{\tBcyl_{(l)}}{\la}(\xi) = -\frac{1}{4}\ncLLi{\Bcyl_{(l)}}\onepbigo{\la^{-1}};
    \]
    for $\xi\in\basec^*$, from which we can finally obtain that
    \[
    \dist(\dcleaf{\tBcyl_{(l+1)}}{\la_{\tBcyl_{(l)}}}(\xi_\critical^{(l+1)}(\la_{\tBcyl_{(l)}})),0)=
    -\frac{1}{4}\ncLLi{\Bcyl_{(l)}}\onepbigo{\la^{-1}};
    \]
    where $\xi_\critical^{(l)}(\la)$ is the unique critical point of $\dcleaf{\tBcyl_{(l)}}\la$ in $\basec^*$. We can then apply proposition~\ref{p_approximateTangency} and conclude that \eqref{e_coveringBicylinder} holds.

    We now need to prove \eqref{e_leafAtWrongCritical}; by \eqref{e_dcleafEtaZeta} it suffices to show that
    \begin{equation}\label{e_differenceEtaZeta}
      \zeta_{(l+1)}+\eta_{(l+1)}-\zeta_{(l)}-\eta_{(l)} = \frac{1}{4}\ncLLi{\Bcyl_{(l)}}\onepbigo{\la^{-1}}.
    \end{equation}
    We need to prove \eqref{e_differenceEtaZeta} separately in each case $l=4k, 4k+1, 4k+2, 4k+3$; we will explicitly prove just the first possibility; all other follow from similar reasoning; we assume then $l=4k$.  By our indexing strategy for balanced bicylinders we have that $\zeta_{(l)}=\zeta_{(l+1)}$, therefore we only need to estimate:
    \[
    \eta_{(l+1)}-\eta_{(l)}=\eta_{(k,+)}-\eta_{(k,-)}=\xiC\ds+{{\s_{-1}}}\cs-{k} - \xiC\ds-{{\s_{-1}}}\cs-{k},
    \]
    which by estimates \eqref{e_deltaRelationsXiC} imply:
    \[
    \eta_{(l+1)}-\eta_{(l)}=\frac{1}{2}\ncLLi{\tbcyl_{(l)}}{}\onepbigo{\la^{-1}}=\frac{1}{4}\ncLLi{\tBcyl_{(l)}}{}\onepbigo{\la^{-1}}.
    \]
    The other three cases can be treated in the same way, exploiting  relations \eqref{e_orderRelationsXiC} and \eqref{e_deltaRelationsXiC}.
  \end{proof}
  Notice that \eqref{e_coveringBicylinder} implies that the core parameter set of each pair of consecutive bicylinders of first generation always overlap, since each bicylinder $\tBcyl_{{j}}$ has a neighboring bicylinder whose tangency parameter is at a distance $(1/4)\ncLLi{\tBcyl_{{j}}}$ from its tangency parameter and, by definition, the core parameter set of each bicylinder is a ball of radius $(3/16)\ncLLi{\tBcyl_{{j}}}$ around the respective tangency parameter.

  We can then start from $\tBcyl_{(0)}$ and cover larger and larger balls in the set $\J$ taking at each step the $j$-th and $-j$th bicylinder; in fact the procedure has to stop $l
  \bigo{\la^{-1/2}}$-close to the boundary of $\J$, but since we assume $\la$ large enough, we will nonetheless cover $\hat\J$. We now state the corresponding proposition for bicylinder of higher generation.
  \begin{prp}\label{p_childBicylinder}
    Let $\Bcyl$ be balanced  bicylinder of rank $r$; then
    \begin{equation}\label{e_childBicylinder}
      |\la_{\tBcyl_{(\Bcyl:0)}}-\la_\Bcyl|<3\laa^{-1}\ncLL{\Bcyl}^{-1}.
    \end{equation}
    moreover let $\tBcyl_{(\Bcyl:l+1)}$ and $\tBcyl_{(\Bcyl:l)}$ be balanced bicylinders of rank $r+1$ that are both children of $\Bcyl$:
    \begin{equation}\label{e_siblingBicylinder}
      \la_{\tBcyl_{(\Bcyl:l+1)}}-  \la_{\tBcyl_{(\Bcyl:l)}}=-\frac{1}{4}\ncLL{\tBcyl_{(\Bcyl:l)}}^{-1}\onepbigo{\la^{-1/2}}
    \end{equation}
  \end{prp}
  \begin{proof}
    The proof is similar to the proof of proposition~\ref{p_firstGenerationDistribution}; we first proceed to find the expression corresponding to \eqref{e_dcleafEtaZeta}. Let $\Bcyl=\fcyl\cap\bcyl$; consider $\xi_\critical(\la_\Bcyl)$ the critical point of $\dcleaf{\Bcyl}{\la_\Bcyl}$ and consider the vertical line $\vl_\critical=\pix^{-1}(\xi_\critical(\la_\Bcyl)$. Notice that, by definition, the point $(\xi_\critical,\fcleaf{\fcyl}{\la_\Bcyl}(\xi_\critical)$ is mapped by $\smaux{\la_\Bcyl}^{r+1}$ on the appropriate vertical line $\gvl\ds{\s_{r+1}}$; moreover, by estimates \eqref{e_bulkEstimates} and \eqref{e_bulkApproximation} we have the following expansion estimate along the vertical line $\vl_\critical$ in the domain $\cdomain_\fcyl$:
    \[
    \de{x_r}{y}=S^\forw\ncLL{\fcyl}\onepbigo{\la^{-1}},
    \]
    where $S^\forw=\s_1\cdots \s_{r-1}$.
    This implies that
    \[
    \fcleaf{\tfcyl_{(\Bcyl:l)}}{\la_\Bcyl}(\xi_\critical) = \fcleaf{\fcyl}{\la_\Bcyl}(\xi_\critical)+S^\forw(\zeta_{(l)} - \gxi\ds{\s_{r+1}})\ncLLi{\fcyl}\onepbigo{\la^{-1}}\mod 1.
    \]
    With a similar argument we find that:
    \[
    \bcleaf{\tbcyl_{(\Bcyl:l)}}{\la_\Bcyl}(\xi_\critical) = \bcleaf{\bcyl}{\la_\Bcyl}(\xi_\critical)-S^\backw(\eta_{(l)} - \gxi\ds{\s_{-r-1}}) \ncLLi{\bcyl}\onepbigo{\la^{-1}}\mod 1,
    \]
    where $S^\backw=\s_{-1} \cdots \s_{-r+1}$; hence:
    \begin{equation}\label{e_dcleafEtaZetar}
      \dcleaf{\tBcyl_{(\Bcyl:l)}}{\la_\Bcyl}(\xi_\critical) = (-S^\forw(\zeta_{(l)} - \gxi\ds{\s_{r+1}}) - S^\backw(\eta_{(l)} - \gxi\ds{\s_{-r-1}}))\frac{1}{2}\ncLLi{\Bcyl}\onepbigo{\la^{-1}}.
    \end{equation}
    Following an argument similar to the one given previously it is easy to show that \eqref{e_dcleafEtaZetar} implies \eqref{e_childBicylinder}. Similarly, the proof of \eqref{e_siblingBicylinder} amounts to check that the indexing strategy that we have chosen in the definition of balanced bicylinders is such that:
    \begin{align*}
      (S^\forw(\zeta_{(l+1)} - \zeta_{(l)})  &+ S^\backw(\eta_{(l+1)} - \eta_{(l)})) \frac{1}{2}\ncLLi{\Bcyl}\onepbigo{\la^{-1}}\\&=  \frac{1}{4}\ncLLi{\tBcyl_{(\Bcyl:l)}}\onepbigo{\la^{-1}}
    \end{align*}
    Again the proof should be done separately in each case $l=4k, 4k+1, 4k+2, 4k+3$; we will once more explicitly prove just the first possibility. As before we have that $\zeta_{(l)}=\zeta_{(l+1)}$, hence we only need to estimate:
    \begin{align*}
      \eta_{(\Bcyl:l+1)}-\eta_{(\Bcyl:l)} &=  \eta_{(\Bcyl:l+1,S^\backw)} - \eta_{(\Bcyl:l+1,-S^\backw)} \\&= \xiC\ds{{S^\backw}}{{\sii_{-r}}}\ds{{\s_{-r}}}{k} - \xiC\ds{{-S^\backw}}{{\sii_{-r}}}\ds{{\s_{-r}}}{k}.
    \end{align*}
    By estimate \eqref{e_deltaRelationsXiC} we then obtain:
    \[
    \eta_{(\Bcyl:l+1)}-\eta_{(\Bcyl:l)} = S^\backw\frac{1}{2}|\la\slope{1}(p_{(\altLetter_{k,S^\backw})}|^{-1}
    \]
    whence we can conclude, since $\ncLL{\tBcyl_{(\Bcyl,l)}}=\ncLL{\Bcyl}\cdot\la|\slope{1}(p_{\altLetter_{k,S^\backw}})|$.
  \end{proof}
  Using a similar reasoning as before we prove that given a balanced bicylinder of generation $n$ we can cover its core parameter set with the core parameter sets of its children, which concludes the proof of lemma~\ref{l_covering}.
\end{proof}
Lemma~\ref{l_covering} immediately implies the following
\begin{cor}\label{c_decreasingSequence}
  For $N$ sufficiently large, define $\eps_N$ as follows:
  \[
  \eps_N=\left(\frac{1}{10}\laa\right)^{-(N-2\lfloor\log N\rfloor-10)}
  \]
  Then if $\mathcal{B}\subset\hat\J$ has diameter larger than $\eps_N$, there exists a balanced bicylinder $\tBcyl$ of rank $(N-2\lfloor\log N\rfloor-10)+1$ such that $\J_{\tBcyl}\subset\mathcal{B}$; moreover we can choose $\tBcyl$ to be $\tBcyl_{(\Bcyl:0)}$.
\end{cor}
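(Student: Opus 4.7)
The plan is to descend through $r := N - 2\lfloor\log N\rfloor - 10$ generations of balanced bicylinders by repeatedly applying Lemma~\ref{l_covering}, tracking the center $\la^{*}$ of $\mathcal{B}$, and then read off $\tBcyl$ as the $0$-th child of the final parent. More precisely, I would first note that the core parameter sets of the first generation cover $\hat\J \supset \mathcal{B} \ni \la^{*}$ by Lemma~\ref{l_covering}, so $\la^{*} \in \hat\J_{\Bcyl^{(1)}}$ for some first-generation balanced bicylinder $\Bcyl^{(1)}$. Then, assuming inductively that $\la^{*} \in \hat\J_{\Bcyl^{(k)}}$ for a balanced bicylinder of rank $k$, the second part of Lemma~\ref{l_covering} furnishes a balanced child $\Bcyl^{(k+1)}$ of $\Bcyl^{(k)}$ still containing $\la^{*}$ in its core parameter set. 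Iterating to $k=r$ produces a balanced bicylinder $\Bcyl := \Bcyl^{(r)}$ of rank $r$ with $|\la^{*} - \la_{\Bcyl}| \leq \frac{3}{16}\ncLL{\Bcyl}^{-1}$.

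\textbf{Closing step and size estimates.} For the last generation, rather than picking an arbitrary child I would simply take $\tBcyl := \tBcyl_{(\Bcyl:0)}$. By Proposition~\ref{p_childBicylinder} we have $|\la_{\tBcyl} - \la_{\Bcyl}| < 3\laa^{-1}\ncLL{\Bcyl}^{-1}$, and by Proposition~\ref{p_decreasingBcylEstimate}(b) together with the bulk lower bound $\ncLL{\tBcyl} \geq (\laa/10)\,\ncLL{\Bcyl}$ coming from \eqref{e_bulkApproximation} and bulkness of the appended symbols, we get $\diam\J_{\tBcyl} \leq \frac{1}{2}\ncLL{\tBcyl}^{-1}(1+O(\laa^{-1})) = O(\laa^{-1})\ncLL{\Bcyl}^{-1}$. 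Combining these three bounds via the triangle inequality, $\J_{\tBcyl}$ is contained in a ball around $\la^{*}$ of radius at most
\[
\Bigl(\tfrac{3}{16} + 3\laa^{-1} + O(\laa^{-1})\Bigr)\ncLL{\Bcyl}^{-1} < \tfrac{1}{4}\ncLL{\Bcyl}^{-1}
\]
for $\laa$ large enough. Since $\Bcyl$ is a balanced (hence bulk) bicylinder of rank $r$, \eqref{e_bulkApproximation} yields $\ncLL{\Bcyl} \geq (\laa/10)^{r} = \eps_{N}^{-1}$, so this radius is at most $\tfrac{1}{4}\eps_{N} < \eps_{N}/2 \leq \mathrm{rad}(\mathcal{B})$, establishing $\J_{\tBcyl} \subset \mathcal{B}$.

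\textbf{Main obstacle.} The nontrivial point is verifying that $\tBcyl_{(\Bcyl:0)}$ is itself a balanced bicylinder, i.e.\ an intact bulk bicylinder of rank $r+1$, so that the rank bookkeeping and the size estimates $\ncLL{\tBcyl} \geq (\laa/10)\ncLL{\Bcyl}$ and $\diam\J_{\tBcyl} \sim \ncLL{\tBcyl}^{-1}$ are applicable. Bulkness is essentially forced by the index $k=0$: by \eqref{e_absoluteRelationsXiC} the relevant points $\xiC{\is}{\sii}{\s}{\js}{\si}{0}$ lie within $O(\la^{-1})$ of the centers $\xi_{\pm}$ of the non-critical strips $\gpstr{\ds}{\pm}$, which is the region where $|\la\slope{1}|$ is maximized and certainly exceeds $\la/10$. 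Intactness amounts to checking the neighborhood condition \eqref{e_intactCondition} for $\tBcyl_{(\Bcyl:0)}$, which reduces, via the diameter estimate for $\J^{\admissible}$ from Proposition~\ref{l_continuationRectangles}, to the fact that $\la_{\tBcyl_{(\Bcyl:0)}}$ lies well inside $\J$ (guaranteed by $\la^{*} \in \hat\J$ and the accumulated triangle-inequality bound above). Once these two checks are handled, the iterative covering argument outlined above goes through.
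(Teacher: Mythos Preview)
Your proposal is correct and follows essentially the same route as the paper: the paper's proof is a one-sentence sketch (``$\eps_N$ is larger than the parameter set of any bulk cylinder of rank $r=N-2\lfloor\log N\rfloor-10$, hence by the covering property a balanced cylinder of rank $r+1$ fits inside any ball larger than $\eps_N$''), and you have simply expanded that sketch into the natural inductive descent via Lemma~\ref{l_covering}, followed by taking the $0$-th child and checking the size estimates via Propositions~\ref{p_childBicylinder} and~\ref{p_decreasingBcylEstimate}(b). Your treatment of the residual obstacle (bulkness and intactness of $\tBcyl_{(\Bcyl:0)}$) is also in line with how the paper implicitly handles it inside the proof of Lemma~\ref{l_covering}.
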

The proof of the corollary is straightforward; $\eps_N$ is defined in such a way that it larger than the parameter set of any bulk cylinder of rank $(N-2\lfloor\log N\rfloor-10)$; hence, by the covering property, we can always find a balanced cylinder of rank $(N-2\lfloor\log N\rfloor-10)+1$ whose parameter set is contained in any ball larger than $\eps_N$.
\paragraph{Stage 4} For a given closed word $\itin\in\Bcyl$ we further reduce the set $\basec_\Bcyl$ to a smaller set $\basei{\itin}$ such that we have $\basec_\itin^\elliptic\subset\basei{\itin}$. Moreover we prove that if a word belongs to an intact bicylinder, then we always have an elliptic island for some specific parameter set of diameter appropriately bounded from below.
Given $\itin$ we introduce the \emph{domain leaf functions} $\leafPrivate_{\itin}$; for $\la\in\J_\itin^\admissible$, $\zeta,\eta\in\basec^*$, define
\begin{align*}
  \fleaf{\itin}{\zeta}{\la}&:\basec^*\to\torus &\pd\itin(\xi,\zeta;\la)&=(\xi,\fleaf\itin\zeta\la(\xi));\\
  \bleaf{\itin}{\eta}{\la}&:\basec^*\to\torus &\qd\itin(\eta,\xi;\la)&=(\xi,\bleaf\itin\eta\la(\xi));
\end{align*}
\[
\dleaf{\itin}{\eta}{\zeta}{\la}=\bleaf{\itin}{\eta}{\la}-\fleaf{\itin}{\zeta}{\la}\mod 1
\]
We also define the function
\begin{equation*}
  \ddleaf\itin\eta\zeta\la(\xi)=\slope{N}(\qd{\itin}(\eta,\xi;\la))-\slope{-N}(\pd{\itin}(\xi,\zeta;\la));
\end{equation*}
As in equations \eqref{e_baseEstimatesOnLeaves} observe that:
\begin{subequations}\label{e_baseEstimatesOnDomainLeaves}
  \begin{align}
    \partial_\xi\dleaf{\itin}{\eta}{\zeta}{\la}(\xi)&=\la\ddleaf\itin\eta\zeta\la(\xi)\\
    \partial_\eta\dleaf\itin\eta\zeta\la(\xi)&=\stepex{0r+1}(\pd\itin(\eta,\xi;\la))^{-1}\\
    \partial_\zeta\dleaf\itin\eta\zeta\la(\xi)&=\stepex{0r+1}(\pd\itin(\xi,\zeta;\la))^{-1}
    \intertext{and combining \eqref{eq_differentialEquationForY0} and \eqref{eq_differentialEquationForYN} we have}
    \partial_\la\dleaf{\itin}{\eta}{\zeta}{\la}(\xi) &= \dot\phi(\xi)+\bigo{\la^{-1/2}}\label{e_parLaLeaf2}
  \end{align}
\end{subequations}
\begin{prp}\label{p_observationsOnLeaves2}
  For any $\xi\in\basec^*$, $\la\in\J_\Bcyl^\admissible$, $\zeta,\eta\in\basec^*$, the following properties are satisfied:
  \begin{subequations}
    \begin{align}
      \partial_\la\dleaf{\itin}{\eta}{\zeta}{\la}(\xi) &= 1+\bigo{\la^{-1/2}}\label{e_estimateLaLeaf2}\\
      \partial_\xi\ddleaf\itin\eta\zeta{\la}(\xi) &= -4\pi^2\onepbigo{\la^{-1/2}}\label{e_estimateXiLeaf2}
    \end{align}
  \end{subequations}
  Moreover $\dleaf{\itin}{\eta}{\zeta}\la$ has a unique critical point in $\basec^*$.
\end{prp}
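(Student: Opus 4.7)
The plan is to replicate, essentially verbatim, the proof of proposition~\ref{p_observationsOnLeaves}, with the cylinder leaf functions replaced by the domain leaf functions. The statements, estimates, and their proofs are completely parallel: the cylinder version used \eqref{e_baseEstimatesOnLeaves}, and here we have the analogue \eqref{e_baseEstimatesOnDomainLeaves} already in hand. The only substantive check is that the points $\qd\itin(\eta,\xi;\la)$ and $\pd\itin(\xi,\zeta;\la)$ lie in $\cl\bnicePoints_{r+1}$ and $\cl\fnicePoints_{r+1}$ respectively, so that proposition~\ref{l_estimateReferenceSequence} and proposition~\ref{l_partialHxi} can be applied to them. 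This is immediate from the definition of $\domain_\itin$, since the intermediate orbit points $\sm^j\pd\itin$ for $1\leq j\leq r$ lie in admissible rectangles $\recta{\letter_j}$ (which are disjoint from $\cl\icrit$), and the endpoints themselves are in $\gstr\cs\si$ and $\gstr\cs\sii$ so only their non-critical forward/backward iterates are relevant for the recursive definition of $\slope{\pm N}$.

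Estimate \eqref{e_estimateLaLeaf2} follows at once by combining \eqref{e_parLaLeaf2} with \eqref{e_estimateOnDotPhi}, since $\xi\in\basec^*$. For \eqref{e_estimateXiLeaf2}, the plan is to compute $\partial_\xi\ddleaf\itin\eta\zeta\la(\xi)$ by applying the formulae of proposition~\ref{l_partialHxi}: the derivatives with respect to $\xi$ produce directional derivatives $\parSlope{\slope{N}}\slope{N}$ and $\parSlope{\slope{-N}}\slope{-N}$ evaluated at $\qd\itin$ and $\pd\itin$, together with terms of the form $\stepex{0r+1}^{-1}\cdot\parSlope{\slope{0}}\slope{\pm N}$. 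The latter are negligible by \eqref{eq_verticalEstimateFordHk} and \eqref{eq_verticalEstimateFordH-k}. Using estimates \eqref{eq_estimatedHk} and \eqref{eq_estimatedH-k}, the directional derivatives $\parSlope{\slope{\pm N}}\slope{\pm N}$ differ from $\parSlope{\slope{\pm 1}}\slope{\pm 1}$ by $\bigo{\la^{-3/2}}$, and by \eqref{e_oneDefinitions} these equal $\dddot\phi$ and $0$. Finally \eqref{e_estimateOnDddotPhi} gives $\dddot\phi(\xi)=-4\pi^2\onepbigo{\laa^{-2}}$ on $\basec^*$, yielding \eqref{e_estimateXiLeaf2}.

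For the existence and uniqueness of the critical point, the argument mirrors its cylinder counterpart: estimates \eqref{e_obviousEstimatesHn} apply equally well with $N$ in place of a rank-$r$ quantity, since $\qd\itin\in\cl\bnicePoints_{r+1}$ and $\pd\itin\in\cl\fnicePoints_{r+1}$, and they show that $\slope{N}(\qd\itin)$ and $\slope{-N}(\pd\itin)$ are $\la^{-3/2}$-close to $\slope{1}$ and $\slope{-1}$ respectively. Since $\slope{-1}\equiv0$ and $\slope{1}(\xi)=\ddot\phi(\xi)+2/\la$ changes sign on $\basec^*$ by its very definition, the function $\ddleaf\itin\eta\zeta\la$ takes opposite signs at the two endpoints of $\basec^*$; by the intermediate value theorem it has a zero in $\basec^*$, and by the strict monotonicity furnished by \eqref{e_estimateXiLeaf2} this zero is unique.

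The main (and only) obstacle is really a bookkeeping one, namely ensuring that the points $\pd\itin$ and $\qd\itin$ lie in the appropriate regular sets so that the estimates for $\slope{\pm N}$ from Section~\ref{s_distribution} apply; once this is verified, the proof proceeds by a direct transcription of that of proposition~\ref{p_observationsOnLeaves} and requires no new ideas.
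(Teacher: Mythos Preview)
Your proposal is correct and follows exactly the paper's approach: the paper's proof consists of the single sentence ``The proof is identical to the proof of proposition~\ref{p_observationsOnLeaves},'' and you have faithfully unpacked that identical argument, invoking the same ingredients (\eqref{e_parLaLeaf2} with \eqref{e_estimateOnDotPhi} for \eqref{e_estimateLaLeaf2}; proposition~\ref{l_partialHxi}, \eqref{eq_estimatedH}, \eqref{e_oneDefinitions}, \eqref{e_estimateOnDddotPhi} for \eqref{e_estimateXiLeaf2}; and \eqref{e_obviousEstimatesHn} plus monotonicity for the critical point). Your additional remark verifying that $\pd\itin$ and $\qd\itin$ lie in the appropriate regular sets is a helpful clarification that the paper leaves implicit.
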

The proof is identical to the proof of proposition~\ref{p_observationsOnLeaves}.
\begin{prp}\label{p_estimateOnPhi2}
  Let $\la_1,\la_2\in\J_{\itin}^\admissible$, $\eta_1,\,\eta_2,\,\zeta_1,\,\zeta_2, \xi\in\basec^*$; then we have:
  \begin{align*}
    \la_2|\ddleaf{\itin}{\eta_2}{\zeta_2}{\la_2}(\xi)| &<\la_1|\ddleaf{\itin}{\eta_1}{\zeta_1}{\la_1}(\xi)|\onepbigo{\laa^{-1}}+\\&\quad+\frac{1}{2}\ndLL{\itin}^{-1}+|\la_2-\la_1|\bigo{\laa^{-1/2}}
  \end{align*}
\end{prp}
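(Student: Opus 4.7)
The approach is to adapt the proof of proposition~\ref{p_estimateOnPhi} essentially verbatim: the only structural difference is that the two separate cylinder expansion rates $\ncLL{\fcyl}$ and $\ncLL{\bcyl}$ are now replaced by the single domain expansion $\ndLL{\itin}$. Since $\itin$ is $\la$-admissible for every $\la$ in the segment joining $\la_1,\la_2$ (corollary~\ref{c_forwardAdmissible}), the orbit points $\pd{\itin}$ and $\qd{\itin}$ extend as smooth functions of $\la$ via the semiflows $\pflowmap{N}$, $\qflowmap{N}$ of lemma~\ref{l_continuationXiXi'}, which is what makes the $\la$-derivative estimates below meaningful.

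The first step is to absorb the change of $\la$-prefactor by writing $\la_1\,\ddleaf{\itin}{\eta_1}{\zeta_1}{\la_1}(\xi) = \la_2\,\ddleaf{\itin}{\eta_1}{\zeta_1}{\la_1}(\xi)\onepbigo{\laa^{-1}}$; this accounts for the multiplicative factor on the right-hand side of the statement and reduces the task to bounding $\la_2\,|\ddleaf{\itin}{\eta_2}{\zeta_2}{\la_2}(\xi)-\ddleaf{\itin}{\eta_1}{\zeta_1}{\la_1}(\xi)|$ by the remaining two terms. I would then split this difference via the triangle inequality through the intermediate value $\ddleaf{\itin}{\eta_2}{\zeta_2}{\la_1}(\xi)$ (or $\ddleaf{\itin}{\eta_1}{\zeta_1}{\la_2}(\xi)$, depending on the sign of $\la_2-\la_1$, exactly as in proposition~\ref{p_estimateOnPhi}), isolating a fixed-$\la$ piece and a fixed-$(\eta,\zeta)$ piece.

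The $(\eta,\zeta)$-piece is handled using the derivative formulas of proposition~\ref{l_partialHxi} together with the vertical estimates $|\parSlope{\slope{0}}{\slope{\pm N}}|=\bigo{\la^{-1}}$ from \eqref{eq_forwardVerticalEstimateFordHk}--\eqref{eq_forwardVerticalEstimateFordH-k}, which give $|\partial_\eta\slope{N}|,|\partial_\zeta\slope{-N}| \leq \ndLL{\itin}^{-1}\bigo{\la^{-1}}$; together with the crude bound $|\eta_2-\eta_1|,|\zeta_2-\zeta_1| \leq \diam\basec^* = \bigo{\laa^{-1}}$ this produces a contribution that sits comfortably inside the allowance $\tfrac12\ndLL{\itin}^{-1}$. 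The $\la$-piece follows directly from proposition~\ref{l_partialHLa}: estimates \eqref{eq_partialHkLa}--\eqref{eq_partialH-kLa} yield $|\partial_\la\slope{\pm N}|=\bigo{\la^{-3/2}}$ (the exponent $3/2$ rather than $2$ reflecting that intermediate orbit points may approach $\crit$), so that after multiplication by $\la_2$ and by $|\la_2-\la_1|$ one obtains the final term $|\la_2-\la_1|\bigo{\laa^{-1/2}}$. No step presents any genuine difficulty; the only point requiring care is the $\la$-dependence of $\pd{\itin}$ and $\qd{\itin}$, which is precisely the situation lemma~\ref{l_continuationXiXi'} was designed to handle.
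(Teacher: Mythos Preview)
Your proposal is correct and follows essentially the same approach as the paper's own proof: absorb the prefactor, split via the triangle inequality into an $(\eta,\zeta)$-piece and a $\la$-piece, and bound each separately. The only cosmetic difference is that the paper cites propositions~\ref{l_estimateReferenceSequence} and~\ref{l_coneInvariance} for the $(\eta,\zeta)$-variation where you invoke proposition~\ref{l_partialHxi} and the vertical estimates directly; these are equivalent, and your reference is arguably the more precise one.
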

\begin{proof}
  The proof is similar to the proof of proposition~\ref{p_estimateOnPhi}; first of all notice that:
  \[
  \la_1|\ddleaf{\itin}{\eta_1}{\zeta_1}{\la_1}(\xi)| = \la_2|\ddleaf\itin{\eta_1}{\zeta_1}{\la_1}(\xi)| + \frac{\la_1-\la_2}{\la_1}\la_1|\ddleaf\itin{\eta_1}{\zeta_1}{\la_1}(\xi)|;
  \]
  thus we only need to estimate the difference $|\ddleaf\itin{\eta_1}{\zeta_1}{\la_1}(\xi) - \ddleaf\itin{\eta_2}{\zeta_2}{\la_2}(\xi)|$:
  \begin{align*}
    |\ddleaf\itin{\eta_1}{\zeta_1}{\la_1}(\xi) - \ddleaf\itin{\eta_2}{\zeta_2}{\la_2}(\xi)| &<
    |\ddleaf\itin{\eta_1}{\zeta_1}{\la_1}(\xi) - \ddleaf\itin{\eta_2}{\zeta_2}{\la_1}(\xi)| +\\&+
    |\ddleaf\itin{\eta_2}{\zeta_2}{\la_1}(\xi) - \ddleaf\itin{\eta_2}{\zeta_2}{\la_2}(\xi)|;
  \end{align*}
  Using propositions~\ref{l_estimateReferenceSequence} and~\ref{l_coneInvariance} we obtain the bound
  \[
  |\ddleaf\itin{\eta_1}{\zeta_1}{\la_1}(\xi) -
  \ddleaf\itin{\eta_2}{\zeta_2}{\la_1}(\xi)|<\frac{1}{4}\ndLL{\itin}^{-1}\cdot\,\la_1^{-1};
  \]
  on the other hand, by proposition~\ref{l_partialHLa} we have that
  \[
  |\ddleaf\itin{\eta_2}{\zeta_2}{\la_1}(\xi) - \ddleaf\itin{\eta_2}{\zeta_2}{\la_2}(\xi)|<|\la_2-\la_1|\bigo{\la^{-3/2}};
  \]
  which concludes the proof.
\end{proof}

\begin{prp}\label{p_estimateItinDiameter}
  Fix $N>0$ and $\itin\in\cwords {N-1}$; then there exist intervals $\basec_\itin\subset\basec^*$ and $\J_\itin\subset\J_\itin^\admissible$ such that
  \begin{subequations}\label{e_estimateItinDiameter}
    \begin{align}
      \basec_\itin^\elliptic&\subset\basec_\itin &\diam\basec_\itin&<\const\,\laa^{-1}\ndLL\itin^{-1}\\
      \J_\itin^\elliptic&\subset\J_\itin &\diam\J_\itin&<\const\,\laa^{-1}\ndLL\itin^{-2};\label{e_estimateItinParameterDiameter}
    \end{align}
  \end{subequations}
\end{prp}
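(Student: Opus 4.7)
The plan is to mirror the inductive scheme used in the proof of Proposition~\ref{p_decreasingBcylEstimate}, replacing the bicylinder $\Bcyl$ by the closed word $\itin$ and the approximate leaf function $\dcleaf{\Bcyl}{\la}$ by the diagonal domain-leaf function $F(\xi,\la) \defeq \dleaf{\itin}{\xi}{\xi}{\la}(\xi)$, whose zeros in $\basec^*$ parametrize cyclicity-one $N$-periodic orbits of itinerary $\itin$. I construct decreasing sequences $\basec_\itin^0 \supset \basec_\itin^1 \supset \cdots$ and $\J_\itin^0 \supset \J_\itin^1 \supset \cdots$ starting from $\basec_\itin^0 = \basec^*$ and $\J_\itin^0 = \J_\itin^\admissible$, and set $\basec_\itin = \bigcap_j \basec_\itin^j$, $\J_\itin = \bigcap_j \J_\itin^j$. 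The inclusions in \eqref{e_estimateItinDiameter} will follow provided each inductive step preserves $\basec_\itin^\elliptic \subset \basec_\itin^j$ and $\J_\itin^\elliptic \subset \J_\itin^j$.

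At the inductive step, I fix any $\la' \in \J_\itin^j$ and define
\[
\basec_\itin^{j+1} = \bigl\{\xi \in \basec_\itin^j \st \la'\,|\ddleaf{\itin}{\xi}{\xi}{\la'}(\xi)| < 2\ndLL\itin^{-1} + \diam \J_\itin^j \bigr\},
\]
\[
\J_\itin^{j+1} = \bigl\{\la \in \J_\itin^j \st \exists\,\xi \in \basec_\itin^{j+1} \text{ with } F(\xi,\la) = 0 \bigr\}.
\]
Proposition~\ref{p_observationsOnLeaves2} ensures that $\basec_\itin^{j+1}$ is an interval of diameter at most $\frac{1}{\pi^2}\laa^{-1}(2\ndLL\itin^{-1} + \diam \J_\itin^j)\onepbigo{\laa^{-1/2}}$. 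The inclusion $\basec_\itin^\elliptic \subset \basec_\itin^{j+1}$ is proved as in Proposition~\ref{p_pcritComparison}: the ellipticity condition \eqref{eq_traceCondition} gives $\la|\slope N(p) - \slope{-N}(p)| < 2\ndLL\itin^{-1}$; Proposition~\ref{l_estimateReferenceSequence} lets us replace $\slope{\pm N}$ at the periodic point by the values appearing in $\ddleaf{\itin}{\xi}{\xi}{\la}(\xi)$; and Proposition~\ref{p_estimateOnPhi2} lets us pass from $\la$ to $\la'$, absorbing the extra $\diam \J_\itin^j$ term. The inclusion $\J_\itin^\elliptic \subset \J_\itin^{j+1}$ is then immediate, since any cyclicity-one elliptic periodic orbit produces a zero of $F$ in $\basec_\itin^{j+1}$.

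The key new estimate is the bound on $\diam \J_\itin^{j+1}$. Using \eqref{e_baseEstimatesOnDomainLeaves}, the total $\xi$-derivative of $F$ decomposes as
\[
\partial_\xi F = \la\,\ddleaf{\itin}{\xi}{\xi}{\la}(\xi) + \stepex{0N}(\pd{\itin}(\xi,\xi;\la))^{-1} + \stepex{0N}(\qd{\itin}(\xi,\xi;\la))^{-1},
\]
each summand being of order $\ndLL\itin^{-1}$ on $\basec_\itin^{j+1}$; together with $\partial_\la F = 1 + \bigo{\laa^{-1/2}}$ from \eqref{e_estimateLaLeaf2}, an implicit-function argument yields
\[
\diam \J_\itin^{j+1} \leq \const\,\laa^{-1}\ndLL\itin^{-2} + \const\,\laa^{-1}(\diam \J_\itin^j)^2.
\]
Starting from $d_0 = \diam\J < 1$, this quadratic recursion falls to $\bigo{\laa^{-1}}$ after one step and then contracts geometrically to a fixed point of order $\laa^{-1}\ndLL\itin^{-2}$, yielding \eqref{e_estimateItinParameterDiameter}. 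Feeding this back into the diameter estimate for $\basec_\itin^{j+1}$ produces the bound on $\diam \basec_\itin$.

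The main technical point is the sharper $\ndLL\itin^{-2}$ scaling of $\diam \J_\itin$, which is strictly better than the $\ncLL\Bcyl^{-1}$ obtained in Proposition~\ref{p_decreasingBcylEstimate}: in the bicylinder setting the forward and backward cylinders intersect over a full window of width $\sim \ncLL\Bcyl^{-1}$, whereas here $F$ must vanish exactly, so one gains one extra factor of $\ndLL\itin^{-1}$ from the smallness of $|\partial_\xi F|$ on $\basec_\itin^{j+1}$. Closing this improved bound requires the sharp ellipticity control on $\la\,\ddleaf\itin\xi\xi\la(\xi)$ to be transferred consistently across parameter values, which is exactly what Propositions~\ref{l_estimateReferenceSequence} and~\ref{p_estimateOnPhi2} provide; care is needed in verifying that every $\bigo{\laa^{-1/2}}$ error term stays below the relevant thresholds at each iteration, so that the inclusions $\basec_\itin^\elliptic \subset \basec_\itin^{j+1}$ and $\J_\itin^\elliptic \subset \J_\itin^{j+1}$ are preserved throughout.
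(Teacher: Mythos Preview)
Your overall scheme matches the paper's: build decreasing sequences $\basec_\itin^j$ and $\J_\itin^j$ by inductive refinement, using control on $\ddleaf\itin{}{}{}{}$ to shrink the base and then the smallness of the $\xi$-derivative of the leaf function over that refined base to shrink the parameter set. (The paper initializes at $\basec_{\Bcyl^*}$ for the rank-$(N-2)$ bicylinder containing $\itin$, rather than at $\basec^*$, but this only speeds convergence and is not essential.)

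There is, however, a genuine gap in your definition of $\J_\itin^{j+1}$. You require a zero of the diagonal function $F(\xi,\la)=\dleaf\itin\xi\xi\la(\xi)$ and justify $\J_\itin^\elliptic\subset\J_\itin^{j+1}$ by saying that an elliptic periodic orbit produces such a zero. But $\J_\itin^\elliptic$ is \emph{not} the set of parameters admitting an elliptic periodic point: by its definition it is $\{\la:\domain_\itin^\elliptic(\la)\cap\sm^{-N}\domain_\itin^\elliptic(\la)\neq\emptyset\}$, which only yields $\eta,\xi,\zeta\in\basec_\itin^\elliptic$ with $\dleaf\itin\eta\zeta\la(\xi)=0$. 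Nothing forces $\eta=\zeta=\xi$, so no zero of $F$ is guaranteed, and your inclusion fails as written.

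The paper instead sets $\J_\itin^{j+1}=\{\la:\domain_\itin|_{\basec_\itin^{j+1}}\cap\sm^{-N}\domain_\itin|_{\basec_\itin^{j+1}}\neq\emptyset\}$, i.e.\ existence of a triple $\eta,\xi,\zeta\in\basec_\itin^{j+1}$ with $\dleaf\itin\eta\zeta\la(\xi)=0$. The inclusion $\J_\itin^\elliptic\subset\J_\itin^{j+1}$ is then automatic from $\basec_\itin^\elliptic\subset\basec_\itin^{j+1}$. Crucially, the diameter bound survives: by \eqref{e_baseEstimatesOnDomainLeaves} the additional freedom in $\eta,\zeta$ contributes only $\bigo{\ndLL\itin^{-1}\cdot\diam\basec_\itin^{j+1}}=\bigo{\laa^{-1}\ndLL\itin^{-2}}$, the same order you obtain. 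Replacing your condition $F=0$ by this triple condition, the rest of your argument goes through unchanged.
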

\begin{proof}
  The proof follows the same type of argument of the proof of proposition~\ref{p_decreasingBcylEstimate}. First of all notice that for all $\Bcyl\ni\itin$ we have by definition:
  \begin{align*}
    \basec^\elliptic_\itin&\subset\basec_\Bcyl &
    \J^\elliptic_\itin&\subset\J_\Bcyl;
  \end{align*}
  let then $\Bcyl^*$ be the bicylinder of rank $\max(N-2,0)$ containing $\itin$; we construct a decreasing sequence of sets:
  \begin{align*}
    \basec_{\Bcyl^*}&=\basec^0_\itin\supset\basec^1_\itin\supset\cdots\supset\basec^j_\itin\supset\cdots\\
    \J_{\Bcyl^*}\cap\J_\itin^\admissible&=\J_\itin^0\supset\J_\itin^1\supset\cdots\supset\J_\itin^j\supset\cdots
  \end{align*}
  satisfying the following properties
  \begin{enumerate}[(a')]
  \item $\basec_\itin^\elliptic\subset\basec_\itin^j$ and $\J_\itin^\elliptic\subset\J_\itin^j$ for all $j\geq 0$;
  \item the following estimates hold for $j>0$:
    \begin{subequations}
      \begin{align}
        \diam\basec_\itin^j&<\frac{1}{4\pi^2}\laa^{-1}(3\ndLL\itin^{-1}+\diam\J_\itin^{j-1})\onepbigo{\la^{-1/2}}\label{e_innerInductionBaseEstimate2} \\
        \diam\J_\itin^j&<\frac{3}{\pi^2}\laa^{-1}(4\ndLL\itin^{-1}+\diam\J_\itin^{j})^{2}
      \end{align}
    \end{subequations}
  \end{enumerate}
  Assume we constructed the sequences as prescribed; then we can conclude by taking:
  \begin{align*}
    \basec_\itin&=\bigcap_j\basec^j_\itin&
    \J_\itin&=\bigcap_j\J^j_\itin;
  \end{align*}
  in fact, as we did before, we can iterate the inductive estimate for the diameter of $\J$ and obtain for large enough $\la$ and $j$ that $\diam\J_\itin^j<40\laa^{-1}\ndLL\itin^{-2}$, from which both estimates \eqref{e_estimateItinDiameter} follow. We now describe the inductive construction, which is indeed very similar to the one given in stage 2.
  If $\J_\itin^j=\emptyset$, then as before we can set $\basei\itin^{j+1}=\emptyset$ and $\J_\itin^{j+1}=\emptyset$ and these will trivially satisfy items (a') and (b') since both $\basei\itin^\elliptic$ and $\J_\itin^\elliptic$ are empty. Assume then that $\J_\itin^j$ is not empty and fix $\la'\in\J_\itin^\admissible\cap\J_\itin^{j}$, $\eta',\zeta'\in\basec_\itin^j$ and define:
  \begin{align*}
    \basec_\itin^{j+1}&=\{\xi\in\basec_\itin^k\st\la'|\varphi_\itin(\xi,\eta',\zeta';\la')|<3\ndLL{\itin}^{-1}+\diam\J_\itin^j\}.\\
    \J_\itin^{j+1}&=\{\la\in\J_\itin^j\st\domain_\itin|_{\basei\itin^{j+1}}\cap\sm^{-N}\domain_\itin|_{\basei\itin^{j+1}}\not =\emptyset\}
  \end{align*}
  Using \eqref{e_estimateLaLeaf2} we can prove that $\basec_{\itin}^{j+1}$ is connected and satisfies \eqref{e_innerInductionBaseEstimate2}.

  \begin{prp}\label{p_pcritComparisonWords}
    We have $\basec_\itin^\elliptic\subset\basec_\itin^{j+1}$ and $\J_\itin^\elliptic\subset\J_\itin^{j+1}$; moreover if $\xi\in\basec_{\itin}^{j+1}$ then for all $\la\in\J_\itin^\admissible\cap\J_\itin^{j}$, $\eta,\zeta\in\basec_\itin^j(\la)$:
    \begin{equation}\label{e_estimateGenericXiEtaWords}
      \la|\varphi(\xi,\eta,\zeta;\la)|<4\ndLL\itin^{-1}+\onepbigo{\la^{-1/2}}\diam\J_\itin^j.
    \end{equation}
  \end{prp}
  \begin{proof}
    Proposition~\ref{p_estimateOnPhi2} implies \eqref{e_estimateGenericXiEtaWords} for large enough $\la$ by taking $\la_1=\la'$ and $\la_2=\la$ and using the definition of $\basec_{\itin}^{j+1}$.

    Now, if $\xi\in\basec_{\itin}^\elliptic$, there exists $\la\in\J_{\itin}^\elliptic\subset\J_{\itin}^j$, such that $p=\pd{\itin}(\xi,\xi;\la)=\qd{\itin}(\xi,\xi;\la)$ and
    \[
    \la|\varphi(\xi,\xi,\xi;\la)|<2\ndLL\itin^{-1}.
    \]
    Using once more proposition~\ref{p_estimateOnPhi2} we obtain that $\xi\in\basec_{\itin}^{j+1}$, but since $\xi$ was arbitrary this implies that $\basec_{\itin}^\elliptic\subset\basec_{\itin}^{j+1}$, which in turn yields $\J_{\itin}^\elliptic\subset\J_{\itin}^{j+1}$.
  \end{proof}
  \begin{prp}
    The following estimate holds:
    \[
    \diam\J_\itin^{j+1}<
    \frac{3}{\pi^2}\laa^{-1}(4\ndLL\itin^{-1}+\diam\J_\itin^{j})^{2}
    \]
  \end{prp}
  \begin{proof}
    If $\J_\itin^{j+1} = \emptyset$ the estimate is trivial; assume then there exists $\la^*\in\J_\itin^{j+1}$ and  $\xi^*,\eta^*,\zeta^*\in\basec_\itin^{j+1}$ such that
    \[
    \dleaf{\itin}{\eta^*}{\zeta^*}{\la^*}(\xi^*)=0;
    \]
    Introduce the quantity:
    \[
    d=\sup_{\la\in\J_\itin^{j+1}}\sup_{\eta,\zeta\in\basei\itin^{j+1}}\sup_{\xi\in\basei\itin^{{j+1}}}\dist(\dleaf{\itin}{\eta}{\zeta}{\la}{\xi},\dleaf{\itin}{\eta}{\zeta}{\la}{\xi^*});
    \]
    by definition we have:
    \[
    d\leq\diam\basei\itin^{j+1}\sup_{\la\in\J_\itin^{j+1}}\sup_{\eta,\zeta\in\basei\itin^{j+1}}\sup_{\xi\in\basei\itin^{{j+1}}}|\la\ddleaf\itin\zeta\eta\la(\xi)|;
    \]
    Proposition~\ref{p_pcritComparisonWords} and \eqref{e_innerInductionBaseEstimate2} then imply that:
    \[
    d\leq  \frac{1}{4\pi^2}\laa^{-1}(3\ndLL\itin^{-1}+\diam\J_\itin^{j})(4\ncLL{\itin}^{-1}+\diam\J_k^j)\onepbigo{\la^{-1/2}}
    \]
    Furthermore, estimates (\ref{e_baseEstimatesOnDomainLeaves}b-c) and \eqref{e_innerInductionBaseEstimate2} imply:
    \begin{align*}
      \sup_{\smash{
          \eta,\zeta\in\basec_\itin^{j+1}}}&
      \dist(\dleaf{\itin}\zeta\eta\la(\xi),\dleaf{\itin}{\zeta^*}{\eta^*}\la(\xi))
      \\&\quad< \frac{1}{2\pi^2}\laa^{-1}(3\ndLL\itin^{-1}+\diam\J_\itin^{j})\ndLL{\itin}^{-1}\onepbigo{\la^{-1/2}}
    \end{align*}
    Therefore
    \begin{align}
      \sup_{\eta,\zeta\in\basei\itin^{j+1}}\sup_{\xi\in\basei\itin^{{j+1}}}&\dist(\dleaf{\itin}{\eta}{\zeta}{\la}{\xi},\dleaf{\itin}{\eta^*}{\zeta^*}{\la}{\xi^*})\leq\notag\\&\leq
      \frac{3}{4\pi^2}\laa^{-1}(4\ndLL\itin^{-1}+\diam\J_\itin^{j})^{2}\label{e_estimateImageLeaf}
    \end{align}
    Using \eqref{e_estimateLaLeaf2} we hence obtain that if
    \[
    7/8 > |\la-\la^*| > \frac{3}{2\pi^2}\laa^{-1}(4\ndLL\itin^{-1}+\diam\J_\itin^{j})^{2}
    \]
    we have necessarily for any $\eta,\zeta,\xi\in\basei\itin^{j+1}$:
    \[
    \dleaf{\itin}{\zeta}\eta\la(\xi)\not = 0,
    \]
    which implies $\la\not\in\J_\itin^{j+1}$ and concludes the proof.
  \end{proof}
  Hence items (a') and (b') are proved and the proof of proposition~\ref{p_estimateItinDiameter} is also complete.
\end{proof}
Notice that \eqref{e_estimateItinParameterDiameter} implies item (a) of
lemma~\ref{l_estimateParameterSpace}. We now describe a construction that will be useful
in the proof of item (b) and in the proof of lemma~\ref{l_fatDensityComplete}. The main
idea is to notice that the appropriate iterate of $\sm$ behaves as the conservative
H\'enon map at the scale given by $\basei\itin$. This fact is indeed true for a generic
unfolding of quadratic homoclinic tangencies in a conservative settings, as proved in
\cite{MoraRomero} (see also \cite{Duarte99}); we will not, however, use any of these
previous results, as our geometrical analysis is sufficient to obtain all needed
information. Indeed, the geometry of the conservative H\'enon map will only provide us
with inspiration to finalize the study of the dynamics of the standard map at these
scales.

Let $\itin$ be a closed word and define the \emph{diagonal leaf function} for
$\xi\in\basec^*$ and $\la\in\J_\itin^\admissible$ as follows:
\[
\diago{\itin}{\la}(\xi)=\dleaf\itin\xi\xi\la(\xi)
\]
Using the definition and equations \eqref{e_baseEstimatesOnDomainLeaves} it is easy to compute the derivative:
\begin{align*}
  \partial_\xi\diago\itin\la(\xi)&=\la\ddleaf\itin\xi\xi\la(\xi)+2\stepex{0N}^{-1}(\pd{\itin}(\xi,\xi;\la))\\
  \partial_\la\diago\itin\la(\xi)&=\dot\phi(\xi)+\bigo{\la^{-1/2}}\label{e_parLaLeafDiago}.
\end{align*}
Assume now that $p=p_{\itin}(\xi,\xi;\la)$ is an $N$-periodic point; then, by definition,
\[
\diago\itin\la(\xi)=0;
\]
hence we can bound the number of elliptic $N$-periodic points contained in $\domain_\itin$ with the number of zeros (mod $1$) of $\diago\itin\la$ in $\basec_\itin$. If $\basec_\itin=\emptyset$ there are no parameters in $\J$ allowing for an elliptic periodic point, so item (b) of lemma~\ref{l_estimateParameterSpace} trivially holds. Assume then $\basei\itin\not=\emptyset$; then %
by definition and by the previous estimate we have that $\gamma(\xi)$ can have at most one quadratic critical point in $\basec_\itin$; moreover, by \eqref{e_estimateImageLeaf} we also have that the image of $\diago\itin\la$ has small diameter and hence cannot wrap around the torus $\torus$. This immediately implies that we can have at most two elliptic periodic points in $\domain_\itin$: one for each sign of the derivative $\partial_\xi\diago\itin\la$.
However, by definition of $\diago\itin\la$ we have that $\pd\itin(\xi,\xi;\la)$ is elliptic if and only if the trace condition \eqref{eq_traceCondition} holds, i.e.:
\begin{equation}\label{e_ellipticDiagoCondition}
  -4\stepex{0N}^{-1}(\pd{\itin}(\xi,\xi;\la)) \leq \partial_\xi\diago\itin\la(\xi) \leq 0;
\end{equation}
On the other hand if $\diago\itin\la$ has two zeros in $\basec_\itin$, then necessarily one of them, that we denote by $\xi^*$, is such that $\partial_\xi\diago\itin\la(\xi^*)>0$; hence it cannot be elliptic, which finally implies that there can be only one elliptic cyclicity one $N$-periodic point in a given domain $\domain_\itin$ and proves item (b) of lemma~\ref{l_estimateParameterSpace}.

The diagonal leaf function is indeed a very powerful object as the following proposition shows:
\begin{prp}\label{p_lowerEstimate}
  Assume there exist $\la_1<\la_2\in\J_\itin^{\admissible}$ such that for all $\xi\in\basei\itin$:
  \[
  \diago\itin{\la_1}(\xi) < 0 <\diago\itin{\la_2}(\xi)
  \]
  then there exists an interval $\hat\J_\itin^\elliptic\subset(\la_1,\la_2)$ of diameter at least $(1/4\pi^2)\laa^{-1}\scLL\itin^{-2}$ such that for $\la\in\hat\J_\itin^\elliptic$ there exists a cyclicity 1 elliptic periodic point in $\domain_\itin$ that is the center of an elliptic island.
\end{prp}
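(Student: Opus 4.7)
The strategy is to locate a saddle-node bifurcation in parameter space and then track the elliptic branch of periodic points emerging from it.

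First, I would show the existence of a ``tangency parameter'' $\la_*\in(\la_1,\la_2)$: let $\xi^*(\la)$ denote the unique critical point of $\diago_\itin(\cdot;\la)$ in $\basei{\itin}$ (it is unique since $\partial_\xi^2\diago_\itin\la=\la\,\partial_\xi\ddleaf\itin\xi\xi\la+O(\stepex{0N}^{-1})=-4\pi^2\la\onepbigo{\laa^{-1/2}}$ by the analogue of~\eqref{e_estimateXiLeaf2}), and define $g(\la)=\diago_\itin{\la}(\xi^*(\la))$. Since $\partial_\xi\diago_\itin\la(\xi^*(\la))=0$ we get $g'(\la)=\partial_\la\diago_\itin\la(\xi^*(\la))=1+\bigo{\laa^{-1/2}}$ by~\eqref{e_parLaLeaf2} and $\dot\phi\equiv 1+\bigo{\laa^{-2}}$ on $\basec^*$. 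The sign-change hypothesis together with this monotonicity gives a unique $\la_*\in(\la_1,\la_2)$ at which $g(\la_*)=0$, i.e.\ $\diago_\itin{\la_*}$ has its critical point exactly on the level $\{0\}\mod 1$.

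Next I would unfold the bifurcation by Taylor-expanding $\diago_\itin\la$ around $\xi^*(\la)$:
\[
\diago_\itin\la(\xi)=g(\la)-2\pi^2\la(\xi-\xi^*(\la))^2\onepbigo{\laa^{-1/2}},
\]
so for $\la$ slightly larger than $\la_*$ (the other case is symmetric) there are two zeros $\xi_\pm(\la)=\xi^*(\la)\pm\sqrt{g(\la)/(2\pi^2\la)}\onepbigo{\laa^{-1/2}}$ of $\diago_\itin\la$ in $\basei\itin$, corresponding to two $N$-periodic points of $\sm$ in $\domain_\itin$. The branch with $\partial_\xi\diago_\itin\la<0$, say $\xi_e(\la)$, satisfies
\[
\partial_\xi\diago_\itin\la(\xi_e(\la))=-2\sqrt{2\pi^2\la\,g(\la)}\onepbigo{\laa^{-1/2}}.
\]
By the trace formula recalled in~\eqref{e_ellipticDiagoCondition}, the corresponding periodic point is elliptic precisely when this derivative lies in $[-4\stepex{0N}^{-1},0]$, which—using $g'(\la)\approx 1$ and $\stepex{0N}(\pd\itin(\xi_e,\xi_e;\la))=\scLL\itin\onepbigo{\laa^{-1/2}}$ on $\basei\itin$—translates into
\[
0\leq\la-\la_*\leq\frac{2}{\pi^2}\laa^{-1}\scLL\itin^{-2}\onepbigo{\laa^{-1/2}}.
\]
Shrinking the constant slightly to absorb error terms yields the claimed interval $\hat\J_\itin^\elliptic$ of length at least $(1/4\pi^2)\laa^{-1}\scLL\itin^{-2}$.

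Finally, to upgrade ``elliptic periodic point'' to ``center of an elliptic island'' I would invoke Moser's twist theorem applied to $\sm^N$ linearized at the elliptic branch $p(\la)=\pd\itin(\xi_e(\la),\xi_e(\la);\la)$: as $\la$ varies across $\hat\J_\itin^\elliptic$ the trace of $\deh\sm^N(p(\la))$ sweeps monotonically across $(-2,2)$ (the derivative with respect to $\la$ is essentially $g'(\la)\sim 1$ times $\stepex{0N}$, hence enormous), so the rotation number of the linearized map sweeps a full interval. Outside a meagre set of resonant parameters one obtains Diophantine rotation numbers, and the twist/non-degeneracy needed by Moser follows from $\partial^2_\xi\diago_\itin\la\neq 0$ (a Birkhoff-normal-form computation as in~\cite{MoraRomero}). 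By shrinking $\hat\J_\itin^\elliptic$ to the set of $\la$ for which the Moser hypotheses hold—a set whose complement inside $\hat\J_\itin^\elliptic$ has arbitrarily small density—the quantitative lower bound on the length is preserved. The main technical obstacle is precisely this last point: making sure the KAM/Moser assumptions are verified for enough parameters to preserve the measure bound; everything else is a direct implicit-function-theorem/Taylor expansion argument using the propositions of Section~\ref{s_distribution}.
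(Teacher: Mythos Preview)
Your saddle-node unfolding for locating and tracking the elliptic branch is correct and is just a repackaging of the paper's ODE approach: the paper differentiates the implicit relation $\diago\itin\la(\xi_\zero(\la))=0$, sets $q=\partial_\xi\diago\itin\la(\xi_\zero)$, obtains $q\,\deh q\approx 4\pi^2\la\,\deh\la$, and integrates $q$ across the relevant range of traces. Both computations give the same lower bound on the length of the parameter set on which the trace condition is satisfied.

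The gap is in your last paragraph. You propose to excise resonant parameters and invoke Moser's theorem on the complement, but removing any set of parameters destroys the interval structure, and the proposition asks for an \emph{interval} $\hat\J_\itin^\elliptic$. (Also, Moser's theorem for an elliptic fixed point does not require a Diophantine rotation at the origin---only avoidance of resonances of order $\leq 4$ together with a nonzero first Birkhoff coefficient.) The paper sidesteps the whole issue by restricting the multiplier angle to $\vartheta\in(0,\pi/3)$, which amounts to the sharper trace condition \eqref{e_ellipticDiagoConditionNonResonant}, namely $-\stepex{0N}^{-1}<\partial_\xi\diago\itin\la<0$. Since $q$ sweeps monotonically in $\la$, this condition cuts out a genuine sub\-interval of parameters, on which \emph{all} low-order resonances are excluded a priori; the factor lost in passing from \eqref{e_ellipticDiagoCondition} to \eqref{e_ellipticDiagoConditionNonResonant} is absorbed in the constant. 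The Birkhoff normal form computation in the appendix then shows the twist coefficient $\Upsilon$ is strictly positive throughout this interval (for $\vartheta\in(0,\pi/3)$ both cotangent terms in $\Upsilon$ have the same sign, so no cancellation can occur). Hence every $\la$ in the interval yields an elliptic island, with nothing to remove.
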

\begin{proof}
  By the intermediate value theorem we know there exists a $\la'\in(\la_1,\la_2)$ such that for some $\xi_\zero(\la')\in\basei\itin$ the function $\diago\itin\la(\xi_\zero(\la'))=0$; let us write a differential equation for the function $\xi_\zero(\la)$ such that $\diago\itin\la(\xi_\zero\la))=0$:
  \begin{align}
    \deh\diago\itin\la(\xi) &=\partial_\xi\diago\itin\la(\xi)\deh\xi + \partial_\la\diago\itin\la(\xi)\deh\la=\notag\\
    &=(\la\ddleaf\itin{\xi_\zero}{\xi_\zero}\la(\xi_\zero)+2\stepex{0N}^{-1}(\pd{\itin}(\xi_\zero,\xi_\zero;\la)))\deh\xi_\zero+\\&+\onepbigo{\la^{-1/2}}\deh\la = 0\label{e_odeForXi}
  \end{align}
  If we want the periodic point to be elliptic we need to satisfy condition \eqref{e_ellipticDiagoCondition}; moreover, to prove that around the periodic point we have an elliptic island we need to avoid resonances of order $4$ and show that the Birkhoff normal form of $\sm$ around $p$ is non-degenerate.
  The non-resonance condition can be ensured by simply restricting the allowed values for the multiplier $\exp(i\vartheta)$; we take $0<\vartheta<\pi/3$; hence it is sufficient to modify \eqref{e_ellipticDiagoCondition} to the following condition:
  \begin{equation}\label{e_ellipticDiagoConditionNonResonant}
    -\stepex{0N}^{-1}(\pd{\itin}(\xi,\xi;\la)) < \partial_\xi\diago\itin\la(\xi) < 0.
  \end{equation}
  We will check non-degeneracy of the Birkhoff normal form in appendix~\ref{appendixBNF}.
  We will now study the evolution of the multiplier with $\la$:
  \begin{align*}
    \de{}{\la}\partial_\xi\diago\itin\la(\xi_\zero(\la))&=
    \de{\xi_\zero}{\la}\partial_{\xi}(\la\ddleaf\itin{\xi}{\xi}\la(\xi)+2\stepex{0N}^{-1}(\pd{\itin}(\xi,\xi;\la)))+\\
    &+\partial_{\la}(\la\ddleaf\itin{\xi}{\xi}\la(\xi)+2\stepex{0N}^{-1}(\pd{\itin}(\xi,\xi;\la)))
  \end{align*}
  By equations (\ref{e_baseEstimatesOnDomainLeaves}bc), proposition~\ref{l_partialHxi}  and \eqref{e_estimateXiLeaf2} we have that:
  \[
  \partial_{\xi}(\la\ddleaf\itin{\xi}{\xi}\la(\xi)+2\stepex{0N}^{-1}(\pd{\itin}(\xi,\xi;\la)))= -4\pi^2\la\onepbigo{\la^{-1/2}}
  \]
  and by \eqref{e_partialStepexLa} and proposition~\ref{l_partialHLa} we have:
  \[
  \partial_{\la}(\la\ddleaf\itin{\xi}{\xi}\la(\xi)+2\stepex{0N}^{-1}(\pd{\itin}(\xi,\xi;\la)))=\ddleaf\itin{\xi}{\xi}\la(\xi)+\bigo{\la^{-1/2}}
  \]
  which is small since $\ddleaf\itin{\xi}{\xi}\la(\xi)$ is small if $\xi\in\basei\itin$. Hence we have:
  \[
  \de{}{\la}\partial_\xi\diago\itin\la(\xi_\zero(\la))=-4\pi^2\la\onepbigo{\la^{-1/2}}\de{\xi_\zero}{\la}.
  \]
  Using \eqref{e_odeForXi} and letting $q=\partial_\xi\diago\itin\la(\xi_\zero)$ we thus obtain:
  \[
  \left|\int_{-\scLL\itin^{-1}}^{0} q\deh q\right|\leq     \left|4\pi^2\la\int_{\hat\J_\itin^\elliptic}\deh\la\right|
  \]
  which implies that the diameter of $\hat\J$ is bounded below by $(1/4\pi^2)\la^{-1}\scLL\itin^{-2}$ and concludes the proof.
\end{proof}
We are now ready to give the
\begin{proof}[Proof of lemma~\ref{l_fatDensityComplete}]
  We can always assume that $\mathcal{B}$ has diameter less than $1/4$; otherwise we can simply consider a subset of $\mathcal{B}$ satisfying this property. Then by construction there exists a $\J(n)$ such that $\mathcal{B}\subset\hat\J(n)$; fix $\J=\J(n)$. Then by corollary~\ref{c_decreasingSequence} there exists a decreasing sequence $\eps_N$ such that if $\mathcal{B}$ has diameter larger than $\eps_N$ it will contain the core parameter set of some $r_1$-th generation balanced bicylinder $\tBcyl=\tfcyl\cap\tbcyl$; where $r_1=(N-\lfloor\log N\rfloor-5) +1$. Let
  \begin{align*}
    \tfcyl&=-\letter_1\cdots\letter_{r_1} &
    \tbcyl&=\letter_{-r_1}\cdots\letter_{-1}-
  \end{align*}
  and assume
  \[
  \itin=\altLetter_{-r}\cdots\altLetter_{r}
  \]
  where by hypothesis $r\leq\lfloor \log N\rfloor$. Then we claim that we can define the following $\mathcal{B}$-admissible closed word of rank $N-1$:
  \[
  \itin^*=-\letter_1\cdots\letter_{r_1} ****\, \altLetter_{-r}\cdots\altLetter_{r} ****\, \letter_{-r_1}\cdots\letter_{-1}-
  \]
  where by $*$ we denote some arbitrary admissible symbols that make $\itin^*$ admissible.
  In fact such padding symbols exist since, by the compatibility condition \eqref{e_compatibilityCondition} we can connect two arbitrary symbols using at most 5 other symbols; since $N-(r+r_1)\geq 5$ we can conclude. Then if we can prove that $\itin^*$ satisfies the hypotheses of proposition~\ref{p_lowerEstimate} we would have that there exists an interval $\hat\J_{\itin^*}^\elliptic\subset\hat\J_{\tBcyl}\subset\mathcal{B}$ of parameter values such that $\sm$ has an elliptic periodic point in $\hypset_\itin$ that is the center of an elliptic island. Moreover the diameter of $\hat\J_{\itin^*}^\elliptic$ can be estimated as follows by proposition~\ref{p_lowerEstimate}:
  \[
  \diam\hat\J_{\itin^*}^\elliptic>\const\,\sdLL{\itin^*}^{-2}>\const\,\la^{-2N}
  \]
  where the constant can contain terms such as $\la^{\bigo{1}}$, but does not depend on $N$.
  Hence we just need to prove that $\itin^*$ satisfies the hypotheses of proposition~\ref{p_lowerEstimate}; this in turn is clear, since by construction we have that the function $\diago{\itin^*}{\la}$ is $\ncLL{\tBcyl}^{-1}$-close to $\dcleaf{\tBcyl}{\la}$. On the other hand, by our construction, we know that we can take $\tBcyl=\tBcyl_{(\Bcyl:0)}$ for some other balanced bicylinder $\Bcyl$. Therefore we can take $\la_1=\la_\Bcyl - 1/2\ncLLi{\Bcyl}$ and $\la_2=\la_\Bcyl + 1/2\ncLLi{\Bcyl}$, in such a way that
  \begin{align*}
    \dcleaf{\tBcyl}{\la_1}&=-\frac{1}{2}\ncLLi{\Bcyl}\onepbigo{\la^{-1/2}}&
    \dcleaf{\tBcyl}{\la_2}&=\frac{1}{2}\ncLLi{\Bcyl}\onepbigo{\la^{-1/2}};
  \end{align*}
  hence we conclude that $\la_1$ and $\la_2$ satisfy the hypotheses of lemma~\ref{p_lowerEstimate}, which conclude the proof.
\end{proof}

\appendix
\input{riemannlemma.p}
\newcommand{\I}{\mathcal{I}}

\section{Technical appendix}
\subsection{Proof of lemma~\ref{l_technicalSummation}}\label{appendixRL}
\begin{proof}
The proof relies on considering the sum as a Riemann Sum and approximating it with a Lebesgue-type argument; 
fix a large integer $N>0$ and consider any decreasing sequence $1=\cursor_0>\cursor_1>\cdots>\cursor_N=0$, so that:
\[
\emptyset=\indexset_{\cursor_0}\subset\indexset_{\cursor_1}\subset\cdots\subset\indexset_{\cursor_N}=\indexset
\]
Define $\tilde{\indexset}_j=\indexset_{\cursor_j}\setminus\indexset_{\cursor_{j-1}}$ for $j=1,\cdots,N$; thus we obtain 
\begin{align*}
\rslc_1 \spread^{1-\cursor_j} &< \RiemFunc_i \leq \rslc_1 \spread^{1-\cursor_{j-1}}\ \textrm{for}\ i\in\tilde{\indexset}_j,& \sum_{i\in\tilde{\indexset}_j}\delta_i&<\rslc_2 \spread^{\expo\cursor_j},
\end{align*}
moreover:
\[
\sum_{i\in\indexset}\RiemFunc_i\delta_i=\sum_{j=1}^N\sum_{i\in\tilde{\indexset}_j}\RiemFunc_i\delta_i<\sum_{j=1}^N \rslc_1\rslc_2\spread^{1-\cursor_{j-1}+\expo\cursor_j}.
\]
If $\expo<1$ choose $\cursor_j$ satisfying the following relations:
\[
\cursor_j=\frac{\expo^{j-N}-1}{\expo(\expo^{j-N-1}-1)}\cursor_{j-1},
\]
in such a way that $\cursor_{j-1}-\expo\cursor_j=\cursor_j-\expo\cursor_{j+1}$; we therefore obtain:
\[
1-\cursor_0+\expo\cursor_1=\frac{\expo^{1-N}-1}{\expo^{-N}-1}=\expo-\eps,
\]
with $\eps\sim 1/N$ which can therefore be taken arbitrarily small.\\
The case $\expo=1$ can be obtained as a limit for $\expo\to 1$; in this setting we choose $\cursor_j$ as follows:
\[
\cursor_j=1-\frac{j}{N}
\]the fast decreasing
hence, again we obtain $\cursor_{j-1}-\expo\cursor_j=\cursor_{j}-\expo\cursor_{j+1}=1/N$, which implies
\[
1-\cursor_0+\expo\cursor_1 = 1 - 1/N = \expo-\eps.
\]
\end{proof}
\subsection{Proof of lemma~\ref{l_technicalHD}}\label{appendixHD}
For any fixed $m$, let $\I_m\subset\hat\J$ be an interval of diameter $\eps'_m$; we now describe an inductive procedure to construct a decreasing sequence of sets:
  \[
  \I_m=\I_m^0\supset\I_m^1\supset\cdots\supset\I_m^n\supset\cdots;
  \]
  such that each $\I_m^n$ is the disjoint union of $L_m^n$ intervals of same length $\delta_m^{n}=\eps'_{n+m}$, i.e
  \begin{align*}
    \I_m^n&=\bigsqcup_{j=1}^{L_m^{n}}\I_{m}^{n,j}, &
    |\I_m^{n,j}|&=\delta_m^{n}.
  \end{align*}
Let $l_m^n=\lfloor\delta_{m}^{n}/\eps_{m}^{n+1}\rfloor$; partition each interval $\I_{m}^{n,j}$ in $l_m^n$ sub-intervals of equal length; by construction their diameter is not smaller than $\eps_{m}^{n+1}$, hence, for each of them, there exists smaller sub-intervals of diameter $\delta^{n+1}_{m}$ for which property $P_{m+n+1}$ holds. We define $\I_{m}^{n+1}$ be the union of these $L_{m}^{n+1}=L_m^{n}\cdot l_m^{n}$ smaller sub-intervals.
  Let
  \[
\bar\I_m=\bigcap_n\I_m^n,
  \]
  Then, by construction, any $\la\in\bar\I_m$ satisfies properties $P_k$ for all $k>m$. We
  thus construct $\fatset$ as follows: for each $m$, partition of $\hat\J$ in a number of
  intervals of diameter $\eps'_{m}$, which we call $\{\I_{m,j}\}$, and, possibly, a
  leftover interval $\I_{m,*}$ of diameter smaller than $\eps'_m$. Then by the above
  construction, for each $\I_{m,j}$ we obtain a set $\bar\I_{m,j}$ such that any
  $\la\in\bar\I_{m,j}$ satisfies properties $P_k$ for all $k>m$.  Let
  $\fatset_m=\bigcup_j\bar\I_{m,j}$ and $\fatset=\bigcup_m\fatset_m$; then $\fatset$ is,
  by construction, a residual set, and to conclude the proof, we only need to prove that
  the Hausdorff dimension of $\fatset$ satisfies the lower bound \eqref{e_lowerBoundHD}.
  We will in fact prove that, for any fixed $m$, $\bar\I_m$ satisfies the same bound;
  since $m$ is now fixed, let us drop it from the notations; also, we will slightly abuse
  the notation by redefining $\eps_n$ and $\eps'_n$ to be $\eps_{m+n}$ and $\eps'_{m+n}$,
  respectively. First of all let
  \[
  s=\liminf_{n\to\infty}\frac{\log\eps'_n}{\log\eps_n};
  \]
  if $s=0$, then \eqref{e_lowerBoundHD} is trivial, hence we can assume $s>0$. We will use the following standard result\footnote{The proof can be found e.g.\ in \cite{Falconer}}
  \begin{lem}
    Assume there exists a Borel probability $\mu$ on a metric space $X$ and positive real numbers $C$, $s$ such that for all sufficiently small balls $B$ we have:
    \begin{equation}\label{e_conditionHD}
      \mu(B)<C\diam B^{s},
    \end{equation}
    then $\dim_H X\geq s$.
  \end{lem}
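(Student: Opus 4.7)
The plan is to invoke the standard \emph{mass distribution principle}: the measure $\mu$, satisfying \eqref{e_conditionHD}, will provide a positive lower bound for the $s$-dimensional Hausdorff measure $\mathcal{H}^s(X)$, from which the dimension bound follows immediately.

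First, I would recall that the $s$-dimensional Hausdorff measure is defined via $\mathcal{H}^s(X) = \lim_{\delta \to 0^+} \inf \sum_i (\diam U_i)^s$, where the infimum is taken over all countable covers $\{U_i\}$ of $X$ by sets of diameter at most $\delta$. So it suffices to exhibit a uniform positive lower bound for $\sum_i (\diam U_i)^s$ over arbitrary $\delta$-covers (with $\delta$ small).

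Fix any such cover $\{U_i\}$ with each $\diam U_i$ smaller than the threshold for which \eqref{e_conditionHD} applies. For each index $i$ pick a point $x_i \in U_i$ (discarding the empty $U_i$) and set $B_i$ to be the closed ball of radius $\diam U_i$ centered at $x_i$, so that $U_i \subset B_i$ and $\diam B_i \leq 2\,\diam U_i$. Applying the hypothesis we get
\[
\mu(U_i) \leq \mu(B_i) \leq C\,(\diam B_i)^s \leq C\,2^s\,(\diam U_i)^s.
\]
Since $\{U_i\}$ covers $X$ and $\mu$ is a probability measure, summing and using countable subadditivity yields
\[
1 = \mu(X) \leq \sum_i \mu(U_i) \leq C\,2^s \sum_i (\diam U_i)^s.
\]
Therefore $\sum_i (\diam U_i)^s \geq (C\,2^s)^{-1}$, and the bound is independent of the cover and of $\delta$. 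Passing to the infimum and then the limit $\delta \to 0^+$ gives $\mathcal{H}^s(X) \geq (C\,2^s)^{-1} > 0$, which forces $\dim_H X \geq s$ by the definition of Hausdorff dimension.

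There is no real obstacle here; the only mild subtlety worth flagging is that \eqref{e_conditionHD} is stated for balls while the Hausdorff measure uses arbitrary sets, which is why one must inflate each $U_i$ to a concentric ball at the cost of a harmless factor $2^s$. Once this is done the argument is a one-line application of countable subadditivity of $\mu$.
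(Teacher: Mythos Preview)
Your proof is correct and is precisely the standard mass distribution principle. The paper does not actually prove this lemma; it simply cites Falconer's textbook, where essentially the same argument you gave appears.
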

Define a sequence of positive functionals $\Phi_n$ acting on a continuous function $\varphi$ on $\bar\I$ by means of the following expression:
  \[
  \Phi_n(\varphi)= L_n^{-1} \sum_{\bar\I^{n,j}}\varphi(x_{\bar\I^{n,j}}),
  \]
  where $x_{\bar\I^{n,j}}\in\bar\I^{n,j}$ can be chosen arbitrarily. Since $\bar\I$ is compact, $\varphi$ is uniformly continuous and it is easy to prove that for any $\varphi$ and $m>n$ we have $|\Phi_n(\varphi)-\Phi_m(\varphi)|\to 0$ as $n\to\infty$. In fact for fixed $\varphi$ and $\eps$ consider the $\delta$ given by uniform continuity and let $n$ be large enough so that $\delta_n<\delta$. Therefore
  \begin{align*}
    \Phi_m(\varphi)&=L_m^{-1}\sum_{\bar\I^{n,j}}\sum_{\bar\I^{m,k}\subset\bar\I^{n,j}}\varphi(x_{\bar\I^{m,k}})=\\
    &=L_n^{-1}\sum_{\bar\I^{n,j}}\frac{L_n}{L_m}\sum_{\bar\I^{m,k}\subset\bar\I^{n,j}}(\varphi(x_{\bar\I^{n,j}})-\varphi(x_{\bar\I^{n,j}})+\varphi(x_{\bar\I^{m,k}}))\\
    &=L_n^{-1}\sum_{\bar\I^{n,j}}\varphi(x_{\bar\I^{n,j}})+\bigo{\eps}=\Phi_n(\varphi)+\bigo{\eps}.
  \end{align*}
  Hence $\Phi_n$ converges to some probability measure (since $\Phi_n(1)=1$ for all $n$) that we call $\mu$; we now claim that $\mu$ satisfies \eqref{e_conditionHD}. First of all notice that, for any $n\in\naturals$ the following bound holds:
  \begin{align*}
    \mu(B) &= L_n^{-1}\sum_{\bar\I^{n,j}}\lim_{m\to\infty}\frac{L_n}{L_m}\sum_{\bar\I^{m,k}\subset\bar\I^{n,j}}1_B(x_{\bar\I^{m,k}})\leq \\
    &\leq L_n^{-1}I_n(B),
  \end{align*}
  where $I_n=\sum_{\bar\I^{n,j}\cap B\not = \emptyset} 1$ is the number of intervals in $\bar\I^n$ intersecting $B$.
  First observe that:
  \begin{align*}
    -\log L_k
    &= -\sum_{j=0}^{k-1} \log l_j =\sum_{j=0}^{n-1} (\log\eps_{j+1}-\log\eps'_j+o(1))=\\
    &= -\log\eps'_0 - \sum(\log\eps'_j(1-s_j))+s_k\log\eps'_k+o(k)\\
    &\defeq s_k'\log\eps'_k
  \end{align*}
  where  $s_k\log\eps'_k=\log\eps_k$ and $s_k-C/k\leq s'_k \leq s_k$ for some fixed $C$.
  Define $S_n=\inf_{m\geq n}s_k$, let $\rho=\diam B$ and choose $n$ such that $\eps'_n\leq\rho<\eps'_{n-1}$; we assume $\rho$ to be small enough so that for the corresponding $n$ the bound $S_n>C/n$ is satisfied. We then need to estimate $I_n(B)$: recall that by construction, each interval of $\bar\I^n$ is obtained by subdividing each interval of $\bar\I^{n-1}$ in subintervals of diameter larger than $\eps_n$; any ball $B$ of diameter $\rho$ can then intersect at most $\rho/\eps_n+3$ such subintervals; on the other hand each subinterval can contain only one component of $\bar\I^n$, therefore we conclude:
  \[
  I_n(B)\leq\frac{\rho}{(\eps'_n)^{s_n}}+3.
  \]
  Thus we obtain the estimate:
  \[
  \mu(B)\leq L_n^{-1}I_n(B)\leq (\eps'_n)^{s'_n-s_n}\rho+3(\eps'_n)^{s'_n}\leq \rho^{1-\sigma_n}+3\rho^{s_n-\sigma_n}
  \]
  where $\sigma_n=s_n-s'_n<C/n$; by the previous assumptions on $n$ we then have:
  \[
  \mu(B)<\const\,\rho^{S_n-C/n}.
  \]
  From which we can conclude since we can take $n$ arbitrarily large provided that we restrict to sufficiently small balls $B$.
\qed
\subsection{Birkhoff Normal Form for cyclicity one elliptic periodic points}\label{appendixBNF}
Fix $\itin$ of rank $N-1$, let $p_0$ be the elliptic periodic point of period $N$ and consider the coordinates given by $\fhomeo_r(p)=(\xi,\eta)$ in a neighborhood of $p_0$; with a slight abuse of notation we will assume that $p_0=\pd\itin(\xi=0,\eta=0)$. It is immediate to write the differential of $\sm^N$ in these coordinates:
\[
\deh\sm^N=\matrixtt{0}{1}{-1}{2c(\xi,\eta)}
\]
where $2c(\xi,\eta)=\la\stepex{0N}(\pd\itin(\xi,\eta)(\slope{N}(\qd\itin(\xi,\eta))-\slope{-N}(\pd\itin(\xi,\eta)))$; for ease of notation we let $c_0=c(0,0)$. Note that in these coordinates the differential is, apart from a scaling factor, essentially the same as the one for the conservative Hénon mapping. The proof of the twist condition is also virtually the same, modulo some small terms.

Denote the multiplier at the periodic point by $\lambda=\exp(i\vartheta)$, i.e.: $\lambda=c_0+i(1-c_0^2)^{1/2}$; we can thus diagonalize in $p_0$ taking the following complex coordinates:
\begin{align*}
z&=\xi+\lambda\eta&\bar z&=\xi+\bar\lambda\eta
\end{align*}
so that we obtain:
\begin{equation}\label{e_complexMap}
z\mapsto \lambda z - 2\tilde c(z,\bar z)\frac{\lambda z + \bar\lambda \bar z}{\bar\lambda-\lambda}
\end{equation}
where $\tilde c = c - c_0$. We should now prove the twist condition for the map \eqref{e_complexMap}; in particular, following \cite{Lazutkin} we shall write:
\[
z\mapsto \lambda z + A_3z^2 + A_4z\bar z + A_5\bar z^2 + A_6z^3 + A_7z^2\bar z + A_8z\bar z ^2 + A_9\bar z^3+\bigo{|z|^4}
\]
and prove that the following quantity is nonzero:
\begin{equation}\label{e_twistCondition}
\Upsilon=\textrm{Im}(\bar\lambda A_7) + 3|A_3|^2\textrm{ctg}(\vartheta/2)+|A_5|^2\textrm{ctg}(3\vartheta/2)\not =0
\end{equation}
We first of all compute the derivatives $\partial_\xi c$ and $\partial_\eta c$; using the definition of $c$, propositions~\ref{l_partialHxi},~\ref{l_estimateReferenceSequence} and equations \eqref{e_oneDefinitions} we immediately obtain:
\begin{align*}
  \partial_\xi c &= \la\stepex{0N}(\pd\itin(\xi,\eta)\bigo{\la^{-1/2}}& \partial_\eta c &= \la\stepex{0N}\left(\partial_{\slope{1}}\slope{1}(\qd\itin(\xi,\eta))+\bigo{\la^{-1/2}}\right)
\end{align*}
and by our choice of $\phi$, since $\ddddot\phi$ is small in $\pcrit$, we have
\[
\partial^2_{(\xi,\eta)} c = \la\stepex{0N}(\pd\itin(\xi,\eta)\bigo{\la^{-1/2}}.
\]
Hence, we have, by the chain rule:
\begin{align*}
  \partial_z c&=\frac{1}{\bar\lambda-\lambda}(\bar\lambda\partial_\xi c - \lambda\partial_\eta c)=  \frac{-\lambda}{\bar\lambda-\lambda}\la\stepex{0N}(\pd\itin(\xi,\eta))\onepbigo{\la^{-1/2}}\\
  \partial_{\bar z} c&=\frac{1}{\bar\lambda-\lambda}( -\partial_\xi c + \partial_\eta c)=  \frac{1}{\bar\lambda-\lambda}\la\stepex{0N}(\pd\itin(\xi,\eta))\onepbigo{\la^{-1/2}}
\end{align*}
which imply:
\begin{align*}
  A_3&=\frac{\lambda^2}{(\bar\lambda-\lambda)^2}\la\stepex{0N}(\pd\itin(\xi,\eta))\onepbigo{\la^{-1/2}}\\
  A_5&=\frac{-\bar\lambda}{(\bar\lambda-\lambda)^2}\la\stepex{0N}(\pd\itin(\xi,\eta))\onepbigo{\la^{-1/2}}\\
\end{align*}
and
\[
A_7=\la\stepex{0N}(\pd\itin(\xi,\eta))\bigo{\la^{-1/2}},
\]
which can therefore be neglected in the computation of $\Upsilon$.
Since we have chosen $0<\vartheta<\pi/3$ we have that the cotangent functions in \eqref{e_twistCondition} are both positive, hence $\Upsilon$ is positive and the twist condition holds.
 Notice, that for generic $\phi$ the second derivatives of $c$ could in principle be of the same order as the first derivatives; however, in expression \eqref{e_twistCondition} the second order terms $A_3$ and $A_5$ appear squared, hence the contribution of the term containing $A_7$ could still be neglected and we again obtain the twist condition.


\bibliographystyle{abbrv}
\bibliography{sm}
\end{document}